\setlist{nosep}
\newtheorem{theorem}{Theorem}[section]
\newtheorem{lemma}[theorem]{Lemma}
\newtheorem{definition}[theorem]{Definition}
\newtheorem{corollary}[theorem]{Corollary}
\newtheorem{proposition}[theorem]{Proposition}
\newtheorem{remark}[theorem]{Remark}
\newcommand{\Hom}{{\mathrm{Hom}}}
\newcommand{\TKK}{{\mathrm{TKK}}}
\newcommand{\Ad}{{\mathrm{Ad}}}
\newcommand{\diff}{{\mathrm{d}}}
\newcommand{\Lie}{{\mathrm{Lie}}}
\newcommand{\im}{{\mathrm{im}}}
\newcommand{\eins}{\leavevmode\hbox{\small1\kern-3.8pt\normalsize1}}
\newcommand{\Ann}{{\rm Ann} }
\newcommand{\ev}{{\mathrm{ev}}}
\newcommand{\mR}{\mathbb{R}}
\newcommand{\mC}{\mathbb{C}}
\newcommand{\mF}{\mathbb{F}}
\newcommand{\mN}{\mathbb{N}}
\newcommand{\mE}{\mathbb{E}}
\newcommand{\mZ}{\mathbb{Z}}
\newcommand{\mg}{\mathfrak{g}}
\newcommand{\mA}{\mathbb{A}}
\newcommand{\fg}{{\mathfrak g}}
\newcommand{\istr}{\mathfrak{istr}}
\newcommand{\Inn}{{\rm Inn}}
\newcommand{\cD}{\mathcal{D}}
\newcommand{\cH}{\mathcal{H}}
\newcommand{\cP}{\mathcal{P}}
\newcommand{\cC}{\mathcal{C}}
\newcommand{\cS}{\mathcal{S}}
\newcommand{\cO}{\mathcal{O}}
\newcommand{\cJ}{\mathcal{J}}
\newcommand{\cB}{\mathcal{B}}
\newcommand{\oa}{\bar{0}}
\newcommand{\ob}{\bar{1}}
\newcommand{\End}{{\rm End}}
\newcommand{\id}{{\rm id}}
\newcommand{\kerorbit}{C}
\newcommand{\mk}{\mathfrak{k}'}
\newcommand{\ck}{\mathfrak{k}}
\newcommand{\x}{z}
\DeclarePairedDelimiter\abs{\lvert}{\rvert}%
\DeclarePairedDelimiter\norm{\lVert}{\rVert}%
\let\oldabs\abs
\def\abs{\@ifstar{\oldabs}{\oldabs*}}
\let\oldnorm\norm
\def\norm{\@ifstar{\oldnorm}{\oldnorm*}}
\newcommand{\minus}{\scalebox{0.9}{{\rm -}}}
\newcommand{\plus}{\scalebox{0.6}{{\rm+}}}
\begin{document}

\title{A minimal representation of the orthosymplectic Lie supergroup}

\author{Sigiswald Barbier and Jan Frahm}
\date{}

\begin{abstract}
We construct a minimal representation of the orthosymplectic Lie supergroup $OSp(p,q|2n)$, generalising the Schr\"{o}dinger model of the minimal representation of $O(p,q)$ to the super case. The underlying Lie algebra representation is realized on functions on the minimal orbit inside the Jordan superalgebra associated with $\mathfrak{osp}(p,q|2n)$, so that our construction is in line with the orbit philosophy. Its annihilator is given by a Joseph-like ideal for $\mathfrak{osp}(p,q|2n)$, and therefore the representation is a natural generalization of a minimal representation to the context of Lie superalgebras. We also calculate its Gelfand--Kirillov dimension and construct a non-degenerate sesquilinear form for which the representation is skew-symmetric and which is the analogue of an $L^2$-inner product in the supercase.
\end{abstract}

\maketitle
\section{Introduction}

\subsection*{Minimal representations}

The orbit philosophy is a guiding principle in the representation theory of Lie groups and suggests a relation between coadjoint orbits and irreducible unitary representations. For nilpotent groups, or more generally solvable groups, it can be used to establish a bijective correspondence between coadjoint	 orbits and irreducible unitary representations, but already for the semisimple group $SL(2,\mR)$ this correspondence does not cover the whole unitary dual. One of the main problems is the quantisation of nilpotent coadjoint orbits of semisimple groups, which are expected to correspond to rather small unitary representations. Minimal representations are the irreducible unitary representations of semisimple Lie groups which are supposed to correspond to a minimal nilpotent coadjoint orbit. Prominent examples are the Segal--Shale--Weil representation of the metaplectic group $Mp(n,\mR$), which is a double cover of the symplectic group, or the more recently studied minimal representation of $O(p,q)$.

More technically, a unitary representation of a simple real Lie group $G$ is called minimal if the annihilator ideal of the derived representation of the universal enveloping algebra of the Lie algebra of $G$ is the Joseph ideal. The Joseph ideal is the unique completely prime, two-sided ideal in the universal enveloping algebra such that the associated variety is the closure of the minimal nilpotent coadjoint orbit (see~\cite{GS}). Minimal representations have been constructed in various different ways, algebraically, analytically, or through Howe's theta correspondence.

\subsection*{$L^2$-models}

For a minimal representation, the Gelfand--Kirillov dimension which measures the size of an infinite-dimensional representation attains its minimum among all infinite-dimensional unitary representations. Therefore, explicit geometric realisations of minimal representations are expected to have large symmetries and allow interactions with other mathematical areas such as conformal geometry, integral operators or special functions (see~\cite{Ko,KM2,HKMM}). In every known realisation, some aspects of the representations are rather clear to describe and some are more subtle. One realisation in which for instance the invariant inner product is particularly easy to see is the $L^2$-model (also called Schr\"{o}dinger model) which is due to Vergne--Rossi~\cite{VR}, Dvorsky--Sahi~\cite{DS} and Kobayashi--{\O}rsted~\cite{KO}. Here the representation is realized on the Hilbert space $L^2(C)$ where $C$ is a homogeneous space for a subgroup of $G$. The three constructions in \cite{VR,DS,KO} are different in nature, and only more recently a unified construction of $L^2$-models of minimal representations was developed in \cite{HKM}, using the framework of Jordan algebras. The approach consists of the following steps:
\begin{itemize}
\item Start from a simple real Jordan algebra.
\item Consider the Tits--Kantor--Koecher (TKK) Lie algebra of the Jordan algebra.
\item Construct a representation of the TKK algebra on functions on the Jordan algebra.
\item Identify a minimal orbit of the structure group of the Jordan algebra and restrict the representation to functions on this orbit.
\item Find an admissible subrepresentation which integrates to the conformal group.
\item Show that this representation is unitary with respect to an $L^2$-inner product on the minimal orbit.
\item Show that the constructed representation is indeed a minimal representation.
\end{itemize}

We remark that the indefinite orthogonal groups $G=O(p,q)$ are special among the cases discussed above, since their corresponding minimal representations are in general neither spherical nor highest/lowest weight representations. This makes them harder to construct than in the remaining cases, but as a consequence their $L^2$-models allow a richer analysis.

\subsection*{Minimal representations of Lie supergroups}

Supersymmetry is a framework introduced in the seventies to consider bosons and fermions at the same level \cite{SS,WZ}. Lie supergroups and Lie superalgebras are the mathematical concepts  underlying supersymmetry. Since the ingredients of the orbit method also exist in the super case, it is expected that the orbit method is a useful tool also in the study of irreducible representations of Lie supergroups~\cite[Chapter 6.3]{Kirillov}. For example, the orbit method provides a classification of irreducible unitary representations of nilpotent Lie supergroups (see~\cite{Salmasian, NS}). With this perspective in mind, it is natural to ask for a super version of minimal representations.

The goal of this paper is to construct a minimal representation of the orthosymplectic Lie supergroup $OSp(p,q|2n)$ following the same approach as in \cite{HKM}.
Therefore we need the following concepts in the super case:
\begin{itemize}
\item Jordan superalgebras,
\item TKK algebra of a Jordan superalgebra,
\item a representation of the TKK-algebra on functions on the Jordan superalgebra,
\item a minimal orbit to which the representation restricts,
\item an admissible subrepresentation which integrates to the group level,
\item an invariant inner product,
\item minimality of the constructed representation, i.e. show that the annihilator ideal is a Joseph-like ideal. 
\end{itemize}

The notion of Jordan superalgebras is already well-developed (see e.g.~\cite{Kac, Cantarini, MZ, Shtern}). For the TKK-algebra different definitions exist in the literature \cite{Tits, Kantor, Koecher, Kac, Krutelevich}, but it is shown in \cite{BC2} that for the Jordan superalgebra $J$ corresponding to the orthosymplectic Lie superalgebra $\mathfrak{osp}(p,q|2n)$ all different definitions are equivalent.

The next step is to construct a representation of $\mathfrak{osp}(p,q|2n)$ on functions on the Jordan superalgebra $J$. This has been done in \cite{BC}, where for general three-graded Lie superalgebras a family of representations $\pi_\lambda$ depending on a complex parameter $\lambda\in\mC$ was obtained.

The first new feature of this work is the construction of the minimal orbit $C$ in the super setting. Orbits under the action of a Lie supergroup on some supermanifold are delicate objects to handle. Supermanifolds, in contrast to ordinary manifolds, are not completely determined by their points, so in the super case we cannot define an orbit through a point $x$ as all points given by $g\cdot x$ for $g$ in $G$. Instead we will define an orbit as the quotient supermanifold of $G$ and the stabilizer subgroup $G_x$. Using this definition, we can construct a minimal orbit $C$ which in our situation can be characterized by $R^2=0$, where $R^2=\sum_{ij} x_i \beta^{ij} x_j$, with $\beta$ the defining orthosymplectic metric of our Jordan superalgebra. We then show that for precisely one value of the parameter $\lambda$ the representation $\pi_\lambda$ considered in \cite{BC} restricts to a representation $\pi_C$ on this minimal orbit.

The main part of this paper is the integration of the obtained Lie superalgebra representation $\pi_C$ to a Lie supergroup representation. For this we make use of the theory of Harish-Chandra supermodules developed in \cite{Alldridge}. The key point here is to construct an admissible submodule of $\pi_C$, which then, by the general theory in \cite{Alldridge}, integrates to a representation of the Lie supergroup $OSp(p,q|2n)$. This submodule is generated by a superversion of a K-Bessel function, resembling the K-Bessel function in \cite{HKM} (see also \cite{DS,KO}). In contrast to the classical case, we have to work harder to show that this submodule is indeed admissible. In \cite{HKM} some unitarity properties were implicitly used to do this, but these tools are not available in the supersetting. Instead we find an explicit decomposition of the submodule generated by the K-Bessel function in terms of radial functions and spherical harmonics. This decomposition is even new in the classical case and gives explicit formulas for \emph{all} K-finite vectors in the minimal representation of $O(p,q)$. To derive the decomposition of the Harish-Chandra module we have to go through several technical computations which are responsible for the length of this paper.

A priori, we know that the representation of $OSp(p,q|2n)$ we construct cannot be unitary, since it was shown in \cite[Theorem 6.2.1]{NS} that there exists no unitary representations of $OSp(p,q|2n)$ if $p, q$ and $n$ are all different from zero. This shows that minimal representations of Lie supergroups cannot be expected to be unitary in the usual sense. It is our hope that the representation constructed in this paper will be useful to find an appropriate replacement for the notion of unitarity for Lie supergroups.
We remark that in \cite{Michel}, a new definition of Hilbert superspaces and unitary representations using the super version of Krein spaces is introduced, which allows for a more general notion of unitary representations of Lie superalgebras than the one considered in \cite[Theorem 6.2.1]{NS}. However, it seems that our representation is not unitary even with respect to this broadened notion of unitarity. Nevertheless, we are able to define a non-degenerate superhermitian, sesquilinear form for which the representation is skew-symmetric. This sesquilinear form is the analogue of the $L^2$-inner product on the minimal orbit in the classical case.

Finally, we compute the annihilator ideal of our representation and show that it agrees with one of the two Joseph-like ideals constructed in \cite{CSS}. In this sense, our representation is the natural generalization of a minimal representation to the context of Lie superalgebras.

We remark that this construction only works for $p+q$ even, which is the same condition as for the existence of a minimal unitary representation of the Lie group $O(p,q)$.

\subsection*{Relation to other work}

The representation we construct is a natural analogue of the minimal representation of the group $O(p,q)$ (see e.g. \cite{KO}). This highlights the first factor of the even part $O(p,q)\times Sp(2n,\mR)$ of the supergroup $OSp(p,q|2n)$. The second factor $Sp(2n,\mR)$ does not admit a minimal representation, but its double cover $Mp(2n,\mR)$ does, the Segal--Shale--Weil representation. An analogue of this representation in the super context was constructed in \cite{Michel} (see also \cite{Ni} for the corresponding Lie superalgebra representation of $\mathfrak{osp}(p,q|2n)$). Its annihilator ideal is equal to the second Joseph-like ideal constructed in \cite{CSS}.

We further remark that in \cite[Section 5.2]{AS} highest weight representations of the Lie algebra $\mathfrak{su}(p,p|2p)$ are considered. It seems likely that, for a specific parameter, their representation has a subrepresentation which is the analogue of the minimal representation of $\mathfrak{su}(p,p)$.

\subsection*{Structure of the paper}

This paper is organized as follows. In Section~\ref{Section Preliminaries}, we introduce the notation and collect some results needed in the rest of the paper. In Section~\ref{Section associate structures} we present the spin factor Jordan superalgebra $\mathcal{JS}pin_{p-1,q-2|2n}$, and define and compute some Lie superalgebras and groups associated with it. In particular  the Tits--Kantor--Koecher algebra of $\mathcal{JS}pin_{p-1,q-2|2n}$ is given by $\mathfrak{osp}(p,q|2n)$. We also consider the family $\pi_\lambda$ of representations of $\mathfrak{osp}(p,q|2n)$ on functions on the Jordan superalgebra constructed in \cite{BC} depending on a complex parameter $\lambda\in\mC$.

The first new results are contained in Section~\ref{Section minimal orbit} about the minimal orbit. We define a superspace characterized by $R^2=0$. We then show that this gives a well-defined supermanifold and that this supermanifold is the orbit through a primitive idempotent under the action of the structure group on our Jordan superalgebra (see Theorem~\ref{Theorem minimal orbit}).  For a specific value of $\lambda$ the representation $\pi_\lambda$ can be restricted to the minimal orbit (see Proposition~\ref{Prop:BesselOperatorsTangential}). Section~\ref{Integration to group level} contains the main body of this paper. We introduce a submodule  of the representation restricted to the minimal orbit and give a very explicit description of this submodule in Theorem~\ref{Theorem structure W}. We also show that for $p+q$ even and $p-2n-3 \not \in -2\mN$, this module can be integrated to the group level (see Corollary~\ref{corollary integration to group level}). 

In the last three sections we derive some further properties of our representation.
 We compute the annihilator ideal of our representation in Section~\ref{Section Joseph ideal} and show that it is equal to a Joseph-like ideal of $\mathfrak{osp}(p,q|2n)$ constructed in \cite{CSS} (see Theorem~\ref{Theorem Joseph ideal}). This links our representation to the definition of minimal representations in the classical case. The Gelfand--Kirillov dimension is computed in Section~\ref{Section Gelfand-Kirillov dimension} and equals $p+q-3$ (see Proposition~\ref{Prop Gelfand-Kirillov}), which is the same as the Gelfand--Kirillov dimension of the minimal representation of $\mathfrak{so}(p,q)$ and thus independent of the `super part'.
In Section~\ref{Section integration}, we introduce a linear functional which defines a non-degenerate sesquilinear form on $W$ resembling the $L^2$-inner product on the minimal orbit in the classical case. Our representation is shown to be skew-symmetric with respect to this form if $p+q-2n-6\geq 0$ (see Theorem~\ref{Theorem skew-symmetric}). 
 
Finally in the Appendix, we give a short introduction to supermanifolds and gather some results on Gegenbauer polynomials, the Bessel functions and the generalized Laguerre functions introduced in \cite{HKMM}. In particular, we also prove some new recursion relations for the generalized Laguerre functions, which are needed in Section~\ref{Integration to group level}.

\subsection*{Acknowledgments}

SB is a PhD Fellow of the Research Foundation - Flanders (FWO).
SB would like to thank Kevin Coulembier and Hendrik De Bie for many fruitful discussions and useful comments and the Mathematical Department of the FAU Erlangen-N\"{u}rnberg for its hospitality. JF also thanks Kevin Coulembier and Hendrik De Bie for an invitation to Gent, where this project was initiated.

\section{Preliminaries and notations} \label{Section Preliminaries}
In this paper, manifolds, affine spaces, Jordan and Lie algebras will be defined over the field of real numbers $\mR$, while functions spaces will be over the complex field $\mC$, unless otherwise stated. We use the notation $\mR^+$ for $\{ x \in \mR \mid x>0\}$ and $\mR^{m}_{(0)}$ for $\mR^{m} \backslash \{0\}.$ 
We use the convention $\mN = \{ 0, 1, 2, \ldots, \}$.
We always assume $p, q \in \mN$ with $p\geq 2$ and $q\geq 2$. 

A super-vector space is a $\mZ_2$-graded vector space $V=V_{\oa} \oplus V_{\ob}$. Elements in $V_{\oa}$ are called even, elements in $V_{\ob}$ odd and elements in $V_{\oa}\cup V_{\ob}$ homogeneous.  We use the notation $\abs{x}$ for the parity of a homogeneous element. So $\abs{x}=0$ for $x$ even and $\abs{x}=1$ for $x$ odd. We use the convention that the appearance of $\abs{x}$ in a formula implies that we are considering homogeneous elements and the formula has to be extended linearly for arbitrary elements. Write $\mR^{m|n}$ for the super-vector space $V$ with $V_{\oa}=\mR^m$ and $V_{\ob}=\mR^n$.

\subsection{The orthosymplectic Lie superalgebra and superpolynomials} \label{Section: conventions orthosymplectic metric}
 \label{Section another realisation}

We will start with a realisation of the orthosymplectic Lie superalgebra as differential operators on superpolynomials. 
Denote by $\cP(\mR^m)$ the space of complex-valued polynomials in $m$ variables and by $\Lambda^{2n}=\Lambda(\mR^{2n})$ the Grassmann algebra in $2n$ variables. Then we define the space of superpolynomials as  
\[
 \cP(\mR^{m|2n}) := \cP(\mR^m)\otimes_\mC \Lambda^{2n},
\] the space of complex-valued polynomials in $m$ even and $2n$ odd variables. These variables satisfy the commutation relations \[ \x_i \x_j = (-1)^{\abs{i}\abs{j}} \x_j \x_i.\]
We define the differential operator $\partial^i$ as the unique derivation in $\End(\cP(\mR^{m|2n}))$ such that  $\partial^i (\x_j) =\delta_{ij}$.

Consider a supersymmetric,  non-degenerate, even bilinear form $\langle \cdot ,\cdot \rangle_\beta$ on $\mR^{m|2n}$ with components $\beta_{ij}$ and let $\beta^{ij}$ be the components of the inverse matrix. So $\sum_j \beta_{ij} \beta^{jk}= \delta_{ik}.$ Set $\x^j = \sum_{i} \x_i \beta^{ij}$.
We also set $\partial_{j} = \sum_{i} \partial^{i} \beta_{ji}$. It satisfies $\partial_{i}(\x^j) =\delta_{ij}$.

The orthosymplectic Lie superalgebra $\mathfrak{osp}(m|2n,\beta)$ is the subalgebra of $\mathfrak{gl}(m|2n)$ spanned by homogeneous operators $X$ that satisfy
\[
\langle X(u), v \rangle_\beta + (-1)^{\abs{u}\abs{X}}\langle u, X(v) \rangle_\beta =0 \qquad \text{ for all } u,v \in V.
\]

We can realise the orthosymplectic Lie superalgebra using differential operators acting on $\cP(\mR^{m|2n})$. A basis of the orthosymplectic Lie superalgebra in this realisation is given by
\begin{align*}
L_{i,j}&:=\x_i \partial_{j} -(-1)^{\abs{i}\abs{j}} \x_j \partial_{i}, \quad   \text{ for } i< j \quad \text{ and } \quad L_{i,i}:=2 \x_i \partial_{i}, \quad  \text{ for } \abs{i}=1.
\end{align*}

Define also operators by
\begin{align}\label{definition laplace, euler and R squared}
R^2 &:= \sum_{i,j} \beta^{ij} \x_i \x_j, \quad \mE:=\sum_{i}\x^i \partial_{i} \quad \text{ and }\quad  \Delta:= \sum_{i,j} \beta^{ij} \partial_{i}\partial_{j}.
\end{align}
The operator $R^2$ acts through multiplication, $\mE$ is called the Euler operator and $\Delta$ the Laplacian. 
We have the following.
\begin{lemma}\label{Lemma relations sl(2)}
The operators $R^2$, $\mE$ and $\Delta$  commute with the orthosymplectic Lie superalgebra in $\End(\mathcal{P}(\mR^{m|2n}))$. Furthermore, they satisfy
\begin{align*}
[\Delta, R^2] &= 4\mE + 2M, \qquad [\Delta, \mE]= 2 \Delta, \qquad 
[R^2, \mE ]= -2R^2,
\end{align*}
where $M=m-2n$ is the superdimension.
\end{lemma}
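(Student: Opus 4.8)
The plan is to reduce the statement to a few elementary properties of the operators on $\cP(\mR^{m|2n})$ together with one short computation. First I would record the basic relations in the super Weyl algebra: from $\partial^i(\x_j)=\delta_{ij}$ and the definitions $\partial_j=\sum_k\partial^k\beta_{jk}$, $\x^j=\sum_k\x_k\beta^{kj}$ one gets $\partial_i(\x_j)=\beta_{ij}$, $\partial^i(\x^j)=\beta^{ij}$, $\partial_i(\x^j)=\delta_{ij}$, the supersymmetry relations $\beta_{ij}=(-1)^{\abs{i}\abs{j}}\beta_{ji}$ and $\beta^{ij}=(-1)^{\abs{i}\abs{j}}\beta^{ji}$, and the evenness $\beta_{ij}=\beta^{ij}=0$ unless $\abs{i}=\abs{j}$. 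Each $\partial_i$ is a super-derivation, so $[\partial_i,f]=\partial_i(f)$ as operators for a polynomial $f$; a first computation then gives $\partial_j(R^2)=2\x_j$, and this is already where the supersymmetry of $\beta$ enters. Finally, $\mE$ acts as the total degree operator: it is a sum of terms $\x\,\partial$ and sends each $\x_a$ to itself, hence multiplies a homogeneous polynomial of degree $d$ by $d$.

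For the commutation with $\mathfrak{osp}(m|2n,\beta)$ it suffices to work with the spanning operators $L_{i,j}$ (and $L_{i,i}$). Each of them preserves the polynomial degree, hence commutes with the grading operator $\mE$. Each is moreover a super-derivation of $\cP(\mR^{m|2n})$, so $[L_{i,j},R^2]$ equals multiplication by $L_{i,j}(R^2)$; and $L_{i,j}(R^2)=2\x_i\x_j-2(-1)^{\abs{i}\abs{j}}\x_j\x_i=0$ by the commutation relation $\x_j\x_i=(-1)^{\abs{i}\abs{j}}\x_i\x_j$. The identity $[L_{i,j},\Delta]=0$ follows by the dual computation, commuting $L_{i,j}$ past the constant-coefficient second-order operator $\Delta$ using $\partial_k(\x_l)=\beta_{kl}$; alternatively one may invoke Howe duality for the pair $(\mathfrak{sl}_2,\mathfrak{osp}(m|2n))$ acting on $\cP(\mR^{m|2n})$, whose $\mathfrak{sl}_2$ is spanned by appropriate normalizations of $R^2$, $\mE+\tfrac{M}{2}$ and $\Delta$.

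The three bracket relations are then immediate except for one. Since $\mE$ is the degree operator while $R^2$ raises the degree by $2$ and $\Delta$ lowers it by $2$, we get $[R^2,\mE]=-2R^2$ and $[\Delta,\mE]=2\Delta$ at once. For the last identity I would expand $[\Delta,R^2]=\sum_{ij}\beta^{ij}[\partial_i\partial_j,R^2]$ using the super-Leibniz rule $[\partial_i\partial_j,R^2]=\partial_i[\partial_j,R^2]+[\partial_i,R^2]\partial_j$ and the relation $[\partial_j,R^2]=2\x_j$, which yields $[\partial_i\partial_j,R^2]=2\beta_{ij}+2(-1)^{\abs{i}\abs{j}}\x_j\partial_i+2\x_i\partial_j$. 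Contracting against $\beta^{ij}$: the term $2\sum_{ij}\beta^{ij}\x_i\partial_j=2\sum_j\x^j\partial_j=2\mE$; the term $2\sum_{ij}(-1)^{\abs{i}\abs{j}}\beta^{ij}\x_j\partial_i=2\sum_{ij}\beta^{ji}\x_j\partial_i=2\mE$ after using the supersymmetry of $\beta^{ij}$ and relabelling; and the scalar term $2\sum_{ij}\beta^{ij}\beta_{ij}=2\sum_i(-1)^{\abs{i}}=2(m-2n)=2M$. Adding these gives $[\Delta,R^2]=4\mE+2M$.

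The only delicate point is the bookkeeping of the signs $(-1)^{\abs{i}\abs{j}}$: it is precisely the evenness and supersymmetry of $\beta$ that convert the naive trace $\sum_i 1=m+2n$ into the superdimension $M=m-2n$, and the same feature makes the two differential contributions to $[\Delta,R^2]$ reinforce each other rather than cancel. I would therefore isolate all sign conventions in the preliminary step, after which the two easy brackets become formal and the computation of $[\Delta,R^2]$ is a mechanical substitution.
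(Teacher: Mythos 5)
Your computation is correct, and it is exactly the ``straightforward calculation'' that the paper's proof omits (the paper simply asserts this and refers to the literature). In particular your sign bookkeeping — $\partial_j(R^2)=2\x_j$, the two Euler terms reinforcing, and $\sum_{i,j}\beta^{ij}\beta_{ij}=\sum_i(-1)^{\abs{i}}=M$ — checks out against the conventions of Section~\ref{Section: conventions orthosymplectic metric}.
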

\begin{proof}
A straightforward calculation or see, for example, \cite{DeBie}.
\end{proof}
In particular, Lemma~\ref{Lemma relations sl(2)} implies that $(R^2/2, \mE+M/2, -\Delta /2)$ forms an $\mathfrak{sl}(2)$-triple.

Later on we will need these operators not only as operators acting on superpolynomials but as global differential operators acting on an affine superspace. (We refer to Appendix \ref{supermanifolds} for a definition of the affine superspace and for an explanation of the notations we use.) We can extend their definition as follows. 
Consider a finite-dimensional super-vector space $V$ equipped with a supersymmetric,  non-degenerate, even bilinear form $\langle \cdot ,\cdot \rangle_\beta.$
Denote by $\x_i$ the coordinate function on the affine superspace $\mA(V^\ast)$ given by $\x_i (v) = v_i$ for $v= \sum_i v_i e^i,$ where $(e^i)_i$ is a homogeneous basis of $V^\ast$. Define $\partial^i$ as the unique element of $\Gamma(\cD_{\mA(V^\ast)})$ which satisfies $\partial^i(\x_j)= \delta_{ij}$.  
We define  $R^2, \Delta$, $\mE$ and $L_{ij}$ similarly as for the $\mR^{m|2n}$ case.  The operators $L_{ij}$ will give a realisation of $\mathfrak{osp}(V)$ and Lemma \ref{Lemma relations sl(2)} still holds.

\subsection{Spherical harmonics} We will collect here also some results on spherical harmonics, which we will use later on.   \label{Section spherical harmonics}
We write $\cP_k(\mR^{m|2n})$ for the space of homogeneous polynomials of degree~$k$. These polynomials satisfy
\[
\mE f= k f \quad \text{ for all } f \in \cP_k(\mR^{m|2n}).
\] 
The space of spherical harmonics $\cH_k(\mR^{m|2n})$ of degree $k$ are the homogeneous polynomials of degree $k$ which are also in the kernel of the Laplace operator:
\[
\cH_k(\mR^{m|2n}) = \{ f \in \cP_k(\mR^{m|2n}) \mid \Delta f= 0 \}.
\]
We have the following decomposition of $\cP(\mR^{m|2n})$, \cite[Theorem 3]{DeBie}:
\begin{proposition}[Fischer decomposition]
If $m-2n\not\in -2\mN$, then $\cP(\mR^{m|2n})$  decomposes as
\[
\cP(\mR^{m|2n}) = \bigoplus_{k=0}^\infty \cP_k(\mR^{m|2n}) = \bigoplus_{k=0}^\infty \bigoplus_{j=0}^\infty R^{2j} \cH_k(\mR^{m|2n}).
\]
\end{proposition}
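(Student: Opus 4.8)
\medskip

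The plan is to prove the decomposition in two stages: first establish the algebraic identity $\cP(\mR^{m|2n}) = \bigoplus_{k\geq 0}\cP_k(\mR^{m|2n})$, which is immediate since $\cP(\mR^m)$ is graded by total degree and $\Lambda^{2n}$ is finite-dimensional and graded, so the tensor product is graded with finite-dimensional components; then show that each homogeneous piece $\cP_k(\mR^{m|2n})$ decomposes as $\bigoplus_{j\geq 0} R^{2j}\cH_{k-2j}(\mR^{m|2n})$ (with the understanding that $\cH_{<0}=0$). Since this is a finite sum for each fixed $k$, it suffices to prove by induction on $k$ the single-step decomposition
\[
\cP_k(\mR^{m|2n}) = \cH_k(\mR^{m|2n}) \oplus R^2\cP_{k-2}(\mR^{m|2n}),
\]
and then iterate. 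Here the hypothesis $M = m-2n\notin -2\mN$ enters in an essential way.

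\medskip

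The key tool is the $\mathfrak{sl}(2)$-triple from Lemma~\ref{Lemma relations sl(2)}: on $\cP_k(\mR^{m|2n})$ the Euler operator $\mE$ acts by the scalar $k$, so $\mE + M/2$ acts by $k + M/2$, and the commutation relation $[\Delta, R^2] = 4\mE + 2M$ restricted to $\cP_{k-2}$ gives $\Delta R^2 f = R^2\Delta f + (4(k-2) + 2M)f$ for $f\in\cP_{k-2}$. Iterating, one computes $\Delta R^{2j}$ on $\cP_{k-2j}\cap\cH$ and finds that $\Delta R^{2j}h$ is a nonzero multiple of $R^{2j-2}h$ precisely when the relevant scalars $\prod_{i}(\text{something involving }k, j, M)$ do not vanish; the condition $M\notin -2\mN$ guarantees all these scalars are nonzero. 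Concretely, for the single-step decomposition I would argue: (i) $\cH_k \cap R^2\cP_{k-2} = 0$, because if $h = R^2 g$ with $\Delta h = 0$, then applying $\Delta$ and using $[\Delta,R^2]$ repeatedly forces $g = 0$ by the non-vanishing of the scalars; (ii) $\cH_k + R^2\cP_{k-2} = \cP_k$, which I would get by a dimension count — the map $\Delta\colon \cP_k \to \cP_{k-2}$ is surjective (again using $M\notin -2\mN$ to see that $\Delta R^2$ is invertible on each $\cP_{k-2}$, hence $\Delta$ is surjective), so $\dim\cH_k = \dim\cP_k - \dim\cP_{k-2}$, and since $R^2\colon \cP_{k-2}\to\cP_k$ is injective (multiplication by the nonzero-divisor $R^2$ in the superpolynomial ring — this needs a small check, or alternatively follows from the injectivity statement in (i)-type arguments), the dimensions add up correctly.

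\medskip

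I expect the main obstacle to be the careful bookkeeping of the scalar factors that appear when commuting $\Delta$ past powers of $R^2$, and in particular pinning down exactly which ones must be nonzero and checking that $M\notin -2\mN$ is the precise condition ensuring this. A clean way to organize this is to introduce, for $f\in\cP_k(\mR^{m|2n})$, the operator identity (provable by induction on $j$ from Lemma~\ref{Lemma relations sl(2)})
\[
[\Delta, R^{2j}] = R^{2j-2}\bigl(4j\,\mE + 2j(M + 2j - 2)\bigr)
\]
(or the analogous correctly-normalized version), from which the action of $\Delta$ on $R^{2j}\cH_{k-2j}$ is read off as multiplication by $4j(k-2j) + 2j(M + 2j-2) = 2j(2k - 2j + M - 2)$; this vanishes for some $j\geq 1$ only if $2k - 2j + M - 2 = 0$, i.e. $M = 2(j - k + 1)$, and for $1\leq j \leq k$ the right side ranges over $\{2, 0, -2, \ldots, 2(1-k)\} \subseteq -2\mN \cup \{2\}$; a slightly finer analysis rules out the value $2$ as well, so indeed $M\notin -2\mN$ suffices. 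The rest is linear algebra. An alternative, perhaps more transparent route is to invoke the representation theory of the $\mathfrak{sl}(2)$-triple $(R^2/2, \mE + M/2, -\Delta/2)$ directly: $\cP(\mR^{m|2n})$ becomes a lowest-weight-type $\mathfrak{sl}(2)$-module, $\cH_k$ is the space of lowest-weight vectors of weight $k + M/2$, and the decomposition is just the decomposition into Verma-type submodules, which is multiplicity-free and exhausts $\cP$ exactly when no unwanted singular vectors appear, again governed by $M\notin-2\mN$. Either way I would cite \cite{DeBie} for the details rather than reproduce the full computation.
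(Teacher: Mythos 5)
The paper does not actually prove this proposition: it is quoted verbatim from \cite[Theorem 3]{DeBie} and the citation is the entire ``proof''. Your sketch is, in substance, the standard $\mathfrak{sl}(2)$-argument that underlies the cited result, and it is correct in outline: the graded decomposition is immediate, the commutator identity $[\Delta,R^{2j}]=R^{2j-2}\bigl(4j\mE+2j(M+2j-2)\bigr)$ checks out by the induction you indicate (using $\mE R^2=R^2(\mE+2)$), and on $R^{2j}\cH_{k-2j}$ it yields the scalar $2j(2k-2j+M-2)$, which vanishes iff $M=2(j+1-k)$. Two small points of bookkeeping: (i) the relevant range is $1\le j\le k/2$ (so that $\cH_{k-2j}$ can be nonzero), and on that range $2(j+1-k)\le 2-k\le 0$, so the exceptional value $2$ you worry about never actually occurs and $M\notin-2\mN$ is exactly the right hypothesis; (ii) the injectivity of multiplication by $R^2$ does need the small check you flag, but it is easy — if $R^2f=0$, the lowest $\theta$-degree component of $f$ is killed by the even part $s^2-t^2$ of $R^2$, which is a non-zero-divisor in $\cP(\mR^m)$, and one peels off $\theta$-degrees inductively. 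Note also that the invertibility of $\Delta R^2$ on $\cP_{k-2}$ (used for surjectivity of $\Delta$) is itself read off from the eigenvalue computation on the pieces $R^{2j}\cH_{k-2-2j}$, so it belongs inside the induction on $k$ rather than before it. With those adjustments the argument is complete; it is a genuine proof where the paper offers only a reference.
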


\begin{proposition}
If $m-2n \not\in -2\mN$ and $n\not=0$, then $\cH_k(\mR^{m|2n})$ is an irreducible $\mathfrak{osp}(m|2n)$-module. If $n=0$, then $\cH_k(\mR^{m})$ is an irreducible $\mathfrak{so}(m)$-module if $m>2$, while $\cH_k(\mR^{2})$ decomposes as $ \mC z^k  \oplus \mC \bar{z}^k,$ where $z=x+\imath y,$ $\bar{z} = x- \imath y$, $(x,y)\in \mR^2.$
\end{proposition}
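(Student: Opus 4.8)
The plan is to prove irreducibility of $\cH_k(\mR^{m|2n})$ as an $\mathfrak{osp}(m|2n)$-module (in the case $n\neq0$) by a highest-weight / lowest-weight type argument combined with the Fischer decomposition, and then to treat the classical $n=0$ case separately, where the statement for $m>2$ is the well-known irreducibility of spherical harmonics for $\mathfrak{so}(m)$ and the $m=2$ case is an explicit computation. For the super case, I would first observe that $\cH_k(\mR^{m|2n})$ is finite-dimensional and carries an action of $\mathfrak{osp}(m|2n)$ because, by Lemma~\ref{Lemma relations sl(2)}, the operators $L_{ij}$ commute with $\Delta$ and preserve the degree, hence preserve $\cH_k$. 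Since $\mathfrak{osp}(m|2n)$ contains the reductive even part $\mathfrak{so}(m)\oplus\mathfrak{sp}(2n)$, I can restrict a hypothetical nonzero proper submodule $N\subset\cH_k(\mR^{m|2n})$ to this even subalgebra; by complete reducibility over the even part, $N$ contains a nonzero even-highest-weight vector for a Borel of $\mathfrak{so}(m)\oplus\mathfrak{sp}(2n)$.

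Next I would pin down the (finitely many) candidate highest-weight vectors. Choosing coordinates adapted to $\beta$ (say isotropic coordinates $z_1,\ldots$ for the even orthogonal part, symplectic pairs for the odd part), the classical fact is that the spherical harmonics that are highest-weight for $\mathfrak{so}(m)$ are spanned by powers of a single isotropic linear form. In the super setting the analogous statement — to be checked by a short direct computation with the operators $L_{ij}$ and $\Delta$ — is that $\cH_k(\mR^{m|2n})$, viewed as a module over the even part, has a very small set of highest-weight lines, essentially generated by monomials $z^{k}$, $z^{k-1}\theta$, $z^{k-2}\theta\theta'$, etc., combining a power of an even isotropic variable with odd variables; equivalently the full module is generated over $\mathfrak{osp}(m|2n)$ by the single vector $z_1^k$ (the classical harmonic highest-weight vector). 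The key point is then: starting from any of these even-highest-weight vectors inside the submodule $N$, apply the odd root vectors of $\mathfrak{osp}(m|2n)$ (which mix even and odd variables) to move between these lines, and apply the even part to fill out each $\mathfrak{so}(m)\oplus\mathfrak{sp}(2n)$-isotypic component; one shows this process reaches $z_1^k$ and then, by acting again, generates all of $\cH_k$. Concretely one checks $\cU(\mathfrak{osp}(m|2n))\cdot z_1^k = \cH_k(\mR^{m|2n})$ and that $z_1^k\in N$ for any nonzero submodule $N$, giving $N=\cH_k$.

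Alternatively, and perhaps more cleanly, I would use the $\mathfrak{sl}(2)$-triple $(R^2/2,\ \mE+M/2,\ -\Delta/2)$ from Lemma~\ref{Lemma relations sl(2)} together with the Fischer decomposition: since $\cP(\mR^{m|2n})=\bigoplus_{k,j}R^{2j}\cH_k(\mR^{m|2n})$ is multiplicity-free as an $\mathfrak{osp}(m|2n)$-module in an appropriate sense (the $R^{2j}\cH_k$ are the $\mathfrak{osp}\times\mathfrak{sl}(2)$-isotypic pieces, a super Howe duality statement), and $\cP(\mR^{m|2n})$ as a whole is known to have a transparent submodule structure, the irreducibility of each harmonic space can be deduced. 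This route requires the condition $m-2n\notin-2\mN$ precisely so that the Fischer decomposition and the Howe duality hold without degeneracies — which matches the hypothesis of the proposition. I expect the main obstacle to be the bookkeeping in the super highest-weight analysis: carefully enumerating the highest-weight vectors for $\mathfrak{so}(m)\oplus\mathfrak{sp}(2n)$ inside $\cH_k(\mR^{m|2n})$ and verifying that the odd root vectors connect all of them, since the odd part of $\mathfrak{osp}(m|2n)$ acts in a way that is genuinely new compared to the purely even case; one has to ensure no invariant subspace can be isolated among the finitely many even-isotypic components. The $n=0$, $m=2$ degeneration is easy: there $\Delta=\partial_z\partial_{\bar z}$ (up to scalar), so $\cH_k(\mR^2)=\mC z^k\oplus\mC\bar z^k$, and $\mathfrak{so}(2)$ acts on each line by a character, giving the stated decomposition; for $m>2$ one cites the classical result.
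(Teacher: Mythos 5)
The paper does not actually prove this proposition: it is quoted from the literature, namely \cite[Theorem 5.2]{Coulembier} for $n\neq 0$ and \cite[Introduction, Theorem 3.1]{Helgason} for $n=0$. Your treatment of the classical cases (citation for $m>2$, explicit factorisation $\Delta\propto\partial_z\partial_{\bar z}$ for $m=2$) is fine and matches what the paper relies on. Your sketch for the super case also has the right overall shape --- decompose $\cH_k(\mR^{m|2n})$ under the reductive even part $\mathfrak{so}(m)\oplus\mathfrak{sp}(2n)$ and then show the odd root vectors connect all the isotypic pieces --- and this is essentially the strategy of the cited reference. However, as written there is a genuine gap: the two load-bearing steps are precisely the ones you defer. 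First, the enumeration of the even-isotypic components of $\cH_k(\mR^{m|2n})$ is not ``a short direct computation''; it requires a Fischer-type decomposition of $\cH_k$ into pieces built from $\cH_i(\mR^m)\otimes\cH_j(\mR^{0|2n})$ multiplied by suitable polynomials in $s^2$ and $\theta^2$, and these pieces occur with multiplicities, so ``finitely many highest-weight lines'' needs real bookkeeping. Second, and more importantly, the verification that the odd part connects all these pieces is exactly where the hypothesis $m-2n\notin-2\mN$ must enter: the connecting coefficients carry factors of the form $m-2n+2\ell$, and when these vanish the module becomes reducible (but indecomposable). Your highest-weight route never identifies where this hypothesis is used, which signals that the decisive computation has not been engaged with. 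A related untreated point is $m=2$, $n\neq 0$: there the $\mathfrak{so}(2)$-analysis separates $z^k$ from $\bar z^k$ exactly as in the reducible classical case, so irreducibility hinges entirely on the odd generators linking these two lines --- this must be checked, not inferred from the classical picture.

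Your proposed alternative route via Howe duality is circular as stated. The Fischer decomposition quoted in the paper is only a vector-space direct sum $\cP(\mR^{m|2n})=\bigoplus_{k,j}R^{2j}\cH_k(\mR^{m|2n})$; upgrading it to the statement that the summands are the irreducible $\mathfrak{osp}(m|2n)\times\mathfrak{sl}(2)$-isotypic components is logically equivalent to the irreducibility of $\cH_k$ that you are trying to prove, so it cannot be used as input. If you want a self-contained proof rather than a citation, the first route is the viable one, but the isotypic decomposition and the non-vanishing of the connecting coefficients under $m-2n\notin-2\mN$ must be carried out explicitly.
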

\begin{proof}
This is \cite[Theorem 5.2]{Coulembier} for the case $n\not= 0$ and \cite[Introduction, Theorem 3.1]{Helgason} for the classical case.
\end{proof}
The dimension of the spherical harmonics of degree $k$ is given in \cite[Corollary 1]{DeBie}.
\begin{proposition} \label{Prop: dimension of spherical harmonics}
The dimension of $\cH_k(\mR^{m|2n})$, for $m\not =0$, is given by
\begin{align*}
\dim \cH_k(\mR^{m|2n}) &= \sum_{i=0}^{\min (k,2n)} \begin{pmatrix}
 2n \\ i \end{pmatrix} \begin{pmatrix}
 k-i+m-1 \\ m-1 
 \end{pmatrix}
  -\sum_{i=0}^{\min (k-2,2n)} \begin{pmatrix}
 2n \\ i \end{pmatrix} \begin{pmatrix}
 k-i+m-3 \\ m-1 
 \end{pmatrix}.
\end{align*}

\end{proposition}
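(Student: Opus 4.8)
The plan is to compute the dimension via the Fischer-type decomposition of homogeneous superpolynomials and an inclusion--exclusion argument on the Laplacian, exactly paralleling the classical proof for $\cH_k(\mR^m)$ but keeping track of the Grassmann variables. First I would record that $\cP(\mR^{m|2n}) = \cP(\mR^m)\otimes_\mC\Lambda^{2n}$, so a homogeneous superpolynomial of degree $k$ decomposes according to how many of the $2n$ odd generators it involves: $\cP_k(\mR^{m|2n}) = \bigoplus_{i=0}^{\min(k,2n)} \cP_{k-i}(\mR^m)\otimes \Lambda^{i}$, where $\Lambda^i$ is the degree-$i$ component of the Grassmann algebra, of dimension $\binom{2n}{i}$. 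Hence $\dim\cP_k(\mR^{m|2n}) = \sum_{i=0}^{\min(k,2n)}\binom{2n}{i}\binom{k-i+m-1}{m-1}$, using the standard formula $\dim\cP_{k-i}(\mR^m) = \binom{k-i+m-1}{m-1}$.

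The core step is to identify $\cH_k(\mR^{m|2n})$ as the kernel of $\Delta\colon\cP_k(\mR^{m|2n})\to\cP_{k-2}(\mR^{m|2n})$ and to show this map is surjective when $m-2n\not\in-2\mN$. Surjectivity follows from the $\mathfrak{sl}(2)$-triple structure of Lemma~\ref{Lemma relations sl(2)}: on $\cP_{k-2}(\mR^{m|2n})$ the operator $\Delta R^2$ acts, by the commutation relations, as $R^2\Delta + 4\mE + 2M = R^2\Delta + (4(k-2)+2M)$, and iterating (equivalently, invoking the Fischer decomposition directly, which already gives $\cP_{k-2} = \bigoplus_j R^{2j}\cH_{k-2-2j}$ and $\cP_k = \bigoplus_j R^{2j}\cH_{k-2j}$) shows that $\Delta$ restricted to $\bigoplus_{j\ge 1}R^{2j}\cH_{k-2j}$ is a bijection onto $\cP_{k-2}$ under the hypothesis $m-2n\not\in-2\mN$, which guarantees the relevant scalars $4\ell+2M$ never vanish. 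Therefore
\[
\dim\cH_k(\mR^{m|2n}) = \dim\cP_k(\mR^{m|2n}) - \dim\cP_{k-2}(\mR^{m|2n}),
\]
and substituting the two closed formulas from the previous paragraph (with the convention that the $\cP_{k-2}$ term is $0$, equivalently an empty sum, when $k<2$) yields precisely the claimed expression. Alternatively, one can avoid re-deriving surjectivity by simply quoting the Fischer decomposition stated just above as a Proposition: it gives $\cP_k = \cH_k \oplus R^2\cP_{k-2}$ as graded vector spaces (since $R^2$ is injective on polynomials), hence the same dimension count.

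The only genuine subtlety — and the step I would be most careful about — is the role of the hypothesis $m-2n\not\in-2\mN$ and the edge cases in the binomial coefficients. One must check that the multiplication map $R^2\colon\cP_{k-2}(\mR^{m|2n})\to\cP_k(\mR^{m|2n})$ is injective (clear, as multiplication by a nonzero element in an integral-domain-like situation — here $R^2$ is not a zero divisor on $\cP(\mR^{m|2n})$ precisely because $m\ne 0$, so $\beta$ restricted to the even part is a nonzero quadratic form), and that the Fischer decomposition genuinely holds, which is where $m-2n\not\in-2\mN$ enters and is why we may cite \cite[Theorem 3]{DeBie}. Finally I would verify the boundary conventions: when $k\le 1$ the second sum is empty and one recovers $\dim\cH_0 = 1$ and $\dim\cH_1 = m+2n$ (the latter matching $\dim\cP_1(\mR^{m|2n})$), and when $2n=0$ the formula collapses to the classical $\binom{k+m-1}{m-1}-\binom{k+m-3}{m-1}$, both of which serve as consistency checks. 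The hardest conceptual point is thus not the counting but ensuring the decomposition is valid in the super setting; once \cite[Theorem 3]{DeBie} is invoked, the rest is the bookkeeping above.
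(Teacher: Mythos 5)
The paper offers no proof of this proposition at all --- it is quoted directly from \cite[Corollary 1]{DeBie} --- so there is nothing to compare against line by line; your argument via $\dim\cH_k=\dim\cP_k-\dim\cP_{k-2}$ and the count $\dim\cP_k(\mR^{m|2n})=\sum_{i=0}^{\min(k,2n)}\binom{2n}{i}\binom{k-i+m-1}{m-1}$ is the natural one, and the bookkeeping (including the injectivity of multiplication by $R^2$, which you justify correctly via the nonvanishing of the bosonic part of the quadratic form for $m\neq 0$) is right.

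The one genuine issue is a mismatch of hypotheses. The proposition asserts the formula for all $m\neq 0$, but both of your routes to the key identity $\dim\cH_k=\dim\cP_k-\dim\cP_{k-2}$ --- the $\mathfrak{sl}(2)$-scalar computation and the appeal to the Fischer decomposition --- require $m-2n\notin-2\mN$, and you say so yourself. So as written your proof does not cover the exceptional superdimensions (e.g.\ $m=2$, $n\geq 1$), which the statement includes. The gap is real but easily closed: surjectivity of $\Delta\colon\cP_k(\mR^{m|2n})\to\cP_{k-2}(\mR^{m|2n})$ holds for every $m\geq 1$ without any condition on $M$. Write $\Delta=\Delta_b+\Delta_f$ with $\Delta_b$ the purely bosonic Laplacian (which preserves the odd degree and is surjective $\cP_j(\mR^m)\to\cP_{j-2}(\mR^m)$ classically for $m\geq 1$) and $\Delta_f$ the fermionic part (which lowers the odd degree by $2$). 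Given $g\in\cP_{k-2}$, solve $\Delta_b f=g$ on the top odd-degree component first; the error term $\Delta_f f$ has strictly smaller odd degree, so downward induction on the number of odd variables produces a preimage. With that replacement the rest of your argument goes through verbatim for all $m\neq 0$, and the Fischer decomposition is not needed at all.
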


\subsection{Jordan superalgebras}
A Jordan superalgebra is a supercommutative superalgebra $J$ satisfying the  Jordan identity. This means $J=J_{\oa}\oplus J_{\ob}$ and
\begin{itemize}
\item $J_iJ_j \subset J_{i+j} \text{ for } i,j \in \mZ_2 $
\item $xy = (-1)^{\abs{x}\abs{y}} yx $
\item $(-1)^{\abs{x}\abs{z}}[L_x,L_{yz}]+(-1)^{\abs{y}\abs{x}}[L_y,L_{zx}]+(-1)^{\abs{z}\abs{y}}[L_z,L_{xy}]=0$, where the operator $L_x$ is (left) multiplication with $x$. \end{itemize}
Note that $[\cdot, \cdot]$ is the supercommutator, i.e. $[L_x,L_y]:=L_x L_y- (-1)^{\abs{x}\abs{y}} L_yL_x$.

The Tits--Kantor--Koecher construction associates with each
 Jordan superalgebra  a $3$-graded Lie superalgebra. Different TKK-constructions exist in the literature, which, in general, can lead to different Lie superalgebras. We will quickly review the Koecher construction for a unital Jordan superalgebra.  See~\cite{BC2} for an overview of the different TKK constructions appearing in the literature.
 
Denote by $\Inn(J)$ the subalgebra of $\mathfrak{gl}(J)$ of inner derivations, i.e. the algebra generated by the operators $[L_u,L_v]$, $u,v \in J$. 
We then define the inner structure algebra as the  following subalgebra of  $\mathfrak{gl}(J)$
\begin{align*}
\istr(J)&:=\{ L_u   \mid u \in J \} +\Inn(J) =\text{span}_{\mR}\{ L_u, [L_u,L_v] \mid u,v \in J \}.
\end{align*}
The last equality follows from the following property,  \cite[Section 1.2]{Kac},
\begin{align*}
[[L_x,L_y],L_z]= L_{x(yz)} -(-1)^{\abs{x}\abs{y}} L_{y(xz)}.
\end{align*}
Let $J^{\plus}$ and $J^{\minus}$ be two copies of $J$. 
Set \[\TKK(J):= J^{\plus} \oplus \istr(J) \oplus J^{\minus}.\]
The inner structure algebra is a subalgebra of $\TKK(J)$ and the other brackets on $\TKK(J)$ are defined as 
\begin{align*}
[x,u]&=2L_{xu}+2[L_x,L_u], &\qquad [x,y]&=[u,v]=0, \\
[L_{a},x]&=ax,  &\qquad  [L_{a},u] &= -au, \\
 [[L_{a},L_{b}], x]&=[L_{a},L_{b}] x, &\qquad [[L_{a},L_{b}], u]&=[L_{a},L_{b}]u,
\end{align*}
for homogeneous $x,y$ in $J^{\plus}$, $u,v$ in $J^{\minus}$, $a,b\in J$ and extended linearly and anti-commutatively.
\subsection{Lie supergroups and their actions}\label{Section Lie supergroups}
A Lie supergroup $G$ is a group object in the category of smooth supermanifolds, i.e.\ there exist morphisms $\mu\colon G \times G \to G$, $i\colon G \to G$, $e\colon \mR^{0\mid 0} \to G$, called the multiplication, inverse and unit which satisfy the standard group properties. We again refer to Appendix \ref{supermanifolds} for definitions of supermanifolds and morphisms between supermanifolds.

 Alternatively, we can also characterise Lie supergroups in the following manner:
\begin{definition}\label{Def Lie supergroup}
A Lie supergroup $G$ is a pair $(G_0, \mg)$ together with a morphism $\sigma \colon G_0 \to \End (\mg)$, where $G_0$ is a Lie group and $\mg$ is a Lie superalgebra for which
\begin{itemize}
\item $\Lie(G_0)$ is isomorphic to $\mg_{\oa}$, the even part of the Lie superalgebra $\mg$.
\item The morphism $\sigma$ satisfies $\sigma(g)_{\mid \mg_{\oa}} = \Ad(g) $ and $\diff \sigma (X) Y = [X,Y]$ for all $g \in G_0$, $X \in\mg_{\oa}$ and $Y \in \mg$. Here $\Ad$ is the adjoint representation of $G_0$ on $\Lie (G_0) \cong \mg_{\oa}$.
\end{itemize}
\end{definition}
See~\cite[Chapter 7]{CCF} for more details and the connection between those two approaches. 

By a closed Lie subgroup $H$ of a Lie supergroup $G$ we mean a closed embedded submanifold of $G$ that is also a subgroup. In the previous sentence we used submanifold instead of subsupermanifold and subgroup instead of subsupergroup. From now we will often omit the prefix super if it is clear from the context. 

A (left) action of a Lie supergroup on a supermanifold is a morphism $ a \colon G \times M \to M$ such that
\begin{itemize}
 \item  $a \circ (\mu \times \id_M) = a \circ (\id_G \times a) $
 \item $a \circ (e \times \id_M) \cong \id_M$, using $\mR^{0\mid0} \times M\cong M$.
\end{itemize}
For every even point $p$ of a supermanifold $M$ we have a morphism $p_{\mR^{0|0}}\colon \mR^{0|0} \to M$ where $\abs{p_{\mR^{0|0}}}$ maps to $p$ and $p_{\mR^{0|0}}^\sharp$ is evaluation at $p$. Then we define $a_p\colon G \to M$ for $p \in \abs{M}$ and $a_g \colon M \to M$ for $g \in \abs{G}$ by \[
a_p :=a \circ (\id_G \times p_{\mR^{0|0}}), \qquad a_g := a \circ (g_{\mR^{0|0}} \times \id_M ).
\]

Also for actions, we can use the equivalent approach with pairs.
\begin{definition}
An action $a$ of a Lie supergroup $G=(G_0,\mg)$ on a supermanifold $M$ is a pair $(\underline{a}, \rho_a)$ where
\begin{itemize}
\item $\underline{a}  \colon G_0 \times M \to M$ is an action of $G_0$ on $M$. 
\item ${\rho}_a \colon \mg \to {\rm Vec}_M$ is a Lie superalgebra anti morphism such that  \begin{align*}
{\rho_a}_{\mid \mg_{\oa}} (X) &= (X \otimes \id_{\mathcal{O}_M} ) \underline{a}^\sharp \qquad\;\, \text{ for all } X \in \mg_{\oa}, 
\\
\rho_a (\sigma(g)Y) &= \underline{a}_{g^{-1}}^\sharp \rho_a (Y)\underline{a}_{g}^\sharp \quad\qquad \text{ for all } Y \in \mg, g \in G_0.
\end{align*}
\end{itemize}
Here ${\rm Vec}_M$ is the Lie superalgebra of vector fields on $M$, and we silently use the isomorphism $\mg_{\oa}\cong T_e G_0$.
\end{definition}
See~\cite[Chapter 8]{CCF} for more details.

By the reduced action $\abs{a}$, we will mean the (ordinary) Lie group action $\abs{\underline{a}}$ of $\abs{G}= G_0$ on $\abs{M}$.
We have the two following propositions.
\begin{proposition}[{\cite[Proposition 8.4.7]{CCF}}]\label{Prop: stabiliser subgroup}
Let $G$ be a supergroup with an action $a$ on $M$ and let $p \in \abs{M}$.
Set \[ \widetilde{G_p}= \{ g \in G_0 \mid \abs{a}(g,p)=p \} \qquad \text{ and }\qquad \mg_p := \ker \diff a_p.\]  
Then $G_p=(\widetilde{G_p}, \mg_p)$ is a closed subgroup of $G=(G_0, \mg)$.
\end{proposition}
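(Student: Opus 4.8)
The statement is a super-analogue of the classical fact that point stabilizers of smooth group actions are closed embedded Lie subgroups, so the plan is to verify, in the pair picture of Definition~\ref{Def Lie supergroup}, the three things required of a closed Lie subgroup: $\widetilde{G_p}$ is a closed embedded Lie subgroup of $G_0$; $\mg_p$ is a graded Lie subalgebra of $\mg$ with $\Lie(\widetilde{G_p})\cong(\mg_p)_{\oa}$; and the structure morphism $\sigma$ restricts to a morphism $\widetilde{G_p}\to\End(\mg_p)$ compatible with $\Ad^{\widetilde{G_p}}$ and the bracket.

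The classical part is immediate: the reduced orbit map $\abs{a}_p\colon G_0\to\abs{M}$, $g\mapsto\abs{a}(g,p)$, is smooth and $\widetilde{G_p}=\abs{a}_p^{-1}(\{p\})$, so by the usual theory of Lie group actions $\widetilde{G_p}$ is a closed embedded Lie subgroup of $G_0$ with $\Lie(\widetilde{G_p})=\ker\big(\diff(\abs{a}_p)_e\colon\mg_{\oa}\to T_p\abs{M}\big)$. Next one identifies $\mg_p$ concretely: since $a_p=a\circ(\id_G\times p_{\mR^{0|0}})$ has body $\abs{a_p}=\abs{a}_p$, the even linear map $\diff a_p\colon\mg=T_eG\to T_pM$ has even component $\diff(\abs{a}_p)_e$, which already gives $(\mg_p)_{\oa}=\Lie(\widetilde{G_p})$. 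For the subalgebra property, the key point is to relate $\diff a_p$ to the induced vector fields $\rho_a$: differentiating $a_p=a\circ(\id_G\times p_{\mR^{0|0}})$ and using the two defining relations of $\rho_a$ (the even one ${\rho_a}_{\mid\mg_{\oa}}(X)=(X\otimes\id_{\cO_M})\underline{a}^\sharp$ together with its odd counterpart) shows $\diff a_p(X)=\pm(\rho_a(X))_p$, i.e. $\mg_p=\{X\in\mg\mid(\rho_a(X))_p=0\}$. Because $\rho_a$ is a Lie superalgebra anti-morphism and the superbracket of two vector fields vanishing at $p$ again vanishes at $p$ (in local coordinates $[V,W]$ has coefficients built from $V^i\partial_iW^j$ and $W^i\partial_iV^j$, all of which vanish at $p$ when $V_p=W_p=0$), the subspace $\mg_p$ is closed under the superbracket and graded.

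It remains to restrict $\sigma$. For $g\in\widetilde{G_p}$ the diffeomorphism $\underline{a}_g$ fixes $p$ on the body, and the equivariance relation $\rho_a(\sigma(g)Y)=\underline{a}_{g^{-1}}^\sharp\,\rho_a(Y)\,\underline{a}_g^\sharp$ exhibits $\rho_a(\sigma(g)Y)$ as the pushforward of $\rho_a(Y)$ along $\underline{a}_g$; pushing a vector field forward along a diffeomorphism fixing $p$ preserves the property of vanishing at $p$, so $\sigma(g)(\mg_p)\subseteq\mg_p$. Thus $\sigma$ restricts to $\sigma_p\colon\widetilde{G_p}\to\End(\mg_p)$, and the identities ${\sigma_p(g)}_{\mid(\mg_p)_{\oa}}=\Ad^{\widetilde{G_p}}(g)$ and $\diff\sigma_p(X)Y=[X,Y]$ follow from the corresponding identities for $\sigma$ together with the two facts established above. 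Hence $(\widetilde{G_p},\mg_p,\sigma_p)$ is a Lie supergroup, it is a sub-pair of $(G_0,\mg,\sigma)$ with $\widetilde{G_p}$ closed embedded in $G_0$, and therefore $G_p$ is a closed Lie subgroup of $G$. The main obstacle is the middle step — pinning down $\ker\diff a_p$ as the elements whose induced vector field vanishes at $p$ while keeping the super signs in the odd directions honest; once that identification is in place the rest reduces to elementary facts about vector fields vanishing at a point and the classical stabilizer theorem.
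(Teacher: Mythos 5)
The paper does not prove this statement itself; it is quoted directly from \cite[Proposition 8.4.7]{CCF}, so there is no in-paper argument to compare against. Your reconstruction is correct and follows the standard route (essentially the one in op.\ cit.): reduce the even part to the classical stabilizer theorem, identify $\ker \diff a_p$ with the set of $X\in\mg$ whose fundamental vector field $\rho_a(X)$ vanishes at $p$, and observe that this set is a graded subalgebra stable under $\sigma(g)$ for $g\in\widetilde{G_p}$, so that $(\widetilde{G_p},\mg_p)$ is a sub-pair and hence a closed subgroup.
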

\begin{proposition}[{\cite[Proposition 9.3.7]{CCF}}]\label{Prop: definition orbit}
Let $G$ be a Lie supergroup and $H$ a closed subgroup. There exists a  supermanifold $X= (\abs{G} / \abs{H} , \cO_X)$ and a morphism $\pi\colon G \to X$ such that 
\begin{itemize}
\item The reduction $\abs{\pi} \colon \abs{G} \to \abs{G} / \abs{H}$ is the natural map. 
\item The morphism $\pi$ is a submersion, i.e.\ for all $g \in \abs{G}$ the map $\diff \pi_g\colon T_gG \to T_{\pi(g)}X$ is surjective.
\item There is  an action $\beta\colon  G \times X \to X$, which reduces to the action of $\abs{G}$ on $\abs{X}$ such that $\pi \circ \mu= \beta \circ (\id_G \times \pi)$, where $\mu$ is the multiplication on $G$.
\end{itemize}
Moreover the pair $(X, \pi)$ satisfying these properties is unique up to isomorphism.
\end{proposition}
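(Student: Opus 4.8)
The plan is to construct the quotient explicitly and then read off the required properties. Write $\mg$ for the Lie superalgebra of $G$ and $\fh\subseteq\mg$ for that of $H$, and fix a $\mZ_2$-graded complement $\fm$, so that $\mg=\fh\oplus\fm$. On reduced spaces $\abs{\pi}\colon\abs{G}\to\abs{G}/\abs{H}=:\abs{X}$ is the usual smooth principal $\abs{H}$-bundle over the homogeneous manifold $\abs{X}$; I would take this as the reduced structure of $X$ and $\abs{\pi}$ as the natural projection. For the structure sheaf, set for open $U\subseteq\abs{X}$
\[
\cO_X(U):=\bigl\{f\in\cO_G(\abs{\pi}^{-1}(U))\;\big|\;r^\sharp f=\mathrm{pr}_G^\sharp f\bigr\},
\]
where $r\colon G\times H\to G$ is right multiplication (the restriction of $\mu$) and $\mathrm{pr}_G\colon G\times H\to G$ the first projection; equivalently, $f$ is invariant under the reduced right action of $H_0$ and is annihilated by $X^L$ for every $X\in\fh$, where $X^L$ is the left-invariant vector field with $X^L_e=X$. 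This defines a sheaf of superalgebras $\cO_X$ on $\abs{X}$, naturally a subsheaf of $\abs{\pi}_*\cO_G$, and I would set $\pi\colon G\to X:=(\abs{X},\cO_X)$ to be $\abs{\pi}$ together with the inclusion $\pi^\sharp\colon\cO_X\hookrightarrow\abs{\pi}_*\cO_G$.

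The substantive point is to show that $X$ is a supermanifold and $\abs{\pi}$ is locally trivial. Around $o:=\abs{\pi}(e)$, choose a small open $U\ni o$ and a classical local section $s_0\colon U\to G_0$ of the reduced bundle. Using the exponential morphism $\exp_G\colon\mg\to G$, thicken $s_0$ in the odd directions of $\fm$ to a morphism $\tilde s\colon V\to G$ from the superdomain $V:=U\times\mR^{0|\dim\fm_{\ob}}$ with $\abs{\tilde s}=s_0$ (informally, $(x,\xi)\mapsto s_0(x)\exp_G(\xi)$ with $\xi$ in the odd part of $\fm$), and put
\[
\Psi:=\mu\circ(\tilde s\times\iota_H)\colon V\times H\longrightarrow\abs{\pi}^{-1}(U).
\]
Then $\Psi$ is equivariant for the right $H$-actions, reduces to the diffeomorphism $(x,h)\mapsto s_0(x)h$, and has invertible differential at every reduced point --- in the even directions because the reduction is a diffeomorphism, in the odd directions because $\fm_{\ob}\oplus\fh_{\ob}=\mg_{\ob}$ --- so by the super inverse function theorem it is an isomorphism of supermanifolds. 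Transporting the defining condition of $\cO_X$ through $\Psi$, a section of $\cO_G$ over $\abs{\pi}^{-1}(U)$ corresponds to one over $V\times H$, and the right-$H$-invariant sections are precisely the ones pulled back from $V$ along the projection $V\times H\to V$. Hence $\cO_X|_U\cong\cO_V$, so $X$ is a supermanifold of dimension $\dim G-\dim H$; in the chart $V\times H$ the morphism $\pi$ becomes the projection onto $V$, which is a submersion, and $\abs{\pi}$ is the natural map.

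To produce the action $\beta$, note that $\pi$ annihilates right $H_0$-translations and the vector fields $X^L$, $X\in\fh$, so $\pi\circ\mu\colon G\times G\to X$ is invariant under the $H$-action on the second factor. Since products commute with this invariant-quotient construction (by the local computation above, applied fibrewise over the first factor), $\id_G\times\pi\colon G\times G\to G\times X$ realises $G\times X$ as the associated quotient, so $\pi\circ\mu$ factors uniquely as $\beta\circ(\id_G\times\pi)$ for a morphism $\beta\colon G\times X\to X$, whose reduction is the transitive action of $\abs{G}$ on $\abs{X}$. The action axioms $\beta\circ(\mu\times\id_X)=\beta\circ(\id_G\times\beta)$ and $\beta\circ(e\times\id_X)\cong\id_X$ follow from the corresponding identities for $\mu$ by pulling back along (products of) $\pi$ and using that $\pi^\sharp$ and its analogues on products are injective. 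For uniqueness, any other pair $(X',\pi')$ satisfying the three conditions has $\pi'$ right-$H$-invariant --- restrict $\pi'\circ\mu=\beta'\circ(\id_G\times\pi')$ to $G\times H$ --- and $\pi$ is initial among right-$H$-invariant morphisms out of $G$ (again by the local computation), so $\pi'$ factors through $\pi$; comparing reductions shows the resulting morphism $X\to X'$ is an isomorphism, and it intertwines $\beta$ with $\beta'$.

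The step I expect to be the main obstacle is the second paragraph: constructing the local super-section $\tilde s$ of $\abs{\pi}$ and verifying that $\Psi$ is an isomorphism, i.e.\ that the invariant subsheaf $\cO_X$ really is, locally, the structure sheaf of a superdomain of the expected dimension. Once this local triviality is in hand, the definition of $\pi$, its submersivity, the construction of the action $\beta$, and the uniqueness statement all follow by essentially formal arguments.
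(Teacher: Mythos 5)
The paper offers no proof of this statement --- it is quoted verbatim from \cite[Proposition 9.3.7]{CCF} --- so there is no in-paper argument to compare against. Your existence construction is essentially the one in that reference: define $\cO_X$ as the subsheaf of right-$H$-invariant sections of $\abs{\pi}_\ast\cO_G$, prove local triviality by thickening a classical local section in the odd directions of a complement $\fm$ and invoking the super inverse function theorem, identify the invariants over $V\times H$ with $\cO_V$, and obtain $\beta$ by factoring $\pi\circ\mu$ through $G\times X\cong (G\times G)/(\{e\}\times H)$. This part is sound; the only imprecision is in the odd-direction surjectivity of $\diff\Psi$ at a point $(x,h)$, where the relevant identity is $\Ad(h^{-1})\fm_{\ob}+\fh_{\ob}=\mg_{\ob}$ rather than $\fm_{\ob}\oplus\fh_{\ob}=\mg_{\ob}$; since $\Ad(h^{-1})$ preserves $\fh$, this is harmless.

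The uniqueness paragraph, however, has a genuine gap. Restricting $\pi'\circ\mu=\beta'\circ(\id_G\times\pi')$ to $G\times H$ gives $\pi'\circ\mu|_{G\times H}=\beta'\circ(\id_G\times\pi'|_H)$, which is right-$H$-invariance of $\pi'$ only if $\pi'|_H$ is the \emph{constant} morphism at $o=\pi'(e)$; the three bullet points only force the reduction of $\pi'|_H$ to be constant, not $\diff\pi'_e|_{\fh_{\ob}}=0$. Likewise, ``comparing reductions'' cannot show that the induced $\phi\colon X\to X'$ is an isomorphism, because reductions are blind to odd dimensions. In fact the uniqueness claim is false under the three conditions exactly as quoted: for $G=\mR^{0|1}$ and $H=\{e\}$, both $(G,\id_G,\mu)$ and $(\mR^{0|0},\,!\,,\text{trivial action})$ satisfy all three bullets, yet $\mR^{0|1}\not\cong\mR^{0|0}$. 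The missing ingredient is the normalization $\ker\diff\pi'_g=\diff L_g(\fh)$ (equivalently, that $\pi'|_H$ is constant and $\dim X'=\dim G-\dim H$), which must either be derived from a sharper formulation of the characterizing properties or added to them. Once it is available, your argument closes: $\pi'$ factors as $\phi\circ\pi$ by the initiality of $\pi$ among $H$-invariant morphisms, $\diff\phi$ is surjective because $\diff\pi'$ is, and the dimension count plus the inverse function theorem make $\phi$ an isomorphism intertwining $\beta$ and $\beta'$.
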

These two propositions allow us to define the orbit through an even point $p$.
\begin{definition} \label{Def: orbit}
Let $G$ be a Lie supergroup with an action on a supermanifold $M$. Let $p \in \abs{M}$. Let $G_p$ the closed subgroup defined in Proposition~\ref{Prop: stabiliser subgroup}. Then we define the orbit $C_p$ through the point $p$ as the manifold $X=(\abs{G}/\abs{G_p}, \cO_X)$ defined in Proposition~\ref{Prop: definition orbit}.
\end{definition} 

\section{The orthosymplectic Lie superalgebra and associate structures}\label{Section associate structures}
In this section, we introduce the algebras and groups we will use in the rest of the article. We start with the spin factor Jordan superalgebra. This is the Jordan superalgebra which is associated to the orthosymplectic Lie superalgebra via the TKK-construction. Then we calculate its structure and TKK algebra. Finally, we also define the structure group and conformal group as Lie supergroups who have the structure algebra and TKK algebra as underlying Lie superalgebras. 

\subsection{The spin factor} \label{Section definition spin factor}

We now define the real spin factor Jordan superalgebra associated with an orthosymplectic metric. 
Let $V$ be a real super-vector space with $\dim (V)= (p+q-3|2n)$ and a supersymmetric, non-degenerate, even, bilinear form $\langle \cdot, \cdot \rangle_{\tilde{\beta}}$ where the even part has signature $(p-1,q-2)$.  We will always assume that $p \geq 2$ and $q\geq 2$. We choose a homogeneous basis $(e_i)_i$ of $V$.
For $u= \sum_i u^ie_i$ and $v= \sum_i v^i e_i$ we then have
\[
\langle u, v\rangle_{\tilde{\beta}}  =\sum_{i,j} u^i {\tilde{\beta}}_{ij} v^j \quad\text{ with }  \quad {\tilde{\beta}}_{ij} :=\langle e_i, e_j \rangle_{\tilde{\beta}}.
\]
We have $\tilde{\beta}_{ij}= 0$ if $\abs{i}\not= \abs{j}$ since the form is even, while $\tilde{\beta}_{ij}=(-1)^{\abs{i}\abs{j}}\tilde{\beta}_{ji}$ because it is supersymmetric and $\det((\tilde{\beta}_{ij})_{ij}) \not=0$ since the form is non-degenerate.

We define the  spin factor Jordan superalgebra ${\mathcal{JS}pin}_{p-1,q-2|2n}$ as\[
J:= \mR e \oplus V \text{ with } \abs{e}=0. \]
The Jordan product is given by  \[ (\lambda e+ u ) (\mu e + v) = (\lambda\mu + \langle u, v\rangle_{\tilde{\beta}} ) e + \lambda v + \mu u \quad \text { for } u,v \in V, \;\lambda,\mu \in \mR.
\]
Thus $e$ is the unit of $J$.

We extend the homogeneous basis $(e_i)_{i=1}^{p+q-3+2n}$ of $V$ to a homogeneous basis $(e_i)_{i=0}^{p+q-3+2n}$ of $J$ by setting $e_0$ equal to the unit $e$.

Define $(\tilde{\beta}^{ij})_{ij}$ as the inverse of $(\tilde{\beta}_{ij})_{ij}$.
Let $(e^i)_i$ be the right dual basis of $(e_i)_i$ with respect to the form $\langle\cdot,\cdot \rangle$, i.e. \[\langle e_i, e^j \rangle = \delta_{ij} \qquad \text{with } \delta_{ij} \text{ the Kronecker delta}.\]
Then
\[
e^j =\sum_{i} e_i \tilde{\beta}^{ij}. 
\]
 
Consider $J^\ast=\mR e^\ast \oplus V^\ast$ the dual super-vector space of $J$ with right dual basis $(e^i)_i$. Define a bilinear form on $V^\ast$ by $\langle e^i, e^j \rangle :=\langle e^i, e^j \rangle_{{\tilde{\beta}}} = {{\tilde{\beta}}}^{ji}$. Then we can make also $J^\ast$ into a spin factor Jordan superalgebra with respect to this bilinear form.

\subsection{The structure and TKK algebra}
Consider the orthosymplectic metric $\tilde{\beta}$ used in Section \ref{Section definition spin factor}. We extend this form as follows.
Set $\beta_{00}=-1$, $\beta_{i0}= 0=\beta_{0i}$, $\beta_{ij}={\tilde{\beta}}_{ij}$ for $i,j \in \{ 1, .. , p+q-3+2n\}$. Then the corresponding form $\langle \cdot, \cdot \rangle_\beta$ is a supersymmetric, non-degenerate, even  bilinear form on the super-vector space $J$ where the even part has signature $(p-1,q-1)$. Consider the orthosymplectic Lie superalgebra $\mathfrak{osp}(J)$, i.e.\ the subalgebra of $\mathfrak{gl}(J)$ which leaves the form $\langle \cdot, \cdot \rangle_\beta$ invariant.
\begin{proposition} \label{Prop: inner structure algebra}
We have 
\[ \istr(J) = \mathfrak{osp}(J) \oplus \mR L_e, \]
where the direct sum decomposition is as algebras.
Furthermore \[ \TKK (J)= \mathfrak{osp}(p,q|2n). \] 
\end{proposition}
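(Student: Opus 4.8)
The plan is to prove the two assertions in turn. For the first, $\istr(J) = \mathfrak{osp}(J) \oplus \mR L_e$, I would argue as follows. Since $e$ is the unit of $J$, the operator $L_e$ is the identity on $J$, hence central in $\mathfrak{gl}(J)$ and in particular it commutes with everything; so the sum on the right-hand side is a direct sum of ideals once we know $\mathfrak{osp}(J)$ is a subalgebra complementary to $\mR L_e$ inside $\istr(J)$. The containment $\supseteq$ amounts to checking that for all $u, v \in V$ the operators $L_u - \tfrac{1}{2}(\text{trace part})$ and $[L_u, L_v]$ lie in $\mathfrak{osp}(J)$; concretely, using the explicit Jordan product $(\lambda e + u)(\mu e + v) = (\lambda\mu + \langle u, v\rangle_{\tilde\beta})e + \lambda v + \mu u$, one computes the matrix of $L_u$ for $u \in V$ in the basis $(e_i)_{i=0}^{p+q-3+2n}$ and verifies directly that $L_u - (\text{its }e\text{-component})$ is $\langle\cdot,\cdot\rangle_\beta$-skew in the graded sense, using that $\beta_{00} = -1$ while $\beta$ restricted to $V$ equals $\tilde\beta$ — the sign flip at the unit coordinate is exactly what converts the symmetric multiplication operator into a skew one. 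Since $\Inn(J)$ is generated by the $[L_u, L_v]$ and these are $\mR$-linear combinations of brackets of elements of the already-checked set, $\Inn(J) \subseteq \mathfrak{osp}(J)$ too, using the displayed identity $[[L_x,L_y],L_z] = L_{x(yz)} - (-1)^{|x||y|}L_{y(xz)}$ to stay inside the span.

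For the reverse containment $\istr(J) \subseteq \mathfrak{osp}(J) \oplus \mR L_e$ — equivalently $\dim \istr(J) = \dim\mathfrak{osp}(J) + 1$ — I would compare graded dimensions. One knows $\dim \mathfrak{osp}(m|2n,\beta)$ with $m = p+q-2$ (since $J$ has even dimension $p+q-2$ and odd dimension $2n$): its even part has dimension $\binom{m}{2} + \binom{2n+1}{2}$ and its odd part $m \cdot 2n$. On the other hand $\istr(J) = \{L_u : u \in J\} + \Inn(J)$, and since $L_e = \id$ one can split off $\mR L_e$ and count $\{L_u : u \in V\} \oplus \Inn(J)$; a standard fact about spin factors (the Jordan superalgebra analogue of the classical spin factor computation) identifies $\Inn(J)$ with $\mathfrak{osp}(V) = \mathfrak{osp}(p-1,q-2|2n)$ acting on $V$ and extended by zero on $\mR e$, and $\{L_u : u \in V\}$ contributes a complementary $(p+q-3|2n)$-dimensional piece; summing, $\dim\{L_u : u\in V\} + \dim\Inn(J) = \dim \mathfrak{osp}(J)$, which gives the equality of dimensions and hence the first claim. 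Alternatively, and perhaps more cleanly, one cites the general principle that $\istr(J)$ for a unital Jordan (super)algebra is $L_e \oplus \mathfrak{str}_0(J)$ where $\mathfrak{str}_0(J)$ is the derivation-plus-traceless-multiplication part, together with the known identification of the reduced structure algebra of a spin factor with the corresponding orthogonal(-symplectic) algebra; this is essentially \cite{BC2}.

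For the second assertion, $\TKK(J) = \mathfrak{osp}(p,q|2n)$, I would use the explicit Koecher construction recalled in the excerpt: $\TKK(J) = J^{+} \oplus \istr(J) \oplus J^{-}$ with the stated brackets. By the first part $\istr(J) = \mathfrak{osp}(J) \oplus \mR L_e$ where $\mathfrak{osp}(J) \cong \mathfrak{osp}(p-1,q-1|2n)$ (even signature $(p-1,q-1)$ on the $(p+q-2)$-dimensional even part). The dimension count then reads $\dim\TKK(J) = 2\dim J + \dim\istr(J) = 2(p+q-2|2n) + \dim\mathfrak{osp}(p-1,q-1|2n) + (1|0)$, and one checks this matches $\dim\mathfrak{osp}(p,q|2n)$: the two copies $J^{\pm}$ together with the $L_e$-line assemble, under the $\mathfrak{sl}(2)$-grading coming from the $\mathfrak{sl}(2)$-triple generated by $e \in J^+$, $L_e$, $e \in J^-$, into two more "hyperbolic" even directions plus the pairing, bumping the signature from $(p-1,q-1)$ up to $(p,q)$ while leaving the odd dimension $2n$ unchanged. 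Concretely I would exhibit the isomorphism by writing $\mathbb{R}^{p|q} \oplus \mathbb{R}^{q-1}$... more precisely, realize $J \oplus \mathbb{R}^2$ (two hyperbolic even coordinates $e_-, e_+$ adjoined) as the standard module of $\mathfrak{osp}(p,q|2n)$ with $e^+ \leftrightarrow$ highest $\mathfrak{sl}(2)$-weight line, $J^-\leftrightarrow$ lowest, $\istr(J)\leftrightarrow$ the weight-zero part (a Levi $\mathfrak{osp}(p-1,q-1|2n)\oplus\mathfrak{so}(1,1)$), and check the Koecher brackets agree with the $\mathfrak{osp}$ brackets on this three-grading. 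The cleanest route, though, is simply to invoke \cite{BC2}, where exactly this identification — all TKK constructions for the spin factor $\mathcal{JS}pin_{p-1,q-2|2n}$ coincide and equal $\mathfrak{osp}(p,q|2n)$ — is proved. The main obstacle is purely bookkeeping: getting the signature arithmetic right (tracking how the $\beta_{00}=-1$ normalization and the two adjoined hyperbolic directions turn $(p-1,q-2)$ into $(p,q)$) and verifying the Koecher bracket relations are precisely the $\mathfrak{osp}$ relations under the grading; there is no conceptual difficulty once the explicit bases are fixed.
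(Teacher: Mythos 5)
Your proposal is correct, but it closes the argument differently from the paper, and the comparison is worth recording. The first step — checking directly that $L_u$ for $u\in V$ and $[L_u,L_v]$ are skew for $\langle\cdot,\cdot\rangle_\beta$, with the sign $\beta_{00}=-1$ doing the work — is exactly the paper's computation, so the inclusion $\istr(J)\subseteq \mathfrak{osp}(J)\oplus\mR L_e$ is obtained the same way (your labelling of the two containments is inconsistent, and the ``trace part'' you subtract from $L_u$ is vacuous for $u\in V$, but the mathematics is right). Where you diverge is in upgrading the inclusion to an equality: the paper complexifies and quotes $\istr(J_\mC)=\mathfrak{osp}_\mC(J)\oplus\mC$ from \cite{BC2}, whereas you do a real dimension count based on the identification $\Inn(J)\cong\mathfrak{osp}(V)=\mathfrak{osp}(p-1,q-2|2n)$ (acting by zero on $\mR e$) together with the $(p+q-3|2n)$-dimensional complement $\{L_u : u\in V\}$; this is a correct and more self-contained argument, at the price of having to justify that the $[L_u,L_v]$ really span all of $\mathfrak{osp}(V)$ and that the sum is direct (both easy from the explicit product). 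For the second assertion the paper again complexifies, reduces to the known case $n=0$, and invokes Parker's uniqueness of the real form of $\mathfrak{osp}_\mC(p+q|2n)$ containing $\mathfrak{so}(p,q)$; you instead propose matching the three-grading of $\TKK(J)$ with the parabolic grading of $\mathfrak{osp}(p,q|2n)$ via two adjoined hyperbolic directions and verifying the brackets on explicit bases. That route is legitimate — the paper itself writes down precisely this explicit isomorphism immediately after the proof and remarks that it ``yields another approach'' — but as written your version defers the actual bracket verification to ``bookkeeping''; a pure dimension count does not by itself give an isomorphism of Lie superalgebras, so to be complete you must either carry out that verification or fall back on the real-form uniqueness argument the paper uses.
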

\begin{proof} From \cite[Section 6.1]{BC2}, it follows that for the complexified Jordan superalgebra $J_\mC$ we have
\[
\istr(J_{\mC}) = \mathfrak{osp}_{\mC}(J) \oplus \mC  \text{ and } \TKK (J_{\mC})= \mathfrak{osp}_{\mC}(p+q|2n).
\]  
For $n=0$ we find
\[
 \TKK (J)= \mathfrak{so}(p,q),
\]
see for example \cite[Section 2.5]{KM2}.
One can check that the even part of $\TKK(J)$ still contains a component $\mathfrak{so}(p,q)$ if $n>0$. For $p+q-2>0$, there is a unique real form of $\mathfrak{osp}_\mC (p+q|2n)$ with contains the component $\mathfrak{so}(p,q)$, \cite[Theorem 2.5]{Parker}. So we can conclude
 \[ \TKK (J)= \mathfrak{osp}(p,q|2n). \] 
The inner structure algebra is spanned by the operators $L_{e_i}$, $[L_{e_i},L_{e_j}]$ for $i,j >0$ and $L_e$. Observe that $L_e$ is in the centre of $\istr(J)$ since $e$ is the unit. 
We defined $\langle\cdot,\cdot \rangle_\beta$ such that \[
\langle e_i e_j, e_k \rangle_{\beta}=0, \quad \langle e_i e_0, e_j \rangle_\beta ={\tilde{\beta}}_{ij},\quad \text{ and } \langle e_0, e_0 \rangle_\beta=-1,
\]
for $i,j,k >0$. Using this, one can show that the operators $X=L_{e_i}$ or  $X=[L_{e_i},L_{e_j}]$ for $i>0$ satisfy
\[
\langle X (u), v \rangle_\beta + (-1)^{\abs{X}\abs{u}} \langle u, X(v) \rangle_\beta=0.
\]
Hence they form  a subspace of $\mathfrak{osp}(J)$ and we obtain \[ \istr(J) \subset \mathfrak{osp}(J) \oplus \mR L_e.\]  Since $\istr(J_{\mC}) = \mathfrak{osp}_{\mC}(J)\oplus \mC$ we conclude that this inclusion is actually an equality.  
\end{proof}

Using the bilinear form
\[
\overline{\beta} = \begin{pmatrix}
1 & &  &\\ & \beta_{sym} & & \\ & & -1 & \\ & &  & \beta_{asym}
\end{pmatrix},
\]
we saw in Section \ref{Section another realisation} that we have a realisation of  $\mathfrak{osp}(p,q|2n)$ using differential operators 
\[
L_{i,j} = \x_i \partial_j -(-1)^{\abs{i}\abs{j}} \x_j \partial_i.
\]
An explicit isomorphism of $\TKK(J)$ with this realisation of $\mathfrak{osp}(p,q|2n)$ is given by
\begin{align*}
e_i^+ &\mapsto L_{\tilde{i},(p+q-1)} -L_{\tilde{i},0} & e_0^+ &\mapsto -L_{(p+q-2),(p+q-1)} -L_{(p+q-2),0} \\
L_{e_i} &\mapsto L_{\tilde{i},(p+q-2)} &
L_{e_0} &\mapsto L_{0,(p+q-1)} \\
[L_{e_i} ,L_{e_j}] &\mapsto L_{\tilde{i},\tilde{j}} \\
e_i^- &\mapsto L_{\tilde{i},(p+q-1)} +L_{\tilde{i},0} &
e_0^+ &\mapsto L_{(p+q-2),(p+q-1)} +L_{(p+q-2),0}.
\end{align*}
Here $\tilde{i}=i $ if $\abs{i}=0$ and $\tilde{i}=i+1$ if $\abs{i}=1$. 
This yields another approach to show that $\TKK(J)\cong \mathfrak{osp}(p,q|2n)$.
\subsection{The structure group} \label{section structure group}

Define \begin{align*}
 O(p-1,q-1) &= \{ X \in  \mR^{(p+q-2)\times (p+q-2)}\mid X^t \beta^s X =\beta^s \} \\
Sp(2n, \mR)& = \{ X \in \mR^{(2n)\times (2n)}\mid X^t \beta^a X =\beta^a\},
 \end{align*}
 where $\beta^s$, $\beta^a$ are the matrices formed by the symmetric part and the anti-symmetric part of the bilinear form of $J$. 

Set \[ Str(J)_0:= \mR^+\times O(p-1,q-1) \times Sp(2n, \mR) \text{   } \]
and recall by Proposition~\ref{Prop: inner structure algebra}
\[  \istr(J)=\mathfrak{osp}(J) \oplus \mR L_e. \]
We embed $Str(J)_0$ in $\mR^{(p+q-2+2n) \times (p+q-2+2n)}$ by associating to the triple $(\nu, k, h) \in Str(J)_0$ the matrix $\nu\begin{pmatrix}
 k &  \\ 
 &  h
\end{pmatrix}$ with $\nu \in \mR^+$, $k \in \mR^{(p+q-2)\times (p+q-2)}$ and $h \in  \mR^{(2n)\times (2n)}$. We will also interpret $X \in\mathfrak{osp}(J)$ as an $(p+q-2+2n) \times (p+q-2+2n)$ matrix.

For $\nu \in \mR^+,$ $k \in O(p-1,q-1)$ and $h\in  Sp(2n, \mR)$, define $\sigma(\nu,k,h) \in \End(\istr(J))$  
\begin{align*}
 \sigma(\nu ,k,h) L_e &= L_e \quad \text{ and }\quad 
  \sigma(\nu ,k,h) X := \begin{pmatrix}
 k &  \\ 
 &   h
\end{pmatrix}X \begin{pmatrix}
  k^{-1} &  \\ 
 &  h^{-1}
\end{pmatrix}
\text{ for }X \in \mathfrak{osp}(J).
\end{align*}

Then $Str(J)=(Str(J)_0, \istr(J), \sigma)$ defines a Lie supergroup, in the sense of Definition~\ref{Def Lie supergroup}. We call $Str(J)$ \emph{the structure group}. 

Next, we define an action of $Str(J)$ on $\mA(J^\ast)$, the affine superspace associated to the dual super-vector space of $J$. Let $\x_i$ be the coordinate functions
 on $J^\ast$. For $x=\sum_i x_ie^i \in J^\ast$, we then have $\x_j(x) = x_j$. By the global chart theorem, \cite[Theorem 4.2.5]{CCF}, a morphism $\phi$ from a supermanifold $M$ to an affine superspace is determined by the pullbacks of the coordinate functions. So we can define $\underline{a}\colon Str(J)_0 \times \mA(J^\ast) \to \mA(J^\ast)$ by
\[
\underline{a}^\sharp (\x_i) =  g^{-1} \x_i = \begin{pmatrix}
(\nu  k)^{-1} &  \\ 
 & (\nu  h)^{-1}
\end{pmatrix} \begin{pmatrix}
\x_i 
\end{pmatrix} \in  \cO_{Str(J)_0} \hat{\otimes}  \cO_{\mA(J^\ast)},
\]
where $(\nu,k,h)=g=(g_{ij})_{1\leq i,j \leq p+q-2+2n} \in Str(J)_0 \subset \mR^{(p+q-2)\times (p+q-2)}$. We interpret the $g_{ij}$  as coordinate functions on $\mR^{(p+q-2)\times (p+q-2)}$ and then restrict them to functions in $\cO_{Str(J)_0}$.

 Set 
\begin{align*} 
\rho_a \colon \istr(J) \to {\rm Vec}_{\mA(J^\ast)} ;\quad  \rho_a (L_{ij})&= - (\x_i \partial_j - (-1)^{\abs{i}\abs{j}} \x_j \partial_i) \text{ for }L_{ij} \in \mathfrak{osp}(J), \\
\rho_a (L_e)  &= -\mE.
\end{align*}

\begin{proposition}
The pair $(\underline{a}, \rho_a)$ defines an action of the Lie supergroup $(Str(J)_0, \istr(J))$ on $\mA(J^\ast)$.
\end{proposition}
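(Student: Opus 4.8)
The plan is to verify the two conditions in the definition of an action given by a pair $(\underline{a},\rho_a)$, namely that $\underline{a}$ is an ordinary $Str(J)_0$-action on $\mA(J^\ast)$, and that $\rho_a\colon\istr(J)\to{\rm Vec}_{\mA(J^\ast)}$ is a Lie superalgebra anti-homomorphism compatible with $\underline{a}$ via the two displayed intertwining relations ${\rho_a}_{\mid\istr(J)_{\oa}}(X)=(X\otimes\id)\underline{a}^\sharp$ and $\rho_a(\sigma(g)Y)=\underline{a}_{g^{-1}}^\sharp\rho_a(Y)\underline{a}_g^\sharp$.

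First I would check that $\underline{a}$ is an action of the ordinary Lie group $Str(J)_0$. Since $\underline{a}^\sharp(\x_i)=g^{-1}\x_i$ is just the standard linear action of the subgroup $\mR^+\times O(p-1,q-1)\times Sp(2n,\mR)\subset GL(J^\ast)$ on the coordinates, the associativity and unit axioms follow from matrix multiplication; by the global chart theorem \cite[Theorem 4.2.5]{CCF} it suffices to verify these on the coordinate functions $\x_i$, which is immediate. Next I would check that $\rho_a$ is a Lie superalgebra anti-homomorphism: on the $\mathfrak{osp}(J)$ part this is exactly the realisation of $\mathfrak{osp}(V)$ by the operators $L_{ij}=\x_i\partial_j-(-1)^{\abs{i}\abs{j}}\x_j\partial_i$ discussed at the end of Section~\ref{Section another realisation} (the minus sign turns the homomorphism into an anti-homomorphism), while $L_e$ is central in $\istr(J)$ and $-\mE$ commutes with all $L_{ij}$ by Lemma~\ref{Lemma relations sl(2)}, so the bracket relations $[\rho_a(L_e),\rho_a(X)]=0=-\rho_a([L_e,X])$ hold as well.

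For the first intertwining relation, for $X\in\istr(J)_{\oa}$ I would compute $(X\otimes\id_{\cO_{\mA(J^\ast)}})\underline{a}^\sharp$ by differentiating the linear $G_0$-action at the identity; because $\underline{a}^\sharp(\x_i)=g^{-1}\x_i$, the derivative in the direction $X\in\mathfrak{osp}(J)\oplus\mR L_e$ produces the vector field $-X$ acting on coordinates, which matches $\rho_a(X)$ by construction (this is a standard computation relating the infinitesimal action to $-X$ on a linear representation). For the equivariance relation $\rho_a(\sigma(g)Y)=\underline{a}_{g^{-1}}^\sharp\rho_a(Y)\underline{a}_g^\sharp$, I would note that $\underline{a}_g^\sharp$ acts on the coordinate functions by $\x_i\mapsto g^{-1}\x_i$ and hence conjugates the first-order operator $L_{ij}$ (built from $\x$'s and $\partial$'s) exactly the way $\sigma(g)=\mathrm{Ad}(g)$ conjugates the corresponding matrix in $\mathfrak{osp}(J)$; for $Y=L_e$ both sides equal $-\mE$ since $\mE$ is $G_0$-invariant and $\sigma(g)L_e=L_e$.

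The main obstacle I anticipate is purely bookkeeping: keeping the sign conventions and the left-versus-right dual basis straight, so that the conjugation $\underline{a}_{g^{-1}}^\sharp(\,\cdot\,)\underline{a}_g^\sharp$ genuinely implements $\sigma(g)$ and not its inverse or transpose, and confirming that the grading (evenness of the relevant operators, since only even group elements are in play) causes no extra signs in the two intertwining identities. None of these steps is deep — the content is essentially that a linear representation of $G_0$ on $J^\ast$ differentiates to the negative of the defining action and that matrix conjugation corresponds to conjugation of the associated differential operators — but I would carry out the coordinate check carefully to make sure the formula $\rho_a(L_{ij})=-(\x_i\partial_j-(-1)^{\abs{i}\abs{j}}\x_j\partial_i)$ and the inverse-transpose appearing in $\underline{a}^\sharp$ are mutually consistent.
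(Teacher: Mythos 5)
Your proposal is correct and follows essentially the same route as the paper's own proof: the paper likewise dismisses the anti-morphism property of $\rho_a$ and the action axioms for $\underline{a}$ as immediate, and then verifies the two intertwining relations on the coordinate functions $\x_i$, using $X(g_{ij})=X_{ij}$, $X((g^{-1})_{ij})=-X_{ij}$ to get $(X\otimes\id)\underline{a}^\sharp(\x_i)=-X(\x_i)=\rho_a(X)(\x_i)$, and the conjugation identity $\underline{a}_{g^{-1}}^\sharp\rho_a(Y)\underline{a}_g^\sharp(\x_i)=-g(Y(g^{-1}\x_i))=-(\sigma(g)Y)\x_i$ for the equivariance. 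The only difference is that you spell out the routine checks (associativity of $\underline{a}$, the anti-homomorphism property via Lemma~\ref{Lemma relations sl(2)}) that the paper leaves implicit.
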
 
\begin{proof}
The map $\rho_a$ is clearly a Lie superalgebra anti-morphism from $\istr(J)$ to ${\rm Vec}_{\mA(J^\ast)}$. One can also check that $\underline{a}$ indeed defines an action of $Str(J)_0$ on $M$. So we only need to prove \[ {\rho_a}_{\mid \istr(J)_{\oa}} (X) = (X \otimes \id_{\mathcal{O}_M} ) \underline{a}^\sharp, \qquad  
\rho_a (\sigma(g)Y) = \underline{a}_{g^{-1}}^\sharp \rho_a (Y)\underline{a}_{g}^\sharp  \]
for $X \in \istr(J)_{\oa}$, $Y \in \istr(J)$, $g \in Str(J)_0.$
If we interpret $X \in \istr(J)_{\oa}$ as an element of $T_e Str(J)_0$, then it acts on the coordinate functions $g_{ij}$ as $X(g_{ij})= X_{ij}$. The map $g \mapsto g^{-1}$ corresponds on algebra level to $X \mapsto -X$, hence we also have $X({(g^{-1})}_{ij})=-X_{ij}.$ Thus
\begin{align*}
(X \otimes \id_{\mathcal{O}_M} ) \underline{a}^\sharp(\x_i) &= X \otimes \id_{\mathcal{O}_M} \begin{pmatrix}
(\nu  k)^{-1} &  \\ 
 & (\nu  h)^{-1}
\end{pmatrix} \begin{pmatrix}
\x_i 
\end{pmatrix}  = -X (\x_i)= \rho_a(X). 
\end{align*}
Furthermore $\underline{a}^\sharp_g(\x_i)= g^{-1} \x_i$. We find
\begin{align*}
\underline{a}_{g^{-1}}^\sharp \rho_a (Y)\underline{a}_{g}^\sharp (\x_i)= -g (Y(g^{-1}\x_i))= -(\sigma(g)Y) \x_i = \rho_a (\sigma(g)Y) \x_i.
\end{align*}
We conclude that the pair $(\underline{a}, \rho_a)$ is an action.
\end{proof}
\subsection{The conformal group}\label{Section conformal group}

We define the conformal group as follows.  Define for $k \in O(p,q)$ and $ h \in Sp(2n, \mR)$, $\sigma(k,h) \in \End(\mathfrak{osp} (p,q|2n))$ by
\[
  \sigma(k,h) X := \begin{pmatrix}
k &  \\ 
 &   h
\end{pmatrix}X \begin{pmatrix}
  k^{-1} &  \\ 
 &  h^{-1}
\end{pmatrix}
\text{ for }X \in \mathfrak{osp} (p,q|2n). \]
Then also $(O(p,q) \times Sp(2n, \mR), \mathfrak{osp} (p,q|2n),\sigma)$ is a Lie supergroup, which we call the conformal group and denote by $OSp(p,q|2n)$. 

 \subsection{A representation of $\mathfrak{osp}(p,q|2n)$} \label{Section: definition representation}
We will consider a realisation of $\mathfrak{osp}(p,q|2n)=\TKK(J)$ in the space of differential operators of the affine superspace $\mA (J^{\ast})$ as constructed in \cite{BC}. This representation depends on a character of $\istr(J)$. For a simple Jordan algebra the representation constructed in \cite{BC} corresponds to the representation of the conformal algebra considered in \cite{HKM}.

Recall $\istr(J) = \mathfrak{osp}(J) \oplus \mR L_e$ by Proposition~\ref{Prop: inner structure algebra}. 
A character $\lambda\colon \istr(J) \to \mR$ is thus uniquely determined by its value on $L_e$, because $[\mathfrak{osp}(J),\mathfrak{osp}(J)]=\mathfrak{osp}(J)$. We will denote the value of $\lambda(2L_e)$ also by $\lambda$.

 Up to an automorphism of $\Gamma(\cD_{\mA(J^\ast)})$ induced by $e_k \mapsto -\imath e_k$, the representation in \cite[Section 4.1]{BC} is given as follows  
\[ \pi_\lambda: {\rm TKK}(J)=J^{\plus}\oplus \istr(J) \oplus J^{\minus}\;\;\;\to\;\;\;  \Gamma(\cD_{\mA(J^\ast)})\]
\begin{enumerate} 
\item $\pi_\lambda(0,0,e_k) = -  \imath \x_k\qquad\qquad\quad\qquad\qquad\qquad\qquad$ for $e_k \in J^{\minus}$
\item $\pi_\lambda(0,L_{ij},0) =  \x_i \partial_{\x^j} -(-1)^{\abs{i}\abs{j}} \x_j \partial_{\x^i} \qquad\qquad\; $ for $L_{ij} \in \mathfrak{osp}(J)$
\item $\pi_\lambda(0,L_e,0)= \frac{\lambda}{2}-\mE$
\item $\pi_\lambda(\bar{e}_k,0,0)= -\imath \cB_\lambda(e_k) \qquad\quad\qquad\qquad \qquad \qquad \text{for  } \bar{e}_k \in J^{\plus}$. 
\end{enumerate}
Here $(e_i)_{i=0}^{m+2n-1}$ is the homogeneous basis of $J^{\minus}=J$ introduced in Section~\ref{Section definition spin factor}. To simplify the expressions, we introduced a basis $(\bar{e}_i)_i$ of $J^{\plus}$ by $\bar{e}_i := e_i$ for $i>0$ and $\bar{e}_0:=-e_0$.  
The Bessel operator $\cB_\lambda$ is an even  global differential operator on $\mA(J^\ast)$ taking values in the super-vector space $(J^{\plus})^\ast$. From \cite[Definition 4.1]{BC}, we get the following expressions for the Bessel operator
\begin{align}\label{Expression Bessel operators}
\cB_\lambda (e_k)=(-\lambda+2\mE)\partial_{k} - \x_k \Delta,
\end{align}
where $\mE$ and $\Delta$ are the Euler operator and Laplacian introduced in equation~\eqref{definition laplace, euler and R squared}. We will also write $\cB_\lambda(\x_k)$ for $\cB_\lambda(e_k)$.

\section{The minimal orbit} \label{Section minimal orbit}
We will use the action of the structure group $Str(J)$ on  $\mA(J^\ast)$ to construct a minimal orbit. For ordinary (i.e.\ not super) Jordan algebras the minimal orbit under the action of the structure group is the one through a primitive idempotent, see~\cite{Kaneyuki}.  We will use this as a definition for the minimal orbit in our case.
\begin{remark}{\rm
If one looks at the action under the identity component of the structure group, as for example is done in \cite{HKM}, then this picture changes a bit. For non-Euclidean Jordan algebras there is still only one minimal orbit, but for Euclidean Jordan algebras we then have two minimal orbits, one through a primitive idempotent $c$ and one through $-c$. 	
}
\end{remark}
Let us first introduce the natural generalisations of primitive idempotents to Jordan superalgebras. 
An even element of a Jordan superalgebra is called an \emph {idempotent} if it satisfies 
$x^2=x$. An idempotent is \emph{primitive} if it can not be written as the sum of two other (non-zero) idempotents. Two idempotents are called \emph{orthogonal} if their product is zero. A \emph{Jordan frame} is a collection of pairwise orthogonal primitive idempotents which sum to the unit \cite[Chapter IV]{FK}.
\begin{proposition}
\label{Prop: idempotents in spin factor}
For the spin factor Jordan superalgebra $J^\ast$ it holds that
an element $c= \lambda e + x \in J^\ast_{\oa}$ is a non-zero idempotent iff $\lambda= \frac{1}{2}$ and $x$ satisfies $\langle x, x \rangle = \frac{1}{4}$ or  $c=e$ and $x=0$.  Here $e$ denotes the unit of $J^\ast$.

All idempotents different from the unit are primitive and if $\frac{1}{2}e +x$ is an idempotent then $(\frac{1}{2}e +x,\frac{1}{2}e -x)$ is a Jordan frame.
\end{proposition}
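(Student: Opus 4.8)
The plan is to compute directly in the spin factor $J^\ast = \mR e \oplus V^\ast$ using the explicit product law. For a general even element $c = \lambda e + x$ with $\lambda \in \mR$ and $x \in V^\ast_{\oa}$, the defining relation $c^2 = c$ unwinds via the Jordan product formula to $(\lambda^2 + \langle x, x\rangle)e + 2\lambda x = \lambda e + x$. Comparing the $e$-component and the $V^\ast$-component gives the system $\lambda^2 + \langle x, x\rangle = \lambda$ and $2\lambda x = x$. From the second equation either $x = 0$, which forces $\lambda^2 = \lambda$ and hence (for $c$ nonzero) $\lambda = 1$, i.e. $c = e$; or $\lambda = \tfrac12$, and then the first equation yields $\langle x, x\rangle = \lambda - \lambda^2 = \tfrac14$. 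This establishes the characterization of nonzero idempotents.

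Next I would verify the Jordan frame claim. Given $c_\pm = \tfrac12 e \pm x$ with $\langle x, x\rangle = \tfrac14$, each is an idempotent by the characterization just proven; they sum to $e$, the unit; and their product is $c_+ c_- = (\tfrac14 - \langle x, x\rangle)e + \tfrac12 x - \tfrac12 x = 0$, so they are orthogonal. Hence $(c_+, c_-)$ is a Jordan frame.

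For primitivity, suppose an idempotent $c \neq e$ decomposes as $c = c_1 + c_2$ with $c_1, c_2$ nonzero idempotents. Write $c = \tfrac12 e + x$ and $c_i = \mu_i e + y_i$. Each $c_i$ is either the unit (giving $\mu_i = 1$) or has $\mu_i = \tfrac12$. Matching $e$-components forces $\mu_1 + \mu_2 = \tfrac12$, which is incompatible with any $\mu_i \in \{\tfrac12, 1\}$ summed in pairs of positive values; the only way out would be to allow $\mu_i = 0$, but then $c_i = y_i$ with $c_i^2 = \langle y_i, y_i\rangle e$, which equals $c_i = y_i$ only if $y_i = 0$, contradicting $c_i \neq 0$. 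So no such decomposition exists and every idempotent other than the unit is primitive. (One should also note that $e$ itself is not primitive precisely because $e = c_+ + c_-$ exhibits it as a sum of two nonzero idempotents, consistent with the statement which only asserts primitivity for idempotents different from the unit.)

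I do not expect any genuine obstacle here: the spin factor product is completely explicit and every assertion reduces to the two scalar equations above together with elementary bookkeeping on the coefficient $\mu_i$. The only mild subtlety is being careful that the "nonzero idempotent" hypothesis is used to exclude the degenerate branch $x \neq 0$, $\lambda$ arbitrary coming from characteristic-type issues — but over $\mR$ this does not arise — and to rule out the $\mu_i = 0$ case in the primitivity argument, where one must invoke nondegeneracy only implicitly (the computation $c_i^2 = \langle y_i, y_i\rangle e$ forces $y_i = 0$ regardless of nondegeneracy, since an element of $V^\ast$ cannot equal a multiple of $e$ unless both vanish).
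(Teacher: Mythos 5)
Your proof is correct and follows exactly the route the paper intends: the paper's own proof is simply the remark ``straightforward verification,'' and your computation with the explicit spin-factor product, the two scalar equations $\lambda^2+\langle x,x\rangle=\lambda$, $2\lambda x=x$, and the coefficient bookkeeping for primitivity is precisely that verification carried out in full. No gaps; the extra discussion of the $\mu_i=0$ branch is redundant (it is already excluded by your characterization of nonzero idempotents) but harmless.
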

\begin{proof}
Straightforward verification.
\end{proof}

Observe that the reduced action of the structure group $Str(J)$ on $\abs{\mA(J^\ast)}$ is equivalent with the action of $\mR^+ O(p-1,q-1)$ on $ \mR^{p+q-2}$ given by
\[
(g, x) \to g^{-1} x,
\]
since $Sp(2n, \mR)$ acts trivially on $\mR^{p+q-2}$.
Hence, for a primitive idempotent, the topological space underlying the orbit manifold is the same as in the classical case. This topological space is independent of the chosen idempotent and given by, see for example \cite[Section 1.2]{HKM},  
\[
  \{ x\in \mR^{p+q-2}_{(0)}\mid R^2(x)=0\},
\]
where $R^2= \sum_i \x^i\x_i$ is the superfunction defined in \eqref{definition laplace, euler and R squared}. We interpret $R^2$ not as an operator but as a function. Thus $R^2(x)$ denotes evaluating the function $R^2$ in the even point $x$. This also means that the odd component in $R^2$ does not play any role. 

Let $\mA(J^\ast)_{(0)}$ be the open submanifold of $\mA(J^\ast)$ we get by excluding zero \[ 
\mA(J^\ast)_{(0)} =(\mR^{p+q-2}_{(0)}, \cC^\infty_{ \mR^{p+q-2}_{(0)}} \otimes \Lambda^{2n}).
\]
Denote by $\langle R^2 \rangle $ the ideal in  $\Gamma(\cO_{\mA(J^\ast)_{(0)}})$ generated by $R^2$.
 Set \[
 \abs{C} :=\{ x\in \mR^{p+q-2}_{(0)}\mid R^2(x)=0\}.
 \]
  We will show that there is supermanifold  $C$ which has $\abs{C}$ as its underlying topological space and  $\Gamma(\cO_{\mA(J^\ast)_{(0)}})/ \langle R^2 \rangle$ as its global sections. By \cite[Corollary 4.5.10]{CCF}, the global sections will determine the sheaf $\cO_C$. 
The main theorem of this section establishes that $C=(\abs{C}, \cO_C)$ is the orbit through a primitive idempotent under the action of the structure group.
\begin{theorem} \label{Theorem minimal orbit}
The space $C=(\abs{C},\cO_C)$ is a well-defined supermanifold.
Furthermore it is the orbit through a primitive idempotent of $J^\ast$ under the action of the structure group $Str(J)$ on $\mA(J^\ast)$  defined in Section~\ref{section structure group}. We will call $C$ the minimal orbit.
\end{theorem}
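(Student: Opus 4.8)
My plan is to treat the two assertions separately: first that $C=(\abs{C},\cO_C)$ is a supermanifold, and then that it coincides with the orbit through a primitive idempotent.

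For the first assertion I would argue as follows. Inside $\Gamma(\cO_{\mA(J^\ast)_{(0)}})=\cC^\infty_{\mR^{p+q-2}_{(0)}}\otimes\Lambda^{2n}$ the reduced function of $R^2=\sum_{i,j}\beta^{ij}\x_i\x_j$ is the quadratic form $Q=\sum_{\abs{i}=\abs{j}=0}\beta^{ij}x_ix_j$ in the even coordinates (there is no mixed contribution since $\beta$ is even), and $Q$ is non-degenerate of signature $(p-1,q-1)$; as $p,q\geq2$ it is indefinite, so $\diff Q_x\neq0$ at every $x\in\abs{C}=Q^{-1}(0)\setminus\{0\}$ and $\abs{C}$ is a smooth embedded hypersurface of $\mR^{p+q-2}_{(0)}$. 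Hence $R^2\colon\mA(J^\ast)_{(0)}\to\mR^{1|0}$ is a submersion along $\abs{C}$, and by the regular value theorem for supermanifolds its zero locus is a closed sub-supermanifold of dimension $(p+q-3|2n)$ whose structure sheaf is, locally, $\cO_{\mA(J^\ast)_{(0)}}/\langle R^2\rangle$; since the base is paracompact the structure sheaf is soft, so the global sections equal $\Gamma(\cO_{\mA(J^\ast)_{(0)}})/\langle R^2\rangle$, and by \cite[Corollary 4.5.10]{CCF} this sub-supermanifold is precisely $C$. Thus $C$ is well defined.

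For the second assertion I first check that the action $a=(\underline{a},\rho_a)$ of $Str(J)$ on $\mA(J^\ast)$ restricts to $C$. The reduced action preserves $\mR^{p+q-2}_{(0)}$, and the ideal sheaf $\langle R^2\rangle$ is $a$-stable: on the infinitesimal side $\rho_a(L_{ij})(R^2)=0$ for $L_{ij}\in\mathfrak{osp}(J)$ (these operators preserve $\langle\cdot,\cdot\rangle_\beta$, cf.\ Lemma~\ref{Lemma relations sl(2)}) and $\rho_a(L_e)(R^2)=-\mE(R^2)=-2R^2$, while on the group side $\underline{a}_g^\sharp(R^2)=\nu(g)^{-2}R^2$, because $O(p-1,q-1)$ and $Sp(2n,\mR)$ leave invariant the forms defining the even and the odd part of $R^2$ and $\nu(g)\in\mR^+$ only rescales. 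Thus $a$ induces an action $a^C=(\underline{a}^C,\rho_a^C)$ of $Str(J)$ on $C$. Now fix a primitive idempotent $c$ of $J^\ast$ (Proposition~\ref{Prop: idempotents in spin factor}); as recalled before the statement of the theorem its reduced orbit is $\abs{C}$, so $R^2(c)=0$ and $c$, regarded as an even point of $\mA(J^\ast)$, lies on $\abs{C}$. Let $G_c=(\widetilde{G_c},\mg_c)$ be the stabilizer of $c$ under $a$ given by Proposition~\ref{Prop: stabiliser subgroup}; since $C\hookrightarrow\mA(J^\ast)$ is a closed embedding, $G_c$ is also the stabilizer of $c$ for $a^C$, with $\mg_c=\ker\diff(a^C)_c$ for the orbit morphism $(a^C)_c\colon Str(J)\to C$. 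It then remains to verify that $(C,(a^C)_c)$ has the three properties characterising the quotient $(\abs{Str(J)}/\abs{G_c},\pi)$ in Proposition~\ref{Prop: definition orbit}; by the uniqueness statement there, $C$ is isomorphic to this quotient, which by Definition~\ref{Def: orbit} is the orbit through the primitive idempotent $c$. Property (1): the reduced orbit map $\abs{(a^C)_c}$ has constant rank with image $\abs{C}$ and fibres the left cosets of $\widetilde{G_c}$, so it descends to a diffeomorphism $\abs{Str(J)}/\abs{G_c}\cong\abs{C}$ and is, under this identification, the natural projection. Property (3): the relation $(a^C)_c\circ\mu=a^C\circ(\id_{Str(J)}\times(a^C)_c)$ is the associativity axiom of $a^C$. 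Property (2): $(a^C)_c$ is a submersion; by equivariance it suffices that $\diff((a^C)_c)_e\colon\istr(J)\to T_cC$ be surjective, and since this map is even we check the even and odd parts separately. On the even part $\istr(J)_{\oa}$ surjects onto $T_c\abs{C}$ because the reduced action of $Str(J)$ on $\abs{C}$ is transitive; on the odd part $\istr(J)_{\ob}=\mathfrak{osp}(J)_{\ob}$ is spanned by the $L_{ab}$ with $\abs{a}=0$, $\abs{b}=1$, for which $\rho_a(L_{ab})\big|_c=-c_a\,\partial_b\big|_c$ (the odd coordinates of the even point $c$ vanish), and as $b$ runs over the $2n$ odd indices and $a$ over even indices with $c_a\neq0$ (one exists since $c\neq0$ has vanishing odd coordinates) these span the $(0|2n)$-dimensional odd part of $T_cC$. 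Hence $\diff((a^C)_c)_e$ is onto, $(C,(a^C)_c)\cong(\abs{Str(J)}/\abs{G_c},\pi)$, and the theorem follows.

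I expect the main obstacle to be the transport of the supergroup action through the quotient within the pair formalism of Proposition~\ref{Prop: definition orbit}, together with the odd part of the submersion check, i.e.\ showing that the infinitesimal orthosymplectic action at $c$ already fills the $2n$ odd tangent directions of $C$. By contrast, the even tangent directions and the whole reduced picture are the classical statement for $O(p,q)$, and the fact that $C$ is a supermanifold is a routine application of the super regular value theorem together with \cite[Corollary 4.5.10]{CCF}.
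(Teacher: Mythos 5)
Your proposal is correct and follows essentially the same route as the paper: both realise $C$ as the closed sub\-supermanifold cut out by $R^2$ using that $\diff R^2\neq 0$ away from the origin (the paper phrases this via the regular-ideal criterion of \cite{CCF} rather than a regular value theorem), show that $\langle R^2\rangle$ is stable under the action so that the action restricts to $C$, and then invoke the uniqueness in Proposition~\ref{Prop: definition orbit}, the key point being surjectivity of the differential of the orbit map. The only notable difference is presentational: you check surjectivity at the identity by splitting into even directions (classical transitivity on $\abs{C}$) and odd directions (the explicit spanning $\ev_c\circ\rho_a(L_{ab})=-c_a\,\ev_c\circ\partial_b$), whereas the paper computes $\diff\pi_g$ at arbitrary $g$ and uses a codimension-one count for the image of $\mathfrak{osp}(J)$ in $T_x\mR^{p+q-2|2n}$ — the two arguments amount to the same dimension count.
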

In the two following subsections we will prove this theorem.

\subsection{The space $C$ is a supermanifold}
We first introduce the notion of a regular ideal, which we then use to show that $C$ is a well-defined supermanifold. 

\begin{definition}[{\cite[Definition 5.3.6]{CCF}}]
Let $M$ be a supermanifold with underlying topological space $\abs{M}$. Let $I$ be an ideal in $\Gamma(\cO_M)$. For $m \in \abs{M}$ denote by $\mathcal{J}_m$ the maximal ideal in $\Gamma(\cO_M)$ given by the kernel of the morphism $\ev_m\colon  \Gamma(\cO_M) \to \mR$ and by $I_m$ the image of $I$ in the stalk $\cO_{M,m}$.
Then $I$ is called a \emph{regular ideal} if 
\begin{itemize}
\item  For every $m \in \abs{M}$ such that $I \subset \mathcal{J}_m$ there exist homogeneous $f_1, \ldots, f_n$ in $I$ such that  $[f_1]_m, \ldots, [f_n]_m$ generate $I_m$ and $(d f_1)_m, \ldots, (d f_n)_m$ are linearly independent at $m$, where $[f_i]_m$ is the image of $f_i$ in $\cO_{M,m}$.
\item If $\{ f_i \}_{i\in \mN}$ is a family in $I$ such that any compact subset of $M$ intersects only a finite number of supp $f_i$, then $\sum_i f_i$ is an element of $I$.
\end{itemize}
\end{definition}
Regular ideals can be used to define supermanifolds in the following manner.
\begin{proposition}[{\cite[Proposition 5.3.8]{CCF}}] \label{Prop: embedded submanifold for regular ideal} 
Let $M$ be a supermanifold and $I$ a regular ideal in $\Gamma(\cO_M)$. Then there exists a unique closed embedded supermanifold $(N, j)$, where $j\colon N \to M$ is an embedding, such that \[ \Gamma(\cO_N) = \Gamma(\cO_M )/I. \]
\end{proposition}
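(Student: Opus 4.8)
The plan is to realise $N$ as the common zero locus of $I$ equipped with the quotient structure sheaf, and to verify locally that this is a super-domain using the two conditions in the definition of regularity. First I would define the underlying topological space $\abs{N} := \{ m \in \abs{M} \mid I \subset \mathcal{J}_m \}$, the set of points at which every element of $I$ vanishes; this is closed, being the zero set of the family of continuous reductions $\abs{f}$ for $f \in I$. Next I would let $\mathcal{I} \subset \cO_M$ denote the ideal subsheaf generated by $I$, that is, the sheafification of the presheaf $U \mapsto \sum_{f \in I} f \cdot \cO_M(U)$, and set $\cO_N := (\cO_M / \mathcal{I})|_{\abs{N}}$. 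The support of $\cO_M/\mathcal{I}$ is exactly $\abs{N}$, so $(\abs{N}, \cO_N)$ is a ringed space carrying a natural closed inclusion $j\colon \abs{N} \hookrightarrow \abs{M}$ together with a surjection $j^\sharp\colon \cO_M \tto j_\ast \cO_N$.

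The heart of the argument is to show that $(\abs{N}, \cO_N)$ is locally isomorphic to a super-domain. Fixing $m \in \abs{N}$, the first regularity condition supplies homogeneous $f_1, \dots, f_r \in I$ whose germs generate $I_m$ and whose differentials $(df_1)_m, \dots, (df_r)_m$ are linearly independent. Separating these into their even and odd members and invoking the super inverse function theorem, one can complete them to a system of local coordinates on a neighbourhood $U$ of $m$ in $M$; after shrinking $U$ the functions $f_1, \dots, f_r$ are thus part of a coordinate system on $U$. In such a chart the locally generated ideal $\mathcal{I}|_U$ is generated by the coordinate functions $f_1, \dots, f_r$, so the quotient $\cO_U / (f_1, \dots, f_r)$ is the structure sheaf of the coordinate sub-super-domain obtained by setting these coordinates to zero. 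Hence $\cO_N$ restricted to $U \cap \abs{N}$ is the structure sheaf of a super-domain and $j$ restricts there to a closed embedding. Letting $m$ vary, $(\abs{N}, \cO_N)$ is a supermanifold closed-embedded in $M$ via $j$.

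It then remains to identify the global sections $\Gamma(\cO_N) = \Gamma(\cO_M)/I$. The comorphism $j^\sharp$ induces a map $\Gamma(\cO_M)/I \to \Gamma(\cO_N)$, and I would show it is an isomorphism. Both surjectivity and injectivity reduce to the twin assertions that $\Gamma(\mathcal{I}) = I$ and that passing to global sections is exact on the defining sequence, and this is exactly where the second regularity condition enters. The local finiteness property guarantees, on the one hand, that a partition of unity argument glues local sections of $\cO_M/\mathcal{I}$ to global ones (giving surjectivity), and on the other hand, that any locally finite sum of elements of $I$ again lies in $I$, so that the ideal sheaf $\mathcal{I}$ acquires no global sections beyond $I$ itself (giving injectivity). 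I expect this partition-of-unity comparison between the quotient of global sections and the global sections of the quotient sheaf to be the main obstacle, since on a general ringed space these two operations need not commute; the two conditions in the definition of a regular ideal are tailored precisely to force agreement.

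Finally, uniqueness follows because a closed embedded supermanifold is completely determined by its defining ideal sheaf. If $(N', j')$ is another such pair with $\Gamma(\cO_{N'}) = \Gamma(\cO_M)/I$, then its ideal sheaf has global sections $I$ and, by the local finiteness condition once more, coincides with $\mathcal{I}$; matching the resulting supports and structure sheaves produces the required isomorphism over $M$, so $(N, j)$ is unique up to isomorphism.
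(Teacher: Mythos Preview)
The paper does not supply its own proof of this proposition: it is quoted verbatim from \cite[Proposition 5.3.8]{CCF} and used as a black box. The only additional content the paper extracts is the remark, immediately following the statement, that the proof yields the description
\[
\abs{N} = \{ m \in \abs{M} \mid I \subset \mathcal{J}_m \} = \{ m \in \abs{M} \mid \ev_m(f) = 0 \text{ for all } f \in I \},
\]
which is precisely your starting definition of $\abs{N}$. So there is nothing in the paper to compare your argument against beyond this one consequence, and your sketch is consistent with it.

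As for the sketch itself, it is the standard line of argument one finds in \cite{CCF}: use the first regularity condition together with the super inverse function theorem to straighten out the generators into coordinates locally, and use the second (local finiteness) condition with a partition of unity to identify $\Gamma(\mathcal{I})$ with $I$ and hence $\Gamma(\cO_N)$ with $\Gamma(\cO_M)/I$. One point worth tightening: when you assert that $\mathcal{I}|_U$ is generated by $f_1, \dots, f_r$ after shrinking $U$, you are implicitly using that the generation at the single stalk $I_m$ propagates to nearby stalks; this is true but requires a Nakayama-type or finite-generation argument, since a priori the first regularity condition only speaks of one point at a time. Apart from that, your outline matches the expected proof and correctly identifies the role of each of the two regularity hypotheses.
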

From the proof of the proposition it also follows that the underlying topological space $\abs{N}$ of $N$ is given by
\[
\abs{N} = \{ m \in \abs{M} \mid I \subset \mathcal{J}_m \}=  \{ m \in \abs{M} \mid \ev_m (f) =0 \text{ for all } f \in I\}.
\]
\begin{lemma} \label{Lemma ideal is regular}
The ideal $I$ in $\Gamma(\cO_{\mA(J^\ast)_0})$ generated by $R^2$ is regular.
\end{lemma}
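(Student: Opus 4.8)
The plan is to verify the two defining conditions of a regular ideal for $I = \langle R^2 \rangle$ in $\Gamma(\cO_{\mA(J^\ast)_{(0)}})$, where $R^2 = \sum_{i,j}\beta^{ij}\x_i\x_j$ is regarded as a superfunction on the open submanifold $\mA(J^\ast)_{(0)} = (\mR^{p+q-2}_{(0)}, \cC^\infty_{\mR^{p+q-2}_{(0)}}\otimes\Lambda^{2n})$. Since $I$ is generated by the single homogeneous (even) element $R^2$, the first condition reduces to showing that at every point $m \in \abs{C} = \{x \in \mR^{p+q-2}_{(0)} \mid R^2(x) = 0\}$, the germ $[R^2]_m$ is a nonzero generator and the differential $(dR^2)_m$ is nonzero (equivalently, $R^2$ is not a zero divisor near such points and its exterior derivative does not vanish). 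The second condition, local finiteness stability, is essentially automatic since the ideal generated by a single element is closed under locally finite sums in the structure sheaf of a supermanifold.

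First I would recall that the body of $R^2$ (its image under the canonical projection $\Gamma(\cO_{\mA(J^\ast)_{(0)}}) \to \cC^\infty(\mR^{p+q-2}_{(0)})$) is the classical quadric function $\tilde R^2(x) = \sum_{i,j}\tilde\beta^{ij}x_ix_j$ associated with the signature $(p-1,q-1)$ form restricted to the even coordinates, since the odd part of $R^2$ contributes only nilpotent terms. For a point $m$ with $\tilde R^2(m) = 0$ and $m \neq 0$, the gradient of the nondegenerate quadratic form $\tilde R^2$ at $m$ is nonzero (this is exactly the classical fact that $0$ is the only critical point of a nondegenerate quadratic form). Hence $(dR^2)_m \neq 0$ as an element of the cotangent space: its even part already has nonvanishing reduction. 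This handles the linear-independence requirement with the single function $f_1 = R^2$.

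The remaining point in the first condition is that $[R^2]_m$ genuinely generates $I_m$ and is not annihilated in the stalk — i.e., one should check $I \subset \cJ_m$ precisely for $m \in \abs{C}$, which is immediate since $\ev_m(R^2) = \tilde R^2(m)$, and that near such an $m$ we can choose coordinates in which $R^2$ becomes one of the coordinate functions plus nilpotents, using the implicit function theorem for supermanifolds (or the standard straightening of a submersion). Concretely, since $(dR^2)_m \neq 0$, there is an open neighborhood and a coordinate system $(y_1, \dots, y_{p+q-2}\mid \theta_1,\dots,\theta_{2n})$ on $\mA(J^\ast)_{(0)}$ with $y_1 = R^2$; in such coordinates the ideal generated by $R^2$ is obviously regular and $I_m = (y_1)$.

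The main obstacle I anticipate is the bookkeeping needed to justify the coordinate straightening in the \emph{super} setting — one must be careful that the odd nilpotent corrections to $R^2$ do not obstruct choosing $R^2$ as a coordinate, but this follows because a morphism of supermanifolds is a submersion iff its reduction is (the odd part of $dR^2$ lives in the nilpotent ideal and can be absorbed), so the classical submersion statement for $\tilde R^2$ on $\mR^{p+q-2}_{(0)}$ lifts. The local-finiteness axiom is then a short verification: if $\{f_i\}$ is a locally finite family in $I$, write $f_i = g_i R^2$ with $g_i \in \Gamma(\cO_{\mA(J^\ast)_{(0)}})$ supported where $f_i$ is, so $\sum_i g_i$ converges to a global section $g$ and $\sum_i f_i = gR^2 \in I$. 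Putting these together gives that $I$ is a regular ideal, and Proposition~\ref{Prop: embedded submanifold for regular ideal} then produces the closed embedded supermanifold structure on $C$ with $\Gamma(\cO_C) = \Gamma(\cO_{\mA(J^\ast)_{(0)}})/\langle R^2\rangle$ and underlying space $\abs{C}$.
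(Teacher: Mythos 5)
Your proposal is correct and follows essentially the same route as the paper: one checks that $[R^2]_m$ generates each stalk $I_m$ with $(dR^2)_m\neq 0$ for $m\neq 0$ (by nondegeneracy of $\beta$), and that a locally finite sum $\sum_i f_i=R^2\sum_i g_i$ stays in $I$. The coordinate-straightening discussion you include is not needed to verify the two axioms of a regular ideal (it belongs to the proof of Proposition~\ref{Prop: embedded submanifold for regular ideal}), but it does no harm.
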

\begin{proof}
For any $m$ in $\mR^{p+q-2}_{(0)}$  we have that $I_m$ is generated by  $[R^2]_m$. Furthermore $(dR^2)_m$ is different from zero if $m\not=0$ and thus linearly independent. Since every $f_i$ in $I$ can be written as $R^2g_i$, we have
\[
\sum_i f_i = R^2 \sum_i g_i \in I.
\]
So we conclude that $I$ is a regular ideal.
\end{proof}

We have that \[
\abs{C}= \{ m\in \mR^{p+q-2}_{(0)} \mid \ev_m(R^2)=0\}=\{m\in \mR^{p+q-2}_{(0)} \mid I \subset \mathcal{J}_m \}. \]
is the topological space corresponding to the regular ideal $I=\langle R^2 \rangle $. 
\begin{corollary}
The space $C=(\abs{C},\cO_C)$ is the unique closed embedded submanifold of $\mA(J^\ast)_0$ corresponding to the regular ideal $\langle R^2 \rangle$. 
\end{corollary}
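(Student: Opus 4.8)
The plan is to obtain the corollary as an immediate consequence of the general machinery of regular ideals, so no serious computation is needed. First I would invoke Lemma~\ref{Lemma ideal is regular}, which says that $\langle R^2\rangle\subset\Gamma(\cO_{\mA(J^\ast)_0})$ is a regular ideal, and feed this into Proposition~\ref{Prop: embedded submanifold for regular ideal} (i.e.\ \cite[Proposition 5.3.8]{CCF}) with $M=\mA(J^\ast)_0$ and $I=\langle R^2\rangle$. This produces a closed embedded supermanifold $(N,j)$, unique up to isomorphism, satisfying $\Gamma(\cO_N)=\Gamma(\cO_{\mA(J^\ast)_0})/\langle R^2\rangle$, and, by the description extracted from the proof of that proposition, having underlying topological space $\abs{N}=\{m\in\abs{\mA(J^\ast)_0}\mid \langle R^2\rangle\subset\mathcal{J}_m\}$.

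Next I would check that $(N,j)$ is exactly $C=(\abs{C},\cO_C)$. On the level of topological spaces this is the identity $\abs{C}=\{m\in\mR^{p+q-2}_{(0)}\mid\ev_m(R^2)=0\}=\{m\in\mR^{p+q-2}_{(0)}\mid\langle R^2\rangle\subset\mathcal{J}_m\}$ already recorded above, so $\abs{N}=\abs{C}$. On the level of structure sheaves, $\Gamma(\cO_N)=\Gamma(\cO_{\mA(J^\ast)_0})/\langle R^2\rangle=\Gamma(\cO_C)$ holds by the way $C$ was defined; since the base is paracompact, \cite[Corollary 4.5.10]{CCF} guarantees that the structure sheaf is determined by its global sections, hence $\cO_N\cong\cO_C$ and $N=C$. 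Uniqueness of $C$ as a closed embedded submanifold attached to $\langle R^2\rangle$ is then inherited from the uniqueness clause of Proposition~\ref{Prop: embedded submanifold for regular ideal}, and in particular $C$ is a well-defined supermanifold, re-proving the first assertion of Theorem~\ref{Theorem minimal orbit}.

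There is essentially no genuine obstacle here; the corollary is bookkeeping that collects the previous lemma and the cited results of \cite{CCF}. The only two places that warrant a moment's attention are (i) that the explicit formula for $\abs{N}$ is read off from the \emph{proof} of \cite[Proposition 5.3.8]{CCF} rather than from its statement, and (ii) the appeal to paracompactness of $\mR^{p+q-2}_{(0)}$ needed to pass from equality of global sections to an isomorphism of structure sheaves via \cite[Corollary 4.5.10]{CCF}.
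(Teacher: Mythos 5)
Your argument is correct and is exactly the paper's proof: the corollary is obtained by combining Lemma~\ref{Lemma ideal is regular} with Proposition~\ref{Prop: embedded submanifold for regular ideal}, the identification of the underlying space and global sections with those of $C$ being immediate from the definitions. The extra remarks on reading $\abs{N}$ off the proof of the cited proposition and on recovering the sheaf from its global sections are sound but were left implicit in the paper.
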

\begin{proof}
This follows immediately from combining Proposition~\ref{Prop: embedded submanifold for regular ideal} and Lemma~\ref{Lemma ideal is regular}.
\end{proof}
We denote the embedding of $C$ in  $\mA(J^\ast)_0$ by $j_C$.

\subsection{The space $C$ is an orbit} \label{Section C is an orbit}
We will show that $C$ is the orbit through a primitive idempotent in the sense of Definition~\ref{Def: orbit}.
We introduce the following morphisms, 
\begin{align*}
a&\colon Str(J) \times \mA(J^\ast) \to \mA(J^\ast) \\
j &\colon C  \hookrightarrow \mA(J^\ast).
\end{align*}
The morphism $a$ is the action of $Str(J)$ on $\mA(J^\ast)$  defined in Section~\ref{section structure group}. For the morphism $j$ we combine the embedding $j_C$ of $C$ in $\mA(J^\ast)_0$ with the embedding of  $\mA(J^\ast)_0$ in $\mA(J^\ast)$.
Define 
\begin{align*}
b \colon Str(J) \times C \to \mA(J^\ast); \qquad b= a \circ (\id_{Str(J)} \times j).
\end{align*}
Then $b=(\abs{b}, b^\sharp)$  with 
 \begin{align*}
\abs{b}= \abs{a} \circ ( \id_{\abs{Str(J)}} \times \abs{j}) \text{ and } b^\sharp = (\id_{\cO_{Str(J)}} \otimes j^\sharp ) a^\sharp.
\end{align*}
\begin{lemma}
The morphism $b$ takes values in $C$.
\end{lemma}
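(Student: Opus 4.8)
The plan is to show that the pullback $b^\sharp$ annihilates the ideal $\langle R^2\rangle$, so that the composite $b=a\circ(\id_{Str(J)}\times j)$ factors through the embedding $j_C\colon C\hookrightarrow \mA(J^\ast)_0$; equivalently, that $b$ maps into the closed subsupermanifold $C$. Since morphisms into an affine superspace are determined by the pullbacks of the coordinate functions (the global chart theorem, \cite[Theorem 4.2.5]{CCF}), it suffices to check that $b^\sharp(R^2)\in \Gamma(\cO_{Str(J)})\hat\otimes\langle R^2\rangle$, because then $b^\sharp$ descends to a morphism $\Gamma(\cO_{\mA(J^\ast)_0})/\langle R^2\rangle \to \Gamma(\cO_{Str(J)\times C})$, i.e.\ a morphism $Str(J)\times C\to C$, which is what the lemma asserts.

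First I would recall that $b^\sharp=(\id_{\cO_{Str(J)}}\otimes j^\sharp)\,a^\sharp$, and that on coordinate functions $a^\sharp(\x_i)=g^{-1}\x_i$, where $g=(\nu,k,h)\in Str(J)_0$ acts block-diagonally. Therefore $a^\sharp(R^2)=a^\sharp\bigl(\sum_{ij}\beta^{ij}\x_i\x_j\bigr)=\sum_{ij}\beta^{ij}(g^{-1}\x)_i(g^{-1}\x)_j$. The key computation is that this equals $\nu^{-2}R^2$: indeed $k\in O(p-1,q-1)$ preserves the symmetric block $\beta^s$ and $h\in Sp(2n,\mR)$ preserves the antisymmetric block $\beta^a$ of the form on $J$ (note the $e_0$-coordinate does not appear in $R^2$, so only these two blocks are relevant), so $\sum_{ij}\beta^{ij}(k^{-1}\x)_i(k^{-1}\x)_j$ on the even variables reproduces the symmetric part of $R^2$ and likewise $h$ reproduces the odd part, while the overall scalar $\nu$ contributes $\nu^{-2}$. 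Hence $a^\sharp(R^2)=\nu^{-2}\otimes R^2$ in $\cO_{Str(J)_0}\hat\otimes \cO_{\mA(J^\ast)}$, where $\nu^{-2}$ is understood as a (smooth, invertible) function on $Str(J)_0$.

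It then follows immediately that $b^\sharp(R^2)=(\id\otimes j^\sharp)(\nu^{-2}\otimes R^2)=\nu^{-2}\otimes j^\sharp(R^2)=0$, since $j^\sharp(R^2)=0$ by definition of $C$ as the zero locus of the ideal $\langle R^2\rangle$. By the universal property in Proposition~\ref{Prop: embedded submanifold for regular ideal} (regularity of $\langle R^2\rangle$ being Lemma~\ref{Lemma ideal is regular}), a morphism $N\to\mA(J^\ast)_0$ whose pullback kills $\langle R^2\rangle$ factors uniquely through $j_C\colon C\hookrightarrow\mA(J^\ast)_0$; applying this to $N=Str(J)\times C$ and the morphism $b$ (whose reduced map $\abs{b}$ indeed lands in $\abs{C}$, since the reduced action of $\nu O(p-1,q-1)$ scales $R^2$ and hence preserves its zero set) gives the desired factorization $b=j_C\circ \tilde b$ for a unique $\tilde b\colon Str(J)\times C\to C$.

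The only mild subtlety — and the step I expect to require the most care — is the first one: verifying $a^\sharp(R^2)=\nu^{-2}\otimes R^2$ cleanly, keeping track of the block structure of $\beta$ (the extra coordinate $e_0$ with $\beta_{00}=-1$ must be excluded since $R^2$ only involves $V$), of the signs coming from the supercommutativity $\x_i\x_j=(-1)^{\abs{i}\abs{j}}\x_j\x_i$, and of the fact that $k$ and $h$ preserve $\beta^s$, $\beta^a$ precisely in the form $k^t\beta^s k=\beta^s$, $h^t\beta^a h=\beta^a$ (so that the inverse-transpose relations needed for $\beta^{ij}$ work out). Everything else is a formal consequence of the universal property of the embedded submanifold $C$.
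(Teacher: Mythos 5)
Your overall strategy --- reduce the lemma to $b^\sharp(R^2)=0$ and then invoke the universal property of the closed embedded submanifold cut out by the regular ideal $\langle R^2\rangle$ --- is exactly the paper's, and your computation that the underlying even action scales $R^2$ by $\nu^{-2}$ is fine as far as it goes (the paper actually asserts exact preservation of the metric, which is slightly imprecise for the $\mR^+$-factor, so your bookkeeping is the more careful one; the scalar is harmless either way). But there is a genuine gap: you identify $a^\sharp(\x_i)$ with $g^{-1}\x_i$, i.e.\ you conflate the supergroup action $a=(\underline{a},\rho_a)$ with its underlying even action $\underline{a}$. The formula $\x_i\mapsto g^{-1}\x_i$ only defines $\underline{a}^\sharp$, taking values in $\cC^\infty_{Str(J)_0}\hat{\otimes}\,\cO_{\mA(J^\ast)}$; the structure group $Str(J)=(Str(J)_0,\istr(J))$ is a genuine supergroup (for $n>0$ the algebra $\istr(J)\supset\mathfrak{osp}(J)$ has a nonzero odd part), so $\cO_{Str(J)}$ is strictly larger than $\cC^\infty_{Str(J)_0}$ and $a^\sharp(R^2)$ is not simply $\underline{a}^\sharp(R^2)$.

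The paper handles this by using the identification $\cO_{Str(J)}\cong\underline{\Hom}_{U(\istr(J)_{\oa})}(U(\istr(J)),\cC^\infty_{Str(J)_0})$, under which
\[
a^\sharp(R^2)=\bigl[X\mapsto(-1)^{\abs{X}}(\id\otimes\rho_a(X))\,\underline{a}^\sharp(R^2)\bigr]
=\bigl[X\mapsto(-1)^{\abs{X}}\,\bigl(1\otimes\rho_a(X)R^2\bigr)\bigr],\qquad X\in U(\istr(J)).
\]
So in addition to your computation of $\underline{a}^\sharp(R^2)$ you must check that $j^\sharp\bigl(\rho_a(X)R^2\bigr)=0$ for every $X\in U(\istr(J))$, including the odd generators. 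This is easy but it is where the actual content of the paper's proof lies: $\rho_a(L_{ij})R^2=0$ for $L_{ij}\in\mathfrak{osp}(J)$ (even and odd alike), since $R^2$ is $\mathfrak{osp}(J)$-invariant, and $\rho_a(L_e)R^2=-\mE R^2=-2R^2\in\langle R^2\rangle$, so iterated applications of the generators keep $R^2$ inside $\langle R^2\rangle$ and $j^\sharp$ kills the result. With that step supplied, your argument closes; without it, the proof only treats the even directions of the action.
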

\begin{proof}
We have to show that $b$ factors as $j \circ \gamma$, with $\gamma\colon Str(J) \times C \to C$. This will be the case if  $\im \abs{b} \subset \abs{C}$ and $b^{\sharp} (R^2) = 0$. On the topological level it is immediately clear that $\abs{b}$ takes values in $\abs{C}$. For the sheaf morphism, we will use the fact that 
for a Lie supergroup $G=(G_0,\mg)$  we have \cite[Remark 7.4.6]{CCF}
\begin{align*}
\cO_G(U) \cong \underline{\Hom}_{U(\mg_{\oa})}(U(\mg), \cC^\infty_{G_{0}}(U)).
\end{align*}
Note that by $\underline{\Hom}(V,W)$ we mean all linear maps from $V$ to $W$ including the odd ones. 
Using this isomorphism, an action $a= (\underline{a},\rho_a)$ on $M$ can be expressed in $\underline{a}$ and $\rho_a$ as
\begin{align*}
a^\sharp&\colon \Gamma(\cO_M) \to \underline{\Hom}_{U(\mg_{\oa})} (U(\mg), \cC^\infty_{G_{0}} (G_{0}) \hat{\otimes} \Gamma(\cO_M)) \\
&f \mapsto [X \mapsto (-1)^{\abs{X}} (\id_{\cC^\infty(G_{0})}\otimes \rho_a(X)) \underline{a}^\sharp f], \text { with } X \in U(\mg)
\end{align*}
The Lie group $Str(J)_0$ preserves the orthosymplectic metric on $J$, so \[
 \begin{pmatrix}
(\nu  k)^{-1} &  \\ 
 & (\nu  h)^{-1}
\end{pmatrix}^t \beta^{-1}  \begin{pmatrix}
(\nu  k)^{-1} &  \\ 
 & (\nu  h)^{-1}
\end{pmatrix}= \beta^{-1} 
\] with $\beta$ the matrix corresponding to the metric. 
Hence \[\underline{a}^\sharp (R^2)= \underline{a}^\sharp (\x_i) \beta^{ij} \underline{a}^\sharp(\x_j ) = \id_{\cO_{Str(J)_0}} \otimes \x_i \beta^{ij} \x_j = \id_{\cO_{Str(J)_0}} \otimes R^2.\] 
Therefore \[
a^\sharp(R^2) = [X \mapsto (-1)^{\abs{X}} (1 \otimes \rho_a(X) R^2) ].
\]
So we get
\begin{align*}
b^\sharp (R^2) = (\id_{\cO_{Str(J)}} \otimes j^\sharp ) a^\sharp (R^2)=[X \mapsto (-1)^{\abs{X}} (1 \otimes j^\sharp (\rho_a(X) R^2)) ] =0,
\end{align*}
since  $\rho_a(X) R^2=0$ for $X \in \mathfrak{osp}(J)$ and for $X=L_e$ we use that $R^2$ evaluates to zero on $\abs{C}$.
\end{proof}
For a primitive idempotent  $c$ of $J^\ast$, we define 
\begin{align*}
\pi \colon Str(J) \to C \text{ by }\quad \abs{\pi}g= \abs{b}(g,c), \quad \pi^\sharp = (\id_{\cO_{Str(J)}} \otimes \ev_c )b^\sharp.
\end{align*}
\begin{proposition}\label{Prop: C is an orbit}
The manifold $C$ and the morphisms $\pi$ and $b$ satisfy the conditions of Proposition~\ref{Prop: definition orbit}. In particular $C$ is the manifold corresponding to the orbit through a primitive idempotent of $J^\ast$ under the action of the structure group.
\end{proposition}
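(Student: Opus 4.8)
The plan is to verify, point by point, the three defining properties of the orbit construction in Proposition~\ref{Prop: definition orbit}, with $H=Str(J)_c$ the stabilizer subgroup of the primitive idempotent $c$ and with $\pi$ and $b$ in the roles of the structure maps. By the uniqueness clause of Proposition~\ref{Prop: definition orbit}, it suffices to show that $\pi$ realizes $C$ as the quotient $Str(J)/Str(J)_c$ in the sense made precise there; then the equivariance of $b$ identifies $C$ with the abstract orbit from Definition~\ref{Def: orbit}.

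First I would analyze the reduced action. As already observed just before the statement, the reduced action of $Str(J)$ on $\abs{\mA(J^\ast)_{(0)}}$ is the action of $\mR^+\times O(p-1,q-1)$ on $\mR^{p+q-2}_{(0)}$ by $(g,x)\mapsto g^{-1}x$, since $Sp(2n,\mR)$ acts trivially. For a primitive idempotent $c=\tfrac12 e+x$ with $\langle x,x\rangle=\tfrac14$, the classical theory (cf.~\cite[Section 1.2]{HKM}) tells us that the orbit of $|c|$ under $\mR^+\times O(p-1,q-1)$ is exactly $\abs{C}=\{y\in\mR^{p+q-2}_{(0)}\mid R^2(y)=0\}$, so $\abs{\pi}$ is the natural surjection $\abs{Str(J)}\to\abs{Str(J)}/\abs{Str(J)_c}=\abs{C}$, giving the first bullet. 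Next, for the submersion property, I would compute $\diff\pi_g$ and show it is surjective onto $T_{\pi(g)}C$. By equivariance it is enough to check this at the identity: $\diff\pi_e\colon\istr(J)\to T_cC$. Concretely, $\diff\pi_e$ sends $Y\in\istr(J)$ to $-\rho_a(Y)$ evaluated at $c$; its kernel is, by Proposition~\ref{Prop: stabiliser subgroup}, precisely $\mg_c=\ker\diff b_c$, so $\diff\pi_e$ descends to an injection $\istr(J)/\mg_c\hookrightarrow T_cC$, and a dimension count (using that $\abs{C}$ has the expected dimension from the classical picture and that the odd directions of $C$ are cut out by the single even equation $R^2=0$, so no odd directions are lost) shows this injection is an isomorphism. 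Surjectivity of $\diff\pi_g$ for all $g$ then follows by translating.

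For the third bullet I would use the equivariant morphism $b\colon Str(J)\times C\to C$ constructed in the preceding lemma (which we may now regard as landing in $C$, since the lemma shows $b$ factors through $j$): this is an action of $Str(J)$ on $C$ reducing to the reduced action on $\abs{C}$, and the relation $\pi\circ\mu=b\circ(\id_{Str(J)}\times\pi)$ is an immediate consequence of the associativity identity $a\circ(\mu\times\id)=a\circ(\id_G\times a)$ for the action $a$ together with the definitions $\pi^\sharp=(\id\otimes\ev_c)b^\sharp$ and $b=a\circ(\id\times j)$. One only needs to track the pullbacks on coordinate functions $\x_i$ through the formula for $a^\sharp$ in terms of $\underline{a}$ and $\rho_a$, which is routine. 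Finally, having verified all three conditions, the uniqueness part of Proposition~\ref{Prop: definition orbit} identifies $(C,\pi)$ with the quotient $(\abs{Str(J)}/\abs{Str(J)_c},\mathcal{O})$, i.e.\ with the orbit through $c$ as in Definition~\ref{Def: orbit}.

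The main obstacle I anticipate is the submersion/dimension-count step: one must be careful that passing to the quotient by the single even relation $R^2=0$ does not kill any odd coordinate directions, i.e.\ that $\istr(J)/\mg_c$ has the same even \emph{and} odd dimension as $T_cC$. This requires checking that the odd part of $\istr(J)$ maps onto the odd tangent directions of $C$ at $c$, equivalently that the odd vector fields $\rho_a(Y)$, $Y\in\istr(J)_{\ob}$, together with the constraint span the full odd tangent space; this is where the explicit form of the $\mathfrak{osp}(J)$-action on $\mA(J^\ast)$ and the fact that $R^2$ is even (so it imposes no condition in odd directions) must be combined carefully. Everything else reduces to unwinding the pair-formalism definitions and the classical $O(p-1,q-1)$ picture.
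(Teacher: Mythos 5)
Your overall strategy matches the paper's: verify the three bullets of Proposition~\ref{Prop: definition orbit} for $(C,\pi,b)$ and invoke uniqueness. The first bullet (the reduced picture) and the third bullet (the equivariance identity $\pi\circ\mu=b\circ(\id\times\pi)$, which you correctly describe as an unwinding of $a\circ(\mu\times\id)=a\circ(\id\times a)$ together with $\pi^\sharp=(\id\otimes\ev_c)b^\sharp$) are handled essentially as in the paper.

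The gap is in the submersion step, and you have in fact named it yourself without closing it. Your route -- $\ker\diff\pi_e=\mg_c$ by Proposition~\ref{Prop: stabiliser subgroup}, hence an injection $\istr(J)/\mg_c\hookrightarrow T_cC$, then ``a dimension count'' -- only shifts the burden: Proposition~\ref{Prop: stabiliser subgroup} merely \emph{defines} $\mg_c$ as that kernel, so you still owe a computation of either $\dim\mg_c$ or of the dimension of the image, and in particular the verification that all $2n$ odd tangent directions of $C$ at $c$ are hit. The paper closes exactly this point with a short explicit argument: after reducing to $\diff\pi_g(X_g)f=\ev_c(\rho_a(X)a_g^\sharp\tilde f)$, it observes that for $x\in\abs{C}$ and any index $i$ with $\x_i(x)\neq 0$ the evaluations $\ev_x\circ\partial_i$ together with the $\ev_x\circ L_{kl}$ span all of $T_x\mR^{p+q-2|2n}$ (this is where the odd directions come from: $L_{i,\theta_k}$ with $i$ even produces $\ev_x(\partial_{\theta_k})$ up to a multiple of $\ev_x(\partial_i)$), so the image of $\mathfrak{osp}(J)$ has codimension at most one; and $L_{kl}(R^2)=0$ forces the codimension to be at least one. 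Hence the image has dimension exactly $p+q-3+2n=\dim T_{\pi(g)}C$, giving surjectivity. Without this (or an equivalent computation of $\mg_c$), your proof of the second bullet is incomplete.
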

\begin{proof}
Almost by definition, the map $\abs{\pi}$ is the natural map from $\abs{Str(J)}$ to $\abs{C}.$
To show that $\pi$ is a submersion, we need
\[
\diff \pi_g \colon T_g Str(J) \to T_{\abs{\pi}(g) }C 
\]
to be surjective for all $g \in \abs{Str(J)}$. 
Consider $ f \in \cO_C(U)$ and let $\tilde{f} \in \cC^\infty_{\mR^{p+q-2}_{(0)}}(U)\otimes \Lambda^{2n}$ be a representative of $f$ i.e.\ $f=\tilde{f} \mod R^2$. Let $X$ be a vector field in $\istr(J)$ and $X_e$ the corresponding vector in $T_eStr(J)$.  From \cite[Proposition 7.2.3]{CCF}, we have $X_g:= \ev_g X = \ev_g (1\otimes X_e) \mu^\sharp$. Combining this with $\ev_c \circ j = \ev_c$, we compute
\begin{align*}
\diff \pi_g(X_g)f&= X_g(\pi^\sharp f) \\
&= \ev_g (1\otimes X_e) \mu^\sharp (\id_{\cO_{Str(J)}} \otimes \ev_c) a^\sharp \tilde{f} \\
&= \ev_c (\ev_g \otimes \id_{\cO_C}) (\id_{\cO_{Str(J)}}\otimes X_e \otimes \id_{\cO_C})(\mu^\sharp \otimes \id_{\cO_C}) a^\sharp \tilde{f}.
\end{align*}
For an action $a$ of $G$ on $M$ it holds that  $(\mu^\sharp \otimes \id_{\cO_M}) a^\sharp=(\id_{\cO_{G}}\otimes a^\sharp) a^\sharp $ and $\rho_a (X)=( X_e \otimes \id_{\cO_M})a^\sharp$. Thus we obtain
\begin{align} \label{Eq expression diff pi}
\diff \pi_g(X_g)f &= \ev_c (\ev_g \otimes X_e \otimes \id_{\cO_C})(\id_{\cO_{Str(J)}}\otimes a^\sharp) a^\sharp \tilde{f} \nonumber \\
&= \ev_c ( X_e \otimes \id_{\cO_C})a^\sharp(\ev_g \otimes  \id_{\cO_C}) a^\sharp \tilde{f} \nonumber \\
&= \ev_c (\rho_a (X) a_g^\sharp \tilde{f}).
\end{align} 
The map from $\mathfrak{osp}(p-1,q-1|2n)$ to $T_x\mR^{p+q-2|2n}$ for $x\in \mR^{p+q-2|2n}$ given by $L_{ij} \mapsto \ev_x \circ L_{ij}$ has codimension one. This follows for example from the fact that for $i$ such that $\ev_x\circ \x_i\not=0$
\[
\{ \ev_x \circ \partial_{i}\} \cup \{ \ev_x \circ L_{kl} \mid 0<k,l\leq p+q-2+2n \}
\]
span $T_x \mR^{p+q-2|2n}$, so the codimension is less than or equal to one, while $L_{ij}(R^2)=0$ implies that the codimension is not zero.
Since $a_g^\sharp$ is surjective, we then conclude from equation~\eqref{Eq expression diff pi}
that \[ \dim \; \im \;(\diff \pi_g \mid_{ \mathfrak{osp}(J)}) = p+q-3+2n.\]
 Since the dimension of 
$T_{\abs{\pi}(g) c}C $ is equal to $p+q-3+2n$ we conclude that also  $\dim \; \im \;(\diff \pi_g)=p+q-3+2n$ and $\diff \pi_g$ is surjective.

Finally we have to show that $b$ is an action that reduces to the natural action $\abs{Str(J)} \times \abs{C} \to \abs{C}$ and $\pi \circ \mu = b\circ (\id_{Str(J)} \times \pi)$. We have $\abs{\pi}\circ \abs{\mu} (g_1,g_2) = (g_1g_2)c$ and $\abs{b}(\id_{\abs{Str(J)}} \times \abs{\pi})(g_1,g_2)= g_1(g_2c)$. We also compute
\begin{align*}
\mu^\sharp \circ \pi^\sharp f &= \mu^\sharp (\id_{\cO_{Str(J)}} \otimes \ev_c) b^\sharp f \\
&= (\id_{\cO_{Str(J)}} \otimes\id_{\cO_{Str(J)}} \otimes \ev_c ) (\mu^\sharp \otimes \id_{\cO_M})a^\sharp f \\
&=(\id_{\cO_{Str(J)}} \otimes\id_{\cO_{Str(J)}} \otimes \ev_c )(\id_{\cO_{Str(J)}} \otimes a^\sharp) a^\sharp f,
\end{align*}
and
\begin{align*}
 (\id_{\cO_{Str(J)}}\otimes \pi^\sharp ) b^\sharp f 
&= (\id_{\cO_{Str(J)}} \otimes\id_{\cO_{Str(J)}} \otimes \ev_c )(\id_{\cO_{Str(J)}} \otimes b^\sharp )b^\sharp f \\
&=(\id_{\cO_{Str(J)}} \otimes\id_{\cO_{Str(J)}} \otimes \ev_c )(\id_{\cO_{Str(J)}} \otimes a^\sharp) a^\sharp f.
\end{align*} 
Since $b$ is almost by definition an action and reduces to the natural action on $\abs{Str(J)} \times \abs{C} \to \abs{C}$, the proposition follows.
\end{proof}

\subsection{Restriction to the minimal orbit} \label{Section restriction to orbit}
Recall that for a simple Jordan algebra the representation constructed in Section \ref{Section: definition representation} corresponds to the representation of the conformal algebra considered in \cite{HKM}. In the latter paper it is also shown that, for certain characters, this representation can be restricted to an orbit.  We will show that for a specific character, also in our case the representation can be restricted to the minimal orbit defined in Section~\ref{Section minimal orbit}.

Consider the representation $\pi_\lambda$ constructed in Section \ref{Section: definition representation}. For $\lambda= 2-M$, with $M=p+q-2-2n$ the superdimension of $J$, we can restrict the representation $\pi_\lambda$ to the minimal orbit, as we will now show. 
We first prove that for this value of $\lambda$ the Bessel operators are tangential to the minimal orbit.
\begin{proposition}\label{Prop:BesselOperatorsTangential}
The Bessel operators are tangential to the minimal orbit, i.e.\ they  map $\langle R^2 \rangle$ into $\langle R^2 \rangle$, if and only if $\lambda = 2-M$, with $M$ the superdimension of $J$. 
\end{proposition}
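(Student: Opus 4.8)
The plan is to compute $\cB_\lambda(e_k)(R^2)$ directly using the explicit formula \eqref{Expression Bessel operators}, namely $\cB_\lambda(e_k) = (-\lambda + 2\mE)\partial_k - \x_k \Delta$, and to check when the result lies in the ideal $\langle R^2\rangle$. The key observation is that since $\langle R^2 \rangle$ is generated by the single element $R^2$, an operator $D$ maps the ideal into itself as soon as $D(R^2) \in \langle R^2\rangle$; indeed for any $g$ we have $D(R^2 g) = D(R^2) g \pm R^2 D(g)$ modulo lower-order Leibniz terms, so the containment $D(R^2)\in\langle R^2\rangle$ together with $D$ being a differential operator suffices (one should spell out the first-order Leibniz identity carefully, but the point is that only the value $D(R^2)$ matters). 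So the entire proposition reduces to the single computation of $\cB_\lambda(e_k)(R^2)$ as an element of $\Gamma(\cO_{\mA(J^\ast)_0})$.

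First I would recall the elementary bracket relations: from Lemma~\ref{Lemma relations sl(2)} (and its extension to the affine superspace setting noted at the end of Section~\ref{Section another realisation}) we have $[\Delta, R^2] = 4\mE + 2M$ and $\mE(R^2) = 2R^2$ (the latter because $R^2$ is homogeneous of degree $2$). We also need the basic identities $\partial_k(R^2) = 2\x^k$ (up to sign/normalization conventions coming from $R^2 = \sum_{ij}\beta^{ij}\x_i\x_j$) and $\Delta(R^2) = 2M$, where $M$ is the superdimension. Then
\begin{align*}
\cB_\lambda(e_k)(R^2) &= (-\lambda + 2\mE)\partial_k(R^2) - \x_k \Delta(R^2) \\
&= (-\lambda + 2\mE)(2\x^k) - \x_k(2M).
\end{align*}
Now $\mE(\x^k) = \x^k$ since $\x^k$ is homogeneous of degree $1$, so $(-\lambda + 2\mE)(2\x^k) = 2(-\lambda + 2)\x^k$ (being careful that $\mE$ acts as a first-order operator, so $(-\lambda+2\mE)$ applied to $2\x^k$ as a function gives $(-2\lambda)\x^k + 4\mE(\x^k) = (-2\lambda + 4)\x^k$). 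Hence $\cB_\lambda(e_k)(R^2) = (4 - 2\lambda - 2M)\x^k = (4 - 2\lambda - 2M)\x^k$, which I would then combine and simplify; note $\x^k$ is \emph{not} in the ideal $\langle R^2\rangle$ (it does not vanish on $\abs C$, and it has degree $1$ while $R^2$ has degree $2$). Therefore $\cB_\lambda(e_k)(R^2) \in \langle R^2\rangle$ if and only if the scalar $4 - 2\lambda - 2M$ vanishes, i.e. $\lambda = 2 - M$. This gives the "only if" direction immediately, by taking $k$ with $\x^k \neq 0$.

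For the "if" direction with $\lambda = 2 - M$, I need to upgrade "$\cB_\lambda(e_k)(R^2) = 0$" to "$\cB_\lambda(e_k)$ maps $\langle R^2\rangle$ into $\langle R^2\rangle$". Since $\cB_\lambda(e_k)$ is a second-order differential operator, applying it to $R^2 g$ produces $\cB_\lambda(e_k)(R^2)\cdot g$, plus terms involving first derivatives of $R^2$ paired with first derivatives of $g$, plus $R^2 \cdot \cB_\lambda(e_k)(g)$; the last two families of terms manifestly lie in $\langle R^2\rangle$ except for the cross terms $\partial_i(R^2)\partial_j(g)$-type contributions coming from $-\x_k\Delta$. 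Here I would observe that $\Delta(R^2 g) = \Delta(R^2)g + 2\sum_{ij}\beta^{ij}\partial_i(R^2)\partial_j(g) + R^2\Delta(g) = 2Mg + 4\mE(g) + R^2\Delta(g)$ using $[\Delta,R^2] = 4\mE + 2M$ again, so $-\x_k\Delta(R^2 g) = -\x_k(2M + 4\mE)g - \x_k R^2 \Delta(g)$, and combining with the $(-\lambda + 2\mE)\partial_k$ part (which similarly produces a "main" term plus an $R^2$-multiple) the non-$R^2$ contributions assemble into exactly $\cB_\lambda(e_k)(R^2)\cdot g = 0$. The main obstacle, and the only place requiring genuine care, is bookkeeping the Leibniz expansion of the second-order operator $-\x_k\Delta$ with the correct Koszul signs in the super setting and verifying that the "anomalous" terms regroup precisely into $\cB_{\lambda}(e_k)(R^2)$; once that is done, the vanishing at $\lambda = 2 - M$ closes the argument. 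An alternative, perhaps cleaner, route is to note that $\langle R^2\rangle$ is the kernel of $j_C^\sharp$ and invoke the $\mathfrak{sl}(2)$-relations to rewrite $\cB_\lambda(e_k)$ modulo $R^2$ purely in terms of operators already known to be tangential (the $L_{ij}$ and $\mE$), but the direct Leibniz computation above is the most transparent.
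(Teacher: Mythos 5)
Your argument is correct and is essentially the paper's proof: the paper packages your two computations (the value $\cB_\lambda(e_k)(R^2)$ and the Leibniz/cross-term bookkeeping on $R^2g$) into the single operator identity $[\cB_{\lambda}(e_k),R^2]=(-2\lambda+4-2M)\x_k+4R^2\partial_k$, obtained from Lemma~\ref{Lemma relations sl(2)} exactly as you use it, and reads off the conclusion since $\x_k\notin\langle R^2\rangle$. The only blemish is the index position in $\partial_k(R^2)=2\x_k$ (not $2\x^k$), which does not affect the conclusion.
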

\begin{proof}
Using the relations of Lemma~\ref{Lemma relations sl(2)} and equation~\eqref{Expression Bessel operators} we obtain
\begin{align*}
[\cB_{\lambda} (e_k) ,R^2] = \x_k (-2\lambda+ 4-2(p+q-2-2n)) +4R^2 \partial_{k}. 
\end{align*}
We conclude that $\langle R^2 \rangle$ gets mapped into $\langle R^2 \rangle$ if and only if $\lambda= 2-M$.
\end{proof}
The operators $\x_i$, $L_{ij}$ and $\mE$ are also tangential to the orbit. Hence $\langle R^2 \rangle $ gives a subrepresentation of $\pi_{2-M}$. Using the embedding $j$ defined in Section~\ref{Section C is an orbit}, we set 
 \[
 \pi_C(X) f = j^\sharp (\pi_{2-M}(X) \tilde{f})
 \]
 for $f$ in $\Gamma(\cO_C)$ and $\tilde{f}$ a representative from $f$ in $\Gamma(\cO_{\mA(J^\ast)_0})$, i.e.\ $j^\sharp(\tilde{f})=f$. Since all the operators occurring in $\pi_{2-M}$ are tangential to $C$, this gives a well defined  quotient representation
 \[
  \pi_C\colon \TKK(J) \to \Gamma (\cD_C) 
 \]
 on the orbit $C$. Here $\Gamma (\cD_C)$ acts by differential operators on $\Gamma(\cO_C)$, hence we found a representation of $\TKK(J)$ on functions on the minimal orbit.

\section{Integration to the conformal group}
\label{Integration to group level}
We introduce the notations
  \begin{align*}
\mg &:= \TKK(J) = \mathfrak{osp}(p,q|2n), & \mk &:= \mathfrak{osp}(p|2n)\oplus \mathfrak{so}(q),\\
 \ck &:= \mathfrak{so}(p)\oplus \mathfrak{so}(q) \oplus \mathfrak{u}(n),& \mk_0 &:= \mk  \cap \istr(J) = \mathfrak{osp}(p-1|2n) \oplus \mathfrak{so}(q-1). 
\end{align*}
Then $\ck$ is a maximal compact subalgebra of the even part of $\mk$ and also  a maximal compact subalgebra of the even part of $\mg$. 

In this section we will integrate a subrepresentation $\pi_C$ of $\mg$ on $\Gamma(\cO_C)$ constructed in Section \ref{Section restriction to orbit} to the conformal group $OSp(p,q|2n)$ using the concept of Harish-Chandra supermodules. To be able to do this we need a $(\mg,\ck)$-module $W$ of $\ck$-finite vectors. As an intermediate step, we will first look for a $(\mg,\mk)$-module of $\mk$-finite vectors.
\begin{remark}{\rm
Our choice of $\mathfrak{k}'$ seems arbitrary, and one might as well work with $\mathfrak{osp}(q|2n)\oplus\mathfrak{so}(p)$. However, the same techniques can be used in this case which leads to similar results. Since $\mathfrak{osp}(p,q|2n)\cong\mathfrak{osp}(q,p|2n)$ it is enough to consider one of the two possible choices.
}
\end{remark}

We start this section by introducing $\mk_0$-invariant radial superfunctions. 
\subsection{Radial superfunctions}\label{Section radial superfunctions}

On $\mR^{p-1}\oplus \mR^{q-1}\oplus \mR^{2n}$ we consider the supersymmetric, non-degenerate, even bilinear form $\beta$  of signature $(p-1,q-1|2n)$ associated to $\mathfrak{osp}(p-1,q-1|2n)$. Choose a basis  $(e_i)_i$, $(f_i)_i$, $(\theta_i)_i$ of $\mR^{p-1}\oplus \mR^{q-1}\oplus \mR^{2n}$  such that \[ \langle e_i , e_j \rangle_{\beta} = \delta_{ij}, \qquad \langle f_i , f_j \rangle_{\beta} =-\delta_{ij}, \qquad \langle e_i, f_j \rangle_{\beta}=0.\]
Let $e^i, f^i$ and $\theta^i$ be the right duals of $e_i, f_i$ and $\theta_i$ with respect to our form.  Then $e^i=e_{i}$ and $f^i=-f_i$.
We will use $x_i$, $y_i$ and $\theta_i$ as the coordinates on $\mA((\mR^{p+q-2|2n})^\ast) \cong\mA(\mR^{p+q-2|2n})$ corresponding to this basis.
Set \[ s^2 = \sum_{i=1}^{p-1}x_i^2,\quad t^2= \sum_{j=1}^{q-1}y_j^2, \quad \theta^2 = \sum_{i=1}^{2n}\theta^i \theta_i.\]

 For a function $h\colon \mR \to \mR$, $h\in \cC^{2n}(\mR_{(0)})$ and a superfunction $f= f_0+ \sum_{I\not=0} f_I \theta^I$, with $f_0$ and $f_I$ in $\cC^\infty(\mR^m_{(0)})$, a new superfunction $h(f)$ in $\cC^\infty(\mR^m_{(0)})\otimes \Lambda^{2n}$  is defined in \cite[Definition 3]{Invariant functions}
\[
h(f) := \sum_{j=0}^{2n}  \frac{(\sum_{I\not=0} f_I \theta^I)^{j}}{j!} h^{(j)}(f_0).
\]

We will use this definition to define radial functions depending on the superfunction $\abs{X}$
\[
\abs{X} = \sqrt{\frac{s^2+t^2+\theta^2}{2}}.
\]
 Note that such a function $h(\abs{X})$ is  $\mk_0$ invariant since $\abs{X}$ is  $\mk_0$ invariant.

\begin{lemma}\label{Lemma properties Bessel functions}
Consider $h\colon \mR \to \mR$, $h\in \cC^{2n+2}(\mR_{(0)})$. The radial function $h(\abs{X})$  satisfies
\begin{align*}
\partial_{x^i} h(\abs{X})&= \frac{x_i}{2\abs{X}}\partial_{\abs{X}} h(\abs{X}), \qquad  \partial_{\theta^i} h(\abs{X}) = \frac{\theta_i}{2\abs{X}}\partial_{\abs{X}}h(\abs{X}),  
\\
  \partial_{y^i} h(\abs{X}) &= -\frac{y_i}{2\abs{X}}\partial_{\abs{X}}h(\abs{X}),  \qquad \mE h(\abs{X})= \abs{X} \partial_{\abs{X}}h(\abs{X}),
\\ 
  \Delta h(\abs{X}) & =  \tfrac{p-q-2n}{2\abs{X}}\partial_{\abs{X}}h(\abs{X}) + \tfrac{R^2}{4\abs{X}^2} \left(\partial_{\abs{X}}^2h(\abs{X})-\frac{\partial_{\abs{X}} h(\abs{X})}{\abs{X}}\right), 
\end{align*}
and
\begin{align*}
(\cB_\lambda(x_i)-x_i) h(\abs{X}) 
&= x_i \left( \partial^2_{\abs{X}}h(\abs{X}) -(p-q-2n+\lambda) \frac{\partial_{\abs{X}}h(\abs{X})}{2 \abs{X}}- h(\abs{X})\right) ,
\\
(\cB_\lambda(\theta_i)-\theta_i) h (\abs{X})
&= \theta_i \left( \partial^2_{\abs{X}}h(\abs{X}) -(p-q-2n+\lambda) \frac{\partial_{\abs{X}}h(\abs{X})}{2 \abs{X}}- h(\abs{X})\right) ,
\\
(\cB_\lambda(y_i)+y_i) h (\abs{X})
&= -y_i \left( \partial^2_{\abs{X}}h (\abs{X})+(p-q-2n-\lambda) \frac{\partial_{\abs{X}}h(\abs{X})}{2 \abs{X}}- h(\abs{X})\right),
\end{align*}
where the three last equalities are modulo $R^2$.
\end{lemma}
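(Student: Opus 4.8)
The plan is to reduce every formula to the chain rule for the superfunction‑of‑a‑superfunction construction $h\mapsto h(f)$ of \cite[Definition 3]{Invariant functions}, applied to $f=\abs{X}$. First I would note that $\abs{X}^2=\tfrac12(s^2+t^2+\theta^2)$ has body $\tfrac12(s^2+t^2)$, strictly positive on $\mR^{p+q-2}_{(0)}$, so that $\abs{X}$ is a well‑defined, invertible even superfunction there; applying the construction to $\widetilde h:=h\circ\sqrt{\,\cdot\,/2}$ and to $u:=2\abs{X}^2=s^2+t^2+\theta^2$ gives $h(\abs{X})=\widetilde h(u)$, which reduces all second‑order computations to the single even variable $u$. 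The facts I would import from \cite{Invariant functions} are that, for $h\in\cC^{2n}(\mR_{(0)})$, the construction is well‑defined and compatible with composition, and that $D\bigl(h(f)\bigr)=h'(f)\,Df$ for every (even or odd) derivation $D\in\Gamma(\cD)$.

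Granting this, the four first‑order identities are immediate: from $2\abs{X}\,\partial_a\abs{X}=\partial_a(\abs{X}^2)$ one reads off $\partial_{x^i}\abs{X}=\tfrac{x_i}{2\abs{X}}$, $\partial_{\theta^i}\abs{X}=\tfrac{\theta_i}{2\abs{X}}$ and $\partial_{y^i}\abs{X}=-\tfrac{y_i}{2\abs{X}}$ (the last sign coming from $y^i=-y_i$), and the chain rule turns these into the stated formulas for $\partial_{x^i}h(\abs{X})$, $\partial_{\theta^i}h(\abs{X})$, $\partial_{y^i}h(\abs{X})$. For the Euler relation I would use that $u$ is homogeneous of degree $2$, hence $\mE u=2u$ and $\mE h(\abs{X})=\mE\widetilde h(u)=2u\,\widetilde h'(u)=\abs{X}\,\partial_{\abs{X}}h(\abs{X})$; equivalently one sums the first‑order formulas against $z^i$.

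The Laplacian formula is the one step requiring real work, and I would base it on the two scalar identities
\[
\Delta u = 2(p-q-2n),\qquad \textstyle\sum_a \partial^a(u)\,\partial_a(u)=4R^2,
\]
where $\sum_a\partial^a(\cdot)\partial_a(\cdot)$ denotes the symmetric contraction occurring in $\Delta(fg)=(\Delta f)g+f(\Delta g)+2\sum_a\partial^a(f)\partial_a(g)$. Rather than expanding the symplectic form, I would extract both from Lemma~\ref{Lemma relations sl(2)}: evaluating $[\Delta,R^2]=4\mE+2M$ on the constant $1$ and on $R^2$ gives $\Delta R^2=2M$ and $\sum_a\partial^a(R^2)\partial_a(R^2)=4R^2$, and since $u-R^2=2t^2$ involves only the negative‑definite $y$‑block, a one‑line comparison of the $x$‑, $y$‑ and $\theta$‑contributions upgrades these to the identities for $u$ displayed above. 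Note that the coefficient $p-q-2n$ — rather than the superdimension $M$ — appears precisely because $\Delta$ carries the indefinite/symplectic signature of $\beta$ while $u$ is written with all plus signs. Substituting $u=2\abs{X}^2$ into $\Delta\widetilde h(u)=\widetilde h'(u)\,\Delta u+\widetilde h''(u)\sum_a\partial^a(u)\partial_a(u)$ and collecting terms then produces the stated expression for $\Delta h(\abs{X})$.

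For the three Bessel identities I would assemble the previous formulas together with $\cB_\lambda(z_k)=(-\lambda+2\mE)\partial_k-z_k\Delta$. The auxiliary point is that $\tfrac{z_k}{2\abs{X}}$ is $\mE$‑invariant, since $\mE\tfrac{z_k}{2\abs{X}}=\tfrac{\mE z_k}{2\abs{X}}-\tfrac{z_k\,\mE\abs{X}}{2\abs{X}^2}=0$; hence $\mE\,\partial_{x^i}h(\abs{X})=\mE\bigl(\tfrac{x_i}{2\abs{X}}\partial_{\abs{X}}h\bigr)=\tfrac{x_i}{2}\partial_{\abs{X}}^2 h$, and likewise with $\theta_i$ or $y_i$ in place of $x_i$. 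Plugging this and the Laplacian formula into $(\cB_\lambda(z_k)\mp z_k)h(\abs{X})$ yields a radial expression whose $\tfrac{R^2}{4\abs{X}^2}$‑term is divisible by $R^2$ and therefore drops out modulo $R^2$, leaving exactly the three formulas claimed; the sign switch and the $+y_i$ in the $y$‑case are inherited from $\partial_{y^i}\abs{X}=-\tfrac{y_i}{2\abs{X}}$. Finally, the regularity hypothesis $h\in\cC^{2n+2}(\mR_{(0)})$ is exactly what makes $h(\abs{X})$ and its first two $\abs{X}$‑derivatives all defined via the construction. The only genuine obstacle I anticipate is the sign bookkeeping for the odd variables in the two scalar identities for $u$; deducing them from Lemma~\ref{Lemma relations sl(2)} instead of from the explicit symplectic form is meant to neutralize this.
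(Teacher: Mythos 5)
Your proposal is correct and follows essentially the same route as the paper: the paper's (very terse) proof likewise derives the first three identities from the chain rule $\partial_{\x^i}h(f)=\partial_{\x^i}(f)h'(f)$ applied to the even superfunction $\abs{X}$, and then obtains $\mE$, $\Delta$ and the Bessel identities as direct consequences. Your only deviation is organizational — routing the second-order computation through $u=2\abs{X}^2$ and extracting $\Delta u=2(p-q-2n)$ and $\sum_a\partial^a(u)\partial_a(u)=4R^2$ from Lemma~\ref{Lemma relations sl(2)} rather than expanding the metric directly — and all the signs and coefficients you obtain check out.
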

\begin{proof}
If $f$ is an even superfunction and $h \in \cC^{2n+1}(\mR_{(0)})$, then we have the chain rule \[ \partial_{\x^i} h(f) = \partial_{\x^i}(f) h'(f).\] Since $\abs{X}$ is an even superfunction, the first three equations follow from this chain rule and
\[
\partial_{x^i} \abs{X}= \frac{x_i}{2\abs{X}} \qquad \partial_{y^i} \abs{X}= -\frac{y_i}{2\abs{X}} \qquad \partial_{\theta^i} \abs{X}= \frac{\theta_i}{2\abs{X}}.
\]
The other equations are then a straightforward corollary from these three equations.
\end{proof}

\subsection{The $(\mg,\mk)$-module W} \label{Section The k module W}
For our definition of $W$ we start from a general $\mk_0$-invariant function  on $\Gamma(\cO_C)$. Acting on this function with basis elements of $\mk$ not in $\mk_0$ leads to the differential equation~\eqref{Bessel function dif rel} that the modified K-Bessel functions satisfy  (see equation~\eqref{equation action bessel operators}  in the proof of Lemma~\ref{Lemma: action of Bessel operators}).  So a natural ansatz for $W$ is the $U(\mg)$-module generated by $\widetilde{K}_\alpha$, the renormalised modified Bessel function of the third kind introduced in \ref{Section Bessel functions}.. 
Set \begin{align}\label{definition mu and nu}
 \mu= \max (p-2n-3,q-3), \qquad \nu = \min (p-2n-3, q-3).
 \end{align}
We also set 
 $ \mR^{\mu+2}= \mR^{p-1|2n}$ and $\mR^{\nu+2}= \mR^{q-1}$ if $p-2n \geq q$ and $ \mR^{\mu+2}= \mR^{q-1}$ and $\mR^{\nu+2}= \mR^{p-1|2n}$ if $p-2n < q$.
 
Let $\Lambda^{\mu,\nu}_{2,j}(\abs{X})$ be the radial superfunction defined using the generalised Laguerre function $\Lambda^{\mu,\nu}_{2,j}(z)$ introduced in \ref{definition Lambda mu nu}. Note that for $j=0$, we find  $\Lambda^{\mu,\nu}_{2,0}(\abs{X})= \frac{1}{\Gamma(\frac{\mu+2}{2})} \widetilde{K}_{\frac{\nu}{2}}(\abs{X}).$

Define \begin{align}\label{Definition W} 
 W := U(\mg) ( \widetilde{K}_{\frac{\nu}{2}} (\abs{X})+\langle R^2 \rangle) \subset \Gamma(\cO_C),
\end{align}

where  the $\mg$-module structure is given by the representation $\pi_C$.  In the following, we will always work modulo $\langle R^2 \rangle$ and  drop $\langle R^2 \rangle$ in our notation. So we write for example $\widetilde{K}_{\frac{\nu}{2}} (\abs{X})$  for $\widetilde{K}_{\frac{\nu}{2}} (\abs{X})+\langle R^2 \rangle$.
\begin{theorem} \label{Theorem structure W}
Assume $\nu \not\in -2\mN$. 
\begin{enumerate}
\item The decomposition of $W$ as $\mk$-module is given by
\begin{align*}
W&= \bigoplus_{j=0}^\infty W_j, & \text{ where }\quad
  W_j =  U(\mk) \Lambda^{\mu,\nu}_{2,j} (\abs{X}). 
 \end{align*}
 \item Assume $q\not=3$ and $p\not= 3$. 
Then $W$ is always indecomposable. It is furthermore a simple $\mg$-module if   $p+q$ is odd or $\mu+\nu\geq 0$ or $q=p-2n=2$ or $p=2$. 
\item If $p+q$ is even, then $W_j$ and thus also $W$ is $\mk$-finite.  
An explicit decomposition of $W_j$ into irreducible $\mk_0$-modules is given by 
  \begin{align} \label{eq structure W_j even}
  W_j = \bigoplus_{k=0}^j \bigoplus_{l=0}^{\tfrac{\mu-\nu}{2} +j} \Lambda^{\mu+2k,\nu + 2l}_{2,j-k} (\abs{X}) \cH_k (\mR^{\mu+2}) \otimes \cH_l(\mR^{\nu+2}).
 \end{align}
 Here $\cH_k(\mR^{\mu+2})$ and $\cH_l(\mR^{\nu+2})$ are spaces of spherical harmonics introduced in Section~\ref{Section spherical harmonics}.
 Furthermore, we also have the following $\mk$-isomorphism
\begin{align*}
W_j &\cong \cH_{j} (\mR^{\mu+3}) \otimes \cH_{\frac{\mu-\nu}{2}+j} (\mR^{\nu+3}).
\end{align*}
	 
 \item If $p+q$ is odd, the decomposition of $W_j$ into irreducible $\mk_0$-modules is given by
  \begin{align} \label{eq structure W_j odd}
  W_j = \bigoplus_{k=0}^j \bigoplus_{l=0}^{\infty} \Lambda^{\mu+2k,\nu + 2l}_{2,j-k} (\abs{X}) \cH_k (\mR^{\mu+2}) \otimes \cH_l(\mR^{\nu+2}).
 \end{align}
If $p,q\not=2$ then $W_j$ is not $\mk$-finite, while for $p=2$ or $q=2$, $W_j$ is still $\mk$-finite.
\end{enumerate}
\end{theorem}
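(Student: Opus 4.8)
The backbone of the argument is that acting with $\pi_C$ on the renormalised $K$-Bessel function produces, via the recursion relations for the generalised Laguerre functions collected in the appendix, exactly the radial superfunctions $\Lambda^{\mu,\nu}_{2,j}(\abs{X})$, tensored with spherical harmonics. So the first step is to compute the action of the three families of Bessel operators $\cB_\lambda(x_i)$, $\cB_\lambda(\theta_i)$, $\cB_\lambda(y_i)$ on a general radial superfunction $h(\abs{X})$; this is already Lemma~\ref{Lemma properties Bessel functions}. The point to extract is that, modulo $\langle R^2\rangle$, applying a $\mk$-generator to $\Lambda^{\mu,\nu}_{2,j}(\abs{X})$ either returns a multiple of $\Lambda^{\mu,\nu}_{2,j\pm 1}(\abs{X})$ times $x_i$, $\theta_i$ or $y_i$ (a degree-one spherical harmonic in one of the two factors $\mR^{\mu+2}$, $\mR^{\nu+2}$), or stays in the same $W_j$. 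From this one reads off that $W=\sum_j W_j$ with $W_j=U(\mk)\Lambda^{\mu,\nu}_{2,j}(\abs{X})$, and the hard work is then to show (a) the sum is direct and (b) each $W_j$ is an irreducible, or at least completely reducible, $\mk$-module with the stated decomposition.

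For the decomposition of $W_j$ into $\mk_0$-isotypic pieces I would use the Fischer decomposition (Proposition, Fischer decomposition) on each of the two Euclidean/super factors: a $\mk_0$-invariant radial generator spreads out under $U(\mk_0)$ into $\bigoplus_{k,l} (\text{radial})\otimes \cH_k(\mR^{\mu+2})\otimes\cH_l(\mR^{\nu+2})$, and the radial coefficient in the $(k,l)$-slot is forced by the recursion relations to be a multiple of $\Lambda^{\mu+2k,\nu+2l}_{2,j-k}(\abs{X})$ — this is where the new recursion relations for the generalised Laguerre functions proved in the appendix are essential, since they identify which Laguerre function appears after shifting the parameters $\mu\mapsto\mu+2k$, $\nu\mapsto\nu+2l$. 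The cutoff $l\le \tfrac{\mu-\nu}{2}+j$ in the $p+q$ even case comes from the fact that the relevant Laguerre function $\Lambda^{\mu+2k,\nu+2l}_{2,j-k}$ vanishes (or the corresponding weight fails to be dominant) once $l$ exceeds that bound; in the odd case no such truncation occurs, giving the infinite sum in~\eqref{eq structure W_j odd}. Counting dimensions via Proposition~\ref{Prop: dimension of spherical harmonics} and matching against $\dim \cH_j(\mR^{\mu+3})\otimes\cH_{\frac{\mu-\nu}{2}+j}(\mR^{\nu+3})$ (branching $\mathfrak{osp}(p|2n)\oplus\mathfrak{so}(q)\downarrow \mathfrak{osp}(p-1|2n)\oplus\mathfrak{so}(q-1)$, which is the classical $\mathfrak{so}$-branching in the even directions tensored with the trivial branching in the odd ones) yields the $\mk$-isomorphism in part (3); $\mk$-finiteness of $W_j$ is then immediate from finiteness of each $\cH_k(\mR^{\mu+2})$, $\cH_l(\mR^{\nu+2})$ together with the finiteness of the range of $(k,l)$. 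For $p+q$ odd, $\mk$-finiteness fails precisely because $l$ is unbounded, unless $p=2$ or $q=2$, in which case one of the two spherical-harmonic factors collapses (the $\cH_l(\mR^{2})$ case, $\mC z^l\oplus\mC\bar z^l$, or the $\nu<0$ degenerate case) and only finitely many $l$ survive.

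For the $\mg$-module structure in part (2): indecomposability follows once one shows that the lowest piece $W_0=U(\mk)\widetilde K_{\nu/2}(\abs{X})$ is contained in every nonzero $\mg$-submodule, which in turn follows from the ladder structure — the Bessel operators move $W_j\to W_{j\pm1}$, the $\istr(J)$-part preserves each $W_j$, and $\pi_C(\bar e_k)$ applied to $W_0$ lands back in $W_0$ (this is where $\lambda=2-M$ matters, via Proposition~\ref{Prop:BesselOperatorsTangential}). Simplicity under the stated numerical hypotheses amounts to checking that there is no $\mg$-submodule complementary to the ``top'' of the ladder; the conditions $p+q$ odd, or $\mu+\nu\ge 0$, or $q=p-2n=2$, or $p=2$ are exactly the ones under which the relevant Laguerre/Bessel three-term recursion has no vanishing coefficient that would allow the ladder to break, so irreducibility is a case analysis on when these coefficients (products of linear factors in $\mu,\nu,j$) are nonzero.

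\textbf{Main obstacle.} The genuinely hard step is part (3)/(4): pinning down the precise radial function in each $\mk_0$-isotypic component and proving it is the claimed generalised Laguerre function with correctly shifted parameters. This requires the new recursion relations for the $\Lambda^{\mu,\nu}_{2,j}$ proved in the appendix and a careful induction on $j$ and on the ``distance'' $k+l$ from the radial generator, keeping track of the $R^2$-corrections from Lemma~\ref{Lemma properties Bessel functions} (the three Bessel-operator identities there hold only modulo $R^2$, so one must verify these corrections stay inside $\langle R^2\rangle$ at every step). Everything else — directness of $\bigoplus_j W_j$, the $\mk$-isomorphism, $\mk$-finiteness, and the indecomposability/simplicity dichotomy — then follows relatively formally from this computation plus dimension counting and the branching rules.
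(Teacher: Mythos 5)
Your overall architecture matches the paper's: compute the action of the Bessel operators on products of radial Laguerre superfunctions with spherical harmonics, organise $W_j$ into $(k,l)$-slots $\Lambda^{\mu+2k,\nu+2l}_{2,j-k}(\abs{X})\,\cH_k(\mR^{\mu+2})\otimes\cH_l(\mR^{\nu+2})$, and deduce everything from vanishing/non-vanishing of the ladder coefficients plus a dimension count against $\cH_j(\mR^{\mu+3})\otimes\cH_{\frac{\mu-\nu}{2}+j}(\mR^{\nu+3})$. However, there is a genuine structural error in your description of which operators do what, and it would derail parts (1) and (2). You assert that ``the Bessel operators move $W_j\to W_{j\pm1}$, the $\istr(J)$-part preserves each $W_j$, and $\pi_C(\bar e_k)$ applied to $W_0$ lands back in $W_0$.'' This is backwards. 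The combinations $\cB_\lambda(z_i)\mp z_i$, which together with $\mk_0$ span $\mk$, \emph{preserve} each $W_j$ — they are exactly the operators that move between the $(k,l)$-slots inside a fixed $W_j$ (Lemma~\ref{Lemma: action of Bessel operators}). The operator that raises and lowers $j$ is $L_e=\tfrac{\lambda}{2}-\mE\in\istr(J)$, via the three-term recursion for $\mE$ acting on $\Lambda^{\mu,\nu}_{2,j}$ (Proposition~\ref{action Le on Laguerre functions}); combined with $\mg=\mk+[\mk,[\mk,L_e]]$ this is what gives both $\bigoplus_j W_j\subseteq W$ and $W\subseteq\bigoplus_j W_j$. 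Without identifying $L_e$ as the ladder operator you cannot even establish part (1). Moreover $\pi_C(\bar e_k)=-\imath\cB_\lambda(e_k)=-\imath(B^\pm_k\pm z_k)$ does \emph{not} preserve $W_0$: multiplication by $x_i$ sends $\widetilde{K}_{\nu/2}(\abs{X})$ into the $k=1$ slot of $W_1$, and Proposition~\ref{Prop:BesselOperatorsTangential} (tangentiality to $C$, i.e.\ preservation of $\langle R^2\rangle$) has no bearing on this.

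Two further points. First, the truncation $l\le\tfrac{\mu-\nu}{2}+j$ in the $p+q$ even case is not caused by vanishing of $\Lambda^{\mu+2k,\nu+2l}_{2,j-k}$ (that function vanishes only when $j-k<0$); it is caused by the vanishing of the coefficient $(j+\tfrac{\mu-\nu}{2}-l)$ of the $l$-raising part of $B^-_i$ at $l=j+\tfrac{\mu-\nu}{2}$, which is an integer precisely when $p+q$ is even — hence the infinite $l$-range in the odd case. Second, your route to indecomposability (``$W_0$ is contained in every nonzero $\mg$-submodule'') is false in the non-simple cases: when $j+\tfrac{\mu+\nu}{2}<0$ the unique simple $\mk$-submodule of $W_j$ excludes the slots with $l<-j-\tfrac{\mu+\nu}{2}$, so in particular the generator $\Lambda^{\mu,\nu}_{2,0}(\abs{X})$ of $W_0$ does not lie in the simple submodule; indecomposability instead comes from each $W_j$ being an indecomposable $\mk$-module with a unique simple submodule, linked by $L_e$.
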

\begin{proof}
This is a combination of Proposition~\ref{Prop: structure Wj}, Proposition~\ref{Structure W}, Corollary~\ref{Corollary W simple} and Proposition~\ref{Prop: structure Wj II}.
\end{proof}
\begin{remark}{\rm
This theorem gives new information  even  in the classical case (i.e.\ $n=0$). Namely, for $\mg = \mathfrak{so(p,q)}$ and   $p+q$ even, it is well-known that the minimal representation $W$ of $\mathfrak{g}$ decomposes as 
\[
W \simeq \bigoplus_{j=0}^\infty \cH_{j} (\mR^{\mu+3}) \otimes \cH_{\frac{\mu-\nu}{2}+j} (\mR^{\nu+3}),
\]
and that for $p+q$ odd, $W_0$ is infinite-dimensional, but in the Schr\"{o}\-ding\-er model only few $\mathfrak{k}$-finite vectors have been made explicit: The $\mk_0$-invariant vectors $\Lambda^{\mu,\nu}_{2,j}$ were given in \cite[Corollary 8.2]{HKMM}, which is the case $l=k=0$. Further, the case $k=j$ and $l$ arbitrary is contained in \cite[Theorem 3.1.1]{KM2}. However, the decomposition of $W_j$ given in \eqref{eq structure W_j even} and \eqref{eq structure W_j odd}, which makes explicit all $\mk$-finite vectors in the Schr\"{o}dinger model if $p+q$ is even, is to the best of our knowledge new.
}
\end{remark}
\begin{remark}{\rm
We have $\widetilde{K}_{\frac{\nu}{2}}(\abs{X})= \Gamma(\frac{\mu+2}{2}) \Lambda^{\mu,\nu}_{2,0} (\abs{X}).$ Hence 
\begin{align*}
W_0 = U(\mk) \Lambda^{\mu,\nu}_{2,0} (\abs{X}) = U(\mk) \widetilde{K}_{\frac{\nu}{2}}(\abs{X}).
\end{align*}
}
\end{remark}

We start with a lemma which gives the action of some elements of $\mk$ on a combination of Laguerre superfunctions with spherical harmonics. 
First we combine  $\phi_k \in \cH_k(\mR^{p-1|2n})$ and  $z_i \in \cP_1(\mR^{p-1|2n})$ to obtain spherical harmonics of degree $k+1$ and $k-1$.  Namely we set
\begin{align} \label{def phi+}
\begin{aligned}
\phi_{k+1,i}^+ &=   \x_i \phi_k - \frac{s^2+\theta^2}{p-3-2n+2k} \partial_{\x^i} \phi_k,  \qquad \text{and} \qquad
\phi^-_{k-1,i} &= \frac{1}{p-3-2n+2k} \partial_{\x^i} \phi_k, 
\end{aligned}
\end{align} 
for  $p-3-2n+2k\not=0.$ 
One checks that  $\phi_{k+1,i}^+$ and $\phi^-_{k-1,i}$ are contained  in $\cH_{k+1}(\mR^{p-1|2n})$ and in $\cH_{k-1}(\mR^{p-1|2n})$ respectively.
Similarly for $\phi_k  \in \cH_k(\mR^{q-1})$, $y_i \in \cP_1(\mR^{q-1})$ and $q-3+2k\not=0$, set
\[ \phi_{k+1,i}^+=  -y_i \phi_k - \frac{t^2}{q-3+2k} \partial_{y^i} \phi_k, \qquad \phi^-_{k-1,i} = \frac{1}{q-3+2k} \partial_{y^i} \phi_k. \]
In this case  $\phi_{k+1,i}^+$ and $\phi^-_{k-1,i}$ are contained  in $\cH_{k+1}(\mR^{q-1})$ and in $\cH_{k-1}(\mR^{q-1})$ respectively.  
\begin{lemma} \label{Lemma: action of Bessel operators}
Let $\phi_k \in \cH_k(\mR^{\mu+2})$ and $\psi_l \in \cH_l (\mR^{\nu+2})$. 
Set 
\begin{align*}
B^+_i &:= \cB_\lambda(\x_i)-\x_i  \qquad \text{ and }\qquad  B^-_i:=\cB_\lambda(y_i)+y_i &\text{ if } p-2n \geq q,
 \\
  B^+_i&: = \cB_\lambda(y_i)+y_i  \qquad \text{ and }\qquad  B^-_i:=\cB_\lambda(\x_i)-\x_i &\text{  if } p-2n <q,
\end{align*} 
where $z_i=x_i$ and $z_{i+p-1}=\theta_i$.
Then for $\nu+2l\not=0$ we have
\begin{align*}
B^+_i(\phi_k \psi_l\Lambda^{\mu+2k,\nu+2l}_{2,j-k}(\abs{X})  )&
=  (j+\mu+k+1) \phi_{k+1,i}^+ \psi_l \Lambda^{\mu+2(k+1),\nu+2l}_{2,j-(k+1)}(\abs{X}) \\
&+ 4(j-k+1) \phi_{k-1,i}^- \psi_l \Lambda^{\mu+2(k-1),\nu+2l}_{2,j-(k-1)}(\abs{X}) 
\\
B^-_i (\phi_k \psi_l\Lambda^{\mu+2k,\nu+2l}_{2,j-k}(\abs{X})) &
 = -(j+\tfrac{\mu-\nu}{2}-l) \phi_k \psi_{l+1,i}^+ \Lambda^{\mu+2k,\nu+2(l+1)}_{2,j-k}(\abs{X})
 \\
 &- 4(j+\tfrac{\mu+\nu}{2}+l) \phi_k \psi_{l-1,i}^- \Lambda^{\mu+2k,\nu+2(l-1)}_{2,j-k}(\abs{X}).
\end{align*}

\end{lemma}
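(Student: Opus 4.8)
The statement is a direct computation: apply the Bessel operators $B^\pm_i$ (which are $\cB_\lambda(\x_i)-\x_i$, $\cB_\lambda(y_i)+y_i$, etc.) to a product $\phi_k \psi_l \Lambda^{\mu+2k,\nu+2l}_{2,j-k}(\abs{X})$ and re-express the result in the same form with shifted indices. The natural strategy is to split the computation into a "radial part" and a "spherical part" by a Leibniz-type expansion. First I would write $\cB_\lambda(\x_i) = (-\lambda+2\mE)\partial_i - \x_i\Delta$ from \eqref{Expression Bessel operators} and apply each constituent operator to the product using the product rules for $\mE$, $\Delta$ and $\partial_i$. Since $\phi_k$ is harmonic of degree $k$ in the $\mR^{\mu+2}$-variables and $\psi_l$ harmonic of degree $l$ in the $\mR^{\nu+2}$-variables, many terms collapse: $\Delta(\phi_k\psi_l g) = \phi_k\psi_l\Delta g + 2(\nabla\phi_k\psi_l)\cdot\nabla g + \ldots$, where $\Delta\phi_k = \Delta\psi_l = 0$ and the only surviving cross terms pair a first derivative of a harmonic with a first derivative of the radial function $g = \Lambda^{\mu+2k,\nu+2l}_{2,j-k}(\abs{X})$.

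**Key steps.** (1) Use Lemma~\ref{Lemma properties Bessel functions} to reduce all derivatives of $g=\Lambda^{\mu+2k,\nu+2l}_{2,j-k}(\abs{X})$ hitting the radial variable $\abs{X}$ to $\partial_{\abs{X}}g$ and $\partial^2_{\abs{X}}g$, picking up the factors $\x_i/(2\abs{X})$ etc.; note that the effective dimension parameter in those formulas shifts from $(p,q,2n)$ to the one appropriate to $\phi_k\psi_l g$ because multiplying by $\phi_k\psi_l$ changes the homogeneity. (2) Recognize the combination $\x_i\phi_k$ versus $\partial_i\phi_k$ that appears, and rewrite it via \eqref{def phi+} in terms of $\phi^+_{k+1,i}$ and $\phi^-_{k-1,i}$ (similarly for $\psi_l$); this is exactly the algebraic identity $\x_i\phi_k = \phi^+_{k+1,i} + \tfrac{s^2+\theta^2}{p-3-2n+2k}\partial_i\phi_k$, and $s^2+\theta^2 = 2\abs{X}^2 - t^2 \equiv 2\abs{X}^2 - (\text{the other radial piece})$, which lets one absorb the $s^2+\theta^2$ prefactor into a shift of the radial argument. (3) Invoke the recursion relations for the generalized Laguerre functions $\Lambda^{\mu,\nu}_{2,j}$ (the "new recursion relations" promised in the Appendix) to identify the resulting radial expressions $\partial^2_{\abs{X}}g$, $\partial_{\abs{X}}g/\abs{X}$, $g$ combinations as $\Lambda^{\mu\pm2,\nu}_{2,j\mp1}$ and $\Lambda^{\mu,\nu\pm2}_{2,j}$ up to the stated coefficients $(j+\mu+k+1)$, $4(j-k+1)$, $-(j+\tfrac{\mu-\nu}{2}-l)$, $-4(j+\tfrac{\mu+\nu}{2}+l)$. (4) Bookkeep parities and signs carefully (the $y$-variables carry the extra minus from $f^i=-f_i$, and in the $p-2n<q$ case the roles of $\x$ and $y$ are swapped), and handle the odd $\theta_i$-case in $B^+_i$ in parallel since $\partial_{\theta^i}\abs{X}=\theta_i/(2\abs{X})$ behaves identically to the even case.

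**Main obstacle.** The genuinely delicate part is step (3): matching the raw differential-operator output to the precise normalization of the Laguerre functions $\Lambda^{\mu,\nu}_{2,j}$, which requires exactly the recursion relations that the paper says it must prove fresh in the Appendix. Everything upstream is a bounded Leibniz computation, but pinning down that the radial coefficients are exactly $j+\mu+k+1$ and $4(j-k+1)$ (and not off by a shift or a factor) depends on getting the index-shifting conventions for $\Lambda^{\mu,\nu}_{2,j}$ right, including the boundary behavior when $\nu+2l=0$ (excluded by hypothesis) or when $k=0$ so that $\phi^-_{k-1,i}$ is absent. I would organize the proof so that the two identities are proved symmetrically, doing $B^+_i$ in full and then indicating that $B^-_i$ follows by the same manipulation with $(\mu,j-k,\x)$ replaced by $(\nu, j, y)$ and tracking the sign changes.
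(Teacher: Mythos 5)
Your proposal follows essentially the same route as the paper's proof: the paper applies the Leibniz-type product rule for the Bessel operator (its equation for $\cB_\lambda(\x_k)(fg)$, a special case of \cite[Proposition 4.2]{BC}) together with Lemma~\ref{Lemma properties Bessel functions} for the radial derivatives, rewrites the resulting $\x_i\phi_k$ and $\partial_i\phi_k$ terms via \eqref{def phi+} using $s^2+\theta^2=\abs{X}^2 \bmod R^2$, and then identifies the radial coefficients through the Laguerre recursions of Corollary~\ref{Properties generalised Laguerre polynomials} — exactly your steps (1)--(4). The paper likewise carries out only the $B_i^+$ case for $p-2n\geq q$ in detail and declares the remaining cases similar, so your plan is a faithful match.
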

\begin{proof}
We will only prove the case $B_i^+$ for  $p-2n \geq q$. The other cases are similar. 

We first observe that the action of the Bessel operator on a product is given by
\begin{align}\label{Eq Product rule for Bessel operators}
\cB_\lambda(\x_k) &(fg)=  (-\lambda + 2 \mE) \partial_k(fg) - \x_k \Delta (fg) &
\\ \nonumber
&=(\cB_\lambda(\x_k) f) g + (-1)^{\abs{f}\abs{k}} f (\cB_\lambda(\x_k)g) 
+ 2 (-1)^{\abs{f}\abs{k}}\mE(f) \partial_k (g) 
\\
&\quad + 2 \partial_k (f) \mE(g) - 2 \x_k (-1)^{\abs{f}\abs{j}} \beta^{ij}  \partial_i(f) \partial_j(g). \nonumber
\end{align}
Note that this is a special case of Proposition \cite[Proposition 4.2]{BC}.

From this product rule and Lemma~\ref{Lemma properties Bessel functions} we obtain
\begin{align} \label{equation action bessel operators}
& (\cB_\lambda(\x_i) -\x_i) (\Lambda^{\mu+2k,\nu+2l}_{2,j-k} \phi_k \psi_l) \nonumber 
\\
&  = \left((\cB_\lambda(\x_i)-\x_i) \Lambda^{\mu+2k,\nu+2l}_{2,j-k} \right)\phi_k \psi_l+ \Lambda^{\mu+2k,\nu+2l}_{2,j-k} \cB_\lambda(\x_i)( \phi_k \psi_l) \nonumber
\\
 &\quad + 2 \mE(\Lambda^{\mu+2k,\nu+2l}_{2,j-k})  \partial_i (\phi_k \psi_l) + 2 \partial_i (\Lambda^{\mu+2k,\nu+2l}_{2,j-k}) \mE(\phi_k\psi_l) \nonumber
 \\
 & \quad -2 \x_i \beta^{ab} \partial_a(\Lambda^{\mu+2k,\nu+2l}_{2,j-k}) \partial_b (\phi_k\psi_l) \nonumber
 \\
&= \x_i \left((\Lambda^{\mu+2k,\nu+2l}_{2,j-k})''+ \frac{q-2}{\abs{X}} (\Lambda^{\mu+2k,\nu+2l}_{2,j-k})' - \Lambda^{\mu+2k,\nu+2l}_{2,j-k}\right) \phi_k \psi_l 
\\
&  \quad+ \Lambda^{\mu+2k,\nu+2l}_{2,j-k} (\mu+\nu+2k+2l) \partial_i(\phi_k) \psi_l  +2\mE (\Lambda^{\mu+2k,\nu+2l}_{2,j-k}) \partial_i(\phi_k) \psi_l \nonumber
\\
& \quad+ \frac{\x_i}{\abs{X}}(k+l) \Lambda^{\mu+2k,\nu+2l}_{2,j-k})' \phi_k \psi_l - \frac{\x_i}{\abs{X}} (k-l) (\Lambda^{\mu+2k,\nu+2l}_{2,j-k})' \psi_k \phi_l \nonumber
\\
&= (\phi_{k+1,i}^+ +\abs{X}^2 \phi_{k-1,i}^-) \psi_l (j-k+\mu+2k+1) \Lambda^{\mu+2k+2,\nu+2l}_{2,j-k-1} \nonumber
\\
& \quad + (\mu+2k)\phi_{k-1,i}^- \psi_l \left( \mu +2k +\nu+2l +2\mE \right) \Lambda^{\mu+2k,\nu+2l}_{2,j-k} \nonumber
\\ \nonumber
&= (j+\mu+k+1) \phi_{k+1,i}^+ \psi_l \Lambda^{\mu+2(k+1),\nu+2l}_{2,j-(k+1)} + 4(j-k+1) \phi_{k-1,i}^- \psi_l \Lambda^{\mu+2(k-1),\nu+2l}_{2,j-(k-1)}.\nonumber
\end{align}
In the last two steps we used Corollary~\ref{Properties generalised Laguerre polynomials} and $s^2+\theta^2=\abs{X}^2 \mod R^2.$
\end{proof}

\begin{proposition} \label{Prop: structure Wj}
 Assume   $\nu\not\in -2\mN$. 
The decomposition of  $W_j=U(\mk) \Lambda^{\mu,\nu}_{2,j} (\abs{X})$  as a $\mk_0$-module is given by
\begin{align*}
W_j &= \bigoplus_{k=0}^j \bigoplus_{l=0}^{\tfrac{\mu-\nu}{2} +j} \Lambda^{\mu+2k,\nu + 2l}_{2,j-k} (\abs{X}) \cH_k (\mR^{\mu+2}) \otimes \cH_l(\mR^{\nu+2}) \text{ if } p + q \text{ is even}, \\
W_j &= \bigoplus_{k=0}^j \bigoplus_{l=0}^{\infty} \Lambda^{\mu+2k,\nu + 2l}_{2,j-k} (\abs{X}) \cH_k (\mR^{\mu+2}) \otimes \cH_l(\mR^{\nu+2})   \text{ if } p + q \text{ is odd}.
\end{align*} 
If $p\not=3$ and $q\not=3$, then $W_j$ is an indecomposable $\mk$-module. If we also have $p+q$ odd or $ j+\frac{\mu+\nu}{2}\geq 0$, then $W_j$ is a simple $\mk$-module. If $p=2$ or $q=2$, then $W_j$ is always finite-dimensional. 
\end{proposition}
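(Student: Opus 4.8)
The plan is to build the decomposition of $W_j$ from the bottom up, starting from the $\mk_0$-invariant radial vector $\Lambda^{\mu,\nu}_{2,j}(\abs{X})$ and repeatedly applying the Bessel-type operators $B^{\pm}_i$ from Lemma~\ref{Lemma: action of Bessel operators}, which together with $\mk_0$ generate $\mk$. First I would record that every element of $\mk$ lies in $\mk_0 + \Span\{B^{\pm}_i\}$ (this is the content of the TKK-realisation: the non-$\mk_0$ part of $\mk$ is spanned by the $\pi_C$-images of $J^{\plus}$ and $J^{\minus}$, i.e.\ by Bessel operators and multiplications, and modulo $R^2$ these reduce to the $B^{\pm}_i$). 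Then, applying Lemma~\ref{Lemma: action of Bessel operators} to $\phi_k\psi_l\Lambda^{\mu+2k,\nu+2l}_{2,j-k}(\abs{X})$, one sees that $B^+_i$ moves $(k,l)\mapsto(k\pm 1,l)$ while preserving $j$, and $B^-_i$ moves $(k,l)\mapsto(k,l\pm 1)$ preserving $j$; crucially the radial index $j-k$ and the superscripts adjust exactly so that one stays inside the span of the vectors $\Lambda^{\mu+2k,\nu+2l}_{2,j-k}(\abs{X})\,\cH_k(\mR^{\mu+2})\otimes\cH_l(\mR^{\nu+2})$. This shows $W_j$ is contained in the claimed sum, and since from $(0,0)$ one reaches every $(k,l)$ with $0\le k\le j$ (the raising $B^+$ stops at $k=j$ because the coefficient $4(j-k+1)$ of the lowering term and the structure of the Laguerre functions force $j-k\ge 0$) and $l$ in the stated range, one gets the reverse inclusion, establishing the two displayed decompositions according to the parity of $p+q$.

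Next I would address the ranges. For $p+q$ even, $\mu-\nu=p-2n-q$ or $q-p+2n$ is even and the Fischer/Laguerre machinery (the generalized Laguerre functions $\Lambda^{\mu+2k,\nu+2l}_{2,j-k}$ are polynomials of a controlled degree times $\widetilde{K}$, cf.\ Appendix~\ref{Section Bessel functions}) terminates at $l=\tfrac{\mu-\nu}{2}+j$; the coefficient $(j+\tfrac{\mu-\nu}{2}-l)$ in the $B^-$ formula vanishes precisely there, which both caps the sum and shows closure. For $p+q$ odd, $\tfrac{\mu-\nu}{2}$ is not an integer, that coefficient never vanishes, and the $l$-sum runs to infinity. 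The finiteness claims then follow by comparing with $\dim\cH_l(\mR^{\nu+2})$ from Proposition~\ref{Prop: dimension of spherical harmonics}: the $j$-sum is finite, the $k$-sum is finite ($0\le k\le j$), so $W_j$ is $\mk_0$-finite iff the $l$-sum is finite, which happens for $p+q$ even, or when $\nu+2=\mR^{1|\cdot}$ so that the relevant sphere is low-dimensional, i.e.\ $p=2$ or $q=2$ (where $\cH_l$ is one- or two-dimensional and only finitely many $l$ contribute in the odd case as well).

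For indecomposability and simplicity I would argue as follows. Indecomposability: any $\mk$-submodule $U$ of $W_j$ is a sum of $\mk_0$-isotypic components; using the explicit $B^{\pm}_i$-action, if $U$ contains one summand $\Lambda^{\mu+2k,\nu+2l}_{2,j-k}\cH_k\otimes\cH_l$ then applying $B^{\pm}$ (whose coefficients $(j+\mu+k+1)$, $(j+\tfrac{\mu-\nu}{2}-l)$, etc.\ are nonzero under the stated hypotheses $p\ne 3$, $q\ne 3$, which guarantee the $\mk_0$-modules $\cH_k(\mR^{\mu+2})$ and $\cH_l(\mR^{\nu+2})$ do not degenerate and that the normalising denominators $p-3-2n+2k$, $q-3+2k$ in \eqref{def phi+} are nonzero for the relevant $k$) one reaches the $(0,0)$ summand, hence all of them; so $W_j$ has a unique minimal submodule, forcing indecomposability. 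Simplicity: one needs, conversely, that from any nonzero vector one can reach $(0,0)$; the only obstruction is a vanishing coefficient among the lowering maps $B^+$ ($4(j-k+1)$, always nonzero for $k\le j$) and $B^-$ ($4(j+\tfrac{\mu+\nu}{2}+l)$), so when $j+\tfrac{\mu+\nu}{2}\ge 0$ all these are positive and $W_j$ is simple; when $p+q$ is odd the same coefficient $j+\tfrac{\mu+\nu}{2}+l$ runs over non-integer-shifted values and never hits zero, giving simplicity there too. The $\mk$-isomorphism $W_j\cong\cH_j(\mR^{\mu+3})\otimes\cH_{\frac{\mu-\nu}{2}+j}(\mR^{\nu+3})$ for $p+q$ even I would get by matching the $\mk_0$-decomposition above against the classical branching $\cH_j(\mR^{\mu+3})\downarrow\bigoplus_{k}\cH_k(\mR^{\mu+2})$ and similarly for the $\nu$-factor, together with a dimension count.

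The main obstacle I expect is bookkeeping: making sure the raising/lowering action of the $B^{\pm}_i$ genuinely closes up on the claimed spanning set modulo $R^2$ — this requires the recursion relations for the generalized Laguerre functions $\Lambda^{\mu,\nu}_{2,j}$ (the new ones proved in the Appendix) to be exactly the ones appearing when one collects the $\partial_{\abs{X}}$, $\partial^2_{\abs{X}}$ terms in Lemma~\ref{Lemma properties Bessel functions} and Lemma~\ref{Lemma: action of Bessel operators} — and the careful tracking of which coefficients vanish and when, since that single phenomenon controls the $l$-range, the $\mk_0$-finiteness, the indecomposability, and the simplicity all at once. The super aspect (the Grassmann variables $\theta_i$) is mostly harmless here because $\abs{X}$ is even and the $\theta$-dependence is packaged inside $\cH_k(\mR^{\mu+2})$ with $\mR^{\mu+2}=\mR^{p-1|2n}$; the only care needed is that Proposition~\ref{Prop: dimension of spherical harmonics} and the irreducibility of $\cH_k(\mR^{m|2n})$ require $m-2n\notin -2\mN$, which is implied by $\nu\notin -2\mN$ under the running conventions.
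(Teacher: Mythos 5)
Your strategy coincides with the paper's: write $\mk=\mk_0+\Span\{B^\pm_i\}$, use Lemma~\ref{Lemma: action of Bessel operators} to see that $B^+_i$ and $B^-_i$ shift the indices of the summands $(k,l):=\Lambda^{\mu+2k,\nu+2l}_{2,j-k}(\abs{X})\,\cH_k(\mR^{\mu+2})\otimes\cH_l(\mR^{\nu+2})$ by $\pm1$, and read off the decomposition and the cut-off at $l=\tfrac{\mu-\nu}{2}+j$ from the vanishing of the raising/lowering coefficients. However, your indecomposability argument contains a genuine error. You assert that any nonzero $\mk$-submodule containing one summand $(k,l)$ must contain the $(0,0)$ summand. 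Since $W_j$ is by definition generated by $(0,0)$, this would force every nonzero submodule to be all of $W_j$, i.e.\ it would prove \emph{simplicity} in all cases --- contradicting both the statement and your own subsequent discussion of the case $p+q$ even, $j+\tfrac{\mu+\nu}{2}<0$. The flaw is that reaching $(0,0)$ from $(k,l)$ requires the \emph{lowering} coefficient $4(j+\tfrac{\mu+\nu}{2}+l)$ of $B^-_i$ to be nonzero, and it vanishes at $l=-j-\tfrac{\mu+\nu}{2}$, which lies in the admissible range precisely when $j+\tfrac{\mu+\nu}{2}<0$; the coefficients you list, $(j+\mu+k+1)$ and $(j+\tfrac{\mu-\nu}{2}-l)$, are the raising ones and are irrelevant here. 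The correct argument (the paper's) is that $\bigoplus_k\bigoplus_{l\geq -j-\mu/2-\nu/2}(k,l)$ is a proper submodule not containing $(0,0)$, and that every nonzero submodule contains it, because one can always raise $l$ (the raising coefficient vanishes only at the top) and move $k$ freely; this exhibits a unique minimal submodule, hence indecomposability, with simplicity exactly when that block is all of $W_j$.

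A secondary gap: the displayed decomposition is claimed for all $p,q\geq 2$, but your argument silently uses that the $(k,l)$ are irreducible, pairwise non-isomorphic $\mk_0$-modules. This fails for $p=3$ or $q=3$, where $\cH_k(\mR^2)$ splits into two pieces (the paper recovers the decomposition there by noting that real harmonics have components in both pieces, and that $W_j$ is then in fact decomposable), and it degenerates for $p=2$ or $q=2$, where $\cH_k$ vanishes for large $k$ and where, in the case $p-2n<2$, one needs $\nu\notin-2\mN$ to force $p$ even. These cases need to be treated explicitly, as the paper does. The rest of your outline --- the range of $l$ according to the parity of $p+q$, simplicity for $p+q$ odd because $j+\tfrac{\mu+\nu}{2}+l$ is never an integer zero, and finiteness for $p=2$ or $q=2$ --- matches the paper's proof.
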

\begin{proof}
Denote by $(k,l)$ the space $\Lambda^{\mu+2k,\nu + 2l}_{2,j-k} (\abs{X}) \cH_k (\mR^{\mu+2}) \otimes \cH_l(\mR^{\nu+2})$.
We can extend a basis of $\mk_0$ with the elements $\imath(\cB_\lambda(x_i)-x_i)$, $i=1,\ldots, p-1$, $\imath(\cB_\lambda(y_i)+y_i)$, $i=1,\ldots, q-1$, $\imath(\cB_\lambda(\theta_i)-\theta_i)$, $i=1,\ldots, 2n$ to get a basis of $\mk$.

First assume $q>3$ and $p>3$. In that case $\cH_k (\mR^{\mu+2}) \otimes \cH_l(\mR^{\nu+2})$ is an irreducible $\mk_0$-module. By Lemma~\ref{Lemma: action of Bessel operators}, we can use $B_i^+$ to go from $(k,l)$ to $(k+1,l)$ and $(k-1,l)$.  Note that $(k,l)=0$ if $k>j$, because then $\Lambda^{\mu+2k,\nu+2l}_{2,j-k}(\abs{X})=0$. Similar we can use $B_i^-$ to go from $(k,l)$ to $(k,l+1)$ and $(k,l-1)$ if $l\not= j+\frac{\mu-\nu}{2}$ and $l \not= -j-\frac{\mu+\nu}{2}$. If $l=j+\frac{\mu-\nu}{2}$, then $B_i^-$ maps $(k,l)$ only to $(k,l-1)$ since the coefficient of the part in $(k,l+1)$ is zero. If $l=-j-\frac{\mu+\nu}{2}$ then $B_i^-$ maps $(k,l)$ to $(k,l+1)$ since now the coefficient of the part in $(k,l-1)$ is zero. Not that these two exceptional case can not occur if $p+q$ is odd, because then $\mu+\nu$ is also odd. The last case can also not occur if $j+\frac{\mu+\nu}{2}\geq 0$. Observe that $W_j$ is the $\mk$-module generated by $(0,0)$. Combining all this, we conclude that 
\[
W_j = \bigoplus_{k=0}^j \bigoplus_{l=0}^{\infty} (k,l),
\]
if $p+q$ is odd. This module is $\mk$-simple since the $(k,l)$ are simple $\mk_0$-modules and we can use $B_i^+$ and $B_i^-$ to go from $(k,l)$ to $(k',l')$ for all $0\leq k, k'\leq j$ and $0\leq l,l'\leq \infty$ . For $p+q$ even, we obtain
\[
W_j = \bigoplus_{k=0}^j \bigoplus_{l=0}^{\tfrac{\mu-\nu}{2} +j} (k,l),
\]
which is simple if $j+\frac{\mu+\nu}{2}\geq 0$. Otherwise  $W_j$ is still indecomposable but it has \[ \bigoplus_{k=0}^j \bigoplus_{l=-j-\tfrac{\mu+\nu}{2}}^{\tfrac{\mu-\nu}{2} +j} (k,l) \] as a simple submodule.

If $q=3$, then $\cH_k(\mR^{q-1})$ is no longer irreducible but decomposes in two submodules. However a real polynomial of degree $k$ has components in both the subspaces of $\cH_{k}(\mR^{q-1})$ and if $\phi_{k-1}$ is real, also $\phi_{k,i}^+$ will be real. Therefore we conclude that the whole $\cH_{k}(\mR^{q-1})$ is contained in $W_j$ and we still obtain
\begin{align*}
W_j &= \bigoplus_{k=0}^j \bigoplus_{l=0}^{\infty} (k,l)
\quad  \text{ if $p+q$ is odd and }\quad 
W_j = \bigoplus_{k=0}^j \bigoplus_{l=0}^{\tfrac{\mu-\nu}{2} +j} (k,l)\quad   \text{ if $p+q$ is even.}
\end{align*}
However, these modules are no longer indecomposable, since $\phi_{k} \in  \mC (x\pm \imath y)^k   $ implies $\phi_{k-1,i}^- \in  \mC (x\pm \imath y)^{k-1}  $ and $\phi_{k+1,i}^+ \in  \mC (x\pm \imath y)^{k+1} $. So $W_j$ decomposes in two submodules.  

For $q=2$ we have $\cH_k(\mR^{q-1})=0$ for $k\geq 2$. For $p-2n\geq 2$ one still obtains, 
\begin{align*}
W_j = \bigoplus_{k=0}^j \bigoplus_{l=0}^{\infty} (k,l)
\quad  \text{ if $p$ is odd and } \quad W_j = \bigoplus_{k=0}^j \bigoplus_{l=0}^{\tfrac{\mu-\nu}{2} +j} (k,l) \quad  \text{ if $p$ is even}
\end{align*}
but with $(k,l)=0$ if $l\geq2$. The module $W_j$ is finite-dimensional and simple. For $p-2n< 2$, the assumption $\nu \not\in -2\mN$ implies that $p$ is even. We then get
\begin{align*}
W_j = \bigoplus_{k=0}^{\min(j,1)} \bigoplus_{l=0}^{\tfrac{\mu-\nu}{2} +j} (k,l),
\end{align*}
which is simple if $j+\frac{\mu+\nu}{2}\geq0$ or $j=0$. For $j+\frac{\mu+\nu}{2}<0$ and $j\not=0$ it is indecomposable with $\bigoplus_{k=0}^1 \bigoplus_{l=-j-\frac{\mu+\nu}{2}}^{\tfrac{\mu-\nu}{2} +j} (k,l)$ as simple submodule.
For $p=3$, the assumption $\nu\not\in -2\mN$ implies $2n=0$ and $q=2$. Then this case is considered in the case $q=2$. However now $W_j$ is not simple since $\cH(\mR^{p-1})$ is not simple. For $p=2$ we have $\cH_k(\mR^{1|2n})=0$ if $k>2n+1.$ Therefore we get 
\begin{align*}
W_j = \begin{cases} \bigoplus_{k=0}^j \bigoplus_{l=0}^{2n+1}(k,l) \qquad\qquad\qquad\;\; \text{ if } q \text{ is odd} \\
\bigoplus_{k=0}^j \bigoplus_{l=0}^{\min(j+\frac{\mu-\nu}{2},2n+1)}(k,l) \qquad \text{ if } q \text{ is even}.\end{cases}
\end{align*}
Then $W_j$ is simple if $q\not=3$. For $q=3$, it decomposes in two simple submodules. 
\end{proof}

\begin{proposition} \label{Structure W}
If  $\nu \not\in -2\mN$, then we have
\[
W= \bigoplus_{j=0}^\infty W_j.
\]
\end{proposition}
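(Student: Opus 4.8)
The statement bundles two claims: that the sum $\sum_{j\ge 0}W_j$ is direct, and that it is all of $W$. I would prove directness first and then the equality by showing that $\bigoplus_j W_j$ is a $\mg$-submodule and, conversely, that every $W_j$ lies in $W$. For directness, recall from Proposition~\ref{Prop: structure Wj} that the $\cH_k(\mR^{\mu+2})\otimes\cH_l(\mR^{\nu+2})$-isotypic component of $W_j$ is that harmonic space multiplied by the single generalised Laguerre function $\Lambda^{\mu+2k,\nu+2l}_{2,j-k}(\abs{X})$. Since $\nu\notin-2\mN$, the Fischer decomposition applies on $C$, so inside $\Gamma(\cO_C)/\langle R^2\rangle$ the full $\cH_k(\mR^{\mu+2})\otimes\cH_l(\mR^{\nu+2})$-isotypic subspace is that harmonic space tensored with the space of all radial superfunctions; as the $\Lambda^{\mu+2k,\nu+2l}_{2,m}(\abs{X})$, $m\ge 0$, are linearly independent (Appendix), the contributions of distinct $W_j$ to any fixed $\mk_0$-isotypic component are linearly independent, so $\sum_j W_j=\bigoplus_j W_j$.

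Next, since $W_0=U(\mk)\widetilde{K}_{\frac{\nu}{2}}(\abs{X})\subseteq U(\mg)\widetilde{K}_{\frac{\nu}{2}}(\abs{X})=W$ and $\widetilde{K}_{\frac{\nu}{2}}(\abs{X})$ generates $W$, we have $W=U(\mg)W_0$, so it suffices to show $\bigoplus_j W_j$ is $\mg$-stable. It is $\mk$-stable by construction; writing $\mg=\mk\oplus\mathfrak{p}$, the complement $\mathfrak{p}$ is spanned (modulo $\mk$) by the ``opposite'' generators $\cB_\lambda(z_i)+z_i$ (resp.\ $\cB_\lambda(y_i)-y_i$), by the Euler operator $\mE=-\pi_C(L_e)$, and by the mixed rotations $L_{ij}$ pairing an $\mR^{\mu+2}$-index with an $\mR^{\nu+2}$-index, together with the multiplication operators $\pi_C(J^{\minus})$. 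I would check that each such $Y$ maps a piece $\Lambda^{\mu+2k,\nu+2l}_{2,j-k}(\abs{X})\,\cH_k\otimes\cH_l$ of $W_j$ into $W_{j-1}\oplus W_j\oplus W_{j+1}$: for the opposite-Bessel generators this is the computation of Lemma~\ref{Lemma: action of Bessel operators} with the sign of the multiplication term reversed, combined with the contiguous relations for the $\Lambda^{\mu,\nu}_{2,j}$ that re-expand a radial factor after a shift of the parameters $\mu+2k$, $\nu+2l$; for $\mE$ and for the mixed rotations one uses Lemma~\ref{Lemma properties Bessel functions} together with the three-term recursion expressing $\abs{X}\,\partial_{\abs{X}}\Lambda^{a,b}_{2,m}(\abs{X})$ through $\Lambda^{a,b}_{2,m-1},\Lambda^{a,b}_{2,m},\Lambda^{a,b}_{2,m+1}$. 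Summing over $k,l$ gives $\mathfrak{p}\cdot W_j\subseteq W_{j-1}\oplus W_j\oplus W_{j+1}$, so $\bigoplus_j W_j$ is $\mg$-stable and hence contains $W$.

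For the reverse inclusion I would prove $W_j\subseteq W$ by induction on $j$, the case $j=0$ being the observation above. Assuming $W_0,\dots,W_j\subseteq W$, apply $\pi_C(0,0,e_i)=-\imath z_i$ to $\Lambda^{\mu,\nu}_{2,j}(\abs{X})\in W_j\subseteq W$: decomposing $z_i\cdot 1$ by \eqref{def phi+} (whose denominators are nonzero since $\nu\notin-2\mN$) and re-expanding the radial factor in the parameter $\mu+2$, one finds, modulo $W_0\oplus\dots\oplus W_j\subseteq W$, a nonzero multiple of $\phi^+_{1,i}\Lambda^{\mu+2,\nu}_{2,j}(\abs{X})$, a vector of the $\cH_1(\mR^{\mu+2})\otimes\cH_0$-isotypic piece of $W_{j+1}$. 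Letting $i$ vary spans that whole piece; applying a generator $B^+_{i'}\in\mk$ and extracting the ``$\phi^-_{0}$''-term in Lemma~\ref{Lemma: action of Bessel operators} produces a nonzero multiple of $\Lambda^{\mu,\nu}_{2,j+1}(\abs{X})$, and since $W$ is $\mk$-stable we get $\Lambda^{\mu,\nu}_{2,j+1}(\abs{X})\in W$, whence $W_{j+1}=U(\mk)\Lambda^{\mu,\nu}_{2,j+1}(\abs{X})\subseteq W$. (This last step uses only that the $(1,0)$-piece $\mk$-generates $W_{j+1}$, which the $B^\pm$-ladder of Lemma~\ref{Lemma: action of Bessel operators} gives even in the indecomposable and the $p=3$, $q=3$ cases of Proposition~\ref{Prop: structure Wj}.)

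The main obstacle is the $\mg$-stability step: it requires having in hand the exact list of recursion relations for the generalised Laguerre functions $\Lambda^{\mu,\nu}_{2,j}$ — both the three-term recursion in $j$ and the contiguous relations shifting $\mu$ or $\nu$ by $2$ — and verifying that after decomposing products such as $z_i\phi_k$ and $\abs{X}^2\Lambda^{a,b}_{2,m}(\abs{X})$ into spherical harmonics no term escapes the window $\{j-1,j,j+1\}$, with the boundary behaviour ($\Lambda^{\cdots}_{2,-1}=0$, the truncation of the $l$-sum when $p+q$ is even, and the reducibility of the $W_j$ when $p=3$ or $q=3$) matching the case analysis of Proposition~\ref{Prop: structure Wj}; the hypothesis $\nu\notin-2\mN$ is precisely what keeps the Fischer decomposition valid and the denominators in \eqref{def phi+} nonzero. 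Once those recursions are established in the Appendix, the remainder is routine bookkeeping.
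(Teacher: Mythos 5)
Your overall architecture (show $\sum_j W_j$ is direct, show $\bigoplus_j W_j$ is $\mg$-stable, show each $W_j\subseteq W$) matches the paper's, and your explicit directness argument via the Fischer decomposition and linear independence of the $\Lambda^{\mu+2k,\nu+2l}_{2,m}$ fills in a point the paper leaves implicit. The essential divergence is in how $\mg$-stability is obtained. The paper does \emph{not} check the opposite Bessel generators, the mixed rotations and the multiplication operators one by one; it uses the identity $\mathfrak{p}=[\mk,[\mk,L_e]]$, so that every $X\in\mg$ can be written as $Y_1+[Y_2,[Y_3,L_e]]$ with $Y_i\in\mk$. Since $\mk$ preserves each $W_j$ and a single computation (Proposition~\ref{action Le on Laguerre functions}, the three-term recursion in $j$) shows $L_e$ maps $W_j$ into $W_{j-1}\oplus W_{j+1}$, the window property for all of $\mg$ follows formally. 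The same single computation handles the reverse inclusion: the coefficient of the $W_{j+1}$-component of $\pi_\lambda(-L_e)\Lambda^{\mu,\nu}_{2,j}$ is $\tfrac{(j+1)(j+\mu+1)}{2j+\mu+1}\neq 0$ (with $j=0$, $\mu=-1$ treated separately), so iterating $L_e$ on $W_0$ already produces every $\Lambda^{\mu,\nu}_{2,j}$, with no need for the multiplication operators or the $\phi^{\pm}$-decomposition.

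Be aware that what you defer as ``routine bookkeeping'' is precisely the part this trick is designed to avoid, and it is not covered by the paper's Appendix: the action of multiplication by $z_i$ (and hence of $\cB_\lambda(z_i)+z_i$ and of the mixed $L_{ij}$) on $\phi_k\psi_l\Lambda^{\mu+2k,\nu+2l}_{2,j-k}$ requires contiguous relations re-expanding $\Lambda^{a,b}_{2,m}$ and $\abs{X}^2\Lambda^{a,b}_{2,m}$ after a shift $a\mapsto a\pm 2$, which are nowhere established (Corollary~\ref{Properties generalised Laguerre polynomials} goes only in one direction). Your route can almost certainly be completed, but you would first have to derive and verify those relations, including the boundary cases; I would strongly recommend replacing that entire step by the commutator argument above.
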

\begin{proof}
Using $\pi_\lambda(L_e)= \frac{\lambda}{2}-\mE$ and Proposition~\ref{action Le on Laguerre functions}, we obtain for $\lambda=-( \mu+\nu+2)$ 
\begin{align*}
&\pi_\lambda(-L_e)  \left(\phi_k\psi_l \Lambda^{\mu+2k,\nu+2l}_{2,j-k}(\abs{X})\right) \\
&= \phi_k\phi_l\left(\left(\mE+\frac{\mu+\nu+2}{2}\right) \Lambda^{\mu+2k,\nu+2l}_{2,j-k}(\abs{X})\right)
+(k+l)\phi_k\psi_l \Lambda^{\mu+2k,\nu+2l}_{2,j-k}(\abs{X})
\\ 
&= \tfrac{(j-k+1)(j+k+\mu+1)}{2j+\mu+1}\phi_k\psi_l\Lambda^{\mu+2k,\nu+2l}_{2,j+1-k}(\abs{X})
 - \tfrac{(j+l+\frac{\mu+\nu}{2})(j-l+\frac{\mu-\nu}{2})}{2j+\mu+1}\phi_k\psi_l\Lambda^{\mu+2k,\nu+2l}_{2,j-1-k}(\abs{X}),
\end{align*}
if  $j\not=0$. If $j=0$, then 
\begin{align*}
\pi_\lambda(-L_e)  &\left(\psi_l \Lambda^{\mu,\nu+2l}_{2,0}(\abs{X})\right) = \psi_l\Lambda^{\mu,\nu+2l}_{2,1}.
\end{align*}
So the result of the action of $L_e$  on $W_j$ has a non-zero component in $W_{j+1}$. By repeatedly acting by $L_e$ on $W_0$ we obtain \[ \bigoplus_{j=0}^\infty W_j\subseteq W.\]

We will now show that the action of an element $X$ in $\mg$ on $W_j$ is contained in $W_{j-1}\oplus W_j \oplus W_{j+1}$. Here we set $W_{-1}=0$.
We have $\mg=\mk+ \mathfrak{p}$ with $\mathfrak{p}=[\mk,[\mk,L_e]]$. Hence, we can write every $X \in \mg$ as
\[
X= Y_1  + [Y_2, [ Y_3,L_e]], 
\]
where $Y_1,Y_2,Y_3$ are in $\mk$. 
Because $\mk$ leaves $W_j$ invariant and $L_e$ maps $W_j$ into $W_{j-1}\oplus W_{j+1}$, also $X$ maps $W_j$ into $W_{j-1}\oplus W_j\oplus W_{j+1}$. Therefore 
 \[ W\subseteq \bigoplus_{j=0}^\infty W_j,\]
 which proves the proposition.
\end{proof}
\begin{corollary}\label{Corollary W simple}
Assume $\nu \not\in -2\mN$ and $q\not=3$ and $p\not= 3$. 
Then $W$ is a simple $\mg$-module if   $p+q$ is odd or $\mu+\nu\geq 0$ or $q=p-2n=2$ or $p=2$. Otherwise it is still indecomposable and has
\begin{align*}
\bigoplus_{j=0}^\infty \bigoplus_{k=0}^j \bigoplus_{l=\min(0, -j-\frac{\mu+\nu}{2})}^{\tfrac{\mu-\nu}{2} +j} \Lambda^{\mu+2k,\nu + 2l}_{2,j-k} (\abs{X}) \cH_k (\mR^{\mu+2}) \otimes \cH_l(\mR^{\nu+2}),
\end{align*}
as simple submodule.
\end{corollary}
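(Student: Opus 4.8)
The plan is to deduce the corollary from the two preceding propositions. Recall from Proposition~\ref{Structure W} that $W=\bigoplus_{j\geq0}W_j$ with $W_j=U(\mk)\Lambda^{\mu,\nu}_{2,j}(\abs{X})$, and from Proposition~\ref{Prop: structure Wj} the structure of each $W_j$ as a $\mk$-module. The first step is a reduction: in every one of the listed cases each $W_j$ is a \emph{simple} $\mk$-module, and in the remaining case $W_j$ is indecomposable with a \emph{simple} socle; moreover $W_j$ and $W_{j'}$ have no composition factor in common for $j\neq j'$ (the largest $k$ occurring in the $\mk_0$-decomposition of $W_j$ from Proposition~\ref{Prop: structure Wj} is $j$; equivalently, in the even case $W_j\cong\cH_j(\mR^{\mu+3})\otimes\cH_{\frac{\mu-\nu}{2}+j}(\mR^{\nu+3})$). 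Hence the $W_j$ are the blocks of $W\vert_{\mk}$, and every $\mg$-submodule $V\subseteq W$ is graded: $V=\bigoplus_j(V\cap W_j)$.

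Second, I would turn simplicity into a statement about the grading. Since $\mg=\mk+[\mk,[\mk,L_e]]$ (as in the proof of Proposition~\ref{Structure W}), it suffices to control the action of $L_e$, and the formula derived there gives $\pi_\lambda(-L_e)\Lambda^{\mu,\nu}_{2,j}(\abs{X})=c_j^{+}\Lambda^{\mu,\nu}_{2,j+1}(\abs{X})+c_j^{-}\Lambda^{\mu,\nu}_{2,j-1}(\abs{X})$, with $c_j^{+}\neq0$ for all $j\geq0$ and with $c_j^{-}$ vanishing, among the positive integers, only at $j=-\tfrac{\mu+\nu}{2}$ (the other numerator zero $\tfrac{\nu-\mu}{2}$ is $\leq0$). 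As $\Lambda^{\mu,\nu}_{2,j+1}(\abs{X})$ generates $W_{j+1}$ over $\mk$ and $V$ is graded, the set of $j$ with $W_j\subseteq V$ is always closed under $j\mapsto j+1$. When $p+q$ is odd, $\mu+\nu$ is odd so $-\tfrac{\mu+\nu}{2}\notin\mN$; when $\mu+\nu\geq0$, $-\tfrac{\mu+\nu}{2}\leq0$; in both cases $c_j^{-}\neq0$ for every $j\geq1$, so this set is also closed downward, hence equals $\mN$ and $V=W$, proving simplicity. For the two remaining cases $q=p-2n=2$ and $p=2$ — both forcing $\mu\leq 0$ — I would instead invoke the truncated descriptions of $W_j$ in Proposition~\ref{Prop: structure Wj} ($\cH_k(\mR^{q-1})=0$ for $k\geq2$ when $q=2$, and $\cH_k(\mR^{1|2n})=0$ for $k>2n+1$ when $p=2$) and redo the coefficient bookkeeping to exclude a proper graded submodule.

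Third, in the remaining case ($p+q$ even, $p,q\neq3$, $\mu+\nu<0$, and not one of the two degenerate cases) the plan is to identify the displayed module $W^{\natural}$ with the socle of $W$ and then deduce indecomposability. That $W^{\natural}$ is a $\mg$-submodule follows from combining Proposition~\ref{Prop: structure Wj} (its degree-$j$ part is a $\mk$-submodule of $W_j$: the simple submodule of $W_j$ for $j$ below the threshold $-\tfrac{\mu+\nu}{2}$, all of $W_j$ above it) with Lemma~\ref{Lemma: action of Bessel operators} and the $L_e$-formula, which show that every transition that would leave the index window $l\geq\max(0,-j-\tfrac{\mu+\nu}{2})$ — the $l\mapsto l-1$ term of $B_i^{-}$ and the $j\mapsto j-1$ term of $L_e$ — has coefficient vanishing identically on the boundary $l=-j-\tfrac{\mu+\nu}{2}$. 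Simplicity of $W^{\natural}$ then follows from the ladder argument of the previous paragraph applied inside $W^{\natural}$, the raising coefficients on the boundary corner vectors being nonzero; and $W$ is indecomposable because $W^{\natural}$ is essential — any nonzero $\mg$-submodule is graded, its lowest nonzero component contains the simple socle of the corresponding $W_j$, and raising then forces it to contain all of $W^{\natural}$.

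The main obstacle I anticipate is twofold. First, the degenerate cases $q=p-2n=2$ and $p=2$: here $\mu=-1$, several index ranges collapse, and some of the generic coefficients degenerate, so the argument above has to be carried out by hand using the explicit truncated forms of $W_j$. Second, and underlying the whole ``otherwise'' part, is the verification that the transition coefficients appearing in Lemma~\ref{Lemma: action of Bessel operators} and in the $L_e$-formula vanish \emph{exactly} on the boundary $l=-j-\tfrac{\mu+\nu}{2}$ of the window defining $W^{\natural}$; this boundary cancellation is precisely what makes $W^{\natural}$ a $\mg$-submodule and what produces the non-trivial simple submodule.
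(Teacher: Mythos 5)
Your proposal is correct in substance and follows essentially the same route as the paper: the paper's own proof is a two-line remark that combines Proposition~\ref{Prop: structure Wj}, the decomposition and $L_e$-formula from (the proof of) Proposition~\ref{Structure W}, and the observation that $L_e$ carries the simple submodule of $W_j$ into those of $W_{j\pm1}$ when $\frac{\mu+\nu}{2}+j<0$ — exactly the ladder/boundary-cancellation argument you describe, which you merely spell out in more detail (including the reduction to graded submodules and the identification of the zeros of the transition coefficients).
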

\begin{proof}
Remark that $L_e$ maps the simple submodule of $W_j$ into the simple submodules of $W_{j-1}$ and $W_{j+1}$ for $\frac{\mu+\nu}{2}+j <0$. Then the corollary follows simply from Proposition~\ref{Prop: structure Wj} and (the proof of) Proposition~\ref{Structure W}.
\end{proof}

\begin{proposition} \label{Prop: structure Wj II}
For $p+q$ even and $\nu\not\in -2\mN$ we have
\begin{align*}
W_j &\cong \cH_{j} (\mR^{\mu+3}) \otimes \cH_{\frac{\mu-\nu}{2}+j} (\mR^{\nu+3}),
\end{align*}
as $\mk$-module. 
\end{proposition}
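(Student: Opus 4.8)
The plan is to deduce this $\mathfrak{k}$-module isomorphism from the $\mathfrak{k}_0$-decomposition in \eqref{eq structure W_j even} together with a branching argument. The right-hand side $\cH_j(\mR^{\mu+3})\otimes\cH_{\frac{\mu-\nu}{2}+j}(\mR^{\nu+3})$ is an irreducible $\mathfrak{k}$-module (here $\mathfrak{k}=\mathfrak{osp}(\mu+3|2n)\oplus\mathfrak{so}(\nu+3)$ or the version with the roles swapped, using the identifications $\mR^{\mu+3}$ and $\mR^{\nu+3}$ fixed just before Lemma~\ref{Lemma: action of Bessel operators}), since each tensor factor is irreducible over the corresponding summand by the Proposition on irreducibility of spherical harmonics. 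So the statement amounts to showing that $W_j$, which we already know to be a simple $\mathfrak{k}$-module when $p+q$ is even (this is part of Proposition~\ref{Prop: structure Wj}, at least away from the $q=3$, $p=3$ edge cases — I would either assume $p,q\neq 3$ here or treat those separately as in that proof), is isomorphic to this specific simple module. Since a simple module is determined by its restriction to $\mathfrak{k}_0$, it suffices to match the $\mathfrak{k}_0$-decompositions.

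First I would write down the branching rule for $\cH_N(\mR^{\mu+3})$ restricted from $\mathfrak{osp}(\mu+3|2n)$ (or $\mathfrak{so}(\mu+3)$) to its subalgebra $\mathfrak{osp}(\mu+2|2n)$ (resp.\ $\mathfrak{so}(\mu+2)$): one has $\cH_N(\mR^{\mu+3})|_{\mathfrak{k}_0\text{-factor}}\cong\bigoplus_{k=0}^{N}\cH_k(\mR^{\mu+2})$, which is the classical ``$\mathfrak{so}(d+1)\downarrow\mathfrak{so}(d)$'' branching and its known super-analogue. Applying this to both tensor factors with $N=j$ and $N=\frac{\mu-\nu}{2}+j$ respectively gives
\begin{align*}
\cH_j(\mR^{\mu+3})\otimes\cH_{\frac{\mu-\nu}{2}+j}(\mR^{\nu+3})\Big|_{\mathfrak{k}_0}
\cong\bigoplus_{k=0}^{j}\bigoplus_{l=0}^{\frac{\mu-\nu}{2}+j}\cH_k(\mR^{\mu+2})\otimes\cH_l(\mR^{\nu+2}),
\end{align*}
which matches \eqref{eq structure W_j even} as $\mathfrak{k}_0$-modules term by term (the Laguerre superfunction prefactors $\Lambda^{\mu+2k,\nu+2l}_{2,j-k}(\abs{X})$ are just nonzero scalars from the $\mathfrak{k}_0$-module viewpoint, carrying no $\mathfrak{k}_0$-action). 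Then I would invoke: two simple $\mathfrak{k}$-modules with the same $\mathfrak{k}_0$-socle filtration — here with literally the same $\mathfrak{k}_0$-composition series, each appearing with multiplicity one — must be isomorphic. Multiplicity one is crucial and it holds because the $\cH_k(\mR^{\mu+2})\otimes\cH_l(\mR^{\nu+2})$ for distinct $(k,l)$ are pairwise non-isomorphic irreducible $\mathfrak{k}_0$-modules.

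A cleaner alternative, which I would probably prefer to make rigorous, is to construct the isomorphism directly: build an explicit $\mathfrak{k}$-equivariant map from $\cH_j(\mR^{\mu+3})\otimes\cH_{\frac{\mu-\nu}{2}+j}(\mR^{\nu+3})$ into $\Gamma(\cO_C)$ landing in $W_j$, sending a lowest-type vector to $\widetilde K_{\nu/2}(\abs{X})$ (up to scalar), and check it is nonzero hence injective by simplicity of the source, and surjective onto $W_j$ by simplicity of $W_j$ (both known to be simple). The equivariance is checked on generators of $\mathfrak{k}$; the action of the ``Bessel-type'' generators of $\mathfrak{k}$ not in $\mathfrak{k}_0$ is exactly computed in Lemma~\ref{Lemma: action of Bessel operators}, and the recursion coefficients $(j+\mu+k+1)$, $4(j-k+1)$, $-(j+\frac{\mu-\nu}{2}-l)$, $-4(j+\frac{\mu+\nu}{2}+l)$ there should be matched against the corresponding structure constants for the $\mathfrak{so}/\mathfrak{osp}$-action on $\cH_N(\mR^{\mu+3})$, $\cH_N(\mR^{\nu+3})$ under the analogous ``raising/lowering via one extra coordinate'' description of spherical harmonics.

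The main obstacle I anticipate is precisely this last comparison of structure constants: one must identify the extra variable used to go from $\cH_k(\mR^{\mu+2})$ up to $\cH_{k+1}(\mR^{\mu+3})$ with the $\abs{X}$-direction (so that the Laguerre index shift $j-k\mapsto j-k-1$ plays the role of the ``$(\mu+3)$rd coordinate'' in the harmonic on $\mR^{\mu+3}$), and likewise match the $\cH_l(\mR^{\nu+2})\to\cH_{l+1}(\mR^{\nu+3})$ step — and then verify that the coefficients produced by the Laguerre recursions (Corollary~\ref{Properties generalised Laguerre polynomials} and the relations in the Appendix) agree, up to a uniform renormalisation of basis vectors, with the Gegenbauer/spherical-harmonic recursion coefficients on $\mR^{\mu+3}$ and $\mR^{\nu+3}$. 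This is bookkeeping-heavy but mechanical. The soft argument via ``same $\mathfrak{k}_0$-character $+$ simplicity $\Rightarrow$ isomorphic'' sidesteps the structure-constant matching entirely and is likely what I would actually write, using Proposition~\ref{Prop: structure Wj II}'s hypotheses together with the simplicity of $W_j$ from Proposition~\ref{Prop: structure Wj} and the standard branching rule for spherical harmonics.
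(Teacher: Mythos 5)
Your second, ``explicit intertwiner'' route is essentially the paper's proof: the authors write down the map $\Phi$ of \eqref{definition isomorphism} sending $\phi_k\psi_l\Lambda^{\mu+2k,\nu+2l}_{2,j-k}(\abs{X})$ to an explicit multiple of $\phi_k\psi_l\,S^{j-k}T^{j-l+\frac{\mu-\nu}{2}}\widetilde{C}^{k+\frac{\mu+1}{2}}_{j-k}(s_0/S)\widetilde{C}^{l+\frac{\nu+1}{2}}_{j-l+\frac{\mu-\nu}{2}}(t_0/T)$, check $\mk_0$-equivariance trivially (since $S,T$ are $\mk_0$-invariant) and equivariance for the remaining generators by matching the coefficients of Lemma~\ref{Lemma: action of Bessel operators} against Gegenbauer recursions --- exactly the ``bookkeeping'' you anticipated --- and then conclude injectivity from non-vanishing on the simple submodule plus a dimension count. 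So if you carry out that version, you are on the paper's track.

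However, the route you say you would actually write --- the ``soft'' argument --- has a genuine gap. First, the inference ``two simple $\mk$-modules with the same multiplicity-free $\mk_0$-decomposition are isomorphic'' is not a valid general principle: restriction to a subalgebra does not separate simple modules without an additional injectivity-of-branching statement (of Gelfand--Tsetlin type), and no such statement is available or cited here for $\mathfrak{osp}(\mu+3|2n)\downarrow\mathfrak{osp}(\mu+2|2n)$; you would have to prove it, which is comparable in difficulty to the explicit construction. Second, and more decisively, the hypotheses of the proposition ($p+q$ even, $\nu\notin-2\mN$) do \emph{not} guarantee simplicity of either side: by Proposition~\ref{Prop: structure Wj}, when $j+\frac{\mu+\nu}{2}<0$ the module $W_j$ is only indecomposable with a proper simple submodule, and correspondingly $\cH_{\frac{\mu-\nu}{2}+j}(\mR^{\nu+3})$ is reducible indecomposable (see the remark following the proof, citing \cite[Theorem 5.2]{Coulembier}). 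In that range the simplicity-based argument does not apply at all, and ``same $\mk_0$-composition factors'' certainly does not pin down an indecomposable module. The explicit intertwiner handles these cases uniformly, which is why the paper takes that route; note also that even there the injectivity step needs the socle of $W_j$ to be simple (so that any nonzero submodule meets it), rather than simplicity of $W_j$ itself.
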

Let $s_0$ and $t_0$ be the extra even coordinates from extending $\mR^{\mu+2}$ and $\mR^{\nu+2}$ to $\mR^{\mu+3}$ and $\mR^{\nu+2}$. Define also
$S= \sqrt{s^2+\theta^2+s_0^2}$, $T= \sqrt{t^2+t_0^2}$ if $p-2n\geq q$ or $S= \sqrt{t^2+s_0^2}$, $T= \sqrt{s^2+\theta^2+t_0^2}$ if $p-2n< q$.

The $\mk$-intertwining map is explicitly given by $\Phi \colon$
\begin{align} \label{definition isomorphism}
\begin{aligned}
 \bigoplus_{k=0}^j \bigoplus_{l=0}^{\frac{\mu-\nu}{2}+j} \Lambda^{\mu+2k,\nu + 2l}_{2,j-k}(\abs{X}) \cH_k (\mR^{\mu+2}) \otimes \cH_l(\mR^{\nu+2}) 
  \to \cH_{j} (\mR^{\mu+3}) \otimes \cH_{\frac{\mu-\nu}{2}+j} (\mR^{\nu+3})  
\\
\phi_k \psi_l \Lambda^{\mu+2k,\nu + 2l}_{2,j-k}(\abs{X}) 
\mapsto c_kd_l \phi_k \psi_l  S^{j-k} T^{j-l+\frac{\mu-\nu}{2}} \widetilde{C}^{k+\frac{\mu+1}{2}}_{j-k}\left(\frac{s_0}{S} \right) \widetilde{C}^{l+\frac{\nu+1}{2}}_{j-l + \frac{\mu-\nu}{2}}\left(\frac{t_0}{T} \right),
\end{aligned}
\end{align}
where $\widetilde{C}_{n}^{\lambda}(z)$ are the normalised Gegenbauer polynomial introduced in \ref{Sect Gegenbauer polynomials}.
The constants $c_k$ and $d_l$ are given by
\[
c_k = \frac{(-4\imath )^k }{(\mu+j+1)_k }, \qquad d_l = \frac{(4\imath )^l }{(-j-\frac{\mu-\nu}{2})_l }
\]
where we used the Pochhammer symbol $(a)_k = a (a+1) (a+2) \cdots (a+k-1). $

\begin{proof}
Assume $p-2n\geq q$; the case $p-2n<q$ is again similar.
A straightforward calculation shows that the right-hand side of \eqref{definition isomorphism} is indeed contained in $\cH_{j} (\mR^{p|2n}) \otimes \cH_{\frac{\mu-\nu}{2}+j} (\mR^{q})$.
We need to prove
\begin{enumerate} 
\item  $ L_{ij} \left(\Phi\left(\phi_k \psi_l \Lambda^{\mu+2k,\nu + 2l}_{2,j-k}\right)\right) = \Phi \left( L_{ij}\left(\phi_k \psi_l \Lambda^{\mu+2k,\nu + 2l}_{2,j-k}\right)  \right) $ for $L_{ij} \in \mk_0$,\label{enum 1}
\item $2L_{k0} \left(\Phi\left(\phi_k \psi_l \Lambda^{\mu+2k,\nu + 2l}_{2,j-k}\right)\right) = \Phi \left( \imath (\cB_\lambda(\x_k)-\x_k) \left(\phi_k \psi_l \Lambda^{\mu+2k,\nu + 2l}_{2,j-k}\right)  \right) $ \label{enum 2}
\item 
$2L_{kq} \left(\Phi\left(\phi_k \psi_l \Lambda^{\mu+2k,\nu + 2l}_{2,j-k}\right)\right) = \Phi \left( -\imath (\cB_\lambda(y_k)+y_k) \left(\phi_k \psi_l \Lambda^{\mu+2k,\nu + 2l}_{2,j-k}\right)  \right) $. \label{enum 3}
\end{enumerate}
Because $S$ and $T$ are $\mk_0$-invariant \eqref{enum 1} follows immediately. The cases  \eqref{enum 2} and \eqref{enum 3} can be shown by a straightforward calculation using Lemma~\ref{Lemma: action of Bessel operators} and the properties of the the Gegenbauer polynomial mentioned in \ref{Sect Gegenbauer polynomials}.

 We conclude that $\Phi$ is a $\mk$-intertwining map. One can check that there exists an element in the simple submodule of $W_j$ with non-zero image. Hence $\Phi$ is injective. Since $\dim W_j = \dim  \cH_{j} (\mR^{\mu+3}) \otimes \cH_{\frac{\mu-\nu}{2}+j} (\mR^{\nu+3})$, we conclude that $\Phi$ is an isomorphism.
\end{proof}
\begin{remark}{\rm
If $\nu \in -2\mN-1$, then the $\mathfrak{osp}(p|2n)$-module $\cH_k(\mR^{\nu+3})$ is irreducible if $k> -1-\nu$ or $k<-\frac{1-\nu}{2}$.  It is always indecomposable. This is \cite[Theorem 5.2]{Coulembier}. This is in correspondence with Proposition~\ref{Prop: structure Wj}, since $\frac{\mu-\nu}{2}+j>-1-\nu$ is equivalent with $\frac{\mu+\nu}{2}+j\geq 0,$ which was the condition for irreducibility of $W_j$.
}
\end{remark}

\subsection{Harish-Chandra supermodules} \label{Section Harish-Chandra supermodules}
Let $G=(G_0, \mg, \sigma)$ be a Lie supergroup such that $G_0$ is almost connected and real reductive. Let $K$ be a maximal compact subgroup of $G_0$. 
\begin{definition}[{\cite[Definition 4.1]{Alldridge}}]
Let $V$ be a complex super-vector space. Then $V$ is a $(\mg,K)$-module if it is a locally finite $K$-representation which has also a compatible $\mg$-module structure, i.e.\  the derived action of $K$ agrees with the $\ck$-module structure and \[ k\cdot (X \cdot v) = (\sigma(k)X) \cdot (k\cdot v)  \text{ for all } k\in K,\; X \in \mg,\; v \in V. \]
A $(\mg,K)$-module is a Harish-Chandra supermodule if it is finitely generated over $U(\mg)$ and is $K$-multiplicity finite. 
\end{definition}
A $K$-module $W$ is $K$-multiplicity finite if every simple $K$-module occurs only a finite number of times in the decomposition of $W$.

Let $G=(O(p,q) \times Sp(2n,\mR),\TKK(J), \sigma)$ be the conformal Lie supergroup defined in Section~\ref{Section conformal group} and $K= O(p)\times O(q) \times U(n)$ be the maximal compact subgroup of $O(p,q) \times Sp(2n,\mR)$. The Lie algebra of $K$ is given by $\ck =\mathfrak{so}(p)\oplus\mathfrak{so}(q) \oplus \mathfrak{u}(n)$.
If $p+q$ is even and $\nu \not\in -2\mN$, we can define a $K$ representation on the $\mg$-module $W$ using the natural action of $O(p)\times Sp(2n,\mR) $ on $\cH_{l} (\mR^{p|2n})$ and the action of $O(q)$ on $\cH_l(\mR^q)$. 
\begin{proposition} \label{Prop: W is a Harish-Chandra module}
For $p+q$ even and $\nu\not \in -2\mN$ the module $W$ is a Harish-Chandra supermodule.
\end{proposition}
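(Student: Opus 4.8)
The plan is to verify the two defining properties of a Harish-Chandra supermodule from Alldridge's definition: that $W$ is a $(\mg,K)$-module which is finitely generated over $U(\mg)$, and that it is $K$-multiplicity finite. First I would address the $(\mg,K)$-module structure. Theorem~\ref{Theorem structure W}(3) gives that for $p+q$ even and $\nu\not\in-2\mN$ each $W_j=U(\mk)\Lambda^{\mu,\nu}_{2,j}(\abs{X})$ is $\mk$-finite and carries the explicit decomposition \eqref{eq structure W_j even}, in which the spaces of spherical harmonics $\cH_k(\mR^{\mu+2})$ and $\cH_l(\mR^{\nu+2})$ appear; moreover $W_j\cong\cH_j(\mR^{\mu+3})\otimes\cH_{\frac{\mu-\nu}{2}+j}(\mR^{\nu+3})$ as $\mk$-modules. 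The point is that each of these finite-dimensional $\mk$-modules integrates to the compact group $K=O(p)\times O(q)\times U(n)$: the factor $O(p)\times Sp(2n,\mR)$ (hence $O(p)\times U(n)$) acts naturally on $\cH_\bullet(\mR^{p|2n})$ and $O(q)$ acts on $\cH_\bullet(\mR^q)$, and this is exactly how the $K$-action on $W$ was just defined before the proposition. So $W$ is a locally finite $K$-representation whose derived action is the given $\ck$-action, and the compatibility relation $k\cdot(X\cdot v)=(\sigma(k)X)\cdot(k\cdot v)$ has to be checked; this should follow because $\pi_C$ is built $Str(J)$- (respectively $OSp(p,q|2n)$-) equivariantly from the action on the affine superspace, so the compatibility with the compact subgroup is inherited from the relation $\rho_a(\sigma(g)Y)=\underline{a}_{g^{-1}}^\sharp\rho_a(Y)\underline{a}_g^\sharp$ in the definition of a Lie supergroup action.

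Next I would establish finite generation over $U(\mg)$: this is immediate, since by definition $W=U(\mg)(\widetilde{K}_{\frac{\nu}{2}}(\abs{X}))$ is cyclic, generated by the single vector $\widetilde{K}_{\frac{\nu}{2}}(\abs{X})=\Gamma(\tfrac{\mu+2}{2})\Lambda^{\mu,\nu}_{2,0}(\abs{X})$.

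For $K$-multiplicity finiteness, I would argue as follows. By Proposition~\ref{Structure W} we have $W=\bigoplus_{j=0}^\infty W_j$, and each $W_j$ is finite-dimensional by Theorem~\ref{Theorem structure W}(3) (using $p\neq3$, $q\neq3$ via the standing hypotheses, or the direct finite-dimensionality statement), so $W$ decomposes into finite-dimensional $\mk$-isotypic pieces. The key estimate is that a fixed simple $K$-module $\tau$ can occur only in finitely many of the $W_j$. Using the isomorphism $W_j\cong\cH_j(\mR^{\mu+3})\otimes\cH_{\frac{\mu-\nu}{2}+j}(\mR^{\nu+3})$, the constituents of $W_j$ as a $K$-module are governed by the branching of these two spherical-harmonic modules, whose ``degree'' grows linearly with $j$ (this is visible in \eqref{eq structure W_j even}: the $O(p)$-type $\cH_k$ has $k\le j$ and the $O(q)$-type $\cH_l$ has $l\le\frac{\mu-\nu}{2}+j$). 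A given $K$-type has bounded highest weight, hence can appear only for $j$ below some bound; combined with the finite-dimensionality of each $W_j$, this gives that every simple $K$-module occurs with finite total multiplicity in $W$. Thus $W$ is $K$-multiplicity finite, and together with the $(\mg,K)$-structure and cyclicity this shows $W$ is a Harish-Chandra supermodule.

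I expect the main obstacle to be the verification of the $K$-equivariance compatibility $k\cdot(X\cdot v)=(\sigma(k)X)\cdot(k\cdot v)$: one must match the $K$-action coming from the geometric action of $OSp(p,q|2n)$ on $\cO_C$ with the $\pi_C$-action of $\mg$, which requires tracking how $O(p)\times O(q)\times U(n)$ sits inside the structure/conformal supergroup and how it acts on the radial-times-spherical-harmonic decomposition. The multiplicity-finiteness bound is essentially bookkeeping once the decomposition \eqref{eq structure W_j even} and the $\mk$-isomorphism are in hand, and finite generation is trivial.
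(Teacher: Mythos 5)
Your finite-generation and $K$-multiplicity-finiteness arguments are fine: $W$ is cyclic by definition, and for the multiplicity bound the paper in fact uses a slightly cleaner version of your observation, namely that the $O(q)$-factor alone already separates the pieces (each $\cH_l(\mR^q)$ is an irreducible $O(q)$-module, pairwise non-isomorphic, tensored with a finite-dimensional space on which $O(q)$ acts trivially), so one never needs to analyze the branching of $\cH_j(\mR^{p|2n})$ to $O(p)\times U(n)$, which your phrasing implicitly invokes.

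The genuine gap is in the compatibility relation $k\cdot(X\cdot v)=(\sigma(k)X)\cdot(k\cdot v)$. You propose to inherit it from the relation $\rho_a(\sigma(g)Y)=\underline{a}_{g^{-1}}^\sharp\rho_a(Y)\underline{a}_{g}^\sharp$ for the action on the affine superspace, but this rests on a false premise: the only group acting geometrically on $\mA(J^\ast)$ (and on $C$) is the structure group $Str(J)$, whose reduced part is $\mR^+\times O(p-1,q-1)\times Sp(2n,\mR)$, and $K=O(p)\times O(q)\times U(n)$ is \emph{not} contained in it. There is no action of $OSp(p,q|2n)$, nor of $K$, on $C$ available at this stage --- producing one is exactly what Corollary~\ref{corollary integration to group level} achieves \emph{using} this proposition, so the argument would be circular. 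The $K$-action on $W$ is defined purely algebraically through the isomorphism $W_j\cong\cH_j(\mR^{\mu+3})\otimes\cH_{\frac{\mu-\nu}{2}+j}(\mR^{\nu+3})$ of Proposition~\ref{Prop: structure Wj II}, i.e.\ via the auxiliary variables $s_0,t_0$ that have no geometric meaning on $J^\ast$; hence the compatibility must be checked by hand. The paper does this by reducing to $X=L_e$ (since $\mg=\mk+[\mk,[\mk,L_e]]$ and the derived $K$-action already agrees with $\mk$) and to one representative of each of the four connected components of $K$ (reflections in $s_0$ and $t_0$), then comparing the explicit action of $L_e$ on the $\Lambda^{\mu,\nu}_{2,j}$ computed in the proof of Proposition~\ref{Structure W} with the intertwiner of Proposition~\ref{Prop: structure Wj II}. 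Your proposal correctly flags this as the main obstacle but does not supply a valid mechanism to close it.
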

\begin{proof}
From the decomposition, Theorem~\ref{Theorem structure W}, 
\begin{align*}
W &\cong \sum_{l=0}^\infty  \cH_{\frac{\mu-\nu}{2}+l} (\mR^{p|2n}) \otimes \cH_l (\mR^q) \qquad \text{ if } p-2n\leq q, \text{ or } \\
W &\cong \sum_{l=0}^\infty \cH_{l} (\mR^{p|2n}) \otimes \cH_{\frac{\mu-\nu}{2}+l} (\mR^q) \qquad \text{ if } p-2n\geq q,
\end{align*}
it immediately follows that $W$ is locally $K$-finite. This decomposition also implies that $W$ is $O(q)$-multiplicity finite since $\cH_l (\mR^q)$ is an irreducible $O(q)$-module with $\cH_l (\mR^q)\not\cong \cH_k (\mR^q) $ if $l\not=k$, while $ \cH_{\frac{\mu-\nu}{2}+l} (\mR^{p|2n})$ is finite-dimensional with trivial $O(q)$ action. If there would be a simple $K$-module which has infinite multiplicity in the decomposition of $W$, this would imply that all the simple $O(q)$-modules contained in this $K$-module also have infinite multiplicity. Therefore we conclude that $W$ is $K$-multiplicity finite.

By definition the derived action of $K$ agrees with the action of $\ck \subset \mg$ and even with the action of $\mk$. To show $k\cdot (X \cdot v) = (\sigma(k)X) \cdot (k\cdot v)$ for all $k\in K$, $X \in \mg,$ $v \in V$, it thus suffices to show this for $X=L_e$, since $\mg= \mk +[\mk, [\mk,L_e]].$ We also only need to verify it for one element in each connected component of $K$. We choose $k$ as the element which act as the identity on the coordinates except on $s_0$ and $t_0$, the coordinates extending $\mR^{p-1}$ to $\mR^{p}$ and $\mR^{q-1}$ to $\mR^{q}$.  On $s_0$ and $t_0$ it  acts by $\pm 1$. This gives us four different $k$, corresponding to the four connected components of
$O(p)\times O(q)\times U(n)$. In the proof of Proposition \ref{Structure W} we calculated the action of $L_e$ on $W_j$. Combining this with the isomorphism given in Proposition~\ref{Prop: structure Wj II}, allows us to verify $k\cdot (L_e \cdot v) = (\sigma(k)L_e) \cdot (k\cdot v)$ for $v \in \cH_{\frac{\mu-\nu}{2}+l} (\mR^{p|2n}) \otimes \cH_l (\mR^q)$ by a direct calculation.
This finishes the proof
\end{proof}
The importance of the previous proposition lies in the fact that it allows us to integrate our representation to group level. 
In \cite{Alldridge}, it is proven that a Harish-Chandra supermodule has a (unique) smooth Fr\'{e}chet globalisation. This means that we have a Fr\'{e}chet space and a representation of the Lie supergroup on this Fr\'{e}chet space for which the space of $K$-finite vectors is the Harish-Chandra supermodule. 
\begin{corollary}\label{corollary integration to group level}
The $(\mg,K)$-module $W$ integrates to a unique smooth Fr\'{e}chet representation of moderate growth for the Lie supergroup $G$.
\end{corollary}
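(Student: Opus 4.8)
The plan is to deduce the statement directly from the super Casselman--Wallach globalisation theorem of \cite{Alldridge}, the substantive work having already been carried out in Proposition~\ref{Prop: W is a Harish-Chandra module}. So the proof will be short, and most of it consists in checking that the hypotheses of \cite{Alldridge} are in place.

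First I would record the structural assumptions. The conformal Lie supergroup is $G = (G_0, \mg, \sigma)$ with $G_0 = O(p,q)\times Sp(2n,\mR)$ and $\mg = \TKK(J) = \mathfrak{osp}(p,q|2n)$, as defined in Section~\ref{Section conformal group}. Here $G_0$ is real reductive and has finitely many connected components (four, coming from $O(p,q)$, since $Sp(2n,\mR)$ is connected), hence is almost connected; and $K = O(p)\times O(q)\times U(n)$ is a maximal compact subgroup of $G_0$ with $\Lie(K) = \ck = \mathfrak{so}(p)\oplus\mathfrak{so}(q)\oplus\mathfrak{u}(n)$. Thus $G$ and $K$ satisfy the standing hypotheses under which the notion of Harish-Chandra supermodule and its globalisation are developed in \cite{Alldridge}.

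Next I would invoke Proposition~\ref{Prop: W is a Harish-Chandra module}: for $p+q$ even and $\nu\not\in -2\mN$, the $(\mg,K)$-module $W$ of Section~\ref{Section The k module W} is a Harish-Chandra supermodule. Concretely it is cyclic over $U(\mg)$, generated by $\widetilde{K}_{\frac{\nu}{2}}(\abs{X})$ by the very definition \eqref{Definition W}, hence finitely generated; and it is $K$-multiplicity finite by the decomposition of Theorem~\ref{Theorem structure W}. Finally, the main globalisation theorem of \cite{Alldridge} asserts that every such Harish-Chandra supermodule admits a smooth Fr\'echet globalisation of moderate growth, unique up to isomorphism: that is, there is a smooth Fr\'echet representation of $G$ of moderate growth whose underlying $(\mg,K)$-module of $K$-finite vectors is $W$. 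Applying this to $W$ yields the corollary.

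I do not expect a genuine obstacle at this stage. The analytically and combinatorially delicate points --- that $W$ is $\ck$-finite with finite multiplicities in the $p+q$ even case, and that the $K$-action is compatible with $\sigma$ in the sense $k\cdot(X\cdot v) = (\sigma(k)X)\cdot(k\cdot v)$ --- were already settled in the proof of Proposition~\ref{Prop: W is a Harish-Chandra module}. The only items requiring attention are the (mild) hypotheses on $G_0$ and $K$ noted above, and the observation that moderate growth is part of the \emph{conclusion} of the cited theorem, so it need not be verified by hand.
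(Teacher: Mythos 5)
Your proposal is correct and follows exactly the paper's argument: the proof is simply the combination of Proposition~\ref{Prop: W is a Harish-Chandra module} with the globalisation theorem (Theorem A) of \cite{Alldridge}, the hypotheses on $G_0$ and $K$ being part of the standing setup of Section~\ref{Section Harish-Chandra supermodules}. Your additional verification that $G_0$ is almost connected and real reductive is a reasonable bit of due diligence but adds nothing beyond what the paper already assumes.
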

\begin{proof}
This follows from combining Proposition~\ref{Prop: W is a Harish-Chandra module} and Theorem A in \cite{Alldridge}.
\end{proof}

\section{Joseph ideal} \label{Section Joseph ideal}
We will now investigate some properties that the minimal representation we constructed  satisfies. 
We will show that the annihilator ideal is equal to a Joseph-like ideal if the superdimension satisfies $p+q-2n>2$. In this sense our representation is indeed a super version of a minimal representation since minimal representations for Lie groups are characterised by the property that their annihilator ideal is the Joseph ideal.
The classical Joseph ideals and the Joseph ideal for $\mathfrak{osp}(m|2n)$ with $m-2n>2$ have the property that any ideal which contains the Joseph ideal and which has still infinite codimension is equal to the Joseph ideal, as follows from the characterisation  by Garfinkle \cite{Garfinkle}. So the Joseph ideal is in this sense the biggest ideal with infinite codimension. If $p+q-2n\leq 2$, we still have that the annihilator ideal contains the Joseph ideal, but due to the lack of the characterisation by Garfinkle in this case, we no longer know whether the annihilator ideal is equal to the Joseph ideal. 

 We will also calculate the Gelfand--Kirillov dimension of our representation. We find that it is equal to $p-q-3$ which is also the Gelfand--Kirillov dimension of the minimal representation of $O(p,q)$. 
As mentioned in the introduction, we know that there are no unitary representations of $\mathfrak{osp}(p,q|2n)$. However, we  can still construct a non-degenerate, superhermitian, sesquilinear bilinear form.  We also show that our representation is skew-symmetric with respect to this form if $p+q-2n\geq 6$. 


\subsection{The super Fourier transform}\label{Sect: Fourier transform}
In the classical case a minimal representation for  a simple real Lie group $G$ is a unitary representation such that the annihilator ideal  of the derived representation in the universal enveloping algebra of Lie$(G)_{\mC}$ is  the Joseph ideal. We will show that the representation $\pi_C$ has as annihilator ideal the generalisation of the Joseph ideal for the $\mathfrak{osp}$-case. To do this, we will use the Fourier transformed representation. So we start this section by introducing the super Fourier transform. 

Consider $\cS(\mR^m)$ the Schwartz space of rapidly decreasing functions and the dual space $\cS'(\mR^m)$ of tempered distributions.
The (even) Fourier transform $\mF^{\pm}_{even}$ on $\cS(\mR^m)$ is given by
\[
\mF^{\pm}_{even} f(y) = \frac{1}{(2\pi)^{\frac{m}{2}}} \int_{\mR^m} \exp\left( \pm \imath \sum_{i,j=1}^{m} \x^i(x) \beta^s_{ij} \x^j(y) \right) f(x) \diff x,
\]
where $\beta^s$ is the symmetric part of the orthosymplectic metric.
The Fourier transform can easily be extended to the dual space $\cS'(\mR^m)$, by duality.
It satisfies $\mF^{\pm}_{even}\mF^{\mp}_{even}=\id$.

Let $\Lambda^{4n}:=\Lambda(\mR^{2n}\oplus \mR^{2n})$ be generated by $\theta_i, \eta_i$, $i=1,\ldots,2n$, with the relations $\theta_i\theta_j =-\theta_j\theta_i$, $\eta_i\eta_j=-\eta_j\eta_i$, $\theta_i\eta_j=-\eta_j\theta_i$. It contains two copies of $\Lambda^{2n}:= \Lambda(\mR^{2n})$, one generated by $\eta_i$ and one generated by $\theta_i$. 
 Then we set $K^{\pm}(\theta,\eta):= \exp (\mp \imath \sum_{i,j} \theta^i \beta_{ij}^a \eta^j)$ where $\beta^a$ is the antisymmetric part of the orthosymplectic metric.
Define the odd Fourier transform by
\[
\mF_{odd}^{\pm}\colon \Lambda^{2n} \to \Lambda^{2n}; \quad \mF_{odd}^{\pm}(f) = \int_{B,\theta} K^{\pm}(\theta,\eta) f(\theta),
\]
where $\int_{B,\theta}= \partial_{\theta_{2n}} \ldots \partial_{\theta_1}$ is the Berezin integral,  see e.g.\ \cite{Le}. The odd Fourier transform satisfies $\mF_{odd}^{\pm} \mF_{odd}^{\mp} =\id$.

Then the super Fourier transform $\mF^{\pm}\colon \cS'(\mR^m)\otimes \Lambda^{2n} \to\cS'(\mR^m)\otimes \Lambda^{2n}  $ is given by
\begin{align*}
\mF^{\pm} (f):= \sum_{I\in \mZ_2^n} \mF^{\pm}_{even} (f_I) \mF^{\pm}_{odd}(\theta^I) & & \text{ for } f=\sum_{I\in \mZ_2^n} f_I \theta^I.
\end{align*}
The super Fourier transform has the following properties.
\begin{proposition}\label{Prop: Fourier transform}
Let $(e_k)_k$ be a basis of $\mR^{m|2n}$ and $(e^k)_k$ its right dual basis with respect to the orthosymplectic metric and $z_k$, $z^k$ the corresponding coordinate functions. Then we have
\begin{align*}
\mF^{\pm}(\partial_{k} f) &= \mp \imath \x_k \mF^{\pm}(f),\quad 
\mF^{\pm}(\x_k f) = \mp \imath \partial_{k} \mF^{\pm}(f) \quad \text{and} \quad
\mF^{\pm}\mF^{\mp}=\id.
\end{align*}
\end{proposition}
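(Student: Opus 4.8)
The plan is to reduce everything to the two factors. By construction $\mF^{\pm}$ acts on $f=\sum_{I\in\mZ_2^n}f_I\theta^I$ by applying $\mF^{\pm}_{even}$ to the coefficients $f_I\in\cS'(\mR^m)$ and $\mF^{\pm}_{odd}$ to the monomials $\theta^I\in\Lambda^{2n}$, so $\mF^{\pm}=\mF^{\pm}_{even}\hat{\otimes}\mF^{\pm}_{odd}$. Hence it suffices to establish the three identities separately for $\mF^{\pm}_{even}$ on a variable $\x_k$ with $\abs{k}=0$ and for $\mF^{\pm}_{odd}$ on a variable $\x_k$ with $\abs{k}=1$, and then assemble them. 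Since the coefficients $f_I$ are even and commute with the odd variables, no Koszul signs enter the assembly, and the claim for $\abs{k}=0$ comes from the even factor while that for $\abs{k}=1$ comes from the odd factor.

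For $\mF^{\pm}_{even}$ I would first treat $f\in\cS(\mR^m)$ and then extend to $\cS'(\mR^m)$ by duality and continuity. Contracting the exponent of the kernel via $\sum_i\beta^{li}\beta_{ij}=\delta_{lj}$ (and the symmetry of $\beta^s$) one gets $\partial^i_x(\text{exponent})=\x^i(y)$; integration by parts in $x$ then yields $\mF^{\pm}_{even}(\partial^i f)=\mp\imath\,\x^i(y)\mF^{\pm}_{even}(f)$, and applying $\partial_k=\sum_i\partial^i\beta_{ki}$ together with $\sum_i\beta_{ki}\x^i=\x_k$ gives $\mF^{\pm}_{even}(\partial_k f)=\mp\imath\,\x_k\mF^{\pm}_{even}(f)$. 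Differentiating the kernel in the $y$-variables instead gives $\mF^{\pm}_{even}(\x_k f)=\mp\imath\,\partial_k\mF^{\pm}_{even}(f)$ in the same way. Finally $\mF^{\pm}_{even}\mF^{\mp}_{even}=\id$ is the classical Fourier inversion theorem: passing to a basis in which $\beta^s=\mathrm{diag}(1,\dots,1,-1,\dots,-1)$ makes the transform the standard one in each coordinate (the $-1$-coordinates merely interchange $\mF^{+}\leftrightarrow\mF^{-}$ there, which does not affect the composition), and the normalising factor $(2\pi)^{-m/2}$ is exactly the one making inversion hold, since $\abs{\det\beta^s}=1$ in such a basis.

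For $\mF^{\pm}_{odd}$ the arguments are parallel with the Berezin integral replacing the Lebesgue integral. Integration by parts for the Berezin integral ($\int_{B,\theta}\partial_{\theta_k}(\,\cdot\,)=0$), combined with the Leibniz rule and the fact that $\partial_{\theta_k}K^{\pm}(\theta,\eta)$ equals $\mp\imath K^{\pm}(\theta,\eta)$ times a linear form in $\eta$, produces $\mF^{\pm}_{odd}(\partial_{\theta_k}f)=\mp\imath\,\eta_k\mF^{\pm}_{odd}(f)$; differentiating $K^{\pm}$ in $\eta$ gives $\mF^{\pm}_{odd}(\theta_k f)=\mp\imath\,\partial_{\eta_k}\mF^{\pm}_{odd}(f)$; and $\mF^{\pm}_{odd}\mF^{\mp}_{odd}=\id$ follows by composing the two Berezin integrals and evaluating the resulting Gaussian Berezin integral in the $\eta$-variables, which reduces to the Berezin analogue of the Dirac delta and substitutes the remaining variable, because $\mathrm{Pf}(\beta^a)=1$ in the chosen basis. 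Assembling the even and odd identities through $\mF^{\pm}=\mF^{\pm}_{even}\hat{\otimes}\mF^{\pm}_{odd}$ then yields the proposition. The only real obstacle is bookkeeping: keeping the metric index-raising and -lowering (the conventions $\x^j=\sum_i\x_i\beta^{ij}$, $\partial_k=\sum_i\partial^i\beta_{ki}$), the signs $\mp$, and the factors of $\imath$ consistent throughout — especially in the odd case, where anticommutativity of the Berezin integral and of $\partial_{\theta_k}$ generates extra signs — and verifying that the normalisations $\abs{\det\beta^s}=1$ and $\mathrm{Pf}(\beta^a)=1$ in a standard basis are precisely what the definitions of $\mF^{\pm}_{even}$ and $\mF^{\pm}_{odd}$ require for the inversion formulas.
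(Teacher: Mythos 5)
Your argument is correct in outline, but it is worth noting that the paper does not actually prove this proposition: its ``proof'' consists of the single citation \cite[Theorem 7 and Lemma 3]{Hendrik}, together with the remark that the metric used there is not orthosymplectic but that the arguments carry over mutatis mutandis. What you have written is, in effect, the content of that mutatis mutandis: the factorisation $\mF^{\pm}=\mF^{\pm}_{even}\hat{\otimes}\mF^{\pm}_{odd}$ (which is how $\mF^{\pm}$ is defined on $f=\sum_I f_I\theta^I$, and which assembles without Koszul signs precisely because the $f_I$ are even), the integration-by-parts computation in the even factor with the index gymnastics $\partial_k=\sum_i\partial^i\beta_{ki}$, $\x^j=\sum_i\x_i\beta^{ij}$, and the Berezin-integral analogues in the odd factor. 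This is also essentially the proof in the cited reference, so your route is not mathematically different, only self-contained where the paper defers. Two points deserve explicit verification rather than assertion if you write this out in full: (i) the inversion $\mF^{\pm}_{odd}\mF^{\mp}_{odd}=\id$ requires evaluating the composed Gaussian Berezin integral in a Darboux basis for $\beta^a$ and checking that the sign and normalisation coming from $\int_B=\partial_{\theta_{2n}}\cdots\partial_{\theta_1}$ (which can produce factors of $(-1)^n$ depending on ordering conventions) really cancel against the expansion of $K^{\pm}K^{\mp}$ --- this is exactly the bookkeeping your last sentence gestures at, and it is where the conventions of \cite{Hendrik} and of this paper must be reconciled; and (ii) the intertwining relations are claimed on $\cS'(\mR^m)\otimes\Lambda^{2n}$, so after proving them on $\cS(\mR^m)\otimes\Lambda^{2n}$ one must pass to distributions by duality, using that $\partial_k$ and multiplication by $\x_k$ are (anti)transposes of each other up to sign. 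Neither point is a gap in the sense of a wrong idea, but both are precisely the places where an error of sign or normalisation could creep in.
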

\begin{proof}
See~\cite[Theorem 7 and Lemma 3]{Hendrik}. Remark that the metric used in op. cit. is not orthosymplectic. However, the same results and proofs still hold, mutatis mutandis.
\end{proof}

\subsection{Fourier-transformed and adjoint representation}
Using an isomorphism between $\mA(J^\ast)$ and $\mA^{p+q-2|2n}$, we can restrict the representation $\pi_\lambda$ defined in Section~\ref{Section: definition representation} to $\mathcal{S}(\mR^{p+q-2})\otimes \Lambda^{2n}$.
Then we can also immediately extend $\pi_\lambda$ to $\mathcal{S}'(\mR^{p+q-2})\otimes \Lambda^{2n}$ since $\pi_\lambda$ is given by differential operators. We will denote this extension also by $\pi_\lambda$. We will use the notations $\pi_{\lambda,\cS}$ and $\pi_{\lambda,\cS'}$ if we want to specify on which space the representations acts.

We define $\hat{\pi}_\lambda$ on $\cS(\mR^{p+q-2}) \otimes \Lambda^{2n}$ or $\cS'(\mR^{p+q-2}) \otimes \Lambda^{2n}$ using the super Fourier transform introduced in Section~\ref{Sect: Fourier transform}:
\[
\hat{\pi}_\lambda(X):= \mF^- \circ \pi_{-\lambda-2M} (X) \circ \mF^+.
\]
From Proposition~\ref{Prop: Fourier transform} it follows that $\hat{\pi}_\lambda(X)$ is given by
 \begin{enumerate} 
\item $\hat{\pi}_\lambda(0,0,e_k) = \partial_{k} \qquad\qquad\quad\qquad\qquad\qquad\quad$ for $e_k \in J^{\minus}$
\item $\hat{\pi}_\lambda(0,L_{kl},0) =  \x_k \partial_{l} -(-1)^{\abs{k}\abs{l}} \x_l \partial_{k} \qquad\qquad\; $ for $L_{kl} \in \mathfrak{osp}_(J)$
\item $\hat{\pi}_\lambda(0,L_e,0)= -\frac{\lambda}{2}+\mE$
\item $\hat{\pi}_\lambda(\bar{e}_k,0,0)= -\x_k( 2\mE -\lambda) +
R^2 \partial_{k} \;\qquad \qquad \text{ for } \bar{e}_k \in J^{\plus}$. 
\end{enumerate}
\begin{proposition}
The kernel of the Laplace operator $\Delta$ is a subrepresentation of $\hat{\pi}_\lambda$  if and only if $\lambda=2-M$, with $M=p+q-2-2n$ the superdimension of $J$. 
\end{proposition}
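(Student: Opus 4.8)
The plan is to follow the same strategy that was used for the Bessel operators being tangential to the minimal orbit (Proposition~\ref{Prop:BesselOperatorsTangential}), but now with the roles of $R^2$ and $\Delta$ interchanged, since the Fourier transform essentially swaps multiplication operators and differential operators. Concretely, the kernel $\ker\Delta \subset \cS'(\mR^{p+q-2})\otimes\Lambda^{2n}$ is a subrepresentation of $\hat\pi_\lambda$ if and only if every generator of $\hat\pi_\lambda$ preserves $\ker\Delta$, i.e.\ maps elements annihilated by $\Delta$ to elements annihilated by $\Delta$. For an operator $D$ this amounts to showing that $[\Delta, D]$ lies in the left ideal generated by $\Delta$ (i.e.\ $[\Delta,D] = (\text{something})\circ \Delta$), so that $\Delta f = 0$ forces $\Delta(Df)=0$.

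First I would dispose of the easy generators. The operators $\hat\pi_\lambda(0,0,e_k)=\partial_k$ and $\hat\pi_\lambda(0,L_{kl},0)=\x_k\partial_l-(-1)^{\abs{k}\abs{l}}\x_l\partial_k$ commute with $\Delta$ by Lemma~\ref{Lemma relations sl(2)} (the orthosymplectic Lie superalgebra, the Laplacian and the $\partial_k$ are among the operators whose commutators are controlled there; in particular $[\Delta,\partial_k]=0$ and $[\Delta, L_{kl}]=0$), so they preserve $\ker\Delta$ for every $\lambda$. For $\hat\pi_\lambda(0,L_e,0)=-\tfrac\lambda2+\mE$ we use $[\Delta,\mE]=2\Delta$ from Lemma~\ref{Lemma relations sl(2)}, so $[\Delta,-\tfrac\lambda2+\mE]=2\Delta$, which again lies in the left ideal generated by $\Delta$; hence this generator also preserves $\ker\Delta$ unconditionally.

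The only generator that imposes a condition is $\hat\pi_\lambda(\bar e_k,0,0)=-\x_k(2\mE-\lambda)+R^2\partial_k$. Here I would compute $[\Delta,\ \minus\x_k(2\mE-\lambda)+R^2\partial_k]$ using the $\mathfrak{sl}(2)$-relations $[\Delta,R^2]=4\mE+2M$, $[\Delta,\mE]=2\Delta$, $[\Delta,R^2]=4\mE+2M$ together with the elementary relations $[\Delta,\x_k]=2\partial_k$ (which follows from $[\Delta, R^2]=4\mE+2M$ and the definition of $\Delta$, or directly) and $[\mE,\partial_k]=-\partial_k$. Expanding, the terms involving $R^2$ will collect into an expression of the form $(\ldots)\circ\Delta$ automatically, while the remaining terms not proportional to $\Delta$ will be a multiple of $\partial_k$ (or of $\x_k\Delta$, which is fine) with coefficient a linear function of $\lambda$ and $M$; setting that coefficient to zero should give exactly $\lambda = 2-M$. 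This mirrors the computation $[\cB_\lambda(e_k),R^2]=\x_k(-2\lambda+4-2(p+q-2-2n))+4R^2\partial_k$ in the proof of Proposition~\ref{Prop:BesselOperatorsTangential}, with $R^2\leftrightarrow\Delta$ and $\cB_\lambda(e_k)\leftrightarrow \hat\pi_\lambda(\bar e_k,0,0)$; indeed one can also deduce the result abstractly by conjugating the identity of Proposition~\ref{Prop:BesselOperatorsTangential} by the super Fourier transform, using $\mF^\pm\mF^\mp=\id$ and Proposition~\ref{Prop: Fourier transform}.

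The main obstacle — though a minor one — is bookkeeping: keeping track of the precise constants and of the sign $(-1)^{\abs{k}}$-type factors coming from the super conventions when commuting $\x_k$, $\partial_k$, $\mE$, $R^2$ and $\Delta$, so that the vanishing condition comes out to be $\lambda = 2-M$ rather than some shifted value. A clean way to avoid errors is to phrase the argument via the Fourier transform: since $\hat\pi_\lambda = \mF^-\circ\pi_{-\lambda-2M}\circ\mF^+$ and the Fourier transform interchanges $R^2$ with $\pm\Delta$ (up to constants) by Proposition~\ref{Prop: Fourier transform}, the statement ``$\ker\Delta$ is $\hat\pi_\lambda$-stable'' is equivalent to ``$\langle R^2\rangle$ is $\pi_{-\lambda-2M}$-stable'', which by Proposition~\ref{Prop:BesselOperatorsTangential} holds iff $-\lambda-2M = 2-M$, i.e.\ $\lambda = -M-2$\,---\,so here one must be careful that the correct normalisation of $\hat\pi_\lambda$ (with the shift $-\lambda-2M$ built in) is what produces $\lambda=2-M$; I would double-check the shift against the explicit formulas listed for $\hat\pi_\lambda$ just above the statement and simply verify directly that with those formulas the condition is $\lambda=2-M$.
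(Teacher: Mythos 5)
Your main argument — computing $[\Delta,D]$ for each generator of $\hat{\pi}_\lambda$, observing that only $\hat{\pi}_\lambda(\bar e_k,0,0)$ imposes a condition, and extracting $\lambda=2-M$ from the coefficient of $\partial_k$ — is exactly the paper's proof, which finds $[\Delta,\hat{\pi}_\lambda(\bar e_k,0,0)]=2(\lambda-2+M)\partial_k-4\x_k\Delta$. Your alternative Fourier-transform shortcut as stated gives the wrong shift because it skips the adjoint step (the Fourier image of $\ker\Delta$ is the annihilator of multiplication by $R^2$, not the ideal $\langle R^2\rangle$, so Proposition~\ref{Prop: adjoint rep} must be inserted to recover $\lambda=2-M$), but you correctly flag this and fall back on the direct computation, so the proof stands.
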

\begin{proof}
Using Lemma~\ref{Lemma relations sl(2)}, we find 
\begin{align*}
[\Delta, \hat{\pi}_\lambda (\bar{e}_k,0,0)] 
& = [\Delta, -\x_k (2\mE-\lambda) ] + [\Delta, R^2 \partial_{k} ] 
= 2(\lambda -2+M) \partial_{k} -4 \x_k \Delta.
\end{align*}
Hence, $\hat{\pi}_\lambda (\bar{e}_k,0,0)$ preserves the kernel of $\Delta$ if and only if $\lambda=2-M$. One verifies similarly that $\hat{\pi}_\lambda(0,0,e_k)$, 
$\hat{\pi}_\lambda(0,L_{kl},0)$ and $\hat{\pi}_\lambda(0,L_e,0)$ also preserve the kernel of $\Delta$.
\end{proof}
Denote by $\langle \phi, f \rangle_{\cS}$ the value of the action of $\phi \in \cS'(\mR^{p+q-2})\otimes \Lambda^{2n}$ on 
$f \in \cS(\mR^{p+q-2})\otimes \Lambda^{2n}$.
If $\phi \in \cS'(\mR^{p+q-2})\otimes \Lambda^{2n}$ is an element of $\ker \Delta$, then for all $f \in \cS(\mR^{p+q-2})\otimes \Lambda^{2n}$,
\begin{align*}
0=\langle \phi, \Delta \mF^{\plus} f \rangle_{\cS} = -\langle \phi, \mF^{\plus} R^2 f \rangle_{\cS} = -\langle \mF^{\plus} \phi, R^2 f \rangle_{\cS}.
\end{align*}
Hence the Fourier transform of $\ker \Delta$ consist of elements contained in $\cS(\mR^{p+q-2})'\otimes \Lambda^{2n}$ with support contained in the closure of $\abs{C}$.

 For $A \in \End(\mathcal{S}(\mR^{p+q-2})\otimes \Lambda^{2n})$ define the adjoint operator $A^\ast \in \End(\mathcal{S}'(\mR^{p+q-2})\otimes \Lambda^{2n})$  by
\[
\langle\overline{ A^\ast  \phi}, f \rangle_\cS =(-1)^{\abs{A}\abs{\phi}}\langle \phi, A f \rangle_\cS, \qquad 
\text{ for } \phi \text{ in } \mathcal{S}'(\mR^{p+q-2})\otimes \Lambda^{2n} \text{ and }f \text{ in } \mathcal{S}(\mR^{p+q-2})\otimes \Lambda^{2n}.\]
Here $\overline{ A^\ast  \phi}$ is the complex conjugate of $ A^\ast  \phi.$
\begin{proposition}\label{Prop: adjoint rep}
The adjoint operator $\pi_\lambda(X)^\ast$ is equal to the operator $-\pi_{-\lambda-2M}(X)$. Similar the adjoint operator $\hat{\pi}_\lambda(X)^\ast$ is equal to $-\hat{\pi}_{-\lambda-2M}(X)$.
\end{proposition}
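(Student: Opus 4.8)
The plan is to verify the claim operator by operator on the four types of generators of $\TKK(J)$ listed in the definition of $\pi_\lambda$, using integration by parts (the defining adjunction for distributions) together with the explicit formulas for $\pi_\lambda$. First I would record the elementary adjoint relations on $\cS(\mR^{p+q-2})\otimes\Lambda^{2n}$: for the coordinate multiplication and differentiation operators one has $\x_k^\ast = \x_k$ and $(\partial_k)^\ast = -\partial_k$ (the Berezin integral contributing the usual sign conventions in the odd variables, which is exactly why the adjunction is defined with the factor $(-1)^{\abs A\abs\phi}$ and a complex conjugation), and $\mE^\ast = -\mE - M$ since $\mE = \sum_i \x^i\partial_i$ and moving each $\partial_i$ across picks up $-\partial_i\x^i = -\x^i\partial_i - \delta_i^{\ i}$, summing to $-\mE - M$ with $M = p+q-2-2n$. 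These are routine but should be stated.

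Next I would run through the generators. For $e_k\in J^{\minus}$, $\pi_\lambda(0,0,e_k) = -\imath\x_k$, so $\pi_\lambda(0,0,e_k)^\ast = \overline{-\imath}\,\x_k = \imath\x_k = -\pi_{-\lambda-2M}(0,0,e_k)$, and the value of $\lambda$ is irrelevant here, as it must be. For $L_{kl}\in\mathfrak{osp}(J)$, $\pi_\lambda(0,L_{kl},0) = \x_k\partial_l - (-1)^{\abs k\abs l}\x_l\partial_k$ is independent of $\lambda$; using $(\x_k\partial_l)^\ast = \partial_l^\ast\x_k^\ast = -\partial_l\x_k$ and the fact that $L_{kl}$ annihilates $R^2$ (equivalently, $\partial_l\x_k - \x_k\partial_l$ rearranges into $-L_{lk}$ up to the metric), one checks $\pi_\lambda(0,L_{kl},0)^\ast = -\pi_\lambda(0,L_{kl},0)$, again matching $-\pi_{-\lambda-2M}$. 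For $L_e$, $\pi_\lambda(0,L_e,0) = \tfrac\lambda2 - \mE$, so $\pi_\lambda(0,L_e,0)^\ast = \tfrac{\bar\lambda}{2} - \mE^\ast = \tfrac\lambda2 + \mE + M = -\bigl(\tfrac{-\lambda-2M}{2} - \mE\bigr) = -\pi_{-\lambda-2M}(0,L_e,0)$, which is the crucial place where the shift $\lambda\mapsto -\lambda-2M$ appears.

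The main computational step is the Bessel operator. From \eqref{Expression Bessel operators}, $\pi_\lambda(\bar e_k,0,0) = -\imath\cB_\lambda(e_k) = -\imath\bigl((-\lambda+2\mE)\partial_k - \x_k\Delta\bigr)$. Taking adjoints term by term: $\bigl((-\lambda+2\mE)\partial_k\bigr)^\ast = \partial_k^\ast\bigl(-\lambda+2\mE\bigr)^\ast = (-\partial_k)(-\bar\lambda - 2\mE - 2M) = \partial_k(\lambda + 2\mE + 2M)$, and commuting $\partial_k$ past $\mE$ gives $[\partial_k,\mE] = \partial_k$, so this equals $(\lambda + 2M + 2\mE + 2)\partial_k$ — hence $(-\lambda+2\mE)\partial_k$ has adjoint $((\lambda+2M)+2+2\mE)\partial_k = (-(-\lambda-2M)+2\mE)\partial_k + 2\partial_k$... one must be a little careful: the claim is that the total adjoint of $\cB_\lambda(e_k)$ is $\cB_{-\lambda-2M}(e_k)$ up to sign, and the $+2\partial_k$ discrepancy from the first term must cancel against the contribution from $(\x_k\Delta)^\ast = \Delta^\ast\x_k^\ast = \Delta\x_k$ (note $\Delta$ is symmetric since $\Delta^\ast = \Delta$, the even and odd second derivatives being individually symmetric), and $\Delta\x_k = \x_k\Delta + [\Delta,\x_k]$. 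Here one invokes Lemma~\ref{Lemma relations sl(2)} to evaluate $[\Delta,\x_k]$ (equivalently $[\Delta,R^2]$-type identities localized to a single coordinate), which produces exactly a $2\partial_k$-type term that absorbs the discrepancy, leaving $\pi_\lambda(\bar e_k,0,0)^\ast = -\pi_{-\lambda-2M}(\bar e_k,0,0)$. The Fourier-transformed statement then follows formally: since $\hat\pi_\lambda(X) = \mF^-\circ\pi_{-\lambda-2M}(X)\circ\mF^+$ and $\mF^\pm$ are adjoint to $\mF^\mp$ (up to the conjugation built into the definition, which matches because $\mF^-\mF^+ = \id$), we get $\hat\pi_\lambda(X)^\ast = (\mF^+)^\ast\circ\pi_{-\lambda-2M}(X)^\ast\circ(\mF^-)^\ast = \mF^-\circ\bigl(-\pi_{-(-\lambda-2M)-2M}(X)\bigr)\circ\mF^+ = -\hat\pi_{-\lambda-2M}(X)$.

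I expect the only real obstacle to be bookkeeping of signs and of the commutators $[\partial_k,\mE]$, $[\Delta,\x_k]$ in the Bessel-operator case; the super/Berezin signs in the odd variables are handled uniformly by the $(-1)^{\abs A\abs\phi}$ convention and contribute nothing beyond the classical case. Everything else is a direct unwinding of the definitions, so I would present the Bessel computation in one displayed \texttt{align*} and dispatch the other three generators in a sentence each.
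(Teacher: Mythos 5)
Your proposal is correct and follows essentially the same route as the paper: the paper's proof simply records the elementary adjunctions for $\partial_k$ and $\x_k$ and then states the resulting four identities $\langle -i\x_k\phi,f\rangle = -(-1)^{\abs{k}\abs{\phi}}\langle\phi,i\x_k f\rangle$, $\langle(-\mE-\tfrac{\lambda+2M}{2})\phi,f\rangle=-\langle\phi,(-\mE+\tfrac{\lambda}{2})f\rangle$, $\langle L_{ij}\phi,f\rangle=-(-1)^{(\abs{i}+\abs{j})\abs{\phi}}\langle\phi,L_{ij}f\rangle$ and $\langle -i\cB_{-\lambda-2M}(\x_k)\phi,f\rangle=-(-1)^{\abs{k}\abs{\phi}}\langle\phi,i\cB_\lambda(\x_k)f\rangle$ without spelling out the computation. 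Your version just makes explicit the bookkeeping the paper leaves to the reader (in particular the cancellation of the $2\partial_k$ terms coming from $[\partial_k,\mE]$ and $[\Delta,\x_k]$ in the Bessel-operator case, and the formal deduction of the $\hat\pi_\lambda$ statement), and those details check out.
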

\begin{proof}
On $\cS(\mR^{p+q-2}) \otimes \Lambda^{2n}$ we have
\begin{align*}
\langle  \partial_{k} \phi , f \rangle_{\cS}   := -(-1)^{\abs{k}\abs{\phi}}  \langle \phi ,  \partial_{k} f \rangle_{\cS} 
\qquad \text{and}\qquad
 \langle  \x_k \phi ,f \rangle_{\cS}  := (-1)^{\abs{k}\abs{\phi}}\langle \phi,  \x_kf \rangle_{\cS}.
\end{align*}
Using this we obtain
\begin{align*}
\langle -i \x_k \phi, f \rangle_{\cS}
 &= -(-1)^{\abs{k}\abs{\phi}} \langle \phi, i \x_k f \rangle_{\cS}, 
\\
\langle (-\mE-\frac{\lambda+2M}{2} )\phi, f\rangle_{\cS} 
&= -\langle \phi, (-\mE+\frac{\lambda}{2}) f \rangle_{\cS}, 
\\ 
\langle L_{ij} \phi, f\rangle_{\cS}
&= -(-1)^{(\abs{i}+\abs{j})\abs{\phi}}\langle \phi, L_{ij} f \rangle_{\cS}, 
\\
\langle -i\cB_{-\lambda-2M}(\x_k) \phi, f\rangle_{\cS}
&= -(-1)^{\abs{k}\abs{\phi}}\langle \phi, i\cB_{\lambda}(\x_k) f \rangle_{\cS}.
\end{align*}
From this the proposition follows.
\end{proof}

\subsection{Connection with a Joseph-like ideal}
We will now quickly introduce the Joseph ideal. A more detailed account is given in \cite{CSS}.
Set $\mg_\mC= \mathfrak{osp}_{\mC}(p+q|2n)$.
We choose a non-standard root system with the following simple positive roots
\begin{align*}
\epsilon_1-\epsilon_2, \dots, \epsilon_{\frac{p+q-3}{2}}-\epsilon_{\frac{p+q-1}{2}}, \epsilon_{\frac{p+q-1}{2}} -\delta_1, \delta_1-\delta_2, \dots, \delta_{n-1}-\delta_n, \delta_n, 
\end{align*}
for $p+q$ odd,
\begin{align*}
\epsilon_1-\epsilon_2, \dots, \epsilon_{\frac{p+q-2}{2}}-\epsilon_{\frac{p+q}{2}}, \epsilon_{\frac{p+q}{2}} -\delta_1, \delta_1-\delta_2, \dots, \delta_{n-1}-\delta_n,2\delta_n,
\end{align*}
for $p+q$ even. If $p+q-2n \not\in \{1,2\}$, then the tensor product $\mg_\mC \otimes \mg_\mC$ contains a decomposition factor isomorphic to the simple $\mg_\mC$-module of highest weight $2\epsilon_1+2\epsilon_2$, \cite[Theorem 3.1]{CSS}. We will denote this decomposition factor by $\mg_\mC \circledcirc \mg_\mC$.  

Define a one-parameter family $\{\cJ_\mu \,|\,\mu\in \mC\}$ of quadratic two-sided ideals in the tensor algebra $T(\fg_\mC)=\oplus_{j\ge 0}\otimes^j\fg_\mC$, where $\cJ_\mu$ is generated by
\begin{align*}
\label{definition jlambda}
\{X\otimes Y- X\circledcirc Y -\frac{1}{2} [X,Y]-\mu\langle X,Y \rangle \mid  X,Y\in\mg_\mC \}
\subset\, \fg_\mC\otimes\fg_\mC\,\,\oplus\,\,\fg_\mC\,\,\oplus\,\,\mC\,\subset\, T(\fg_\mC).
\end{align*}
Here $X \circledcirc Y$ is the projection of $X\otimes Y$ on $\mg_\mC \circledcirc \mg_\mC$, and $\langle
 X,Y \rangle$ is the Killing form. 

By construction there is a unique ideal $J_\mu$ in the universal enveloping algebra $U(\fg_\mC)$, which satisfies $T(\fg_\mC)/\cJ_\mu \cong U(\fg_\mC)/J_\mu$. Define \[ \mu^{c}:=-\frac{p+q-4-2n}{4(p+q-1-2n)}.\]
Then $J_\mu$ has finite codimension for $\mu\not= \mu^c$ and infinite codimension for $\mu= \mu^c$, \cite[Theorem 5.3]{CSS}.
We call $J_{\mu^c}$ the Joseph ideal of $\mg_{\mC}$.

The annihilator ideal of a representation $(\pi, V)$ of $\mg_{\mC}$ is by definition the ideal in $U(\mg_{\mC})$ given by
\[
\Ann (\pi) := \{X \in  U(\mg_{\mC}) \mid \pi(X) v = 0 \text{ for all } v \in V \}.
\]

\begin{theorem}\label{Theorem Joseph ideal}
If $p+q-2n>2$, then  \[
\Ann (\pi_C) =J_{\mu^c}.
\]
We also have $ J_{\mu^c} \subseteq \Ann (\pi_C)$ if $p+q-2n \not\in \{1,2\}$.
\end{theorem}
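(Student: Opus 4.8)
The plan is to show both inclusions $\Ann(\pi_C)\supseteq J_{\mu^c}$ and $\Ann(\pi_C)\subseteq J_{\mu^c}$, the first working whenever $p+q-2n\not\in\{1,2\}$ and the second requiring the stronger hypothesis $p+q-2n>2$ via the maximality property of the Joseph ideal. For the inclusion $J_{\mu^c}\subseteq \Ann(\pi_C)$, it suffices to check that for the specific value $\mu=\mu^c$ the generators $X\otimes Y - X\topa Y - \tfrac12[X,Y]-\mu^c\langle X,Y\rangle$ act by zero under $\pi_C$ (more precisely, under the induced map $U(\fg_\mC)\to\Gamma(\cD_C)$). Since $\pi_C$ is a quotient of the Fourier-transformed representation $\hat\pi_{2-M}$ acting on $\ker\Delta\subseteq\cS'(\mR^{p+q-2})\otimes\Lambda^{2n}$ — or rather on distributions supported on $\overline{\abs C}$ — I would work with $\hat\pi_\lambda$ for $\lambda=2-M$, whose generators are the very simple operators $\partial_k$, $\x_k\partial_l-(-1)^{\abs k\abs l}\x_l\partial_k$, $\mE-\lambda/2$ and $-\x_k(2\mE-\lambda)+R^2\partial_k$. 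On $\ker\Delta$ the last operator simplifies (the $R^2\partial_k$ term drops after pairing, as in the computation preceding Proposition~\ref{Prop: adjoint rep}), so the symmetrized quadratic expressions $\hat\pi_\lambda(X)\hat\pi_\lambda(Y)+\hat\pi_\lambda(Y)\hat\pi_\lambda(X)$ become manifestly tractable polynomials in $\x,\partial$; one then computes the projection $X\topa Y$ explicitly (it lives in the $\fg_\mC$-module of highest weight $2\epsilon_1+2\epsilon_2$) and matches the anomaly term against $\mu^c\langle X,Y\rangle$. The value $\mu^c=-\tfrac{p+q-4-2n}{4(p+q-1-2n)}$ should emerge from requiring the trace/scalar part to vanish, exactly as in \cite{CSS} and the classical computations of \cite{HKM}.

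For the reverse inclusion under $p+q-2n>2$, I would invoke the Garfinkle-type characterization recalled in the prose at the start of Section~\ref{Section Joseph ideal}: for $\fg_\mC=\mathfrak{osp}_\mC(p+q|2n)$ with $p+q-2n>2$, any two-sided ideal of $U(\fg_\mC)$ that strictly contains $J_{\mu^c}$ has finite codimension (equivalently, $J_{\mu^c}$ is maximal among ideals of infinite codimension). Hence it is enough to show that $\Ann(\pi_C)$ has infinite codimension, which follows once we know $\pi_C$ is infinite-dimensional as a $\fg_\mC$-module (indeed $W\subseteq\pi_C$ is infinite-dimensional by Theorem~\ref{Theorem structure W}, or alternatively from the Gelfand--Kirillov dimension being $p+q-3>0$, Proposition~\ref{Prop Gelfand-Kirillov}) together with the fact that $\Ann(\pi_C)$ is a primitive ideal and $U(\fg_\mC)$ modulo a finite-codimension ideal would force $\pi_C$ to be finite-dimensional. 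Combined with $J_{\mu^c}\subseteq\Ann(\pi_C)\subsetneq U(\fg_\mC)$ and maximality, this forces equality.

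The main obstacle I anticipate is the explicit identification of the projection $X\topa Y$ onto the Cartan-type component $\fg_\mC\topa\fg_\mC$ of highest weight $2\epsilon_1+2\epsilon_2$, and verifying that the quadratic relation holds on the nose with the claimed constant $\mu^c$ rather than merely up to lower-order terms; this is a representation-theoretic bookkeeping exercise that must be done carefully in the orthosymplectic setting, keeping track of signs $(-1)^{\abs i\abs j}$ and of the distinction between $\x^i$ and $\x_i$. It is essentially the super-analogue of the computation in \cite{HKM}, and one expects it to go through because \cite{CSS} already establishes the abstract existence and parameter value; the new content here is checking it is realized by $\pi_C$. A secondary technical point is justifying that passing to the quotient representation $\pi_C$ on $\Gamma(\cO_C)$ (as opposed to the Fourier picture on distributions supported on $\overline{\abs C}$) does not change the annihilator — this should follow from the fact that $\langle R^2\rangle$ is a $\fg$-submodule and $C$ is dense in its closure, so no nonzero element of $U(\fg_\mC)$ can act trivially on one model but not the other.
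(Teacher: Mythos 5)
Your proposal follows essentially the same route as the paper: establish the quadratic relation for $\hat{\pi}_{2-M}$ on $\ker\Delta$, transfer the annihilator through the Fourier transform (and adjoints) to $\pi_C$, and then combine $J_{\mu^c}\subseteq\Ann(\pi_C)$ with the infinite codimension of $\Ann(\pi_C)$ (via the infinite-dimensional cyclic module $W$) and the Garfinkle-type maximality from \cite[Theorem 5.4]{CSS}. The only real difference is that the explicit verification of the identity $\hat{\pi}_\lambda(X)\hat{\pi}_\lambda(Y)=\hat{\pi}_\lambda(X\circledcirc Y)+\tfrac12\hat{\pi}_\lambda([X,Y])+\mu^c\langle X,Y\rangle$, which you anticipate as the main obstacle, is not redone in the paper: it is quoted verbatim from \cite[Corollary 5.8]{CSS} for exactly this realization, so that step reduces to a citation.
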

\begin{proof}
 From \cite[Corollary 5.8]{CSS}, it follows that
 \[
 \hat{\pi}_{\lambda,\cS}(X)\hat{\pi}_{\lambda,\cS}(Y) = \hat{\pi}_{\lambda,\cS}(X\circledcirc Y) + \frac{1}{2} \hat{\pi}_{\lambda,\cS}([X,Y]) + \mu^c \langle X, Y \rangle 
 \]
on $\cS \cap \ker \Delta$ for $\lambda = 2-M$. Therefore 
$
J_{\mu^c} \subseteq \Ann( \hat{\pi}_{\lambda,\cS} {}_{\mid \ker \Delta})$  for $\lambda=2-M.$

 Proposition~\ref{Prop: adjoint rep}  implies
\begin{align*}
\Ann( {\hat{\pi}_{\lambda,\cS}} {}_{\mid \ker \Delta})  &= \Ann( {\hat{\pi}_{\lambda,\cS'}^\ast} {}_{\mid \ker \Delta}) 
 = \Ann( \hat{\pi}_{-\lambda-2M, \cS'} {}_{\mid \ker \Delta}).
\end{align*}
We have \[ \pi_{\lambda,S'}(X) v = (\mF^+ \circ {\hat{\pi}}_{-\lambda-2M, \cS'}(X) \circ \mF^-) v =0   \] for all  $v$ in $ \cS' \cap \ker \Delta$ if and only if  ${\hat{\pi}}_{-\lambda-2M, \cS'}(X) v= 0 $ for all  $v$ in $\cS'$ with support contained in the closure of $\abs{C}$. Therefore
\begin{align*}
\Ann( \hat{\pi}_{-\lambda-2M, \cS'} {}_{\mid \ker \Delta})
&= \Ann( \pi_{\lambda,\cS'} {}_{\mid \text{ supp contained in } \abs{C}}).
\end{align*}
Furthermore
\begin{align*}
\Ann( \pi_{\lambda,\cS'} {}_{\mid \text{ supp contained in } \abs{C}})& \subseteq \Ann( \pi_{\lambda,\cS} {}_{\mid \text{ supp contained in } \abs{C}})
 = \Ann( \pi_C).
\end{align*}
We conclude 
\[
J_{\mu^c} \subseteq \Ann( \pi_C).
\]
From \cite[Theorem 5.4]{CSS}, it follows that every ideal with infinite codimension that contains the Joseph-like ideal $J_{\mu^c}$ is equal to $J_{\mu^c}$ if $p+q-2n>2.$
Since $J_{\mu^c}\subseteq  \Ann( \pi_C)$ and $\Ann (\pi_C)$ has infinite codimension,  the theorem follows.
\end{proof}

\section{The Gelfand--Kirillov dimension}\label{Section Gelfand-Kirillov dimension}
The Gelfand--Kirillov dimension is a measure of the size of a representation. 
Suppose that $R$ is a finitely generated algebra and $M$ is a finitely generated $R$-module. 
Then the Gelfand--Kirillov dimension  (GK-dimension) of $M$ is defined by
\[
GK(M)=  \lim\sup_{k\to\infty}  \left( \log_k \dim (V^k F)\right),
\]
where $V$ is a finite-dimensional subspace of $R$ containing $1$ and generators of $R$, and $F$ is a finite-dimensional subspace of $M$ which generates $M$ as an $R$-module. 
This definition is independent of our choice of $V$ and $F$, \cite[Section 7.3]{Musson}.
\begin{proposition} \label{Prop Gelfand-Kirillov}
The Gelfand--Kirillov dimension of the $U(\mg)$-module $W$ defined in \eqref{Definition W} is given by
\[
GK(W) = p+q-3.
\]
\end{proposition}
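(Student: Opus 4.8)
The plan is to estimate $\dim(V^NF)$ directly from the definition of the Gelfand--Kirillov dimension, taking as generating data the one-dimensional subspace $F=\mC\,\widetilde{K}_{\frac{\nu}{2}}(\abs{X})\subset W$ from \eqref{Definition W} (which generates $W$ over $U(\mg)$) and the subspace $V=\mC 1\oplus\mg\subset U(\mg)$, so that $GK(W)=\limsup_{N\to\infty}\log_N\dim(V^NF)$. I work under the standing hypothesis $\nu\not\in-2\mN$ of Theorem~\ref{Theorem structure W}. The bookkeeping is organized by the $\mk$-type decomposition of that theorem: I index the $\mk_0$-isotypic piece $\Lambda^{\mu+2k,\nu+2l}_{2,j-k}(\abs{X})\,\cH_k(\mR^{\mu+2})\otimes\cH_l(\mR^{\nu+2})$ of $W$ by the triple $(j,k,l)$ with $0\le k\le j$ and $0\le l\le L_j$, where $L_j=\tfrac{\mu-\nu}{2}+j$ if $p+q$ is even and $L_j=\infty$ if $p+q$ is odd, and call it the \emph{slot} $(j,k,l)$; it has dimension $\dim\cH_k(\mR^{\mu+2})\cdot\dim\cH_l(\mR^{\nu+2})$, and $\widetilde{K}_{\frac{\nu}{2}}(\abs{X})$ spans (a line in) the slot $(0,0,0)$.

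The crucial structural fact, read off from Lemma~\ref{Lemma: action of Bessel operators} together with the action of $L_e$ computed in the proof of Proposition~\ref{Structure W}, is that a single basis element of $\mg$ changes the quantity $j+k+l$ by at most one: an element of $\mk_0$ preserves every slot, $L_e$ maps the slot $(j,k,l)$ into $(j-1,k,l)\oplus(j+1,k,l)$, and each Bessel element $\imath B^\pm_i\in\mk$ maps it into $(j,k\pm1,l)$, resp.\ $(j,k,l\pm1)$. Hence $V^NF\subseteq\bigoplus_{j+k+l\le N}(\text{slot }(j,k,l))$, which is genuinely finite-dimensional for either parity of $p+q$. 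Conversely, starting from $\widetilde{K}_{\frac{\nu}{2}}(\abs{X})=\Gamma(\tfrac{\mu+2}{2})\Lambda^{\mu,\nu}_{2,0}(\abs{X})$ in the slot $(0,0,0)$ and following the path ``apply $L_e$ $j$ times, then $B^+$ $k$ times, then $B^-$ $l$ times'', and using, exactly as in the proof of Proposition~\ref{Prop: structure Wj}, that the relevant structure constants are nonzero along this path and that the vectors $\phi^+_{k+1,i}$ (resp.\ $\psi^+_{l+1,i}$) sweep out all of $\cH_{k+1}(\mR^{\mu+2})$ (resp.\ $\cH_{l+1}(\mR^{\nu+2})$), one obtains the whole slot $(j,k,l)$ inside $V^{j+k+l}F$. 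Therefore, up to a bounded multiplicative constant,
\[
\dim(V^NF)\;\asymp\;\sum_{\substack{j+k+l\le N\\ 0\le k\le j,\ 0\le l\le L_j}}\dim\cH_k(\mR^{\mu+2})\,\dim\cH_l(\mR^{\nu+2}).
\]

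It remains to evaluate this sum. By Proposition~\ref{Prop: dimension of spherical harmonics} the quantity $\dim\cH_k(\mR^{m|2n})$ is, for $k$ large, a polynomial in $k$ of degree $m-2$ with positive leading term, independent of $n$ (for $m=2$ it is bounded, and for $m=1$ it vanishes for large $k$); since $\{\mR^{\mu+2},\mR^{\nu+2}\}=\{\mR^{p-1|2n},\mR^{q-1}\}$, the summand grows like $k^{p-3}l^{q-3}$, in one order or the other. Summing this over the index region above --- for fixed $(k,l)$ the variable $j$ sweeps an interval of length $\asymp N$ on the bulk $\{k,l\le N/4\}$, while $k$ and $l$ each range up to $\asymp N$ --- gives $\dim(V^NF)=\Theta\big(N^{\,1+(p-2)+(q-2)}\big)=\Theta(N^{p+q-3})$, whence $GK(W)=p+q-3$. (When $p+q$ is even one may argue more crudely instead: $W_j\cong\cH_j(\mR^{\mu+3})\otimes\cH_{\frac{\mu-\nu}{2}+j}(\mR^{\nu+3})$ has dimension $\Theta(j^{p+q-4})$, so $V^NF\subseteq\bigoplus_{j\le N}W_j$ forces $GK(W)\le p+q-3$, while $W_j\subseteq V^{cj}F$ for a fixed constant $c$ --- via the same path --- gives the reverse inequality.)

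The step I expect to require the most care is the sharpness of the lower bound in the degenerate low-rank cases $p\in\{2,3\}$ or $q\in\{2,3\}$: there some of the spaces $\cH_k(\mR^{\mu+2})$, $\cH_k(\mR^{\nu+2})$ truncate to $0$ or fail to be irreducible, so one must recheck, along the chosen path, both the nonvanishing of the coefficients in Lemma~\ref{Lemma: action of Bessel operators} and in the $L_e$-formula of Proposition~\ref{Structure W}, and the precise range of the index region. In each such case the effective dimension of the summation region drops by exactly as much as the degree of the summand, so the exponent $p+q-3$ is unchanged, but this has to be verified case by case.
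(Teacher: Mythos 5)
Your argument is correct and rests on the same pillars as the paper's proof --- the explicit $\mk_0$-type decomposition of Theorem~\ref{Theorem structure W} and the dimension formula of Proposition~\ref{Prop: dimension of spherical harmonics} --- but the bookkeeping is genuinely different and worth comparing. The paper takes $F=W_0$ and $V=\mC 1\oplus\mg$, observes $U_k(\mg)W_0=\bigoplus_{j\le k}W_j$, and then evaluates $\dim U_k(\mg)W_0$ \emph{exactly} as a sum of binomial coefficients via the $\mk$-isomorphism $W_j\cong\cH_j(\mR^{\mu+3})\otimes\cH_{\frac{\mu-\nu}{2}+j}(\mR^{\nu+3})$ of Proposition~\ref{Prop: structure Wj II}; this tacitly requires $p+q$ even (and $\nu\notin-2\mN$), since otherwise $W_0$ is infinite-dimensional and that isomorphism is unavailable. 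Your finer $(j,k,l)$-slot filtration, with a one-dimensional $F$, sidesteps both points and treats the two parities uniformly, at the price of an asymptotic rather than exact count; the asymptotics $\Theta(N^{1+(p-2)+(q-2)})$ match the paper's leading term. One imprecision should be repaired: your ``crucial structural fact'' that every basis element of $\mg$ shifts $j+k+l$ by at most one is only verified for $\mk_0$, the $B^\pm_i$ and $L_e$, which span $\mk\oplus\mR L_e\subsetneq\mg$; the remaining generators (multiplication by $z_i$ and the Bessel operators $\cB_\lambda(e_i)$ taken separately, and the mixed rotations in $\mathfrak{osp}(J)\setminus\mk_0$) are not addressed, and for them the shift is not obviously $\le 1$. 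This is harmless for two reasons, either of which you should state: since $\mg=\mk+[\mk,[\mk,L_e]]$, the subspace $\mk\oplus\mR L_e$ already generates $U(\mg)$, so you may legitimately take $V=\mC 1\oplus\mk\oplus\mR L_e$ (the Gelfand--Kirillov dimension being independent of the choice of $V$); alternatively, every element of $\mg$, written as $Y_1+[Y_2,[Y_3,L_e]]$ with $Y_i\in\mk$, shifts $j+k+l$ by at most $3$, which replaces the index region by $\{j+k+l\le 3N\}$ and changes nothing in $\limsup_N\log_N$. With that fixed, and with the low-rank cases $p,q\in\{2,3\}$ checked as you indicate, the proof is complete.
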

\begin{proof}
We choose $W_0$ for $F$ and $\mg\oplus 1\subset U(\mg)$ for $V$. Then $V^k = U_k(\mg)$, with $U_k(\mg)$ the canonical filtration on the universal enveloping algebra. From the proof of Proposition~\ref{Prop: structure Wj}, we obtain
\[
U_k(\mg) W_0 = \bigoplus_{j=0}^k W_j. 
\]

Using Proposition~\ref{Prop: structure Wj II} and the dimension of the space of spherical harmonics given in Proposition~\ref{Prop: dimension of spherical harmonics}, we compute the dimension of $U_k(\mg) W_0 $ for the case $p-2n\geq q$
\begin{align*}
 \dim U_k(\mg) W_0 &=\sum_{j=0}^k  \left(  \sum_{i=0}^{\min (j,2n)} \left( \begin{smallmatrix}\
 2n \\ i \end{smallmatrix}\right) \begin{pmatrix}\begin{smallmatrix}
 j-i+p-1 \\ p-1 
\end{smallmatrix} \end{pmatrix} -  \sum_{i=0}^{\min (j-2,2n)} \begin{pmatrix}\begin{smallmatrix}
 2n \\ i \end{smallmatrix}\end{pmatrix} \begin{pmatrix}
 j-i+p-3 \\ p-1 
 \end{pmatrix} \right)  \cdot  
 \\
& \quad \left( \begin{pmatrix}\begin{smallmatrix}
 \frac{\mu-\nu}{2}+j+q-1 \\q- 1
\end{smallmatrix}\end{pmatrix}  - \begin{pmatrix}\begin{smallmatrix}
\frac{\mu-\nu}{2}+j+q-3 \\ q-1
\end{smallmatrix}\end{pmatrix} \right) 
\\
&= \sum_{i=0}^{2n} \begin{pmatrix}\begin{smallmatrix}
2n \\ i
\end{smallmatrix}\end{pmatrix} \left( \begin{pmatrix}\begin{smallmatrix}
k-i+p-1 \\p-1
\end{smallmatrix}\end{pmatrix} -\begin{pmatrix}\begin{smallmatrix}
k-i+p-3 \\p-1
\end{smallmatrix}\end{pmatrix} \right)\begin{pmatrix}\begin{smallmatrix}
\frac{\mu-\nu}{2} + k+q-1 \\q-1
\end{smallmatrix}\end{pmatrix} 
\\
&\quad +\sum_{i=0}^{2n} \begin{pmatrix}\begin{smallmatrix}
2n \\ i
\end{smallmatrix}\end{pmatrix} 
\left( \begin{pmatrix}\begin{smallmatrix}
k-i+p-2 \\p-1
\end{smallmatrix}\end{pmatrix} -\begin{pmatrix}\begin{smallmatrix}
k-i+p-4 \\p-1
\end{smallmatrix}\end{pmatrix} \right)\begin{pmatrix}\begin{smallmatrix}
\frac{\mu-\nu}{2} + k+q-2 \\q-1
\end{smallmatrix}\end{pmatrix} 
\\ 
& \quad -\begin{pmatrix}\begin{smallmatrix}
\frac{\mu-\nu}{2} +q-3 \\ q-1 
\end{smallmatrix}\end{pmatrix}-(p+2n) \begin{pmatrix}\begin{smallmatrix}
\frac{\mu-\nu}{2}+q-2 \\ q-1,
\end{smallmatrix}\end{pmatrix}
\end{align*}
where we assumed $k>>2n$. By \cite[Lemma 7.3.1]{Musson}, it is sufficient to know the highest exponent of $k$ in the expression for $\dim U_k(\mg) W_0$ to calculate  $\lim\sup_{k\to\infty} \log_k \dim U_k(\mg) W_0$. The highest exponent of $k$ in $
\left( \begin{pmatrix}\begin{smallmatrix}
k-i+p-1 \\p-1
\end{smallmatrix}\end{pmatrix} -\begin{pmatrix}\begin{smallmatrix}
k-i+p-3 \\p-1
\end{smallmatrix}\end{pmatrix} \right)
$  is given by $p-2$, while in $\begin{pmatrix}\begin{smallmatrix}
\frac{\mu-\nu}{2} + k+q-1 \\q-1
\end{smallmatrix}\end{pmatrix} $ it is given by $q-1$. Therefore, we conclude 
$GK(W) = p+q-3$.
\end{proof}
\section{Non-degenerate sesquilinear form}\label{Section integration}\label{Non-degenerate sesquilinear form}
In this section we will define an `integral' on the minimal orbit. More specifically, we will define a functional on a subspace of $\Gamma(\cO_{\mA^{p+q-2|2n}_{(0)}})$. This functional also leads to a functional on a subspace of $\Gamma(\cO_{C})$, which then can be used to define a sesquilinear form on $W$, where $W$ is the submodule defined in Equation \ref{Definition W}. Then we show that the representation $\pi_C$ on $W$ is skew-symmetric with respect to this sesquilinear form if $p+q-2n-6\geq 0$. 

We will use the same conventions as in Section~\ref{Section radial superfunctions} for $s^2$, $t^2,$ and $\theta^2$.
Further, we also set
\begin{align*}
1+\eta &:= \sqrt{1- \frac{\theta^2}{2s^2}} = \sum_{j=0}^n  \frac{1}{j!2^j}\left(\frac{-1}{2}\right)_j \frac{\theta^{2j}}{s^{2j}}, \\
1+\xi &:= \sqrt{1+ \frac{\theta^2}{2t^2}} = \sum_{j=0}^n  \frac{(-1)^j}{j!2^j}\left(\frac{-1}{2}\right)_j \frac{\theta^{2j}}{t^{2j}},
\end{align*}
where $\left(\frac{-1}{2}\right)_j$ is the Pochhammer symbol $\frac{-1}{2}\frac{1}{2}\frac{3}{2}\cdots \frac{-1+2(j-1)}{2}. $
Note that $\eta$ and $\xi$ are nilpotent since $\eta^{n+1}=0=\xi^{n+1}$.

\subsection{Bipolar coordinates} We use bipolar coordinates to define a morphism between certain algebras of superfunctions. More precisely, for $(x,y)\in\mR^{p-1}\times\mR^{q-1}=\mR^{p+q-2}$ consider spherical coordinates by setting 
$x_i= s\omega_i^p$, and  $y_j= t\omega_j^q$ with $\omega^p \in \mathbb{S}^{p-2}$ and $\omega^q  \in \mathbb{S}^{q-2}$. We then define
\[
\partial_u := \frac{1}{2}\partial_{t^2} -\frac{1}{2}\partial_{s^2} =\frac{1}{4t}\partial_t -\frac{1}{4s}\partial_s.
\]

\begin{lemma} \label{Lemma multiplication and partial derivative}
The morphism 
\begin{align}
\label{Def phisharp}
\begin{aligned}
\phi^\sharp \colon \cC^\infty(\mR^{p+q-2}_{(0)})\otimes \Lambda^{2n} \to \cC^\infty\left(\mR^+ \times \mathbb{S}^{p-2} \times \mR^+ \times \mathbb{S}^{q-2} \right)\otimes \Lambda^{2n} 
\\ 
f \mapsto  \phi^\sharp(f) = \exp \theta^2 \partial_{u} (f)= \sum_{j=0}^n  \frac{\theta^{2j}}{j!} \left(\frac{1}{4t} \partial_{t}-\frac{1}{4s} \partial_{s}\right)^j ( f), 
\end{aligned}
\end{align}
is a well-defined (algebra) morphism.
\end{lemma}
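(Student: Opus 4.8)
The plan is to verify directly that the map $\phi^\sharp$ is well-defined and multiplicative, the only subtle point being that the exponential series $\exp(\theta^2\partial_u)$ actually terminates and produces an element of the target algebra. First I would observe that $\theta^2 = \sum_i \theta^i\theta_i$ is an even nilpotent superfunction with $(\theta^2)^{n+1}=0$ in $\Lambda^{2n}$, so the formal series $\exp(\theta^2\partial_u) = \sum_{j\geq 0}\frac{1}{j!}(\theta^2)^j\partial_u^j$ truncates at $j=n$: for any $f\in\cC^\infty(\mR^{p+q-2}_{(0)})\otimes\Lambda^{2n}$ only the terms $j=0,\dots,n$ survive. Here I would be slightly careful about ordering: since $\theta^2$ is even it is central, so $(\theta^2)^j$ commutes past everything and there is no sign issue, and $\partial_u$ acts only on the $\cC^\infty(\mR^{p+q-2}_{(0)})$-factor (via $\partial_{s^2},\partial_{t^2}$, i.e.\ radially in the $x$- and $y$-blocks), so each summand $\frac{1}{j!}(\theta^2)^j\partial_u^j(f)$ genuinely lies in $\cC^\infty(\mR^+\times\mathbb{S}^{p-2}\times\mR^+\times\mathbb{S}^{q-2})\otimes\Lambda^{2n}$ after passing to spherical coordinates $x_i=s\omega_i^p$, $y_j=t\omega_j^q$ on $\mR^{p-1}_{(0)}\times\mR^{q-1}_{(0)}$. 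Linearity of $\phi^\sharp$ is immediate.

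Next I would check that $\phi^\sharp$ is multiplicative. This is exactly the statement that $\exp(\theta^2\partial_u)$ is an algebra homomorphism on $\cC^\infty(\mR^{p+q-2}_{(0)})\otimes\Lambda^{2n}$, which follows from the fact that $D:=\theta^2\partial_u$ is an (even) derivation of this algebra: indeed $\partial_u=\tfrac12\partial_{t^2}-\tfrac12\partial_{s^2}$ is a derivation of $\cC^\infty(\mR^{p+q-2}_{(0)})$, hence $\theta^2\partial_u$ is a derivation of the tensor product (multiplication by the central even element $\theta^2$ of a derivation is again a derivation), and for any even derivation $D$ with $D^{n+1}=0$ on the algebra — which holds here because each application of $D$ raises the $\theta$-degree by $2$ — the truncated exponential $\sum_{j=0}^n \tfrac{1}{j!}D^j$ is an algebra morphism by the usual Leibniz/binomial computation $D^j(fg)=\sum_{k=0}^j\binom{j}{k}D^k(f)D^{j-k}(g)$, so that $\exp(D)(fg)=\exp(D)(f)\cdot\exp(D)(g)$. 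I would write out the binomial identity once and note the series manipulation is finite so convergence is a non-issue.

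Finally I would record that $\phi^\sharp$ sends $1\mapsto 1$ (all $j\geq 1$ terms vanish on constants since $\partial_u(1)=0$), so $\phi^\sharp$ is a unital algebra morphism, completing the proof. The only genuine obstacle is bookkeeping: making sure the change to spherical coordinates is legitimate (which it is, on the open set where $s,t>0$, i.e.\ away from the loci $x=0$ or $y=0$ — but note the target is $\cC^\infty(\mR^+\times\mathbb{S}^{p-2}\times\mR^+\times\mathbb{S}^{q-2})$ precisely to accommodate this) and that $\partial_u$ is well-defined as written, i.e.\ that $\tfrac{1}{4t}\partial_t-\tfrac1{4s}\partial_s$ agrees with $\tfrac12\partial_{t^2}-\tfrac12\partial_{s^2}$; this is just the chain rule $\partial_{s^2}=\tfrac{1}{2s}\partial_s$. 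No deeper difficulty is expected.
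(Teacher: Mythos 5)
Your proof is correct and follows essentially the same route as the paper's: a direct verification of well-definedness (via the restriction to $s,t>0$, where $1/s^k$ and $1/t^k$ are smooth) together with multiplicativity. The paper merely asserts that multiplicativity "can easily be checked," whereas you supply the standard argument that the truncated exponential of the nilpotent even derivation $\theta^2\partial_u$ is an algebra morphism; this is exactly the intended verification, just made explicit.
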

\begin{proof}
One can easily check that $\phi^\sharp$ is a linear map which satisfies $\phi^\sharp(fg) = \phi^\sharp(f)\phi^\sharp(g)$. Note that there are points of $\mR^{p+q-2}_{(0)}$, for which $s=0$ or $t=0$ and in those points $1/s^k$ and $1/t^k$ are not well-defined. Therefore, we restricted the domain of the image to $s>0$ and $t>0$, where $1/s^k$ and $1/t^k$ are smooth. The product of a smooth function with a smooth function gives again a smooth function. For the partial derivatives we remark that  $\partial_s =\sum_i \frac{x_i}{s}\partial_{x_i}$. Multiplication with  $x_i$ and derivations with respect to  $x_i$ are smooth operators, so $\partial_s$ is a smooth operator. Similar we also have that $\partial_t$ is smooth, which proves the lemma.
\end{proof}
The superalgebra morphism~$\phi^\sharp$ satisfies the following properties:
\begin{lemma} \label{Lemma: properties of phi sharp}
We have
\begin{align*}
\phi^\sharp x_i &= (1+\eta)x_i \phi^\sharp, &
\phi^\sharp \partial_{x^i} &=  \frac{1}{1+\eta} (\partial_{x^i}-x_i\frac{\theta^2}{s^2} \partial_{s^2}) \phi^\sharp , \\
\phi^\sharp y_i &= (1+\xi) y_i \phi^\sharp, &
\phi^\sharp \partial_{y^i} &=  \frac{1}{1+ \xi } (\partial_{y^i}  - 2y_i\frac{ \theta^2  }{ t^2} \partial_{t^2}) \phi^\sharp, \\
  \phi^\sharp \theta_k &= \theta_k  \phi^\sharp, & \phi^\sharp \partial_{\theta^k} &= (\partial_{\theta^k} -2\theta_k\partial_{u}) \phi^\sharp, \\
\phi^\sharp s &= (1+\eta)s \phi^\sharp, & \phi^\sharp \partial_s &= (1+\eta)\partial_s \phi^\sharp , \\
\phi^\sharp t &= (1+\xi)t \phi^\sharp, & \phi^\sharp \partial_t &= (1+\xi)\partial_t \phi^\sharp . 
\end{align*}

\end{lemma}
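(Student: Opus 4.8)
The plan is to exploit the description $\phi^\sharp=\exp(\theta^2\partial_u)$ obtained in Lemma~\ref{Lemma multiplication and partial derivative}. Since $\theta^2$ is an even, central superfunction with $\theta^{2(n+1)}=0$, the operator $\phi^\sharp=\sum_{k=0}^{n}\tfrac{\theta^{2k}}{k!}\partial_u^k$ is a terminating sum, and because $\partial_u$ commutes with $\theta^2$ it is invertible with inverse $\exp(-\theta^2\partial_u)$. Moreover $\partial_u(s^2)=-\tfrac12$, $\partial_u(t^2)=\tfrac12$, while $\partial_u$ annihilates the angular coordinates $\omega^p,\omega^q$ and the odd coordinates $\theta_i$. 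Reading a superfunction in the bipolar coordinates $(s,\omega^p,t,\omega^q)$ and Taylor expanding — the expansion has only finitely many nonzero terms because $\theta^2$ is nilpotent, so there is no analytic subtlety — one sees that $\phi^\sharp$ is the algebra endomorphism implementing the substitution
\[
s^2\ \longmapsto\ s^2-\tfrac{\theta^2}{2}, \qquad\qquad t^2\ \longmapsto\ t^2+\tfrac{\theta^2}{2},
\]
and fixing all the remaining coordinates. In particular $\phi^\sharp(s)=\sqrt{s^2-\theta^2/2}=(1+\eta)s$, $\phi^\sharp(t)=(1+\xi)t$, $\phi^\sharp(\omega^p_i)=\omega^p_i$, $\phi^\sharp(\omega^q_i)=\omega^q_i$ and $\phi^\sharp(\theta_k)=\theta_k$.

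The multiplication identities are then immediate: as $\phi^\sharp$ is an algebra morphism, $\phi^\sharp\circ(\text{multiplication by }a)=(\text{multiplication by }\phi^\sharp(a))\circ\phi^\sharp$ for every superfunction $a$, and applying this to $a=s,\ t,\ \theta_k,\ x_i=s\,\omega^p_i,\ y_i=t\,\omega^q_i$ gives all the identities in the left-hand column, i.e.\ the formulas for $\phi^\sharp x_i$, $\phi^\sharp y_i$, $\phi^\sharp\theta_k$, $\phi^\sharp s$ and $\phi^\sharp t$.

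For the radial derivatives I would write $\partial_s=2s\,\partial_{s^2}$ and use that $\phi^\sharp$ commutes with $\partial_{s^2}$ (it is a translation in the variable $s^2$), while conjugating multiplication by $2s$ into multiplication by $2(1+\eta)s$; this gives $\phi^\sharp\partial_s=(1+\eta)\partial_s\,\phi^\sharp$, and symmetrically $\phi^\sharp\partial_t=(1+\xi)\partial_t\,\phi^\sharp$. To obtain $\phi^\sharp\partial_{x^i}$ I would split $\partial_{x^i}$ into its radial and spherical–tangential parts, $\partial_{x^i}=\omega^p_i\,\partial_s+\tfrac1s\,T_i$ with $T_i=s\,\partial_{x^i}-x_i\,\partial_s$ a vector field tangent to $\mathbb{S}^{p-2}$. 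The essential point is that $T_i$ involves neither $s^2$ nor $t^2$ nor the $\theta_i$, hence commutes with $\phi^\sharp$, whereas $\phi^\sharp$ conjugates multiplication by $\tfrac1s$ into multiplication by $\tfrac1{(1+\eta)s}$. Combining these facts with $\phi^\sharp\partial_s=(1+\eta)\partial_s\phi^\sharp$ and simplifying by means of $(1+\eta)^2=1-\tfrac{\theta^2}{2s^2}$ and $\partial_{s^2}=\tfrac1{2s}\partial_s$ yields the stated expression for $\phi^\sharp\partial_{x^i}$; the identical computation on the $\mathbb{R}^{q-1}$–summand — the sign in the $y$–formula reflecting the metric convention there — gives $\phi^\sharp\partial_{y^i}$. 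Finally, acting with $\partial_{\theta^k}$ directly on $\phi^\sharp(f)=\sum_k\tfrac{\theta^{2k}}{k!}\partial_u^k(f)$ and using that $\partial_{\theta^k}$ is an odd superderivation commuting with $\partial_u$ (which contains no odd variable) with $\partial_{\theta^k}(\theta^2)=2\theta_k$, the graded Leibniz rule gives $\partial_{\theta^k}\circ\phi^\sharp=\phi^\sharp\circ\partial_{\theta^k}+2\theta_k\,\partial_u\circ\phi^\sharp$, i.e.\ $\phi^\sharp\partial_{\theta^k}=(\partial_{\theta^k}-2\theta_k\partial_u)\phi^\sharp$.

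Once the substitution picture of $\phi^\sharp$ is in place the remaining work is routine chain–rule bookkeeping; I expect the main thing to get right to be the radial/spherical–tangential decomposition used for $\partial_{x^i}$ and $\partial_{y^i}$ — in particular verifying that $T_i$ is genuinely free of $s$ and $\theta$, so that it commutes with $\phi^\sharp$ — together with keeping the parity signs straight in the treatment of $\partial_{\theta^k}$.
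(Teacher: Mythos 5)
Your proposal is correct and follows essentially the same route as the paper: the multiplication identities come from the algebra-morphism property of $\phi^\sharp$, the derivatives $\partial_{x^i},\partial_{y^i}$ are handled by splitting into a radial part and an angular part that commutes with $\phi^\sharp$ (your $T_i=s\partial_{x^i}-x_i\partial_s=\tfrac1s\sum_k x^kL^p_{ki}$ is exactly the paper's decomposition via the $L^p_{ki}$), and $\partial_{\theta^k}$ is treated by the same graded Leibniz computation; the "coordinate substitution" picture of $\phi^\sharp$ is a tidy repackaging but the computations coincide. One caveat: carrying your computation out for $\partial_{y^i}$ yields the coefficient $y_i\tfrac{\theta^2}{t^2}\partial_{t^2}$ (consistent with the $x$-formula and with the paper's own proof, as one checks on the test function $f=t^2$), not $2y_i\tfrac{\theta^2}{t^2}\partial_{t^2}$ as printed in the statement — the factor $2$ there appears to be a typo, so your assertion that the "identical computation" reproduces the displayed $y$-formula should not be taken literally.
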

\begin{proof}
We have $\partial_{t^2}=\frac{1}{2t} \partial_{t}= \frac{1}{2t^2}\sum_i y_i\partial_{y_i}$. Using this we obtain, for $l \in \mN$.
\[
\partial_{t^2} \left( \frac{y_k}{t^{2l}} \right)= (\tfrac{1}{2}-l) \frac{y_k}{t^{2l+2}} \text{ and } \partial^l_{t^2} (y_k) = (-1)^l\left(\frac{-1}{2}\right)_l  \frac{y_k}{t^{2l}}.
\]
Therefore
\[
\phi^\sharp(y_k)= y_k \sum_{j=0}^n \frac{(-1)^j}{j!2^j}\left(\frac{-1}{2}\right)_j  \frac{\theta^{2j}}{ t^{2j} } = y_k (\xi+1).
\]
Since $\phi^\sharp$ is an algebra morphism, we have $
\phi^\sharp(y_i f) = \phi^\sharp(y_i) \phi^\sharp(f) = (1+\xi)y_i\phi^\sharp(f).$
In the same way, we get $\phi^\sharp(x_i) =  (1+\eta)x_i$, $\phi^\sharp(s) =(1+\eta)s$ and $\phi^\sharp(t) = (1+\xi)t $. Since $\partial_t = 2t \partial_{t^2}$, we obtain
\[
\phi^\sharp( \partial_t) = 2t (\xi+1) \partial_{t^2} \phi^\sharp= (\xi+1) \partial_t \phi^\sharp.
\]
Rewrite $\partial_{y^i}$ as $\frac{-y_i}{t}\partial_t -\sum_{k} \frac{y^k}{t^2}L^q_{ki}$, with $L^q_{ki}= y_k \partial_{y^i}-y_i \partial_{y^k}$ and use the fact that $[\partial_{t^2}, L^q_{ki}]=0$. Then we compute
\begin{align*}
\phi^\sharp \partial_{y^i} 
& = -(\xi+1) \frac{y_i}{t}\partial_t \phi^\sharp - \sum_k\frac{y^k}{(\xi+1)t^2} L^q_{ki} \phi^\sharp 
= \frac{1}{1+ \xi } \partial_{y^i} \phi^\sharp - \frac{ (\xi^2+2\xi)y_i }{ (\xi+1)t} \partial_{t} \phi^\sharp.
\end{align*}
Because $\xi^2 + 2\xi = \frac{\theta^2}{2t^2}$, this proves it for $\partial_{y^i}$. 
Using $[\partial_{\theta^i}, \theta^{2k}]= 2k \theta_i \theta^{2k-2}$ we obtain 
\[
\phi^\sharp \partial_{\theta^i} =(  \partial_{\theta^i} -2\theta_i\partial_{u} )\phi^\sharp,
\]
while the cases $\partial_s$ and $\partial_{x^i}$ are similar to $\partial_t$ and $\partial_{y^i}$.
\end{proof}
\subsection{The functional}

In \cite[Equation (2.2.3)]{KM2}  the following distribution on  $\mR^{p+q-2}_{(0)}$ is defined
\begin{align} \label{Integration in bipolar coordinates}
\begin{aligned}
\langle \delta(r^2), f \rangle
& =\int_{\abs{C}} f
:= \frac{1}{2}\int_{0}^\infty \int_{\mathbb{S}^{p-2}}\int_{\mathbb{S}^{q-2}} f_{\mid s=t=\rho} \rho^{p+q-5} \diff\rho \diff\omega^p \diff\omega^q,
\end{aligned}
\end{align}
where $f$ is a smooth function with compact support.
The Berezin integral on $\Lambda^{2n}$ is defined as
\[
\int_B := \partial_{\theta_{2n}}\partial_{\theta_{2n-1}} \cdots \partial_{\theta_1}. 
\]

In the spirit of \cite{Integration}, where integration over the supersphere was studied, we then define the following functional on $\cC^\infty_c (\mR^{p+q-2}_{(0)}) \otimes \Lambda^{2n}$, where $\cC^\infty_c(\mR^{p+q-2}_{(0)})$ stands for smooth functions on $\mR^{p+q-2}_{(0)}$ with compact support, 
\begin{align}\label{definition integral}
\int_{\kerorbit} f &:=\int_{\abs{C}} \int_B \left(1+\eta\right)^{p-3}\left(1+\xi \right)^{q-3} \phi^\sharp (f)  
\\
&= \frac{1}{2}\int_{0}^\infty \int_{\mathbb{S}^{p-2}}\int_{\mathbb{S}^{q-2}}  \int_B  \rho^{p+q-5} \left(1+\eta\right)^{p-3}\left(1+\xi \right)^{q-3} \phi^\sharp (f)_{\mid s=t=\rho}  \diff\rho \diff\omega^p \diff\omega^q,  \nonumber
\end{align}
with $\phi^\sharp(f)$ the morphism defined in equation~\eqref{Def phisharp}.

 Since the integral $\int_{0}^\infty \int_{\mathbb{S}^{p-2}}\int_{\mathbb{S}^{q-2}} \diff\rho \diff\omega^p \diff\omega^q$ is convergent for smooth functions with compact support, Lemma~\ref{Lemma multiplication and partial derivative} implies that the functional defined in  \eqref{definition integral} is well-defined. 
 
 
 We can extend the domain of our functional $\int_C$ from smooth functions with compact support to Bessel functions with polynomials of high enough degree. 
 \begin{lemma} \label{integral well-defined}
 Let $P_k$ be a homogeneous polynomial of degree $k$ in $\cP(\mR^{p+q-2|2n})$ and $\widetilde{K}_{\alpha} (\abs{X}),$ $\widetilde{K}_{\beta} (\abs{X})$  Bessel functions with $\alpha$, $\beta$ in $\mR$. If $p+q-2n-4+k > 2 \max(\alpha,0) + 2 \max(\beta,0)$, then we can extend the domain of our functional $\int_C$ such that
 \[
 \int_C P_k \widetilde{K}_{\alpha} (\abs{X})\widetilde{K}_{\beta} (\abs{X})
 \]
 is also defined.
 \end{lemma}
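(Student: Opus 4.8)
The plan is to reduce the question of convergence to a one-dimensional radial integral and then analyse its behaviour at $\rho\to 0$ and $\rho\to\infty$. First I would unravel the definition \eqref{definition integral}: applying $\phi^\sharp$ to the product $P_k \widetilde K_\alpha(\abs{X})\widetilde K_\beta(\abs{X})$ and then restricting to $s=t=\rho$, the superfunction $\abs{X}=\sqrt{(s^2+t^2+\theta^2)/2}$ becomes $\sqrt{\rho^2+\theta^2/2}$, and by the chain rule of \cite[Definition 3]{Invariant functions} this is a finite sum (of length $\le n$) of terms $\rho^{j}\theta^{2j}$ times derivatives of $\widetilde K_\alpha$ and $\widetilde K_\beta$ evaluated at $\rho$. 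After performing the Berezin integral $\int_B$ the $\theta$-dependence is killed and one is left with a finite linear combination of expressions of the form
\[
\int_0^\infty \rho^{p+q-5+2a}\, g_a(\rho)\,\diff\rho,
\]
where each $g_a$ is a product of $\rho$-powers coming from the polynomial $P_k$ (contributing $\rho^{k-2b}$ for various $b\ge 0$, after writing $P_k$ in spherical coordinates and integrating over $\mathbb{S}^{p-2}\times\mathbb{S}^{q-2}$) and of derivatives $\widetilde K_\alpha^{(i)}(\rho)\widetilde K_\beta^{(i')}(\rho)$. The integrals over the two spheres are finite and contribute only constants, so only the radial integral is at issue.

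Next I would invoke the known asymptotics of the renormalised Bessel function $\widetilde K_\gamma$ collected in Section~\ref{Section Bessel functions}: as $\rho\to\infty$, $\widetilde K_\gamma(\rho)$ together with all its derivatives decays like $e^{-\rho}\rho^{\gamma-1/2}$ up to constants, so the integrand at infinity is dominated by $e^{-2\rho}$ times a polynomial, and convergence at $+\infty$ is automatic. As $\rho\to 0$, the renormalisation is chosen so that $\widetilde K_\gamma(\rho)$ stays bounded when $\gamma\le 0$ and behaves like $\rho^{2\gamma}$ (up to a bounded factor) when $\gamma>0$; differentiating lowers the exponent but, crucially, each derivative of $\widetilde K_\alpha$ is balanced by a factor $\rho$ coming from $\partial_\rho\abs{X}=\rho/(2\abs{X})$ in the chain rule, so in the worst term the combined power of $\rho$ near the origin is $p+q-5+k-2\max(\alpha,0)-2\max(\beta,0)$ (the $-2b$ contributions from $P_k$ only improve matters since they are even and the radial polynomial part of a spherical harmonic expansion contributes non-negative powers). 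Hence the radial integral converges at $0$ precisely when $p+q-4+k-2\max(\alpha,0)-2\max(\beta,0)>0$, which is the stated hypothesis. Therefore $\int_C P_k\widetilde K_\alpha(\abs{X})\widetilde K_\beta(\abs{X})$ is an absolutely convergent expression, and one defines $\int_C$ on the span of such products by this formula; the definition is consistent with \eqref{definition integral} on the overlap with $\cC^\infty_c(\mR^{p+q-2}_{(0)})\otimes\Lambda^{2n}$ because both are given by the same integral formula.

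The main obstacle I expect is bookkeeping the exponent of $\rho$ at the origin through the combination of three separate operations — the chain-rule expansion of $\phi^\sharp$ applied to a function of $\abs{X}$ (which produces derivatives of $\widetilde K$ together with compensating powers of $\rho$), the Berezin integral (which picks out the $\theta^{2n}$-coefficient, i.e.\ the term with $n$ nilpotent factors), and the spherical-coordinate expansion of the polynomial $P_k$ into a radial power times spherical harmonics. One has to check carefully that the ``worst'' term — the one giving the sharp inequality — is exactly $\rho^k$ times $\widetilde K_\alpha(\rho)\widetilde K_\beta(\rho)$ with no extra derivatives, so that the borderline exponent is $p+q-5+k-2\max(\alpha,0)-2\max(\beta,0)$; the derivative terms are then seen to be no worse because of the compensating $\rho$-factors, and the lower-degree radial parts of $P_k$ only help. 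Once this is pinned down, the conclusion is immediate from the elementary fact that $\int_0^1\rho^{c-1}\diff\rho<\infty$ iff $c>0$.
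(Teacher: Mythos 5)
Your overall strategy (reduce to a one-dimensional radial integral and apply the asymptotics of $\widetilde K_\gamma$ at $0$ and $\infty$) is the same as the paper's, and the treatment of $\rho\to\infty$ is fine. But there is a genuine gap in the accounting at $\rho\to 0$: the convergence condition you arrive at, $p+q-4+k>2\max(\alpha,0)+2\max(\beta,0)$, is \emph{not} the stated hypothesis --- the lemma requires $p+q-2n-4+k>2\max(\alpha,0)+2\max(\beta,0)$, and your argument never produces the superdimension shift $-2n$. That shift comes from the Berezin integral: $\int_B$ selects the coefficient of $\theta^{2n}$, and every factor of $\theta^2$ in the integrand is attached to a negative power of $\rho$ or to a raised Bessel index (the expansions $(1+\eta)^{p-3}=\sum_j c_j\,\theta^{2j}/s^{2j}$ and $(1+\xi)^{q-3}=\sum_j c'_j\,\theta^{2j}/t^{2j}$, the $\theta$-content of $P_k$ which lowers its radial degree, and the expansion $\widetilde K_\alpha(\abs{X})=\sum_j\frac{(-1)^j\theta^{2j}}{j!\,8^j}\widetilde K_{\alpha+j}(\rho)$). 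Writing $j_1$, $2j_2$, $2j_3$, $2j_4$ for the $\theta$-degrees drawn from these sources, the Berezin integral forces $j_1+2(j_2+j_3+j_4)=2n$, and the relevant radial integrand is $\rho^{p+q-5+k-j_1-2j_2}\widetilde K_{\alpha+j_3}(\rho)\widetilde K_{\beta+j_4}(\rho)$; only after substituting the constraint does the condition become $p+q-2n-4+k>2\max(\alpha,-j_3)+2\max(\beta,-j_4)$, which is implied by the hypothesis. This is exactly the computation the paper carries out.

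Relatedly, your claim that each derivative of $\widetilde K_\alpha$ is balanced by a factor $\rho$ is not correct: the $j$-th term of the radial expansion is (up to constants) $\theta^{2j}\widetilde K_{\alpha+j}(\rho)$, and $\widetilde K_{\alpha+j}(\rho)\sim\rho^{-2(\alpha+j)}$ for $\alpha+j>0$, so taken in isolation these terms are \emph{worse} near the origin by $\rho^{-2j}$. They are rescued only by the Berezin constraint above, which says that the $\theta^{2j}$ they carry reduces by $2j$ the number of odd variables (and hence negative powers of $\rho$) that must be supplied by the other factors. Without this bookkeeping the identification of $\rho^k\widetilde K_\alpha(\rho)\widetilde K_\beta(\rho)$ as the worst term, and hence the sharp exponent, is unjustified. (Two minor sign slips as well: $\widetilde K_\gamma(x)\sim x^{-2\gamma}$, not $x^{2\gamma}$, as $x\to0$ for $\gamma>0$, and the decay at infinity is of order $x^{-\gamma-1/2}e^{-x}$.)
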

 \begin{proof}

The morphism $\phi^\sharp$ leaves the degree of a polynomial unchanged. Hence, we can expand 
\[
(\phi^\sharp (P_k))_{s=t=\rho} = \sum_{j=0}^k \rho^{k-j} a_j(\theta) b_j(\omega^p,\omega^q),
\]
where $a_j(\theta)$ is a polynomial in  $\cP(\mR^{0|2n})$ of degree $j$ and $b_j(\omega^p,\omega^q)$ is a function depending on the spherical coordinates $\omega^p$ and $\omega^q$.
Since $\partial_u(\abs{X}) =0$, we have
\[
\phi^\sharp(\widetilde{K}_\alpha(\abs{X}) )= \widetilde{K}_\alpha(\abs{X}).
\]
We also have
\begin{align}\label{Eq expansion1}
(1+\eta)^{p-3}&=\left(1-\frac{\theta^2}{2s^2}\right)^{\frac{p-3}{2}} = \sum_{j=0}^{n} \frac{1}{j!}\left(\frac{3-p}{2}\right)_j \frac{\theta^{2j}}{2^js^{2j}}, 
\\
(1+\xi)^{q-3}&=\left(1+\frac{\theta^2}{2t^2}\right)^{\frac{q-3}{2}} = \sum_{j=0}^{n} \frac{(-1)^j}{j!}\left(\frac{3-q}{2}\right)_j \frac{\theta^{2j}}{2^jt^{2j}}. \label{Eq expansion2}
\end{align}
and 
\[
\widetilde{K}_{\alpha}(\abs{X})=\sum_{j=0}^n\frac{(-1)^j\theta^{2j}}{j!8^j}\widetilde{K}_{\alpha+j}(\rho),
\]
where we used  the differential recurrence relation \eqref{Bessel function diff rec rel} of the Bessel function.

Combining all this, we see that \[ \int_{\abs{C}} \int_B  \left(1+\eta \right)^{p-3} \left(1+\xi \right)^{q-3} \phi^\sharp (P_k \widetilde{K}_{\alpha} (\abs{X})\widetilde{K}_{\beta} (\abs{X}))  \]
converges if 
\[
\int_{0}^\infty\int_B \rho^{p+q-5+k-j_1-2j_2} a_{j_1}(\theta) \theta^{2j_2+2j_3+2j_4} \widetilde{K}_{\alpha+j_3}(\rho) \widetilde{K}_{\beta+j_4}(\rho) \diff \rho
\]
converges for all $0\leq j_1 \leq k$, $0\leq j_2,j_3,j_4 \leq n$.
The Berezin integral is zero unless $j_1+2(j_2+j_3+j_4)=2n$.
The integral
\[
\int_0^\infty \widetilde{K}_\alpha(\rho) \widetilde{K}_\beta(\rho) \rho^{\sigma-1} \diff \rho
\]
converges if $\sigma > 2 \max(\alpha,0) + 2 \max(\beta,0)$. This follows from the asymptotic behaviour of the Bessel functions, see Section~\ref{Section Bessel functions}.
Therefore we get the following condition
\[
p+q-4+k -(j_1+2j_2) > 2 \max(\alpha+j_3,0) + 2\max(\alpha+j_4,0),
\]
with $j_1+2(j_2+j_3+j_4)=2n$.
This is equivalent with 
\[
p+q-2n-4+k > 2 \max(\alpha,-j_3) + 2\max(\alpha,-j_4),
\]
which proves the lemma.
 \end{proof}
 For future reference, we also need the following lemma.
 \begin{lemma} \label{Lemma lim rho}
 Let $P_k$ be a homogeneous polynomial in $\cP(\mR^{p+q-2|2n})$ of degree $k$ and $\widetilde{K}_{\alpha} (\abs{X}),$ $\widetilde{K}_{\beta} (\abs{X})$  Bessel functions with $\alpha$, $\beta$ in $\mR$. Then for $z_i=x_i$ or $z_i=y_i$
 \[
\lim_{\rho\to 0} \int_B \rho^{p+q-5} (1+\xi)^{q-3} \frac{z_i}{\rho} \phi^\sharp (P_k \widetilde{K}_{\alpha} (\abs{X})\widetilde{K}_{\beta} (\abs{X})) =0
 \]
if $p+q-2n-5+k > 2 \max(\alpha,0) + 2 \max(\beta,0)$. The limit of $\rho$ to infinity is always zero.
 \end{lemma}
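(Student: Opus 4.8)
The plan is to repeat, almost verbatim, the expansion carried out in the proof of Lemma~\ref{integral well-defined}, now tracking powers of $\rho$ toward a limit rather than toward an integrability statement. First I would apply $\phi^\sharp$ and restrict to $s=t=\rho$. Since $\partial_u(\abs{X})=0$ (as in the proof of Lemma~\ref{integral well-defined}), the morphism $\phi^\sharp$ fixes $\widetilde{K}_{\alpha}(\abs{X})$ and $\widetilde{K}_{\beta}(\abs{X})$, and it preserves the degree of the homogeneous polynomial $P_k$, so one may write
\[
(\phi^\sharp(P_k))_{\mid s=t=\rho} = \sum_{j_1=0}^{k} \rho^{k-j_1}\, a_{j_1}(\theta)\, b_{j_1}(\omega^p,\omega^q),
\]
with $a_{j_1}\in\cP(\mR^{0|2n})$ of degree $j_1$ and $b_{j_1}$ a bounded function of the spherical variables; moreover $(z_i/\rho)_{\mid s=t=\rho}$ equals $\omega_i^p$ or $\omega_i^q$, hence is bounded.

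Next I would expand the remaining nilpotent factors in powers of $\theta^2$ exactly as in Lemma~\ref{integral well-defined}: at $s=t=\rho$ one has $(1+\xi)^{q-3}=\sum_{j_2} c_{j_2}\,\rho^{-2j_2}\theta^{2j_2}$, $\widetilde{K}_{\alpha}(\abs{X})=\sum_{j_3}\tfrac{(-1)^{j_3}}{j_3!\,8^{j_3}}\theta^{2j_3}\widetilde{K}_{\alpha+j_3}(\rho)$ and likewise for $\beta$. Multiplying everything together and applying the Berezin integral $\int_B$, only the terms with $j_1+2(j_2+j_3+j_4)=2n$ survive, and each such term is, up to a bounded function of $\omega^p,\omega^q$, a constant times
\[
\rho^{\,p+q-5+k-j_1-2j_2}\,\widetilde{K}_{\alpha+j_3}(\rho)\,\widetilde{K}_{\beta+j_4}(\rho).
\]

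Finally I would invoke the asymptotics of the renormalised Bessel functions recorded in Section~\ref{Section Bessel functions}: as $\rho\to 0$, $\widetilde{K}_{\gamma}(\rho)=O(\rho^{-2\max(\gamma,0)})$ up to a logarithmic factor, so the displayed term is $O(\rho^N)$ (logarithms included) with $N=p+q-5+k-j_1-2j_2-2\max(\alpha+j_3,0)-2\max(\beta+j_4,0)$. Substituting $j_1=2n-2(j_2+j_3+j_4)$ and using $2j-2\max(\gamma+j,0)\ge -2\max(\gamma,0)$ for $j\ge 0$, one obtains
\[
N \ \ge\ p+q-2n-5+k-2\max(\alpha,0)-2\max(\beta,0),
\]
which is strictly positive by hypothesis; hence every surviving term tends to $0$ as $\rho\to 0$, giving the first assertion. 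For $\rho\to\infty$ the exponential decay of $\widetilde{K}_{\gamma}$ dominates every polynomial factor, so the limit is $0$ unconditionally.

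The argument is essentially a bookkeeping of exponents parallel to Lemma~\ref{integral well-defined}; the only genuine point is the borderline case $\alpha+j_3=0$ or $\beta+j_4=0$, where a logarithmic factor appears in the behaviour at $0$. This is precisely why the hypothesis is a \emph{strict} inequality: since $N>0$ strictly, $\rho^N(\log\rho)^{O(1)}\to 0$ still holds, so no extra work is needed.
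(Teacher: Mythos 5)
Your argument is correct and follows essentially the same route as the paper: both proofs redo the expansion from Lemma~\ref{integral well-defined}, reduce to terms $\rho^{p+q-5+k-2n+2j_3+2j_4}\widetilde{K}_{\alpha+j_3}(\rho)\widetilde{K}_{\beta+j_4}(\rho)$ via the Berezin-integral constraint, and apply the small-$\rho$ asymptotics of $\widetilde{K}_\gamma$ together with the inequality $2j-2\max(\gamma+j,0)\ge-2\max(\gamma,0)$, plus exponential decay at infinity. Your explicit treatment of the logarithmic borderline case is a small point the paper leaves implicit, but it changes nothing of substance.
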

 \begin{proof}
 Similar as in the proof of Lemma~\ref{integral well-defined}, we have to calculate
 \[
 \lim_{\rho\to 0} \rho^{p+q-5+k-2n+2j_1+2j_2}  \widetilde{K}_{\alpha+j_1}(\rho)\widetilde{K}_{\beta+j_2} (\rho),
 \]
 for $0\leq j_1,j_2 \leq n$.
 Using the asymptotic behaviour at zero of the K-Bessel function, we obtain that this limit is zero if 
 \[
 p+q-5+k-2n +2j_1 +2 j_2 > 2\max(\alpha+j_1,0) + 2\max(\beta+j_2,0).
 \]
This is equivalent with $M-3+k > 2 \max(\alpha,0) + 2 \max(\beta,0)$. The Bessel function goes exponentially to zero at infinity. Hence the limit for $\rho$ to infinity is also zero.
 \end{proof}
 As an example and to show that our functional is non-zero, we will now calculate the functional for the generating function of $W$.
\begin{lemma} \label{Integral of Knu}
For $\mu$ and $\nu$ as defined in \eqref{definition mu and nu}, we have
\begin{align*}
\int_C &K_{\frac{\nu}{2}} (|X|) K_{\frac{\nu}{2}} (|X|)= \tfrac{2^{\mu+\nu} }{n!}  \left(\tfrac{3-p}{2}\right)_n \tfrac{\pi^{\frac{p+q-2}{2}}}{\Gamma(\frac{p-1}{2})\Gamma(\frac{q-1}{2})}   \tfrac{\Gamma(\frac{\mu-\nu}{2}+1) \Gamma(\frac{\mu+\nu}{2}+1) \Gamma(\frac{\mu}{2}+1) \Gamma(\frac{\mu}{2}+1) }{\Gamma(\mu+2)} .
 \end{align*}
\end{lemma}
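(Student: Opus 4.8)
The plan is to compute the functional $\int_C K_{\frac{\nu}{2}}(|X|)K_{\frac{\nu}{2}}(|X|)$ directly from its definition \eqref{definition integral}, reducing it to a product of three elementary integrals: a Berezin integral over the odd variables, a one-dimensional radial integral over $\rho$, and the volume integrals over the two spheres $\mathbb{S}^{p-2}$ and $\mathbb{S}^{q-2}$. First I would observe, as in the proof of Lemma~\ref{integral well-defined}, that $\partial_u(|X|)=0$, so $\phi^\sharp\left(K_{\frac{\nu}{2}}(|X|)^2\right)=K_{\frac{\nu}{2}}(|X|)^2$. Hence the only odd-variable dependence in the integrand comes from the factors $(1+\eta)^{p-3}$, $(1+\xi)^{q-3}$ and from the nilpotent expansion of $K_{\frac{\nu}{2}}(|X|)$ itself, which on $s=t=\rho$ becomes $|X|=\rho\sqrt{1+\frac{\theta^2}{2\rho^2}}$. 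Using the differential recurrence relation \eqref{Bessel function diff rec rel} for the Bessel functions together with expansions \eqref{Eq expansion1}--\eqref{Eq expansion2}, I would expand everything as a polynomial in $\theta^2$ with coefficients that are functions of $\rho$.

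Next I would carry out the Berezin integral $\int_B = \partial_{\theta_{2n}}\cdots\partial_{\theta_1}$, which picks out precisely the coefficient of $\theta^2\cdots$ of top degree $\theta^{4n}$, i.e.\ the coefficient of $(\theta^2)^n$ up to a combinatorial constant: one has $\int_B (\theta^2)^n = (-1)^{?} n!$ (with the appropriate sign depending on the conventions for $\theta^2=\sum\theta^i\theta_i$ fixed in Section~\ref{Section radial superfunctions}), and lower powers of $\theta^2$ integrate to zero. This collapses the triple Cauchy-product of the $\theta^2$-series (from $(1+\eta)^{p-3}$, from $(1+\xi)^{q-3}$, and from the two Bessel factors) into a finite sum; with $s=t=\rho$ the factors $1/s^{2j}$ and $1/t^{2j}$ combine with the $\rho$-powers. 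I expect that the $(1+\xi)^{q-3}$ series contributes trivially after setting $s=t$ because of cancellation, or more precisely that the $\theta^2$-bookkeeping forces the answer into the clean closed form with the single Pochhammer $\left(\frac{3-p}{2}\right)_n$ appearing in the statement; tracking exactly which term survives is the delicate combinatorial point. The sphere integrals contribute $\mathrm{vol}(\mathbb{S}^{p-2})\,\mathrm{vol}(\mathbb{S}^{q-2}) = \frac{2\pi^{(p-1)/2}}{\Gamma(\frac{p-1}{2})}\cdot\frac{2\pi^{(q-1)/2}}{\Gamma(\frac{q-1}{2})}$, accounting for the factor $\frac{\pi^{(p+q-2)/2}}{\Gamma(\frac{p-1}{2})\Gamma(\frac{q-1}{2})}$ (the factor $4$ being absorbed by the $\frac12$ in \eqref{definition integral} and a $\frac12$ from changing variables $\rho\mapsto\rho^2$).

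The remaining radial integrals are of the form $\int_0^\infty \rho^{\sigma-1}K_{a}(\rho)K_{b}(\rho)\,d\rho$, for which there is the classical Weber--Schafheitlin-type evaluation
\begin{align*}
\int_0^\infty \rho^{\sigma-1}K_a(\rho)K_b(\rho)\,d\rho
= \frac{2^{\sigma-3}}{\Gamma(\sigma)}\,\Gamma\!\left(\tfrac{\sigma+a+b}{2}\right)\Gamma\!\left(\tfrac{\sigma+a-b}{2}\right)\Gamma\!\left(\tfrac{\sigma-a+b}{2}\right)\Gamma\!\left(\tfrac{\sigma-a-b}{2}\right),
\end{align*}
valid for $\sigma>|a|+|b|$, which is exactly the convergence condition guaranteed by Lemma~\ref{integral well-defined} in the relevant range. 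Specializing $a=b=\frac{\nu}{2}+(\text{shift})$ and $\sigma = p+q-4-2n+(\text{shift})$ and simplifying via $\mu-\nu = 2\max(\dots)-2\min(\dots)$ yields the four Gamma factors $\Gamma(\frac{\mu-\nu}{2}+1)\Gamma(\frac{\mu+\nu}{2}+1)\Gamma(\frac{\mu}{2}+1)^2/\Gamma(\mu+2)$, after using $K_{-\nu/2}=K_{\nu/2}$ and the renormalisation relating $\widetilde K$ to $K$. The main obstacle I anticipate is precisely the bookkeeping of the three interlocking $\theta^2$-expansions under the Berezin integral: one must verify that, after setting $s=t=\rho$, the sum over the splitting $j_1+2(j_2+j_3+j_4)=2n$ telescopes to a single Pochhammer symbol $\left(\frac{3-p}{2}\right)_n$ and a power $2^{\mu+\nu}$, rather than a messy hypergeometric sum; I would handle this either by a generating-function identity for the product of the three series or by reindexing and recognising a Vandermonde-type summation. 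Everything else is a routine but careful substitution into the Bessel integral formula above.
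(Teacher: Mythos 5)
Your proposal follows essentially the same route as the paper's proof: expand $\widetilde{K}_{\nu/2}(\abs{X})$, $(1+\eta)^{p-3}$ and $(1+\xi)^{q-3}$ as polynomials in $\theta^2$ via \eqref{Bessel function diff rec rel} and \eqref{Eq expansion1}--\eqref{Eq expansion2}, extract the top $\theta^2$-coefficient with the Berezin integral, and evaluate the radial integrals with the Weber--Schafheitlin formula (your version for $K_a K_b$ is equivalent to the paper's formula for $\widetilde{K}_\alpha\widetilde{K}_\beta$ after accounting for the renormalisation). The one step you flag but do not execute --- collapsing the resulting multiple sum over the splitting of $2n$ into the single factor $\tfrac{2^n}{n!}\left(\tfrac{3-p}{2}\right)_n$ --- is carried out in the paper precisely by the Vandermonde-type summation $\sum_i\binom{a}{i}(x)_i(y)_{a-i}=(x+y)_a$ that you propose, applied iteratively to the inner sums.
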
 
Note that $\left(\frac{3-p}{2}\right)_n=0$ implies $\nu \in -2\mN$. Thus for $\nu \not\in -2\mN$, $\int_C K_{\frac{\nu}{2}} (|X|) K_{\frac{\nu}{2}} (|X|)$ is non-zero. 
 \begin{proof}
Using $\phi^\sharp (\widetilde{K}_{\alpha}(\abs{X}))_{\mid s=t=\rho} = \widetilde{K}_{\alpha}(\abs{X})_{\mid s=t=\rho} 
=\sum_{j=0}^n\frac{(-1)^j\theta^{2j}}{j!8^j}\widetilde{K}_{\alpha+j}(\rho)$, the expansion of $(1+\eta)^{p-3}$ and $(1+\xi)^{q-3}$ given in \eqref{Eq expansion1} and \eqref{Eq expansion2}, and the following property \cite[10.3 (49)]{EMOT},
 \begin{align*}
  \int_{0}^\infty \rho^{\sigma-1} \widetilde{K}_{\alpha}(\rho) \widetilde{K}_{\beta}(\rho)\diff \rho
 = 2^{\sigma-3}\frac{\Gamma(\frac{\sigma}{2})}{\Gamma(\sigma-\alpha-\beta)} \Gamma(\frac{\sigma-2\alpha}{2})\Gamma(\frac{\sigma-2\beta}{2})\Gamma(\frac{\sigma-2\alpha-2\beta}{2}),
\end{align*}
we obtain after a long but straightforward calculation
\begin{align*}
\int_C &K_{\frac{\nu}{2}} (|X|) K_{\frac{\nu}{2}} (|X|)=  \frac{\pi^{\frac{p+q-2}{2}}}{\Gamma(\frac{p-1}{2})\Gamma(\frac{q-1}{2})} 2^{\mu+\nu-n}   \frac{\Gamma(\frac{\mu-\nu}{2}+1) \Gamma(\frac{\mu+\nu}{2}+1) \Gamma(\frac{\mu}{2}+1) \Gamma(\frac{\mu}{2}+1) }{\Gamma(\mu+2)}  \Sigma(p,q,n),
\end{align*}
where 
\begin{align*}
\Sigma(p,q,n) 
=\sum\limits_{\substack{i,j,k,l=0,\\ i+j+k+l=n}}^n\tfrac{(-1)^{i+j+k}}{i!j!k!l! } \left(\tfrac{3-q}{2}\right)_k \left(\tfrac{3-p}{2}\right)_l  \left(\tfrac{\mu}{2}+1\right)_i \left(\tfrac{\mu}{2}+1\right)_j\frac{\left(\frac{\mu+\nu}{2}+1\right)_{i+j}}{\left(\mu+2\right)_{i+j}}.
 \end{align*}
 We have 
 \begin{align*}
 \sum_{i=0}^a \begin{pmatrix}
 a \\ i 
 \end{pmatrix} (x)_i (y)_{a-i} = (x+y)_a.
 \end{align*}
Using this we can compute $\Sigma(p,q,n)$, with $a=i+j$,
\begin{align*}
\Sigma(p,q,n)
&= \sum_{l=0}^n \sum_{a=0}^{n-l} \sum_{i=0}^{a} \frac{(-1)^{n-l}}{(n-l-a)!l! } \left( \left(\tfrac{3-q}{2}\right)_{n-l-a}\left(\tfrac{3-p}{2}\right)_l  \frac{\left(\frac{\mu+\nu}{2}+1\right)_{a}}{\left(\mu+2\right)_{a}} \frac{\left(\frac{\mu}{2}+1\right)_i \left(\frac{\mu}{2}+1\right)_{a-i} }{i!(a-i)!} \right),
\end{align*}
which can be simplified to $\Sigma(p,q,n)=\frac{2^n }{n!}  \left(\frac{3-p}{2}\right)_n$.
This finishes the proof.
 \end{proof}
The main proposition of this section is the following.
\begin{proposition} Let  $f=P_k \widetilde{K}_{\alpha} (\abs{X})\widetilde{K}_{\beta} (\abs{X})$, with $P_k$ a homogeneous polynomial of degree $k$ with $p+q-2n-5+k > 2 \max(\alpha,0) + 2 \max(\beta,0)$ or let $f$ be in $\cC^\infty_c (\mR^{p+q-2}_{(0)}) \otimes \Lambda^{2n}$ . 

The integral $\int_{\kerorbit}$ \label{Proposition properties integral} has the following properties.
\begin{enumerate}
\item Only depends on the restriction of $f$ to the minimal orbit $C$:  \[\int_{\kerorbit} R^2 f =0.\]

\item It is $\mathfrak{osp}(p-1,q-1|2n)$ invariant: \[ \int_{\kerorbit} X(f) = 0 \qquad \text{ for all }  X \text{ in } \mathfrak{osp}(p-1,q-1|2n).\]
\item It satisfies 
\[
\int_{\kerorbit} (\mE+M-2) (f) = 0,
\]
where $M=m-2n$ is the superdimension of $\mR^{p+q-2|2n}$.
\item The integral is symmetric with respect to the Bessel operators
\[
\int_{\kerorbit} (\cB_\lambda(e_k)f) g=(-1)^{\abs{f}\abs{k}} \int_{\kerorbit} f (\cB_\lambda(e_k)g),
\]
for the critical value $\lambda= -M +2$.
\end{enumerate} 
\end{proposition}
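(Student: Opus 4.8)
I prove the four assertions in the order given; part (4) will rest on (1)--(3). For (1), since $\phi^\sharp$ is an algebra morphism, $\phi^\sharp(R^2f)=\phi^\sharp(R^2)\,\phi^\sharp(f)$, and Lemma~\ref{Lemma: properties of phi sharp} gives $\phi^\sharp(s^2)=(1+\eta)^2s^2=s^2-\tfrac{\theta^2}{2}$, $\phi^\sharp(t^2)=(1+\xi)^2t^2=t^2+\tfrac{\theta^2}{2}$ and $\phi^\sharp(\theta^2)=\theta^2$, whence $\phi^\sharp(R^2)=\phi^\sharp(s^2-t^2+\theta^2)=s^2-t^2$. Since the definition of $\int_{\kerorbit}$ involves the restriction to $s=t=\rho$, which annihilates $s^2-t^2$, we get $\int_{\kerorbit}R^2f=0$.

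For (2), it is enough to verify $\int_{\kerorbit}X(f)=0$ for $X$ in a generating set of $\mathfrak{osp}(p-1,q-1|2n)$, because $\int_{\kerorbit}X(f)=\int_{\kerorbit}Y(f)=0$ for all $f$ forces $\int_{\kerorbit}[X,Y](f)=0$. A convenient generating set is $\mathfrak{so}(p-1)\oplus\mathfrak{so}(q-1)\oplus\mathfrak{sp}(2n)$ together with one even boost $L_{x_1,y_1}$ and one odd generator $L_{x_1,\theta_1}$. For $X\in\mathfrak{so}(p-1)\oplus\mathfrak{so}(q-1)$, $X$ commutes with $\phi^\sharp$, annihilates $(1+\eta)^{p-3}(1+\xi)^{q-3}$, and is a rotation vector field on $\mS^{p-2}\times\mS^{q-2}$, so $\int_{\kerorbit}X(f)=0$ by invariance of $\diff\omega^p\,\diff\omega^q$. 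For $X\in\mathfrak{sp}(2n)$ the same argument works, with $\diff\omega$-invariance replaced by the $Sp(2n)$-invariance of the Berezin integral (the Berezinian of a symplectic matrix is $1$). For $X=L_{x_1,y_1}$ and $X=L_{x_1,\theta_1}$, $\phi^\sharp$ no longer commutes with $X$; using Lemma~\ref{Lemma: properties of phi sharp} one writes $\phi^\sharp\circ X=\widetilde X\circ\phi^\sharp$ with $\widetilde X$ an explicit first-order operator in the bipolar variables $(\rho,\omega^p,\omega^q,\theta)$, and shows that the part of $\widetilde X$ along $\rho$ integrates to a boundary term which vanishes by Lemma~\ref{Lemma lim rho}, the part along $\omega^p,\omega^q$ integrates to zero by the classical $O(p-1,q-1)$-invariance of $\int_{\abs{C}}$ (\cite{KM2}), and the part along $\theta$ is a $\theta$-derivative, hence killed by $\int_B$. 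Tracking the $\theta$-dependent correction terms produced by conjugating the boost and the odd generator through $\phi^\sharp$ is the main technical obstacle.

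For (3), the dilation $D_r\colon z\mapsto rz$ of $\mR^{p+q-2|2n}$ commutes with $\phi^\sharp$ and fixes $(1+\eta)^{p-3}(1+\xi)^{q-3}$, because $\eta$ and $\xi$ are power series in the $D_r$-invariant quantities $\theta^2/s^2$ and $\theta^2/t^2$, and $\theta^2\partial_u$ is $D_r$-invariant ($\theta^2$ scales by $r^2$ while $\partial_u=\tfrac14(\tfrac1t\partial_t-\tfrac1s\partial_s)$ scales by $r^{-2}$). Therefore
\[
\int_{\kerorbit}D_rf=\int_{\abs{C}}\int_B D_r\Bigl((1+\eta)^{p-3}(1+\xi)^{q-3}\phi^\sharp f\Bigr).
\]
The Berezin integral extracts the coefficient of the top monomial $\theta_1\cdots\theta_{2n}$, so $\int_B D_r(\,\cdot\,)=r^{2n}\,D_r^{\mathrm{ev}}\!\int_B(\,\cdot\,)$, where $D_r^{\mathrm{ev}}$ dilates only the even coordinates, while the substitution $\sigma=r\rho$ in \eqref{Integration in bipolar coordinates} gives $\int_{\abs{C}}D_r^{\mathrm{ev}}h=r^{-(p+q-4)}\int_{\abs{C}}h$. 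Hence $\int_{\kerorbit}D_rf=r^{2n-(p+q-4)}\int_{\kerorbit}f=r^{-(M-2)}\int_{\kerorbit}f$; the bounds in Lemma~\ref{integral well-defined} are uniform for $r$ near $1$, so differentiating at $r=1$ gives $\int_{\kerorbit}(\mE+M-2)f=0$.

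For (4), write $\lambda=-M+2$, so that $\cB_\lambda(e_k)=2\bigl(\mE+\tfrac{M-2}{2}\bigr)\partial_k-\x_k\Delta$. Expanding $(\cB_\lambda(e_k)f)g-(-1)^{\abs{f}\abs{k}}f(\cB_\lambda(e_k)g)$ by means of the product rule for Bessel operators \eqref{Eq Product rule for Bessel operators} and regrouping, one exhibits it as a linear combination of: terms $\cB_\lambda(e_k)(fg)$, which are tangential to $C$ by Proposition~\ref{Prop:BesselOperatorsTangential} and whose integral is handled by (2), (3) together with an integration by parts turning $\partial_k$ and $\Delta(\,\cdot\,)=\sum_{ij}\beta^{ij}\partial_i\partial_j(\,\cdot\,)$ into total $\rho$-derivatives plus $\mathfrak{osp}(p-1,q-1|2n)$- and $\mE$-terms; a multiple of $R^2$ times a function, killed by (1); and genuine total $\rho$-derivatives, whose boundary contributions vanish by Lemma~\ref{Lemma lim rho} under the hypothesis $p+q-2n-5+k>2\max(\alpha,0)+2\max(\beta,0)$. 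The point of the critical value $\lambda=-M+2$ is precisely that it makes the first-order piece assemble into the homogeneity-covariant operator $\mE+\tfrac{M-2}{2}$, so that after applying (1)--(3) no residual multiple of $\int_{\kerorbit}fg$ survives; for any other $\lambda$ the identity would fail. This yields $\int_{\kerorbit}(\cB_\lambda(e_k)f)g=(-1)^{\abs{f}\abs{k}}\int_{\kerorbit}f(\cB_\lambda(e_k)g)$, and the hardest bookkeeping in the whole proposition is this symmetrisation step in (4) together with the boost and odd-generator cases of (2).
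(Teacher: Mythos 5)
Parts (1) and (3) of your argument are sound: (1) is the same computation as the paper's ($\phi^\sharp R^2=(s^2-t^2)\phi^\sharp$, which dies on $s=t=\rho$), and your scaling argument for (3) — conjugating the dilation $D_r$ through $\phi^\sharp$, through the Berezin integral (picking up $r^{2n}$) and through the radial measure (picking up $r^{-(p+q-4)}$), then differentiating at $r=1$ — is a genuinely different and rather clean alternative to the paper's route, which instead writes $\mE f=\sum_k(-1)^{\abs{k}}\partial_k(\x^kf)-Mf$ and invokes an explicit formula for $\int_{\kerorbit}\partial_{\x^i}f$ (Lemma~\ref{Lemma: integration of a derivative}). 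The price of your route is that you must justify differentiating under the integral, which you only assert.

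Parts (2) and (4), however, have genuine gaps: in both cases you correctly isolate the hard step and then explicitly decline to carry it out. In (2), the compact generators $\mathfrak{so}(p-1)\oplus\mathfrak{so}(q-1)\oplus\mathfrak{sp}(2n)$ are the trivial cases; the entire content of the statement is the mixed generators $L_{x_i,y_j}$, $L_{x_i,\theta_j}$, $L_{y_i,\theta_j}$, where $\phi^\sharp$ does not commute with $X$ and the weight $(1+\eta)^{p-3}(1+\xi)^{q-3}$ interacts nontrivially with the conjugated operator $\widetilde X$. You write ``tracking the $\theta$-dependent correction terms \dots is the main technical obstacle'' — but that tracking \emph{is} the proof. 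The paper handles it by first establishing the identity $\int_{\kerorbit}\partial_{\x^i}f=\int_{\kerorbit}(s\partial_s+p-1)\tfrac{\x_i}{2s^2}f-(t\partial_t+q-1)\tfrac{\x_i}{2t^2}f$ (a nontrivial integration by parts in $\rho$ with the corrections from Lemma~\ref{Lemma: properties of phi sharp}), after which (2) follows by writing $L_{i,j}f=(-1)^{\abs{i}\abs{j}}\partial_j(\x_if)-\partial_i(\x_jf)$ and observing the two contributions cancel. In (4), your ``regrouping'' is asserted, not exhibited, and it silently relies on the statement $\int_{\kerorbit}\cB_\lambda(\x_k)h=0$ for the critical $\lambda$ — tangentiality of $\cB_\lambda(e_k)$ to $C$ (Proposition~\ref{Prop:BesselOperatorsTangential}) does not by itself make the integral vanish; the paper proves this vanishing separately (Lemma~\ref{Lemma: int Laplacian and Bessel operator}, which in turn computes $\int_{\kerorbit}\Delta f$ via the derivative formula), and then combines it with the product rule \eqref{Eq Product rule for Bessel operators} and an identity for the cross terms $2\mE(f)\partial_k(g)+2\partial_k(f)\mE(g)-2\x_k\beta^{ij}\partial_i(f)\partial_j(g)$. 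Without the derivative formula none of (2), (4), or the Laplacian/Bessel vanishing is established, so the proposal as written does not prove the proposition.
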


The integration of a derivative is as follows. 
\begin{lemma} \label{Lemma: integration of a derivative}
For $f$  as in Proposition~\ref{Proposition properties integral} it holds
\[
\int_{\kerorbit} \partial_{\x^i} f = \int_{\kerorbit} (s\partial_s + p-1) \frac{\x_i}{2s^2} f- (t\partial_t + q-1) \frac{\x_i}{2t^2} f.
\]
\end{lemma}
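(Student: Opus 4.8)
The plan is to reduce the statement about the super-derivative $\partial_{\x^i}$ to an integration-by-parts computation in bipolar coordinates, using the explicit description of $\int_C$ through the morphism $\phi^\sharp$ and the commutation relations from Lemma~\ref{Lemma: properties of phi sharp}. First I would rewrite $\int_C \partial_{\x^i} f$ using the definition~\eqref{definition integral}, i.e.\ apply $\phi^\sharp$ and use $\phi^\sharp \partial_{x^i} = \frac{1}{1+\eta}(\partial_{x^i} - x_i \frac{\theta^2}{s^2}\partial_{s^2})\phi^\sharp$. The factor $(1+\eta)^{p-3}$ in front then combines with the $\frac{1}{1+\eta}$ to $(1+\eta)^{p-4}$, and one is left with a classical integral over $\mR^+\times\mS^{p-2}\times\mR^+\times\mS^{q-2}$ (with a Berezin integral in the Grassmann variables) of $(\partial_{x^i} - x_i\frac{\theta^2}{s^2}\partial_{s^2})$ applied to $\phi^\sharp(f)$, weighted by $(1+\eta)^{p-4}(1+\xi)^{q-3}\rho^{p+q-5}$.

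Next I would split $\partial_{x^i}$ in spherical coordinates as $\partial_{x^i} = \frac{x_i}{s}\partial_s - \sum_k \frac{x^k}{s^2}L^p_{ki}$, exactly as was done for $\partial_{y^i}$ in the proof of Lemma~\ref{Lemma: properties of phi sharp}. The angular part $\sum_k \frac{x^k}{s^2}L^p_{ki}$ integrates to zero over the sphere $\mS^{p-2}$ because $L^p_{ki}$ is a tangential vector field (a rotation generator) and the measure $\diff\omega^p$ is rotation-invariant; more precisely $\int_{\mS^{p-2}} L^p_{ki}(g)\,\diff\omega^p = 0$ for any smooth $g$, and the remaining factors $(1+\eta)^{p-4}$, $\rho^{p+q-5}$ do not depend on $\omega^p$. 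That leaves the radial term $\frac{x_i}{s}\partial_s\phi^\sharp(f)$ together with the term $-x_i\frac{\theta^2}{s^2}\partial_{s^2}\phi^\sharp(f) = -\frac{x_i}{s}\cdot\frac{\theta^2}{2s^2}\cdot\frac{1}{1}\partial_s\phi^\sharp(f)$ (using $\partial_{s^2} = \frac{1}{2s}\partial_s$), which I would recombine: $\frac{1}{1+\eta}\bigl(\partial_{x^i} - x_i\frac{\theta^2}{s^2}\partial_{s^2}\bigr)$ acting, modulo the angular piece, contributes $\frac{x_i}{s}\bigl(\frac{1}{1+\eta} - \frac{\theta^2/(2s^2)}{(1+\eta)}\cdot\text{(something)}\bigr)\partial_s$; here one uses $\eta^2+2\eta = -\frac{\theta^2}{2s^2}$, i.e.\ $(1+\eta)^2 = 1-\frac{\theta^2}{2s^2}$, to simplify the scalar coefficient to just $(1+\eta)\partial_s$ applied under $\phi^\sharp$, consistent with $\phi^\sharp\partial_s = (1+\eta)\partial_s\phi^\sharp$. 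So the whole thing collapses to $\int_{\abs{C}}\int_B (1+\eta)^{p-4}(1+\xi)^{q-3}\rho^{p+q-5}\cdot\frac{x_i}{s}(1+\eta)\cdot\partial_s\phi^\sharp(f)$, i.e.\ $\int_{\abs{C}}\int_B (1+\eta)^{p-3}(1+\xi)^{q-3}\rho^{p+q-5}\frac{x_i}{s}\partial_s(\text{stuff})$ after pushing $\partial_s$ back through.

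Then I integrate by parts in $\rho$ (recall $s=t=\rho$ on $\abs{C}$, and $\partial_s$ restricted to the diagonal contributes half of $\partial_\rho$ together with the analogous $\partial_t$ term). Concretely, on the orbit $\partial_s|_{s=t=\rho}$ and $\partial_t|_{s=t=\rho}$ together reconstruct $\partial_\rho$, so I would write $\frac{x_i}{s}\partial_s = \frac{x_i}{2s^2}(s\partial_s)$ and integrate the $\rho$-derivative against $\rho^{p+q-5}$, producing the boundary terms (which vanish by Lemma~\ref{Lemma lim rho}, given the degree hypothesis $p+q-2n-5+k > 2\max(\alpha,0)+2\max(\beta,0)$, or trivially for compactly supported $f$) plus the derivative hitting $\rho^{p+q-5}$, i.e.\ a factor $-(p+q-5)$, which splits into $-(p-1)-(q-1)+3$; distributing the $-(p-1)$ with the $s$-part and $-(q-1)$ with the $t$-part and reassembling yields exactly $(s\partial_s + p-1)\frac{\x_i}{2s^2}f - (t\partial_t+q-1)\frac{\x_i}{2t^2}f$ on the right. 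The main obstacle I anticipate is bookkeeping the Grassmann-valued factors $(1+\eta)$, $(1+\xi)$ and the nilpotent relation $(1+\eta)^2 = 1-\frac{\theta^2}{2s^2}$ carefully enough that the power $(1+\eta)^{p-3}$ is restored exactly, and making the ``$\partial_s$ on the diagonal'' step rigorous — one must be careful that $\phi^\sharp(f)$ is evaluated at $s=t=\rho$ only \emph{after} differentiation, so the identity $\partial_\rho\bigl(g|_{s=t=\rho}\bigr) = (\partial_s g + \partial_t g)|_{s=t=\rho}$ has to be invoked, and the splitting of the single $\partial_s$ term into a symmetric $s$/$t$ combination uses that $\phi^\sharp(f)$, being built from $\abs{X}$-radial and harmonic pieces, has a controlled dependence allowing one to trade $\partial_s$ against $\partial_t$ up to terms that vanish after the sphere integrations. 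I would handle this by first proving the scalar ($n=0$) case cleanly and then checking that every $\theta^{2j}$-component goes through verbatim with $p-3$ replaced by $p-3-2j$ and $q-3$ by $q-3-2k$, the shifts being absorbed by the expansions~\eqref{Eq expansion1}–\eqref{Eq expansion2}.
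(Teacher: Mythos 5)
Your overall strategy matches the paper's (push $\partial_{x^i}$ through $\phi^\sharp$ via Lemma~\ref{Lemma: properties of phi sharp}, split into radial and angular parts in bipolar coordinates, then integrate by parts in $\rho$ using $\partial_\rho(g_{\mid s=t=\rho})=((\partial_s+\partial_t)g)_{\mid s=t=\rho}$ with boundary terms killed by Lemma~\ref{Lemma lim rho}), but there is a genuine error in the angular step that breaks the bookkeeping. You claim that $\sum_k \tfrac{x^k}{s^2}L^p_{ki}(\phi^\sharp f)$ integrates to zero over $\mathbb{S}^{p-2}$ because $\int_{\mathbb{S}^{p-2}}L^p_{ki}(g)\,\diff\omega^p=0$. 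That vanishing statement is true, but your integrand is not of the form $L^p_{ki}(g)$: the factor $x^k=s\,\omega^p_k$ depends on the angular variables, so you must first commute, $x^k L^p_{ki}(g)=L^p_{ki}(x^k g)-L^p_{ki}(x^k)\,g$, and the commutator survives, producing the term $\tfrac{p-2}{s^2}x_i\,\phi^\sharp f$ (this is exactly the identity $\sum_k[x^k,L^p_{ki}]=(p-2)x_i$ used in the paper). This term is not a nuisance: it is where the coefficients $p-1$ and $q-1$ on the right-hand side ultimately come from. Your alternative accounting, getting them by splitting the factor $-(p+q-5)$ from $\partial_\rho\rho^{p+q-5}$ as $-(p-1)-(q-1)+3$ and "distributing", does not work; already in the classical case $n=0$ one checks that the difference of the two sides is a total $\rho$-derivative only after the $(p-2)x_i/s^2$ contribution is included, and omitting it leaves a nonzero leftover proportional to $\int\rho^{p+q-7}x_i f$.

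A second, related gap is your treatment of the conversion of the single $\partial_s$ into the symmetric $s$/$t$ combination. You propose to "trade $\partial_s$ against $\partial_t$ up to terms that vanish after the sphere integrations", invoking a controlled dependence of $\phi^\sharp(f)$. No such vanishing occurs and none is needed: the correct mechanism is simply $\partial_s g_{\mid s=t=\rho}=\partial_\rho(g_{\mid s=t=\rho})-\partial_t g_{\mid s=t=\rho}$, where the leftover $\partial_t$ term is a genuine surviving contribution that pairs with the $-(t\partial_t+q-1)\tfrac{\x_i}{2t^2}f$ part of the right-hand side (this is the content of the "claim" in the paper's proof). Finally, when you integrate by parts in $\rho$ you account only for the derivative hitting $\rho^{p+q-5}$; the derivatives of the weights $(1+\eta)^{p-2}$ and $(1+\xi)^{q-3}$ produce additional nilpotent corrections proportional to $\theta^2/\rho^3$ which must be tracked explicitly (they become the $\tfrac{p-2}{(1+\eta)^2}+\tfrac{q-3}{(1+\xi)^2}$ factors in the paper) rather than "absorbed by the expansions". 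With the angular commutator restored and these two points made precise, your argument would coincide with the paper's proof.
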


\begin{proof}
Assume first that $z_i$ is equal to $x_i$.

For $x_i$ we have $\partial_{x^i} = \frac{x_i}{s}\partial_s +\sum_k  \frac{x^k }{s^2} L^p_{ki}$, with $L^p_{ki} = x_k \partial_{x^i}-x_i \partial_{x^k}$.
Then, using  $\sum_k [x^k, L_{ki}^p]= (p-2)x_i$, the fact that  $\int_{\mathbb{S}^{p-2}} L^p_{ki} f = 0$ and Lemma~\ref{Lemma: properties of phi sharp}, we  obtain
\begin{align*}
\int_\kerorbit  \partial_{x^i} f &= \int_{\abs{C}}\int_B \frac{(1+\eta)^{p-3}}{(1+\xi)^{3-q}} \frac{1}{1+\eta}\left(\partial_{x^i} -x_i \frac{\theta^2}{2s^2}\partial_{s^2}\right) \phi^\sharp f
\\
&=\int_{\abs{C}}\int_B \frac{(1+\eta)^{p-4}}{(1+\xi)^{3-q}} \left( \frac{x_i}{s}\partial_s + \sum_k \frac{1}{s^2}[x^k,L^p_{ki}] - x_i \frac{\theta^2}{2s^2}\partial_{s^2} \right) \phi^\sharp f
\\
&=\int_{\abs{C}}\int_B \frac{(1+\eta)^{p-4}}{(1+\xi)^{3-q}} \left( \frac{x_i}{s}(1+\eta^2)\partial_s +  \frac{p-2}{s^2}x_i \right) \phi^\sharp f.
\end{align*}
On the other hand, using Lemma~\ref{Lemma: properties of phi sharp}, we obtain
\begin{align*}
&\int_C \left((s\partial_s+p-1)\frac{x_i}{2s^2}-(t\partial_t +q-1)\frac{x_i}{2t^2}\right) f
\\
&= \int_C \left(\frac{x_i}{2s}\partial_s+(p-2)\frac{x_i}{2s^2}-\frac{x_i}{2t}\partial_t -(q-3)\frac{x_i}{2t^2}\right)f 
\\
&= \int_{\abs{C}} \int_B \frac{(1+\eta)^{p-3}}{(1+\xi)^{3-q}}(1+\eta) \left(\frac{x_i}{2s}\partial_s+(p-2)\frac{x_i}{2(1+\eta)^2s^2} -\frac{x_i}{2t}\partial_t -(q-3)\frac{x_i}{2(1+\xi)^2t^2} \right)\phi^\sharp f
\end{align*}
We claim 
\begin{align*}
&\int_{\abs{C}}  \int_B  \frac{(1+\eta)^{p-2}}{(1+\xi)^{3-q}} \frac{x_i}{2s}\partial_s \phi^\sharp f \\
&=- \int_{\abs{C}}\int_B \frac{(1+\eta)^{p-2}}{(1+\xi)^{3-q}} \left( \frac{p-2}{(1+\eta)^2 }+ \frac{q-3}{(1+\xi)^2 } \right)\frac{x_i}{2\rho^2} \phi^\sharp f
-\int_{\abs{C}}  \int_B \frac{(1+\eta)^{p-2}}{(1+\xi)^{3-q}} \frac{x_i}{2\rho} \partial_t\phi^\sharp f .
\end{align*}
Applying this claim, we obtain, 
\begin{align*}
\int_C \left((s\partial_s+p-1)\frac{x_i}{2s^2}-(t\partial_t +q-1)\frac{x_i}{2t^2}\right) f
&= \int_{\abs{C}} \int_B \frac{(1+\eta)^{p-2}}{(1+\xi)^{3-q}} \left(\frac{x_i}{s}\partial_s+(p-2)\frac{x_i}{(1+\eta)^2s^2} \right)\phi^\sharp f,
\end{align*}
and we already shown that the right-hand side is equal to $\int_\kerorbit  \partial_{x^i} f.$

We will now prove the claim.   
We have $ \partial_\rho (g_{\mid s=t=\rho})=  ((\partial_s+\partial_t)g)_{\mid \rho=s=t}$. Therefore 
\begin{align} \label{equation claim}
\begin{aligned}
&\int_{\abs{C}}  \int_B \left((1+\eta)^{p-2}(1+\xi)^{q-3} \frac{x_i}{2s}\partial_s \phi^\sharp f\right)_{\mid s=t=\rho} 
\\
&=\int_{\abs{C}}  \int_B \left((1+\eta)^{p-2}(1+\xi)^{q-3} \frac{x_i}{2s}\right)_{\mid s=t=\rho}  \left(\partial_\rho( \phi^\sharp f_{\mid s=t=\rho})-(\partial_t\phi^\sharp f)_{\mid s=t=\rho} \right).
\end{aligned}
\end{align}
We will integrate by parts with respect to $\partial_\rho$ and use the fact that by  Lemma~\ref{Lemma lim rho} the boundary terms are zero. Then, using \begin{align*}
\partial_\rho (1+\eta)^{p-2}& = (p-2)(1+\eta)^{p-4} \frac{\theta^2}{2\rho^3}, 
\quad \partial_\rho (1+\xi)^{q-3} = -(q-3)(1+\xi)^{q-5} \frac{\theta^2}{2\rho^3} ,  \quad \partial_\rho (\frac{x_i}{\rho}) = 0,
\end{align*} we obtain 
\begin{align*}
&\int_{\abs{C}}  \int_B \left((1+\eta)^{p-2}(1+\xi)^{q-3} \frac{x_i}{2s}\right)_{\mid s=t=\rho} \partial_\rho( \phi^\sharp f_{\mid s=t=\rho})
\\
&= \int_{\abs{C}}\int_B \frac{(1+\eta)^{p-2}}{(1+\xi)^{3-q}} \left(-(p+q-5) - (p-2)\tfrac{\theta^2}{2(1+\eta)^2 \rho^2} + (q-3)\tfrac{\theta^2}{2(1+\xi)^2 \rho^2} \right)\frac{x_i}{2\rho^2} (\phi^\sharp f_{\mid s=t=\rho}).
\end{align*}
Combining this with \eqref{equation claim} proves the claim. 

For $z_i=y_i$, the proof is completely similar.
Finally for $\theta_i$, the proposition can be shown in the same spirit using Lemma~\ref{Lemma: properties of phi sharp} and integration by parts with respect to $\theta_i$.
\end{proof}

\begin{proof}[Proof of Proposition~\ref{Proposition properties integral}, part (1)-(3) ]
From Lemma~\ref{Lemma: properties of phi sharp} we obtain that \[ \phi^\sharp R^2 = ((1+\eta)^2s^2 - (1+ \xi)^2  t^2 + \theta^2 )\phi^\sharp= (s^2-t^2) \phi^\sharp.\]
So $\phi^\sharp R^2{}_{\mid s=t} =0$ and
we conclude $\int_\kerorbit R^2 f =0$. This proves part $(1)$ of the proposition.

The operators $L_{i,j} := \x_i \partial_j -(-1)^{\abs{i}\abs{j}} \x_j \partial_i$ for $i \leq j $  span $\mathfrak{osp}(p-1,q-1|2n)$. We can rewrite the operator $L_{i,j}$ as follows
\begin{align*}
L_{i,j}f
& = (-1)^{\abs{i}\abs{j}} \partial_j (\x_i f) - (-1)^{\abs{i}\abs{j}}  \beta_{ji} f - \partial_i (\x_j f) +  \beta_{ij} f 
= (-1)^{\abs{i}\abs{j}} \partial_j (\x_i f)  - \partial_i (\x_j f).
\end{align*}
Using Lemma~\ref{Lemma: integration of a derivative}, we can then compute that
\begin{align*}
\int_\kerorbit L_{i,j} f &= \int_\kerorbit (-1)^{\abs{i}\abs{j}} \partial_j(\x_i f) - \partial_i (\x_j f) 
=0.
\end{align*}
 This finishes the proof of part (2).
For part (3), we use \[ \mE f= \sum_k \x^k \partial_k = \sum_k (-1)^{\abs{k}} \partial_k  (\x^k f) - M f.\]
Hence, again using Lemma~\ref{Lemma: integration of a derivative}, we find  
\begin{align*}
\int_\kerorbit  \mE f &= \sum_k (-1)^{\abs{k}}\int_\kerorbit(s\partial_s +p-1)\frac{\x_k \x^k}{2s^2} f- (t\partial_t + q-1) \frac{\x_k \x^k}{2t^2} f - \int_\kerorbit M f 
\\
& =  \int_\kerorbit R^2 \left((s\partial_s +p-1)\frac{1}{2s^2}f -(t\partial_t + q-1) \frac{1}{2t^2}f\right) 
 + \int_\kerorbit  \frac{s^2}{s^2}f+\frac{t^2}{t^2} f   - \int_\kerorbit M f  \\
&= (2-M) \int_\kerorbit f,
\end{align*}
where we used part $(1)$ to eliminate the term with $R^2$. 
\end{proof}

For the Laplacian and the Bessel operators we have the following.
\begin{lemma}  \label{Lemma: int Laplacian and Bessel operator}
For $\lambda = -M+2$ we have 
\begin{align*}
\int_\kerorbit \Delta f = -(M-4) \int_\kerorbit (s\partial_s+p-1)\frac{f}{2s^2}-(t\partial_t +q -1 )\frac{f}{2t^2}
\quad\text{and}\quad
\int_\kerorbit \cB_\lambda(\x_k) f =0.
\end{align*}

\end{lemma}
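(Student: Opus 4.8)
\textbf{Plan of proof for Lemma~\ref{Lemma: int Laplacian and Bessel operator}.}
The strategy is to reduce everything to Lemma~\ref{Lemma: integration of a derivative} (integration of a single derivative) together with parts (1)--(3) of Proposition~\ref{Proposition properties integral}, which we are allowed to use. The key is to rewrite the Laplacian and the Bessel operator so that they become sums of single partial derivatives applied to admissible functions, and then push those derivatives onto the measure via Lemma~\ref{Lemma: integration of a derivative}.

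For the Laplacian: write
\[
\Delta f = \sum_{i,j}\beta^{ij}\partial_i\partial_j f = \sum_k (-1)^{\abs{k}}\partial_k\big(\partial^k f\big),
\]
so that Lemma~\ref{Lemma: integration of a derivative} gives
\[
\int_{\kerorbit}\Delta f = \sum_k (-1)^{\abs{k}}\int_{\kerorbit}\Big((s\partial_s + p-1)\tfrac{\x_k}{2s^2}\big(\partial^k f\big) - (t\partial_t + q-1)\tfrac{\x_k}{2t^2}\big(\partial^k f\big)\Big).
\]
Now the only subtlety is to commute $\partial^k$ past the coordinate-dependent multipliers and past $s\partial_s$, $t\partial_t$. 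One has $\sum_k (-1)^{\abs{k}}\x_k\partial^k = \mathbb{E}$ up to the parity sign conventions of the paper (more precisely $\sum_k\x^k\partial_k=\mE$, and one must be careful with the relation between $\partial^k$ and $\partial_k$ via $\beta$); using $[\partial^k,\x_k]=(-1)^{\abs k}$ summed over $k$ produces the superdimension $M$, and $\sum_k \x_k\x^k = R^2$. Collecting terms, the $R^2$-terms drop out by part (1), the terms where the derivative lands on $1/s^2$ or $1/t^2$ recombine, and after using part (3) (applied to the function $(s\partial_s+p-1)\tfrac{f}{2s^2}-(t\partial_t+q-1)\tfrac{f}{2t^2}$, or rather directly bookkeeping the Euler-operator contributions) one is left with the stated factor $-(M-4)$. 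I expect this bookkeeping of commutators and parity signs — making sure the coefficient is exactly $M-4$ rather than $M-2$ or $M-3$ — to be the main obstacle; it is a finite but delicate computation.

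For the Bessel operator, recall from \eqref{Expression Bessel operators} that $\cB_\lambda(\x_k) = (-\lambda+2\mE)\partial_k - \x_k\Delta$. Apply $\int_{\kerorbit}$: the term $\int_{\kerorbit}(-\lambda+2\mE)\partial_k f$ is handled by combining part (3) (to replace $\mE$ acting under the integral by $2-M$, i.e. $\int_{\kerorbit}\mE g = (2-M)\int_{\kerorbit} g$) with Lemma~\ref{Lemma: integration of a derivative} for $\int_{\kerorbit}\partial_k f$; one must be slightly careful since $\mE$ does not commute with $\partial_k$, but $[\mE,\partial_k]=-\partial_k$, so $\mE\partial_k f = \partial_k(\mE f) - \partial_k f = \partial_k((\mE-1)f)$. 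The term $-\int_{\kerorbit}\x_k\Delta f$ is evaluated using the Laplacian formula just proved, noting that multiplication by $\x_k$ commutes with the radial operators $s\partial_s$, $t\partial_t$ only up to lower-order terms (again $[\partial_s,\x_k]$ etc. contribute). Substituting $\lambda = -M+2$, i.e. $-\lambda = M-2$, all the surviving terms cancel: the coefficient $-\lambda+2(2-M) = (M-2) + 4 - 2M = 2-M$ from the first term is designed to match $-(-(M-4)) \cdot (\text{factor } 2-M \text{ bookkeeping from } \x_k\Delta)$, so the two contributions are negatives of each other. The critical value $\lambda=-M+2$ is exactly what forces this cancellation, consistent with Proposition~\ref{Prop:BesselOperatorsTangential} and with part (4) of Proposition~\ref{Proposition properties integral} (symmetry of the Bessel operators), of which this is essentially the case $g=1$. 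The main care needed here, beyond the Laplacian computation, is tracking the sign $(-1)^{\abs f\abs k}$ when commuting $\x_k$ through odd functions, but this is routine.
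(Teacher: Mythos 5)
Your plan is essentially the paper's proof: the Laplacian identity is obtained by feeding $\sum_{i,j}\beta^{ij}\partial_i\partial_j f$ into Lemma~\ref{Lemma: integration of a derivative} together with Proposition~\ref{Proposition properties integral}(3), and the Bessel identity from the decomposition $\cB_\lambda(\x_k)=(-\lambda+2\mE)\partial_k-\x_k\Delta$ plus the Laplacian formula. One correction to the second half: the clean way to handle $-\int_\kerorbit \x_k\Delta f$ is not to commute $\x_k$ past the radial operators $s\partial_s$, $t\partial_t$ (the commutators $[\partial_s,\x_k]$ you invoke are not the ones that actually enter), but to use $[\Delta,\x_k]=2\partial_k$ first, which together with Proposition~\ref{Proposition properties integral}(3) yields
\[
\int_\kerorbit \cB_\lambda(\x_k)f=(2-M)\int_\kerorbit\partial_kf-\int_\kerorbit\Delta(\x_kf)+2\int_\kerorbit\partial_kf;
\]
then the Laplacian formula applied to $\x_kf$, combined with Lemma~\ref{Lemma: integration of a derivative}, gives $\int_\kerorbit\Delta(\x_kf)=(4-M)\int_\kerorbit\partial_kf$, and the three terms cancel. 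Your bookkeeping of the final cancellation (the expression ``$-(-(M-4))\cdot(\text{factor }2-M)$'') is garbled, but the conclusion is right: the two contributions are $(2-M)\int_\kerorbit\partial_kf$ and $(M-2)\int_\kerorbit\partial_kf$, summing to zero, and the critical value $\lambda=2-M$ is exactly what makes this happen. As for the coefficient $-(M-4)$ in the Laplacian formula, which you leave as ``expected to work out'', the paper is equally terse there, so this is not a gap beyond what the paper itself leaves to the reader.
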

\begin{proof}
For the Laplacian this follows from applying Lemma~\ref{Lemma: integration of a derivative} and Proposition~\ref{Proposition properties integral}(3) to $\int_\kerorbit \sum_{i,j} \beta^{ij} \partial_i \partial_j f$, while for the Bessel operator $\cB_\lambda(\x_k)= (-\lambda+2\mE)\partial_{k} - \x_k \Delta$, we first use Proposition~\ref{Proposition properties integral}(3) and the fact that $[\Delta, e_k ]= 2 \partial_k $ to obtain \[
\int_\kerorbit \cB_\lambda (\x_k) f = (-M+2) \int_\kerorbit \partial_k f - \int_\kerorbit \Delta (\x_k f) + 2 \int_\kerorbit \partial_k f
\]
which can be shown to be zero by Lemma~\ref{Lemma: integration of a derivative} and the expression we found for the Laplacian.
\end{proof}
Now we can prove the final part of Proposition~\ref{Proposition properties integral}.
\begin{proof}[Proof of Proposition~\ref{Proposition properties integral}, part (4)]
We first remark that, using Lemma~\ref{Lemma: integration of a derivative}, we obtain 
\begin{align*}
\int_\kerorbit & \partial_k (\mE(f) g ) 
- \sum_{i,j}(-1)^{\abs{j}(\abs{i}+\abs{k})}\partial_j \left(\x_k g^{ij} \partial_i (f)  g\right) = 0.
\end{align*}
Combining this with part (3) of Proposition~\ref{Proposition properties integral}, we then find 
\begin{align*}
\begin{aligned}
 \int_\kerorbit   \left( 2 (-1)^{\abs{f}\abs{k}}\mE(f) \partial_k (g) 
 + 2 \partial_k (f) \mE(g)  -\sum_{i,j} 2 \x_k (-1)^{\abs{f}\abs{j}} g^{ij}  \partial_i(f) \partial_j(g) \right) &=  - 2 \int_\kerorbit (\cB_\lambda(\x_k)f) g. 
\end{aligned}
\end{align*}
Together with Lemma~\ref{Lemma: int Laplacian and Bessel operator} and the product rule given in \eqref{Eq Product rule for Bessel operators}, we can then conclude
\[
\int_\kerorbit (\cB_\lambda(\x_k) f) g - (-1)^{\abs{f}\abs{k}} f (\cB_\lambda(\x_k)g)=0, 
\]
which proves part (4) of the proposition.
\end{proof}
\subsection{The sesquilinear form}

Define a sesquilinear form on the minimal orbit $C$ using the functional $\int_\kerorbit $
\[
\langle f,g\rangle := \int_\kerorbit \overline{f} g .
\]

\begin{theorem} \label{Theorem skew-symmetric}
 Suppose $\nu\not\in-2\mN,$ $\mu+\nu$ even and $\mu+\nu= p+q-2n-6\geq 0$.
The representation $\pi_C$ on $W$ is skew-symmetric for the  form $\langle \cdot,\cdot \rangle$, i.e.\ for $X \in \TKK(J)$, and $f,g$ in $W$
\[
\langle \pi_C(X)f , g \rangle +(-1)^{\abs{X}\abs{f}} \langle f, \pi_C(X) g \rangle=0.
\]
\end{theorem}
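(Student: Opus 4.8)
The plan is to deduce everything from the four properties of the functional $\int_{\kerorbit}$ established in Proposition~\ref{Proposition properties integral}. Fix $f,g\in W$ and consider the ``defect''
\[
D(X):=\langle \pi_C(X)f,g\rangle+(-1)^{\abs{X}\abs{f}}\langle f,\pi_C(X)g\rangle .
\]
Since $\TKK(J)$ is a \emph{real} Lie superalgebra, $\pi_C$ is $\mR$-linear, and the differential operators $\x_k,\partial_k,L_{ij},\mE,\Delta$ occurring in the formulas of Section~\ref{Section: definition representation} have real coefficients, so the map $X\mapsto D(X)$ is $\mR$-linear. Hence it suffices to check $D(X)=0$ for $X$ in the spanning set coming from the decomposition $\TKK(J)=J^{\plus}\oplus\mathfrak{osp}(J)\oplus\mR L_e\oplus J^{\minus}$, that is, for $X=(0,0,e_k)$, $X=(0,L_{ij},0)$ with $L_{ij}\in\mathfrak{osp}(J)$, $X=(0,L_e,0)$ and $X=(\bar e_k,0,0)$. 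Throughout one works with representatives in $\Gamma(\cO_{\mA(J^\ast)_0})$, using that $\pi_{2-M}$ is tangential and Proposition~\ref{Proposition properties integral}(1) to descend to the orbit; the hypotheses $\nu\not\in-2\mN$ and $\mu+\nu=p+q-2n-6\geq0$ are what guarantee, via Lemma~\ref{integral well-defined} together with the fact that $W$ is a $\pi_C$-submodule, that all integrals written below are defined.

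The three ``structure'' generators are immediate. For $X=(0,0,e_k)$ one has $\pi_C(X)=-\imath\x_k$, and $D(X)=0$ is just supercommutativity of multiplication inside $\int_{\kerorbit}$, since $\x_k\overline f\,g=(-1)^{\abs{k}\abs{f}}\overline f\,\x_k g$. For $X=(0,L_{ij},0)$ with $L_{ij}\in\mathfrak{osp}(J)$ one has $\pi_C(X)=L_{ij}$, a first-order derivation, so that
\[
(L_{ij}\overline f)\,g+(-1)^{\abs{X}\abs{f}}\overline f\,(L_{ij}g)=L_{ij}(\overline f\,g),
\]
and this has vanishing integral by the $\mathfrak{osp}(p-1,q-1|2n)$-invariance of $\int_{\kerorbit}$, Proposition~\ref{Proposition properties integral}(2). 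For $X=(0,L_e,0)$ one has $\pi_C(X)=\tfrac{\lambda}{2}-\mE$ with $\lambda=2-M$, and since $\mE$ is an (even) derivation,
\[
D(X)=\int_{\kerorbit}\Bigl(\lambda\,\overline f\,g-\mE(\overline f\,g)\Bigr)=(\lambda-2+M)\int_{\kerorbit}\overline f\,g=0
\]
by Proposition~\ref{Proposition properties integral}(3) and the choice $\lambda=2-M$.

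The substantial case is $X=(\bar e_k,0,0)$, where $\pi_C(X)=-\imath\cB_\lambda(e_k)$ with $\lambda=2-M$. As $\cB_\lambda(e_k)=(-\lambda+2\mE)\partial_k-\x_k\Delta$ has real coefficients, $\overline{\cB_\lambda(e_k)f}=\cB_\lambda(e_k)\overline f$, and therefore
\[
\langle\pi_C(X)f,g\rangle=\imath\int_{\kerorbit}(\cB_\lambda(e_k)\overline f)\,g=\imath(-1)^{\abs{f}\abs{k}}\int_{\kerorbit}\overline f\,(\cB_\lambda(e_k)g)=-(-1)^{\abs{k}\abs{f}}\langle f,\pi_C(X)g\rangle ,
\]
the middle equality being exactly the symmetry of $\int_{\kerorbit}$ under the Bessel operators, Proposition~\ref{Proposition properties integral}(4), which holds precisely at the critical value $\lambda=2-M$. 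Thus $D(X)=0$ on all generators and hence, by $\mR$-linearity, on all of $\TKK(J)$, which completes the proof.

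I expect the only genuine difficulty to be the bookkeeping of the domain of $\int_{\kerorbit}$: one must verify that $\overline f\,g$ and all the expressions $\overline{\pi_C(X)f}\,g$ arising above remain within the class of (super)polynomial-times-Bessel superfunctions to which Proposition~\ref{Proposition properties integral} applies. This is exactly the point where the hypothesis $p+q-2n-6\ge0$ (equivalently $\mu+\nu\ge0$) is used — it provides the extra decay needed for the integration-by-parts identities of Proposition~\ref{Proposition properties integral} and of Lemma~\ref{Lemma: integration of a derivative} — and once that is in place the skew-symmetry is a purely formal consequence of those identities.
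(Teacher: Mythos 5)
Your formal reduction to the generators $(0,0,e_k)$, $(0,L_{ij},0)$, $(0,L_e,0)$, $(\bar e_k,0,0)$ and the case-by-case application of Proposition~\ref{Proposition properties integral} is correct, but it is precisely the part of the argument that the paper dismisses in one sentence (``the theorem follows easily from Proposition~\ref{Proposition properties integral}''). The genuine content of the paper's proof is the point you flag in your closing paragraph but do not carry out: verifying that for all $f,g\in W$ the product $\overline{f}\,g$ lies in the domain on which the four identities of Proposition~\ref{Proposition properties integral} are valid. Your appeal to ``Lemma~\ref{integral well-defined} together with the fact that $W$ is a $\pi_C$-submodule'' does not close this, because the admissibility condition $p+q-2n-5+k>2\max(\alpha,0)+2\max(\beta,0)$ couples the polynomial degree $k$ to the Bessel parameters $\alpha,\beta$, and the operators generating $W$ (in particular $L_e$ and the Bessel operators) raise the Bessel index; one must therefore show that every increase of the Bessel parameter inside $W$ is compensated by a corresponding increase of polynomial degree. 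Being a submodule is not by itself enough information.

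The paper supplies exactly this control by proving the containment
\[
W\subset\sum_{a=0}^{\frac{\mu-\nu}{2}}\sum_{b=0}^{\infty}\widetilde{K}_{\frac{\nu}{2}+a+b}(\abs{X})\otimes\cP_{\geq a+2b}(\mR^{p+q-2|2n}),
\]
via the factorisation $W=U(J^{\minus})\,U(L_e)\,U(\mk)\,\widetilde{K}_{\frac{\nu}{2}}(\abs{X})$ from the Poincar\'e--Birkhoff--Witt theorem: $W_0=U(\mk)\widetilde{K}_{\frac{\nu}{2}}(\abs{X})$ lies in the right-hand side by the explicit decomposition of Theorem~\ref{Theorem structure W}, the identity $\mE\,\widetilde{K}_{\alpha}(\abs{X})=-\tfrac{\abs{X}^2}{2}\widetilde{K}_{\alpha+1}(\abs{X})$ shows the right-hand side is stable under $U(L_e)$, and $U(J^{\minus})$ acts by polynomial multiplication. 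A short estimate using $\mu+\nu\geq0$ then checks the admissibility condition for a product of two such terms. To complete your proof you would need to supply this containment (or an equivalent quantitative description of $W$); without it the convergence of the integrals in your four displayed identities is not established.
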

\begin{proof}
The theorem follows easily from Proposition~\ref{Proposition properties integral}. So we have to show that we can apply this proposition.
We will prove
\begin{align}\label{Eq: inclusion W}
W \subset \sum_{a=0}^{\tfrac{\mu-\nu}{2}}\sum_{b=0}^\infty  \widetilde{K}_{\frac{\nu}{2}+a+b}(\abs{X}) \otimes \cP_{\geq a+2b} (\mR^{p+q-2|2n}).
\end{align}
If $f$ and $g$ are in the right-hand side, then $\overline{f}g$ is a linear combination of elements of the form $P_k \widetilde{K}_{\frac{\nu}{2}+a+b}(\abs{X}) \widetilde{K}_{\frac{\nu}{2}+a'+b'}(\abs{X})$, where $P_k$ is a homogeneous polynomial of degree $k$ with $k\geq a+a'+2b+2b'$ and $a,a' \leq \frac{\mu-\nu}{2}.$
We have \begin{align*}
\mu+\nu+1 &> \max(\mu+\nu,0) \geq \max(\nu+a,0) + \max(\nu+a',0) 
\\
&\geq \max(\nu+a,-a-2b)+\max(\nu+a,-a'-2b').
\end{align*}
Hence \[\mu+\nu+1 +k > \max (\nu+2a+2b,0) + \max(\nu+2a' +2b',0), \] and $\overline{f}g$ satisfies Proposition~\ref{Proposition properties integral}.

To see \eqref{Eq: inclusion W}, note that $W_0 = U(\mk) \widetilde{K}_{\frac{\nu}{2}}(\abs{X})$ is  contained in the right-hand side. Using the differential relation of equation~\eqref{Bessel function diff rec rel}, 
 we obtain  $\mE (\widetilde{K}_\alpha(\abs{X})) = -\frac{\abs{X}^2}{2} \widetilde{K}_{\alpha+1}(\abs{X})$. Therefore the right-hand side is invariant for the action of $U(L_e)$, the associative algebra generated by powers of $L_e$. It is also clearly invariant for $U({J}^{\minus})$ which acts by multiplication with polynomials. By the Poincar\'{e}--Birkhoff--Witt theorem $W=U(\mg)\widetilde{K}_{\frac{\nu}{2}}=U({J}^{\minus})U(L_e)U(\mk)\widetilde{K}_{\frac{\nu}{2}}$, hence equation~\eqref{Eq: inclusion W} follows.
 \end{proof}
We can use this skew-symmetry to show non-degeneracy of our form.
\begin{lemma} Assume $\nu \not\in -2\mN$, $p\not=3$, $q\not=3$, $p+q$ even and $p+q-2n-6 \geq 0$.
The form $\langle, \rangle$ defines a sesquilinear, non-degenerate form on $W$, which is superhermitian, i.e.\ 
\[ \langle f,g\rangle = (-1)^{\abs{f}\abs{g}} \overline{\langle g,f\rangle}.
\]
\end{lemma}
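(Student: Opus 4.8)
The plan is to verify the three asserted properties (sesquilinearity, superhermitian symmetry, and non-degeneracy) in turn. Sesquilinearity is immediate from the construction: the form $\langle f,g\rangle = \int_C \overline{f} g$ is built from the linear functional $\int_C$ precomposed with complex conjugation in the first slot, so it is conjugate-linear in $f$ and linear in $g$; this needs only the remark that for $f,g \in W$ the product $\overline{f} g$ lies in the domain of $\int_C$, which was already established in the proof of Theorem~\ref{Theorem skew-symmetric} via the inclusion \eqref{Eq: inclusion W}. For the superhermitian property, I would first reduce to homogeneous $f,g$ and write out $\overline{\langle g,f\rangle} = \overline{\int_C \overline{g} f} = \int_C g \overline{f}$ (using that $\int_C$ of a real superfunction is real, which follows from its explicit description in \eqref{definition integral} as a genuine integral against $\rho^{p+q-5}(1+\eta)^{p-3}(1+\xi)^{q-3}$ together with a Berezin integral — all real operations); then supercommuting $g$ past $\overline{f}$ in the supercommutative algebra $\Gamma(\cO_C)$ produces the sign $(-1)^{\abs{f}\abs{g}}$, giving $\langle f,g\rangle = (-1)^{\abs{f}\abs{g}}\overline{\langle g,f\rangle}$.

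The substantive point is non-degeneracy. Here I would exploit the $\mk$-decomposition $W = \bigoplus_{j\geq 0} W_j$ from Theorem~\ref{Theorem structure W} together with the skew-symmetry established in Theorem~\ref{Theorem skew-symmetric}. The strategy: show that distinct $\mk$-isotypic pieces are orthogonal, and that the form is non-degenerate on each piece. For orthogonality, the key observation is that the operator $\pi_C(2L_e) = \lambda - 2\mE$ acts on $W_j$ by mixing only into $W_{j\pm 1}$ (from the proof of Proposition~\ref{Structure W}), and more usefully that $\mk$ acts within each $W_j$; combined with skew-symmetry of $\pi_C(X)$ for $X\in\mk$ (so $\mk$ acts by skew-symmetric operators), the radical of $\langle\cdot,\cdot\rangle$ is a $\mg$-submodule of $W$. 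Then I would invoke the structure results: under the hypotheses $\nu\not\in-2\mN$, $p,q\neq 3$, $p+q$ even and $\mu+\nu = p+q-2n-6 \geq 0$, Corollary~\ref{Corollary W simple} gives that $W$ is a \emph{simple} $\mg$-module. Hence its radical is either $0$ or all of $W$; since $\int_C \widetilde{K}_{\nu/2}(\abs{X})\widetilde{K}_{\nu/2}(\abs{X}) \neq 0$ by Lemma~\ref{Integral of Knu} (using $(\frac{3-p}{2})_n \neq 0$, which holds precisely because $\nu\not\in-2\mN$), the form is nonzero on the generating vector $\widetilde{K}_{\nu/2}(\abs{X})$, so the radical is not all of $W$, hence is $0$.

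I expect the main obstacle to be pinning down the claim that the radical is a $\mg$-submodule rigorously, i.e. handling the skew-symmetry carefully in the $\mZ_2$-graded setting: if $v \in \mathrm{rad}$ and $X \in \mg$, one wants $\pi_C(X)v \in \mathrm{rad}$, and from $\langle \pi_C(X)v, g\rangle = -(-1)^{\abs{X}\abs{v}}\langle v, \pi_C(X)g\rangle = 0$ for all $g$ this follows, but one must check $\mathrm{rad}$ is a graded subspace so that the sign manipulations are legitimate — this is fine because $\langle\cdot,\cdot\rangle$ pairs $W_{\oa}$ with $W_{\oa}$ and $W_{\ob}$ with $W_{\ob}$ (the integral $\int_C$ kills odd superfunctions, as its top Berezin component must be saturated), so the form respects the parity grading and $\mathrm{rad}$ splits as $(\mathrm{rad}\cap W_{\oa})\oplus(\mathrm{rad}\cap W_{\ob})$. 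A secondary subtlety is that Theorem~\ref{Theorem skew-symmetric} requires $\mu+\nu = p+q-2n-6 \geq 0$ exactly, which matches the hypothesis of the lemma, so skew-symmetry is available; and one should note the hypotheses $p\neq 3$, $q\neq 3$ are exactly those needed in Corollary~\ref{Corollary W simple} for simplicity. Once these are in place the argument is short: orthogonality of isotypic components is not even strictly needed — simplicity plus one nonzero value of the form does all the work.
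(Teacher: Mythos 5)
Your proposal is correct and follows essentially the same route as the paper: sesquilinearity and the superhermitian property are read off from the construction, and non-degeneracy is obtained by observing via Theorem~\ref{Theorem skew-symmetric} that the radical is a $\mg$-submodule, invoking simplicity of $W$ from Corollary~\ref{Corollary W simple} (using $\mu+\nu\geq 0$, $p,q\neq 3$), and noting the form is nonzero on $\widetilde{K}_{\nu/2}(\abs{X})$ by Lemma~\ref{Integral of Knu}. The extra care you take with the parity-grading of the radical and the reality of $\int_C$ is a welcome elaboration of what the paper leaves implicit, and your closing remark that isotypic orthogonality is not needed matches the paper's argument exactly.
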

\begin{proof}
We see immediately that our form is sesquilinear and superhermitian. From Theorem~\ref{Theorem skew-symmetric}, it follows that the radical of the form gives a subrepresentation. Namely if $\langle f, g\rangle =0 $ for all $g$ in $W$, then also 
\[
\langle \pi_C(X) f , g\rangle =- (-1)^{\abs{f}\abs{X}} \langle f, \pi_C(X) g \rangle =0, \qquad \text{ for all } g \in W.
\]
So $\pi_C(X) f$ is also contained in the radical. By Corollary~\ref{Corollary W simple} $W$ is simple for $\mu+\nu\geq0$, hence the radical is zero or the whole $W$. Since $\int_C \widetilde{K}_{\frac{\nu}{2}}(\abs{X})\widetilde{K}_{\frac{\nu}{2}}(\abs{X})\not=0$ by Lemma~\ref{Integral of Knu} we conclude that the radical is zero and the form is non-degenerate.
\end{proof}

\section{Open questions}
We end this paper by mentioning some open questions and possibilities for future research.

\subsection{Density of $W$ in $\Gamma(\cO_C)$.} From \cite[Theorem 6.2.1]{NS},  we know a priori that our representation is not unitary. However, we can still define a Hilbert superspace on the minimal orbit in the sense of the new definition introduced in \cite{Michel}.  Namely,  we can `pullback' the Hilbert superspace  $L^2(\mR^+,\rho^{p+q-5}\diff\rho) \hat{\otimes} L^2(\mathbb{S}^{p-2}) \hat{\otimes} L^2(\mathbb{S}^{q-2})$ using the isomorphism $\phi^\sharp$ defined in Section~\ref{Section integration} to obtain a Hilbert superspace $H$ on the minimal orbit. This defines a topology on $H$ and $W$ is contained in $H$. So a natural question to ask is if $W$ is dense in $H$ with respect to this topology. We note that we were not able to show continuity of the operators in our representation with respect to this topology. This has to do with the fact that in the supercase isometrical operators are not necessarily continuous and that our operators do not respect the fundamental decomposition of $H$. So in particular we cannot use the standard techniques for integrating a $(\mg,\mathfrak{k})$-representation to a representation of the corresponding group on $H$. 
However, one could still investigate if there is any connection between the Fr\'{e}chet space on which we defined the representation of $OSp(p,q|2n)$ and the Hilbert superspace $H$. 

\subsection{Characterisation of $W$.} At the moment our definition of $W$ looks a bit arbitrary. In particular it depends on our choice of intermediate algebra $\mk= \mathfrak{osp}(p|2n)\oplus \mathfrak{so}(q)$. Therefore an intrinsic characterisation of $W$, which could be generalized to other Lie superalgebras, would be interesting. In the classical case, $W$ is simply the space of $\mathfrak{k}$-finite vectors in the representation $\pi_C$ of $\mathfrak{g}$ on $\mathcal{C}^\infty(C)$, but a statement like this is not immediate in the supercase.

\subsection{Other minimal representations} Another natural direction of study is to use the approach developed in this paper for other Lie superalgebras corresponding to simple Jordan superalgebras. In particular for (real forms of) the Jordan superalgebra $osp_\mC(m|2n)_+$, one would expect to find the metaplectic representation of (real forms of) $SpO(4n|2m)$ as constructed in \cite{Michel}.


\appendix
\section{The affine superspace and supermanifolds}\label{supermanifolds}
A general introduction to supermanifolds can be found in \cite{DM} and \cite{CCF}. Here we quickly introduce definitions and notations.

Consider a topological space $\abs{M}$. We associate a category $\mathcal{C}_{\abs{M}}$ with it as follows. The objects of $\mathcal{C}_{\abs{M}}$ are the open sets of $\abs{M}$ and its morphisms are the inclusions. So if $U \subset V$ for $U$ and $V$ open sets in $\abs{M}$, then there exists a unique morphism from $U$ to $V$.

A {\it presheaf} (of superrings) on $\abs{M}$ is a contravariant functor $\mathcal{O}$ from the category $\mathcal{C}_{\abs{M}}$ to the category of superrings. This means that there corresponds a superring $\mathcal{O}(U)$ to each open set $U$ in $\abs{M}$ and that there exists a morphism $r_{U,V} \colon \mathcal{O}(V) \to \mathcal{O}(U) $ if $U \subset V$. These morphisms satisfy 
$r_{U,U}= \id$ and $r_{U,V} \circ r_{V,W} = r_{U,W}$
for $U \subset V \subset W$. We will often write $r_{U,V}(f)$ as $f_{\mid U}$ for a section $f$ in $\mathcal{O}(V)$.

A presheaf $\mathcal{O}$ on $\abs{M}$ is a {\it sheaf} if it has the following gluing property. 
Consider an open set $U$ in $\abs{M}$ and an open covering $\{ U_i \}_{i \in I}$. Assume we have a family $\{f_i\}_{i\in I} $ of sections $f_i \in \mathcal{O}(U_i)$ for which ${f_i} {\mid_{U_i \cap U_j}} = {f_j} {\mid_{U_i \cap U_j}}$ for all $i, j \in I$. Then the gluing property asserts that there exists a unique $f$ in $\mathcal{O}(U)$ such that $f {\mid_{U \cap U_i}} = {f_i} {\mid_{U \cap U_i}}$ for all $i$.

For every $x$ in $\abs{M}$ we can define the stalk $\mathcal{O}_{x}$ as the direct limit \[ \lim_{\longrightarrow}  \mathcal{O}(U),\]
where we take the limit over all open neighbourhoods $U$ of $x$.
The idea is that the stalk captures the behaviour of the sheaf locally around the point $x$. It consists of sections defined on some neighbourhood of $x$ and sections are considered equivalent if their restrictions on a smaller neighbourhood agree.

\begin{definition}
A superringed space $(\abs{S}, \cO_S)$ is a topological space $\abs{S}$ and a sheaf $\cO_S$ of superrings.  \\
A superspace $(\abs{S}, \cO_S)$ is a superringed space for which the stalk $\cO_{S,x}$ is a local superring for all points $x \in \abs{S}$.
\end{definition}
A superring is {\it local} if it has a unique maximal ideal. 

Let $V$ be a real finite-dimensional super-vector space. Then the affine superspace is the superringed space
\[
\mA(V)= (V_{\oa}, \cC^{\infty}_{V_{\oa}} \otimes_{\mR} \Lambda V_{\ob}^\ast),
\]
where $\cC^\infty_{V_{\oa}}$ is the sheaf of smooth, complex-valued functions on $V_{\oa}$ and $ \Lambda V_{\ob}^\ast$ is the Grassmann algebra  of $V_{\ob}^\ast$.
 In case $V=\mR^{m|n}$ we also use the notation $\mA^{m|n}$ for $\mA(\mR^{m|n})$.
 
 A morphism $\phi=(\abs{\phi}, \phi^\sharp)$ between two superspaces $M$ and $N$ is a continuous map $\abs{\phi}\colon \abs{M} \to \abs{N}$ and a sheaf morphism $\phi^\sharp\colon \cO_N \to  \abs{\phi}_\ast \cO_M$. Here $\abs{\phi}_\ast \cO_M $ is the sheaf on $\abs{N}$ given by $\abs{\phi}_\ast \cO_M (U) =  \cO_M( \abs{\phi}^{-1} (U))$. 
 
A (real smooth) supermanifold $M$ is a superspace  that is locally isomorphic to $\mA^{m|n}$. We denote the underlying topological space by $\abs{M}$ and the structure sheaf of commutative superrings by  $\cO_M$. The global sections are denoted by $\Gamma(\cO_M)$. If $M$ is an ordinary manifold, then we will also use the notation $\cC^\infty(M)$ for $\Gamma(\cO_M)$.  Note that for supermanifolds the global sections $\Gamma(\cO_M)$ determine the sheaf $\cO_M$, \cite[Corollary 4.5.10]{CCF}.

The elements in $\cO_M(U)$ act by multiplication on $\cO_M(U)$ and they form the differential operators of degree zero. The differential operators of degree $k$ are defined inductively:
\[
\cD_M^k(U) := \{ D \in \End(\cO_M(U)) \mid [D,f] \in \cD_M^{k-1}(U)\quad   \forall f \in \cO_M(U) \}.
\]
Here $[D,f]= D f - (-1)^{\abs{D}\abs{f}} f D$  is the supercommutator. 
The sheaf of differential operators  $\cD_M$ is then defined by
\[
\cD_M(U) = \bigcup^{\infty}_{i=0} \cD^k_M(U).
\]
We again use  the notation $\Gamma(\cD_M)$ for the global sections.

The product of supermanifolds $M$ and $N$ is given by
\[
M \times N = (\abs{M}\times \abs{N}, \cO_{M\times N}),
\]
where $ \cO_{M\times N}(U\times V) := \cO_{M}(U) \hat{\otimes} \cO_N(V),$
for an open set $U\times V \in \abs{M}\times \abs{N}$. Here $\hat{\otimes}$ is the completion of the tensor product with respect to the projective tensor topology. This is the unique topology such that 
\[
C^\infty(U) \hat{\otimes} C^\infty(V) \cong C^\infty (U \times V) 
\]
for $U\subset \mR^m$ and $V \subset \mR^n$, \cite[Section 4.5]{CCF}.

The following proposition tells us that for most practical purposes it is sufficient to only consider the tensor product.
\begin{proposition}[{\cite[Proposition 4.5.4]{CCF}}]\
\begin{enumerate}
\item The space of sections $\Gamma(\cO_M) \otimes \Gamma(\cO_N)$ is dense in $\Gamma(\cO_{M\times N})$.
\item If $\phi_i\colon M_i \to N_i$,$ i=1,2$ are supermanifold morphisms, then the sheaf morphism of the map $\phi_1\times \phi_2 \colon M_1 \times M_2 \to N_1 \times N_2$ is given by $\phi_1^\sharp \hat{\otimes} \phi_2^\sharp$ which is in turn completely determined by $\phi_1^\sharp \otimes \phi_2^\sharp.$
\end{enumerate}
\end{proposition}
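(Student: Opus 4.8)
The plan is to reduce everything to the classical isomorphism $\cC^\infty(U)\hat\otimes\cC^\infty(V)\cong\cC^\infty(U\times V)$ recalled just above, the odd directions contributing only finite-dimensional Grassmann factors that pass freely through the completion. The starting observation is that, by the very definition of the product supermanifold, taking global sections over the product open $\abs{M}\times\abs{N}$ gives $\Gamma(\cO_{M\times N})=\Gamma(\cO_M)\hat\otimes\Gamma(\cO_N)$. Both assertions then become statements about the completed projective tensor product of the Fr\'echet superalgebras $\Gamma(\cO_M)$ and $\Gamma(\cO_N)$.

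For (1), note that $\hat\otimes$ is by construction the completion of the algebraic tensor product endowed with the projective topology, so $\Gamma(\cO_M)\otimes\Gamma(\cO_N)$ is tautologically dense in $\Gamma(\cO_M)\hat\otimes\Gamma(\cO_N)=\Gamma(\cO_{M\times N})$; the only point needing justification is that the canonical map from the algebraic tensor product is injective with the projective topology as its subspace topology. I would check this by reducing to the even, local picture: writing $\Gamma(\cO_M)\cong\cC^\infty(\abs{M})\otimes_\mR\Lambda_M$ and $\Gamma(\cO_N)\cong\cC^\infty(\abs{N})\otimes_\mR\Lambda_N$ with $\Lambda_M,\Lambda_N$ finite-dimensional, and using that tensoring with a finite-dimensional space commutes with completion, the claim follows from the density of $\cC^\infty(\abs{M})\otimes\cC^\infty(\abs{N})$ in $\cC^\infty(\abs{M}\times\abs{N})$, which is the recalled nuclearity statement. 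Concretely, a section $\sum_{I,J}h_{IJ}\,\xi^I\eta^J$ of $\cO_{M\times N}$, with $\xi^I,\eta^J$ odd monomials from the two factors and $h_{IJ}\in\cC^\infty(\abs{M}\times\abs{N})$, is approximated by splitting each $h_{IJ}\approx\sum_k f_k\otimes g_k$ and regrouping the odd monomials as $(f_k\xi^I)\otimes(g_k\eta^J)\in\Gamma(\cO_M)\otimes\Gamma(\cO_N)$.

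For (2), the pullbacks $\phi_i^\sharp\colon\Gamma(\cO_{N_i})\to\Gamma(\cO_{M_i})$ are continuous homomorphisms of Fr\'echet superalgebras --- on the even part pullback of smooth functions is continuous, and the odd part involves finitely many generators with smooth coefficients --- so $\phi_1^\sharp\otimes\phi_2^\sharp$ is continuous for the projective topology and extends uniquely to $\phi_1^\sharp\hat\otimes\phi_2^\sharp\colon\Gamma(\cO_{N_1\times N_2})\to\Gamma(\cO_{M_1\times M_2})$; that this extension is \emph{completely determined} by the algebraic map is precisely the density proved in (1). It then remains to identify $\bigl(\abs{\phi_1}\times\abs{\phi_2},\,\phi_1^\sharp\hat\otimes\phi_2^\sharp\bigr)$ with the product morphism $\phi_1\times\phi_2$, which I would do through the universal property of the categorical product: $\phi_1\times\phi_2$ is characterised by $p_i\circ(\phi_1\times\phi_2)=\phi_i\circ p_i$ for the two projections. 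Verifying these identities is a calculation on elementary tensors, where $\phi_1^\sharp\hat\otimes\phi_2^\sharp$ acts as the algebraic $\phi_1^\sharp\otimes\phi_2^\sharp$; the identities then propagate to all sections by continuity together with the density from (1), and uniqueness in the universal property forces the identification.

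The genuinely non-formal ingredient throughout is functional-analytic: the nuclearity of $\cC^\infty$ of a manifold, which underlies both the identification $\cC^\infty(\abs{M})\hat\otimes\cC^\infty(\abs{N})\cong\cC^\infty(\abs{M}\times\abs{N})$ with dense algebraic tensor product and the automatic continuity of the pullback maps. I expect this to be the main obstacle; granting it, the remainder is the bookkeeping of carrying the finite-dimensional Grassmann factors through the completion and checking the universal property on a dense subalgebra.
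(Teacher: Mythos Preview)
The paper does not actually prove this proposition: it is stated with a citation to \cite[Proposition 4.5.4]{CCF} and no argument is given in the paper itself. So there is no ``paper's own proof'' to compare against; the statement is simply quoted from the reference.

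That said, your proposal is a correct sketch of how such a result is established, and it is in the spirit of the argument one finds in \cite{CCF}. One small point to watch: when you write $\Gamma(\cO_M)\cong\cC^\infty(|M|)\otimes_\mR\Lambda_M$ globally, you are implicitly invoking Batchelor's theorem (smooth supermanifolds are split, non-canonically). This is fine for a purely topological-vector-space density statement, but you should flag it, since the isomorphism is not an isomorphism of superalgebras in general. Alternatively, you can avoid this by working locally on a coordinate chart and then using a partition of unity to globalise the density, which is cleaner and is closer to how \cite{CCF} proceeds.
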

Let $\theta_1,\ldots,  \theta_n$ be a basis of $V_{\ob}^\ast$. For a multi-index $I=(i_1,i_2, \ldots, i_n) \in \mZ_2^{n}$, we introduce the notation $\theta^{I}:=\theta_1^{i_1}\theta_2^{i_2}\cdots \theta_n^{i_n}$. Then we can decompose every section $f \in \cO_{\mA(V)}(U)$ for $U$ an open subspace of $V_{\oa}$ as
\[
f= f_0 + \sum_{I \in \mZ_2^n \backslash \{0\} } f_I \theta^I,
\]
where $f_0, f_I$ are in $\cC^{\infty}(U).$
The value of $f$ at a point $x$ in $V_{\oa}$ is defined as \[ f(x):=\ev_x(f):=f_0(x).\]
Note that $\ev_x(fg) = \ev_x(f)\ev_x(g)$ for $f, g$ in $\cO_{\mA(V)}(U)$.
\section{Gegenbauer polynomials, Bessel functions and generalized Laguerre functions} \label{Section Appendix}
We will also need some orthogonal polynomials and special functions which we introduce here. 
\subsection{Gegenbauer polynomials}\label{Sect Gegenbauer polynomials}
For $n \in \mN$ and $\lambda \in \mC$, we define the Gegenbauer polynomial 
\[
C^\lambda_n(z) = \frac{1}{\Gamma(\lambda)} \sum_{k=0}^n \frac{(-1)^k \Gamma(\lambda+k) \Gamma(n+2\lambda+k)}{k! (n-k)!\Gamma(2\lambda+2k)}\left(\frac{1-z}{2}\right)^k.
\]
We will use the normalised version 
\begin{align*}
\widetilde{C}^\lambda_n(z) = \Gamma(\lambda) C^\lambda_n(z), 
\end{align*}
which, in contrast to $C^\lambda_n(z)$, is non-zero for $\lambda=0$.
We need the following two properties of the normalised Gegenbauer polynomial, \cite[3.15(21) and 3.15(30)]{EMOT}:
\begin{align*}
\partial_z \widetilde{C}^\lambda_m (z) = 2 \widetilde{C}^{\lambda+1}_{m-1}(z),
\end{align*}
and
\begin{align*}
4(1-z^2) \widetilde{C}^{\lambda+1}_{m-1}(z)-2z (2\lambda-1)\widetilde{C}^{\lambda}_{m}(z) 
=-(m+1)(2\lambda+m-1)& \widetilde{C}^{\lambda-1}_{m+1}(z).
\end{align*}
\subsection{Bessel functions} \label{Section Bessel functions}
The modified Bessel function of the first kind or $I$-Bessel function is defined, for $z>0$ and $\alpha \in \mC$, by 
\[
I_\alpha(z) := \left(\frac{z}{2}\right)^\alpha \sum_{n=0}^\infty \frac{1}{n!\Gamma 	(n+\alpha+1)} \left(\frac{z}{2}\right)^{2n}
\]
and the modified Bessel function of the third kind or $K$-Bessel function by
\[
K_\alpha(z) := \frac{\pi}{2 \sin (\pi \alpha)} (I_{-\alpha}(z) -I_\alpha (z)),
\]
see \cite[Section 3.7]{Wat}.
We will use the following renormalisations
\begin{align*}
\widetilde{I}_\alpha(z) := \left(\frac{z}{2}\right)^{-\alpha } I_\alpha(z), 
\qquad \widetilde{K}_\alpha(z) :=\left(\frac{z}{2}\right)^{-\alpha } K_\alpha(z).
\end{align*}
The functions $\widetilde{I}_\alpha(z)$ and $ \widetilde{K}_\alpha(z)$  are linearly independent and solve the following second order differential equation 
\begin{align} \label{Bessel function dif rel}
z^2 \frac{\diff^2 u}{\diff z^2} + (2\alpha+1) z \frac{\diff u}{\diff z} -z^2 u =0.
\end{align}
We also have the differential recurrence relations, \cite[III.71 (6)]{Wat}
\begin{align} \label{Bessel function diff rec rel}
\frac{\diff }{\diff z }\widetilde{I}_\alpha(z) = \frac{z}{2}\widetilde{I}_{\alpha+1}(z), \qquad 
\frac{\diff }{\diff z }\widetilde{K}_\alpha(z) = -\frac{z}{2}\widetilde{K}_{\alpha+1}(z).
\end{align}
Using these relations we can rewrite the second order differential equation as a recurrence relation, \cite[III.71 (1)]{Wat},
\begin{align} \label{Bessel function rec rel}
\begin{aligned}
\frac{z^2}{4} \widetilde{I}_{\alpha+1}(z) + \alpha \widetilde{I}_\alpha(z) - \widetilde{I}_{\alpha-1}(z) &=0, 
\qquad 
\frac{z^2}{4} \widetilde{K}_{\alpha+1}(z) - \alpha \widetilde{K}_\alpha (z)- \widetilde{K}_{\alpha-1}(z) &=0.
\end{aligned}
\end{align}

The asymptotic behaviour of the $K$-Bessel function is given by, \cite[Chapter III and VII]{Wat},
\begin{align*}
\text{ for } x\to 0:\quad \widetilde{K}_\alpha(x)&  = 
\begin{cases}
\frac{\Gamma(\alpha)}{2} (\frac{x}{2})^{-2\alpha} + o(x^{-2\alpha})\qquad \quad\text{ if } \alpha >0 \\
-\log(\frac{x}{2}) + o(\log(\frac{x}{2})) \qquad\quad\;\;\text{ if } \alpha=0 \\
\frac{\Gamma(-\alpha)}{2} + o(1) \qquad \qquad\qquad\quad \text{ if } \alpha<0. 
\end{cases} \\
\text{ for } x\to \infty:\quad  \widetilde{K}_\alpha(x) &=  \frac{\sqrt{\pi}}{2}\left(\frac{x}{2}\right)^{-\alpha -\frac{1}{2}} e^{-x} \left(1 + \cO\left(\frac{1}{x}\right)\right).
\end{align*}

\subsection{Generalised Laguerre functions} \label{definition Lambda mu nu}
Consider the generating function 
\[
G^{\mu,\nu}_2(t,x) :=  \frac{1}{(1-t)^{\frac{\mu+\nu+2}{2}}} \widetilde{I}_{\frac{\mu}{2}}\left(\frac{tx}{1-t}\right)\widetilde{K}_{\frac{\nu}{2}}\left(\frac{x}{1-t}\right),\quad  \text{ for parameters } \mu,\nu \in \mC.
\]

This function $G^{\mu,\nu}_2$ is holomorphic near $t=0$. We will define the generalised Laguerre functions $\Lambda^{\mu,\nu}_{2,j}(x)$ as the coefficients in the expansion 
\begin{align} 
G^{\mu,\nu}_2(t,x) = \sum_{j=0}^\infty \Lambda^{\mu,\nu}_{2,j}(x) t^j.
\end{align}
 Note that $\Lambda^{\mu,\nu}_{2,0}(x) =\frac{1}{\Gamma(\frac{\mu+2}{2})} \widetilde{K}_{\frac{\nu}{2}}(x)$. For notational convenience we set $\Lambda^{\mu,\nu}_{2,j} = 0$ for $j<0$.
We have some relations between the generating functions, which in turn lead to corresponding differential recurrence relations for the $\Lambda^{\mu,\nu}_{2,j}$.
\begin{proposition}\label{Prop generating functions}
The generating functions satisfy
\begin{align*}
\partial^2_x G^{\mu,\nu}_2 (x,t) + \frac{(\nu+1)}{x} \partial_x G^{\mu,\nu}_2(x,t) -   G^{\mu,\nu}_2(x,t) 
&= t(\mE_t +\mu+2) G^{\mu+2,\nu}_2 (x,t),
 \\
\partial^2_x G^{\mu,\nu}_2 (x,t) + \frac{(\mu+1)}{x} \partial_x G^{\mu,\nu}_2 (x,t)  -   G^{\mu,\nu}_2(x,t) &= -(\mE_t +\frac{\mu-\nu}{2}) G^{\mu,\nu+2}_2 (x,t),
 \\
 t \left(\mu(\mu +  \nu+ 2  \mE_x) G_2^{\mu,\nu}(x,t) + x^2 (t \right. \left. (\mE_t +\mu+2)) G_2^{\mu+2,\nu}(x,t)\right)  &= 4 \mE_t G_2^{\mu-2,\nu}(x,t), 
 \\
 -\nu(\mu+ \nu +  2  \mE_x ) G_2^{\mu,\nu}(x,t) + x^2  (\mE_t  +\frac{\mu-\nu}{2}) G_2^{\mu,\nu+2}(x,t)  &= (4 \mE_t +2(\mu+\nu) )G_2^{\mu,\nu-2}(x,t) ,
\end{align*}
where $\mE_x = x \partial_x $ and $\mE_t = t \partial_t$.
\end{proposition}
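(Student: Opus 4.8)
The final statement to prove is Proposition~\ref{Prop generating functions}, which asserts four differential-difference identities satisfied by the generating function $G^{\mu,\nu}_2(x,t)$ of the generalized Laguerre functions.

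\medskip

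The plan is to prove each of the four identities directly by unwinding the definition
\[
G^{\mu,\nu}_2(t,x) = (1-t)^{-\frac{\mu+\nu+2}{2}} \widetilde{I}_{\frac{\mu}{2}}\!\left(\tfrac{tx}{1-t}\right)\widetilde{K}_{\frac{\nu}{2}}\!\left(\tfrac{x}{1-t}\right)
\]
and reducing everything to the three basic facts about the renormalized Bessel functions recorded in Section~\ref{Section Bessel functions}: the second-order ODE \eqref{Bessel function dif rel}, the differential recurrence \eqref{Bessel function diff rec rel}, and the three-term recurrence \eqref{Bessel function rec rel}. First I would introduce the abbreviations $u = \tfrac{tx}{1-t}$ and $v = \tfrac{x}{1-t}$, noting the useful relations $v - u = x$, $\mE_x u = u$, $\mE_x v = v$, and that $\mE_t$ acting on $u,v$ produces $u/(1-t)$ and $tv/(1-t)$ respectively; the prefactor $(1-t)^{-\frac{\mu+\nu+2}{2}}$ contributes a scalar term $\tfrac{\mu+\nu+2}{2}\cdot\tfrac{t}{1-t}$ under $\mE_t$. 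With these in hand, applying $\partial_x$ to $G^{\mu,\nu}_2$ and using \eqref{Bessel function diff rec rel} on each Bessel factor expresses $\partial_x G^{\mu,\nu}_2$ as a combination of products $\widetilde I_{\mu/2}\widetilde K_{(\nu+2)/2}$ and $\widetilde I_{(\mu+2)/2}\widetilde K_{\nu/2}$, i.e.\ of $G^{\mu,\nu+2}_2$ and $G^{\mu+2,\nu}_2$ with explicit coefficients.

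\medskip

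For the first two identities (the ``lowering/raising in $x$ versus shifting $\mu$ or $\nu$'' relations), the natural route is to compute the operator $\partial_x^2 + \tfrac{\nu+1}{x}\partial_x - 1$ (respectively with $\mu+1$) applied to $G^{\mu,\nu}_2$. The $\widetilde K_{\nu/2}$-factor, as a function of $v=\tfrac{x}{1-t}$, satisfies exactly the ODE \eqref{Bessel function dif rel} with parameter $\alpha = \nu/2$; rewriting that ODE in the variable $x$ (so $z^2\partial_z^2 + (\nu+1)z\partial_z - z^2$ becomes $x^2\partial_x^2 + (\nu+1)x\partial_x - x^2/(1-t)^2$ up to the chain rule bookkeeping) lets one kill the ``diagonal'' part, leaving only the cross terms coming from differentiating through the $\widetilde I_{\mu/2}$-factor and the prefactor. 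Those cross terms, after again invoking \eqref{Bessel function diff rec rel}, collapse to $t(\mE_t + \mu+2)G^{\mu+2,\nu}_2$, which is the claimed right-hand side; the second identity is the mirror computation swapping the roles of $\widetilde I$ and $\widetilde K$ and using the $I$-Bessel ODE. The last two identities then follow by combining the first two with the three-term recurrences \eqref{Bessel function rec rel}: e.g.\ expressing $\widetilde I_{(\mu-2)/2}$ and $\widetilde K_{(\nu-2)/2}$ via $\widetilde I_{\mu/2},\widetilde I_{(\mu+2)/2}$ and $\widetilde K_{\nu/2},\widetilde K_{(\nu+2)/2}$ turns the second-order relations into the stated $G^{\mu-2,\nu}_2$ and $G^{\mu,\nu-2}_2$ identities; one may also simply differentiate the generating function explicitly with respect to $t$ to identify $\mE_t$ on the nose.

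\medskip

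I expect the main obstacle to be purely bookkeeping: keeping the chain-rule factors $\partial_x u = t/(1-t)$, $\partial_x v = 1/(1-t)$ straight while simultaneously tracking how $\mE_t$ acts on $u$, $v$ and on the scalar prefactor, so that the powers of $(1-t)^{-1}$ cancel correctly and the operator $\mE_t$ appears in the precise combinations $\mE_t + \mu + 2$ and $\mE_t + \tfrac{\mu-\nu}{2}$ rather than shifted versions thereof. A clean way to manage this is to verify the identities first at the level of the Bessel ODEs (which are parameter-free statements) and only at the end substitute $z = u$ or $z = v$ and restore the $t$-dependence, so that the Euler operator $\mE_t$ enters exactly through $\partial_t$ of the three building blocks. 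Once the four generating-function identities are established, extracting the coefficient of $t^j$ yields the corresponding differential recurrence relations for the $\Lambda^{\mu,\nu}_{2,j}$ used in Section~\ref{Integration to group level}, but that extraction is routine and need not be carried out here.
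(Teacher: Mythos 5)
Your proposal is correct and follows essentially the same route as the paper: compute $\partial_x G^{\mu,\nu}_2$, $\partial_x^2 G^{\mu,\nu}_2$ and $\partial_t G^{\mu,\nu}_2$ explicitly via the differential recurrences \eqref{Bessel function diff rec rel} (expressing them through $G^{\mu+2,\nu}_2$ and $G^{\mu,\nu+2}_2$), then combine with the three-term recurrences \eqref{Bessel function rec rel} to produce the parameter-lowered identities. Your use of the Bessel ODE \eqref{Bessel function dif rel} for the diagonal second-derivative term is only a cosmetic repackaging of the same computation, so there is nothing substantive to distinguish the two arguments.
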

\begin{proof}
First one uses the differential recursion relations for the Bessel functions, equation~\eqref{Bessel function diff rec rel}, to calculate
\begin{align*}
\partial_x  G^{\mu,\nu}_2(x,t) &= \tfrac{x}{2(1-t)}(t^2 G^{\mu+2,\nu}_2(x,t) - G^{\mu,\nu+2}_2(x,t)),
 \\
\partial_x^2   G^{\mu,\nu}_2(x,t) &= \tfrac{x^2t^4}{4(1-t)^2} G^{\mu+4,\nu}_2(x,t)+\tfrac{t^2}{2(1-t)}G^{\mu+2,\nu}_2(x,t)
 -\tfrac{x^2 t^2 }{2(1-t)^2} G^{\mu+2,\nu+2}_2(x,t) 
 \\
&\quad -\tfrac{1}{2(1-t)}G^{\mu,\nu+2}_2(x,t) + \tfrac{x^2}{4(1-t)^2}G^{\mu,\nu+4}_2(x,t), 
\\
\partial_t    G^{\mu,\nu}_2(x,t) &
= \tfrac{\mu+\nu+2}{2(1-t)} G^{\mu,\nu}_2(x,t) + \tfrac{x^2 t}{2(1-t)^2}G^{\mu+2,\nu}_2(x,t) - \tfrac{x^2}{2(1-t)^2} G^{\mu,\nu+2}_2(x,t).
\end{align*}
From the recurrence relations~\eqref{Bessel function rec rel} for the Bessel functions, we get the following recurrence relations for $G^{\mu,\nu}_2(x,t)$
\begin{align*}
\frac{x^2}{4(1-t)} G^{\mu,\nu+2}_2(x,t)-\frac{\nu}{2} G^{\mu,\nu}_2(x,t) - \frac{1}{1-t}G^{\mu,\nu-2}_2(x,t) &= 0, 
\\
\frac{x^2t^2}{4(1-t)} G^{\mu+2,\nu}_2(x,t) + \frac{\mu }{2} G^{\mu,\nu}_2(x,t) - \frac{1}{1-t} G^{\mu-2,\nu}_2(x,t)&=0.
\end{align*}
We can combine these relations with the expressions for the partial derivatives to obtain the proposition. 
\end{proof}

\begin{corollary}\label{Properties generalised Laguerre polynomials}
The generalised Laguerre functions satisfy
\begin{align}\label{Properties gen Lag poly 1}
\partial^2_x \Lambda^{\mu,\nu}_{2,j}(x) + \tfrac{(\nu+1)}{x} \partial_x \Lambda^{\mu,\nu}_{2,j}(x) -   \Lambda^{\mu,\nu}_{2,j}(x)&= (j+\mu+1)\Lambda^{\mu+2,\nu}_{2,j-1}(x) \\
\partial^2_x \Lambda^{\mu,\nu}_{2,j}(x) + \tfrac{(\mu+1)}{x} \partial_x \Lambda^{\mu,\nu}_{2,j}(x) -   \Lambda^{\mu,\nu}_{2,j}(x)&= -(j+\tfrac{\mu-\nu}{2})\Lambda^{\mu,\nu+2}_{2,j}(x) \nonumber \\
\mu(\mu+\nu + 2 \mE_x )\Lambda^{\mu,\nu}_{2,j} + (j+\mu+1) x^2 \Lambda^{\mu+2,\nu}_{2,j-1} &= 4(j+1) \Lambda^{\mu-2,\nu}_{2,j+1} \nonumber\\
\nu(\mu+\nu + 2 \mE_x )\Lambda^{\mu,\nu}_{2,j} + (-j-\tfrac{\mu-\nu}{2}) x^2 \Lambda^{\mu,\nu+2}_{2,j} &=-4(j+\tfrac{\mu+\nu}{2}) \Lambda^{\mu,\nu-2}_{2,j}. \nonumber
\end{align}
\end{corollary}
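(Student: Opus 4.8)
The plan is to read off each of the four identities in Corollary~\ref{Properties generalised Laguerre polynomials} from the corresponding identity in Proposition~\ref{Prop generating functions} by comparing Taylor coefficients in the variable $t$. Recall that $G^{\mu,\nu}_2(x,t)=\sum_{j\ge 0}\Lambda^{\mu,\nu}_{2,j}(x)\,t^j$ is holomorphic in $t$ near $t=0$ for every fixed $x>0$ and jointly smooth in $(x,t)$, so we may differentiate the series term by term in both $x$ and $t$ and its Taylor coefficients are uniquely determined. The three elementary rules needed are: the operator $\partial_x$, and hence $\mE_x=x\partial_x$, acts coefficientwise, so that $[\partial_x G^{\mu,\nu}_2]_{t^j}=\partial_x\Lambda^{\mu,\nu}_{2,j}$ and $[\mE_x G^{\mu,\nu}_2]_{t^j}=\mE_x\Lambda^{\mu,\nu}_{2,j}$; the operator $\mE_t=t\partial_t$ multiplies the coefficient of $t^j$ by $j$, so $[(\mE_t+c)G^{\mu,\nu}_2]_{t^j}=(j+c)\Lambda^{\mu,\nu}_{2,j}$; and multiplication by $t^k$ shifts indices, $[t^k G^{\mu,\nu}_2]_{t^j}=\Lambda^{\mu,\nu}_{2,j-k}$, with the convention $\Lambda^{\mu,\nu}_{2,j}=0$ for $j<0$ handling the low-order terms.

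With these rules in hand, I would first treat the two identities on the first line of Proposition~\ref{Prop generating functions}. Extracting the coefficient of $t^j$ from the first one yields, on the left, $\partial_x^2\Lambda^{\mu,\nu}_{2,j}+\tfrac{\nu+1}{x}\partial_x\Lambda^{\mu,\nu}_{2,j}-\Lambda^{\mu,\nu}_{2,j}$, and, on the right, $[t(\mE_t+\mu+2)G^{\mu+2,\nu}_2]_{t^j}=(j-1+\mu+2)\Lambda^{\mu+2,\nu}_{2,j-1}=(j+\mu+1)\Lambda^{\mu+2,\nu}_{2,j-1}$, which is exactly the first equation of the corollary. The second identity carries no extra factor of $t$, so the coefficient of $t^j$ on the right is simply $-(j+\tfrac{\mu-\nu}{2})\Lambda^{\mu,\nu+2}_{2,j}$, and the second equation of the corollary drops out directly.

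For the last two identities the same procedure applies, with one bookkeeping step. In the third identity the overall factor $t$ produces on the left $\mu(\mu+\nu+2\mE_x)\Lambda^{\mu,\nu}_{2,j-1}+x^2(j+\mu)\Lambda^{\mu+2,\nu}_{2,j-2}$ and on the right $4j\,\Lambda^{\mu-2,\nu}_{2,j}$; replacing $j$ by $j+1$ turns this into the third equation of the corollary. The fourth identity has no overall factor of $t$, so comparing coefficients of $t^j$ directly gives $-\nu(\mu+\nu+2\mE_x)\Lambda^{\mu,\nu}_{2,j}+x^2(j+\tfrac{\mu-\nu}{2})\Lambda^{\mu,\nu+2}_{2,j}=\big(4j+2(\mu+\nu)\big)\Lambda^{\mu,\nu-2}_{2,j}$, which is the fourth equation after transposing the first term and writing $4j+2(\mu+\nu)=4(j+\tfrac{\mu+\nu}{2})$. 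There is no genuine obstacle here; the only point worth a sentence in the write-up is the legitimacy of the term-by-term differentiation in $x$ and $t$, which follows from the holomorphy of $G^{\mu,\nu}_2$ in $t$ noted just before Proposition~\ref{Prop generating functions} together with the local uniform convergence of the resulting series, and keeping the index shifts straight (the reason the third equation needs the reindexing $j\mapsto j+1$).
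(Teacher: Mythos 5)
Your proposal is correct and is exactly the argument the paper gives: the paper's proof of this corollary simply states that it follows from Proposition~\ref{Prop generating functions} together with the definition of $\Lambda^{\mu,\nu}_{2,j}$ as Taylor coefficients of $G^{\mu,\nu}_2$, which is the coefficient-extraction you carry out in detail (including the index shift $j\mapsto j+1$ in the third identity and the overall sign change in the fourth).
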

\begin{proof}
The corollary follows from Proposition~\ref{Prop generating functions} and the definition of $\Lambda^{\mu,\nu}_{2,j}$  in equation~\eqref{definition Lambda mu nu} as coefficients in the expansion of $G^{\mu,\nu}_2(x,t)$.
\end{proof}
In general we do not have of an explicit expression for the functions $\Lambda^{\mu,\nu}_{2,j}(x)$. However for our purposes it is sufficient to know when they are non-zero.
\begin{corollary}\label{Generalised Laguerre functions are non-zero}
Assume $\mu \not \in - \mN$ or $\mu+j\geq 0$. Then on every open interval the function $\Lambda^{\mu,\nu}_{2,j} $ is different from zero for $j \in \mN$. 
\end{corollary}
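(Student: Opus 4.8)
The statement to prove is Corollary~\ref{Generalised Laguerre functions are non-zero}: under the hypothesis $\mu\notin-\mN$ or $\mu+j\geq0$, the generalised Laguerre function $\Lambda^{\mu,\nu}_{2,j}$ is nowhere locally zero on any open interval, for every $j\in\mN$. The plan is to argue by induction on $j$, using the first differential recurrence relation in Corollary~\ref{Properties generalised Laguerre polynomials}, namely
\[
\partial_x^2 \Lambda^{\mu,\nu}_{2,j}(x) + \tfrac{\nu+1}{x}\,\partial_x \Lambda^{\mu,\nu}_{2,j}(x) - \Lambda^{\mu,\nu}_{2,j}(x) = (j+\mu+1)\,\Lambda^{\mu+2,\nu}_{2,j-1}(x).
\]
The base case $j=0$ is handled by the explicit formula $\Lambda^{\mu,\nu}_{2,0}(x)=\tfrac{1}{\Gamma((\mu+2)/2)}\widetilde K_{\nu/2}(x)$ from \ref{definition Lambda mu nu}: the $K$-Bessel function $\widetilde K_{\nu/2}$ solves the second-order ODE \eqref{Bessel function dif rel}, so it cannot vanish identically on an open interval (a solution of a linear second-order ODE with a zero of infinite order, or rather vanishing on an interval, is identically zero, contradicting its known asymptotics at $0$ and $\infty$ recorded in Section~\ref{Section Bessel functions}); moreover $\Gamma((\mu+2)/2)$ is finite and nonzero precisely because $\mu\notin-2\mN-2\supset$ the excluded set is avoided under our hypothesis (if $\mu+j\geq0$ with $j=0$ then $\mu\geq0$, and if $\mu\notin-\mN$ then in particular $\mu\notin-2\mN-2$).

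For the inductive step, suppose $\Lambda^{\mu+2,\nu}_{2,j-1}$ is nowhere locally zero. First I would check that the hypothesis propagates: if $\mu\notin-\mN$ then $\mu+2\notin-\mN$, and if $\mu+j\geq0$ then $(\mu+2)+(j-1)=\mu+j+1\geq0$, so the inductive hypothesis applies to $(\mu+2,\nu,j-1)$. Next, the coefficient $j+\mu+1$ on the right-hand side must be nonzero: under $\mu\notin-\mN$ this is clear since $j+1\in\mN\setminus\{0\}$ would force $\mu=-(j+1)\in-\mN$; under $\mu+j\geq0$ we get $j+\mu+1\geq1>0$. Hence the right-hand side of the recurrence is a nonzero multiple of a function that does not vanish on any open interval. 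Now suppose for contradiction that $\Lambda^{\mu,\nu}_{2,j}$ vanishes on some open interval $I$. Then on $I$ the left-hand side vanishes identically, so the right-hand side vanishes on $I$, contradicting that $(j+\mu+1)\Lambda^{\mu+2,\nu}_{2,j-1}$ is nowhere locally zero. This completes the induction.

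\textbf{Main obstacle.} The delicate point is the base case: one must rule out that $\widetilde K_{\nu/2}$ vanishes on an open interval. I expect to handle this cleanly via the asymptotic expansions in Section~\ref{Section Bessel functions} — $\widetilde K_\alpha(x)$ is asymptotic to an explicit nonzero leading term as $x\to\infty$ (namely $\tfrac{\sqrt\pi}{2}(x/2)^{-\alpha-1/2}e^{-x}(1+\cO(1/x))$), so it is nonzero for all large $x$; since it solves the linear analytic ODE \eqref{Bessel function dif rel} on $(0,\infty)$, if it vanished on any open subinterval it would vanish identically on $(0,\infty)$ by uniqueness/analyticity, contradicting the large-$x$ behaviour. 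A subtler variant of the same issue recurs implicitly in the inductive step — I am using that "nowhere locally zero" is exactly the property that survives the argument — but once phrased as "does not vanish on any open interval," the linearity of the recurrence makes the propagation immediate. One should also double-check the edge case where the hypothesis forces $\nu$ or $\mu$ into special values (e.g. when $\mu+j\geq0$ but $\mu<0$, which is allowed); this only affects whether intermediate $\Gamma$-factors are finite, and since we never divide by such factors after the base case — the recurrence itself has no $\Gamma$-denominators — this causes no trouble.
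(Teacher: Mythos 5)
Your proof is correct and follows essentially the same route as the paper: the paper iterates the first recurrence of Corollary~\ref{Properties generalised Laguerre polynomials} downward from $j$ to $0$ by contradiction (noting each coefficient $j+\mu+k+1$ is nonzero under the hypothesis) and concludes from $\Lambda^{\mu+2j,\nu}_{2,0}=\tfrac{1}{\Gamma(\frac{\mu+2j+2}{2})}\widetilde K_{\nu/2}$ being nonzero on any interval, which is exactly your induction read in reverse. Your extra care with the asymptotics of $\widetilde K_{\nu/2}$ and the finiteness of the $\Gamma$-factor only makes explicit what the paper leaves implicit.
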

\begin{proof}
Suppose $\Lambda^{\mu,\nu}_{2,j}(x)=0$ for all $x \in I$, with $I\subset \mR^+$ an open interval. Then from \eqref{Properties gen Lag poly 1} it would follow that also $(j+\mu+1)\Lambda^{\mu+2,\nu}_{2,j-1}(x)=0$. Since $\mu+1+j\not=0$, we obtain $\Lambda^{\mu+2,\nu}_{2,j-1}(x)=0$. This would again lead to $\Lambda^{\mu+4,\nu}_{2,j-2}(x)=0$ and so on. Finally we get $\Lambda^{\mu+2j,\nu}_{2,0}(x)=0$. This is a contradiction since $\Lambda^{\mu+2j,\nu}_{2,0}(x)= \frac{1}{\Gamma(\frac{\mu+2j+2}{2})}\widetilde{K}_{\frac{\nu}{2}}(x)$ and the Bessel function is different from zero on $I$. 
\end{proof}
We also use the following recursion relation. 
\begin{proposition} \label{action Le on Laguerre functions}
For $\mu,\nu \in \mC$, we have for $j\in\mZ$
\begin{align*}
&(2j+\mu+1) \left(\mE_x +\tfrac{\mu+\nu+2}{2}\right) \Lambda_{2,j}^{\mu,\nu}(x)\\& = (j+1)(j+\mu+1) \Lambda^{\mu,\nu}_{2,j+1}(x) - \left(j +\tfrac{\mu+\nu}{2}\right)\left(j + \tfrac{\mu-\nu}{2}\right) \Lambda^{\mu,\nu}_{2,j-1}(x). 
\end{align*}
For $j=0$, we have
\begin{align*}
 \left(\mE_x +\tfrac{\mu+\nu+2}{2}\right) \Lambda_{2,0}^{\mu,\nu}(x) =  \Lambda^{\mu,\nu}_{2,1}(x), 
\end{align*}
even for $\mu=-1$.
\end{proposition}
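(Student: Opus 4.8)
The plan is to read off the recursion from the generating function identity
\[
\left(\mE_x + \tfrac{\mu+\nu+2}{2}\right) G_2^{\mu,\nu}(x,t) = \left(\tfrac{1}{1-t}\right)\cdot\big(\text{something}\big),
\]
more precisely from the last expression for $\partial_t G_2^{\mu,\nu}$ computed in the proof of Proposition~\ref{Prop generating functions}. First I would apply $\mE_x = x\partial_x$ to $G_2^{\mu,\nu}(x,t)$ using the formula for $\partial_x G_2^{\mu,\nu}$ already recorded there, namely $\partial_x G_2^{\mu,\nu} = \tfrac{x}{2(1-t)}(t^2 G_2^{\mu+2,\nu} - G_2^{\mu,\nu+2})$, so that
\[
\mE_x G_2^{\mu,\nu}(x,t) = \tfrac{x^2}{2(1-t)}\big(t^2 G_2^{\mu+2,\nu}(x,t) - G_2^{\mu,\nu+2}(x,t)\big).
\]
Comparing with the displayed formula for $\partial_t G_2^{\mu,\nu}$, one sees that $\mE_x G_2^{\mu,\nu} = t(1-t)\partial_t G_2^{\mu,\nu} - \tfrac{(\mu+\nu+2)t}{2(1-t)}\cdot\! \big(\dots\big)$; after a small rearrangement this yields a clean first-order relation of the shape
\[
\left(\mE_x + \tfrac{\mu+\nu+2}{2}\right)G_2^{\mu,\nu}(x,t) = \tfrac{1}{1-t}\,(\mE_t + \tfrac{\mu+\nu+2}{2})\,(1-t)\,G_2^{\mu,\nu}(x,t)
\]
or an equivalent closed expression; the precise constant bookkeeping is the only subtlety and I would double-check it against the $j=0$ case.

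Next I would substitute the expansion $G_2^{\mu,\nu}(x,t) = \sum_{j\ge 0}\Lambda_{2,j}^{\mu,\nu}(x)t^j$ into this relation. The operator $\mE_t$ acts on $t^j$ by multiplication by $j$, and multiplication by $\tfrac{1}{1-t} = \sum_{k\ge 0} t^k$ or by $(1-t)$ shifts indices, so extracting the coefficient of $t^j$ on both sides produces a three-term recurrence in $j$. Tracking the coefficients $(2j+\mu+1)$, $(j+1)(j+\mu+1)$ and $\big(j+\tfrac{\mu+\nu}{2}\big)\big(j+\tfrac{\mu-\nu}{2}\big)$ is then a matter of collecting the arithmetic; I expect these to fall out once the generating-function identity is in the normalized form above. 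This gives the first displayed equation of the proposition for all $j\in\mZ$ (using the convention $\Lambda_{2,j}^{\mu,\nu}=0$ for $j<0$).

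For the $j=0$ statement, note that when $j=0$ the factor $(2j+\mu+1) = \mu+1$, which vanishes precisely at $\mu=-1$, so dividing by it is not allowed there; instead I would extract the $t^0$-coefficient directly from the unnormalized identity, where no division occurs. Since $\Lambda_{2,-1}^{\mu,\nu}=0$ and the coefficient $(j+1)(j+\mu+1)$ at $j=0$ equals $\mu+1$, one might worry the right-hand side also degenerates, but in fact the $t^0$-coefficient of the identity directly gives $\big(\mE_x + \tfrac{\mu+\nu+2}{2}\big)\Lambda_{2,0}^{\mu,\nu}(x) = \Lambda_{2,1}^{\mu,\nu}(x)$ with the factor $(\mu+1)$ cancelling on both sides, hence the relation persists at $\mu=-1$; this is exactly the point where using the generating function rather than the already-divided recurrence pays off.

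The main obstacle I anticipate is purely organizational: getting the generating-function identity into a form where the first-order differential operator $\mE_x + \tfrac{\mu+\nu+2}{2}$ appears cleanly on the left and a shift operator in $t$ on the right, without sign or constant errors. Once that identity is established the coefficient extraction is routine, and the $j=0$ and $\mu=-1$ edge cases are handled by reading the $t^0$-coefficient off the undivided form. I would cross-check the final recurrence against the known relations in Corollary~\ref{Properties generalised Laguerre polynomials}, in particular by combining the first and second displayed relations there (applying $\mE_x$ and using $\partial_x^2 + \tfrac{\nu+1}{x}\partial_x$), as an independent consistency test.
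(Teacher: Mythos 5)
There is a genuine gap: the generating-function identity you propose as the engine of the proof is structurally wrong, not just off by constants. Carrying out your own first step with the formulas from the proof of Proposition~\ref{Prop generating functions} gives
\[
\mE_x G^{\mu,\nu}_2 = \frac{x^2}{2(1-t)}\bigl(t^2G^{\mu+2,\nu}_2 - G^{\mu,\nu+2}_2\bigr),\qquad
t(1-t)\partial_t G^{\mu,\nu}_2 = \tfrac{(\mu+\nu+2)t}{2}G^{\mu,\nu}_2 + \frac{x^2t^2}{2(1-t)}G^{\mu+2,\nu}_2 - \frac{x^2t}{2(1-t)}G^{\mu,\nu+2}_2,
\]
so that $\mE_x G^{\mu,\nu}_2 - t(1-t)\partial_t G^{\mu,\nu}_2 = -\tfrac{(\mu+\nu+2)t}{2}G^{\mu,\nu}_2 - \tfrac{x^2}{2}G^{\mu,\nu+2}_2$. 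The leftover term $-\tfrac{x^2}{2}G^{\mu,\nu+2}_2$ cannot be absorbed into any expression in $G^{\mu,\nu}_2$ alone by a first-order operator in $t$: the recurrence relations \eqref{Bessel function rec rel} only trade it for $G^{\mu,\nu-2}_2$, and the relations of Proposition~\ref{Prop generating functions} trade it for second-order $x$-derivatives. There is also a structural obstruction you can see without computing anything: the target recursion has coefficients $(j+1)(j+\mu+1)$ and $(j+\tfrac{\mu+\nu}{2})(j+\tfrac{\mu-\nu}{2})$ that are \emph{quadratic} in $j$, with a factor $(2j+\mu+1)$ (i.e.\ $2\mE_t+\mu+1$) already on the left, so the corresponding generating-function identity must be second order in $\mE_t$ on the right; your proposed identity $\bigl(\mE_x+\tfrac{\mu+\nu+2}{2}\bigr)G = \tfrac{1}{1-t}\bigl(\mE_t+\tfrac{\mu+\nu+2}{2}\bigr)(1-t)G$ is first order and, worse, the $\tfrac{1}{1-t}$ factor produces a full sum over all lower indices rather than a three-term recursion when you extract the coefficient of $t^j$. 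So the "clean first-order relation" you are banking on does not exist, and the subsequent coefficient extraction cannot produce the stated formula.

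Your treatment of the $j=0$, $\mu=-1$ edge case (read off the $t^0$-coefficient of the undivided identity) is the right instinct, but it rests on the same nonexistent identity. For what it is worth, the paper does not prove this proposition either: it cites it directly from M\"ollers' thesis ([M\"o], Proposition 3.6.1 and Example 3.3.1). A self-contained proof along generating-function lines is possible, but it requires assembling a genuinely second-order (in $\mE_t$) identity by combining the parameter-shifting relations of Proposition~\ref{Prop generating functions} and Corollary~\ref{Properties generalised Laguerre polynomials} so as to eliminate the shifted functions $G^{\mu\pm2,\nu}_2$, $G^{\mu,\nu\pm2}_2$; that elimination is the actual content of the proof and is missing from your sketch.
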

\begin{proof}
This is \cite[Proposition 3.6.1]{Mollers} and \cite[Example 3.3.1]{Mollers}.
\end{proof}

\noindent
SB: Department of Mathematical Analysis, Ghent University, Krijgslaan 281, 9000 Gent, Belgium;
E-mail: {\tt Sigiswald.Barbier@UGent.be}

JF:  Department Mathematik, FAU Erlangen-N\"{u}rnberg, Cauerstr. 11, 91058 Erlangen, Germany;
E-mail: {\tt frahm@math.fau.de}

\date{}

\end{document}